\numberwithin{equation}{section}
\numberwithin{figure}{section}
\theoremstyle{plain}
\newtheorem*{theorem*}{Theorem}
\newtheorem*{lemma*}{Lemma}
\newtheorem{lemma}{Lemma}[section]
\newtheorem{proposition}[lemma]{Proposition}
\newtheorem{prop}[lemma]{Proposition}
\newtheorem{theorem}[lemma]{Theorem}
\newtheorem{cor}[lemma]{Corollary}
\theoremstyle{definition}
\newtheorem{defn}[lemma]{Definition}
\theoremstyle{remark}
\newtheorem{remark}[lemma]{Remark}
\newtheorem{rem}[lemma]{Remark}
\newcommand{\ang}[1]{{\left\langle{#1}\right\rangle}}
\renewcommand{\Im}{\operatorname{Im}}
\renewcommand{\Re}{\operatorname{Re}}
\newcommand{\ep}{{\epsilon}}
\newcommand{\id}{\mathrm{Id}}
\newcommand{\Id}{\mathrm{Id}}
\newcommand{\RR}{\mathbb{R}}
\newcommand{\ZZ}{\mathbb{Z}}
\newcommand{\NN}{\mathbb{N}}
\newcommand{\CC}{\mathbb{C}}
\DeclareMathOperator{\WF}{WF}
\DeclareMathOperator{\Op}{Op}
\DeclareMathOperator{\SL}{SL} 
\DeclareMathOperator{\ord}{ord}
\DeclareMathOperator{\coker}{coker} 
\newcommand{\SD}{\mathcal D}
\newcommand{\SF}{\mathcal F}
\renewcommand{\SL}{\mathcal L}
\newcommand{\SM}{\mathcal M}
\newcommand{\SN}{\mathcal N}
\newcommand{\SQ}{\mathcal Q}
\newcommand{\SR}{\mathcal R}
\newcommand{\SW}{\mathcal W}
\newcommand{\SX}{\mathcal X}
\newcommand{\SY}{\mathcal Y}
\newcommand\mf{\mathrm{mf}}
\colorlet{linkcolour}{black}
\colorlet{urlcolour}{blue}
\newcommand{\Psip}[2]{\Psi^{#1,#2}_\mathrm{par}(\RR^{n+1})}
\newcommand{\Psipcl}[2]{\Psi^{#1,#2}_{\mathrm{par},\mathrm{cl}}(\RR^{n+1})}
\newcommand\Spar[2]{S^{#1,#2}_\mathrm{par}(\RR^{n+1})}
\newcommand{\Spcl}[2]{S^{#1,#2}_{\mathrm{par},\mathrm{cl}}(\RR^{n+1})}
\newcommand\Char{\mathop{\rm Char}}
\newcommand\ubar{\overline{u}}
\newcommand\Hpar[2]{H_{\mathrm{par}}^{#1, #2}(\RR^{n+1})}
\newcommand\Hparpm[2]{H_{\mathrm{par}, \pm}^{#1; #2}(\RR^{n+1})}
\newcommand\Hparmod[3]{H_{\modu}^{#1, #2; #3}(\RR^{n+1})}
\newcommand{\Rot}{\mathrm{Rot}}
\newcommand\Ell{\mathrm{ell}}
\newcommand\sw{\mathsf{r}}
\newcommand\Rout{P_+^{-1}}
\newcommand\Rin{R^-}
\newcommand\modu{\mathcal{D}}
\newcommand\upl{\Rout v}
\DeclareMathOperator{\supp}{supp}
\newcommand\lra{\longrightarrow}
\newcommand\Hdata{\mathcal{W}}
\newcommand\Poi{\mathcal{P}}
\newcommand\Poim{\mathcal{P}_-}
\newcommand\Poip{\mathcal{P}_+}
\newcommand\Poipm{\mathcal{P}_\pm}
\newcommand\Nhat{\widehat{\mathcal{N}}}
\newcommand\rmin{\mathbf{r}_{\mathrm{min}}}
\newcommand\rhobase{\rho_{\mathrm{base}}}
\newcommand\rhofib{\rho_{\mathrm{fib}}}
\newcommand\SymbSpa{{}^{\mathrm{par}} \overline{T}^* \overline{\mathbb{R}}^{n + 1}}
\begin{document}

\title[Scattering regularity for 
NLS]{Scattering regularity for small data solutions of the nonlinear
  Schr\"{o}dinger equation} 

\author{Jesse Gell-Redman}
\address{School of Mathematics and Statistics, University of Melbourne, Melbourne, Victoria, Australia}
\email{jgell@unimelb.edu.au}

\author{Se\'{a}n Gomes}
\address{Department of Mathematics and Statistics, University of Helsinki, Helsinki, Finland}
\email{sean.p.gomes@gmail.com}

\author{Andrew Hassell}
\address{Mathematical Sciences Institute, Australian National University, Acton, ACT, Australia}
\email{Andrew.Hassell@anu.edu.au}

\begin{abstract}Using the Fredholm theory of the linear time-dependent
  Schrodinger equation set up in \cite{TDSL}, we solve the final-state
  problem for the nonlinear Schr\"odinger problem
$$
(D_t + \Delta + V) u = N[u], \quad u(z,t) \sim (4\pi it)^{-n/2} e^{i|z|^2/4t} f\big( \frac{z}{2t} \big), \quad t \to -\infty,
$$
where $u : \RR^{n+1} \to \CC$ is the unknown and $f : \RR^n \to \CC$
is the asymptotic data.  Here $D_t = -i \frac{\partial}{\partial t}$
and $\Delta = \sum_{j=1}^n D_{z_j} D_{z_j}$ is the positive Laplacian,
or more generally a compactly supported, nontrapping perturbation of
this, $V$ is a smooth compactly supported potential function, and the
nonlinear term $N$ is a (suitable) polynomial in $u,\ubar,\partial_{z_j} u$ and $\partial_{z_j}\ubar$ satisfying phase invariance. Our
assumption on the asymptotic data $f$ is that it is small in a certain
function space $\SW^k(\RR^n)$ constructed in \cite{TDSL}, for sufficiently
large $k \in \mathbb{N}$, where the index $k$ measures both regularity
and decay at infinity (it is similar to, but not quite a standard
weighted Sobolev space $H^{k, k}(\RR^n)$). We find that for $N[u] = \pm |u|^{p-1} u$, $p$ odd, and $(n,p) \neq (1, 3)$ then  
 if the asymptotic data as $t \to -\infty$ is small in $\SW^k$, then the
asymptotic data as $t \to +\infty$ is also in $\SW^k$; that is, the
nonlinear scattering map preserves these spaces of asymptotic data. For a more general nonlinearity involving derivatives of $u$, we show that if 
the asymptotic data as $t \to -\infty$ is small in $\ang{\zeta}^{-1} \SW^k_\zeta$, then the
asymptotic data as $t \to +\infty$ is also in $\ang{\zeta}^{-1} \SW^k_\zeta$ where $\zeta = z/2t$ is the argument of $f$. 
\end{abstract}

\thanks{This research was supported in part by the Australian Research
  Council through grants DP180100589 (JGR, SG, AH),  DP210103242 (JGR) and FL220100072 (AH). We also acknowledge the support of MATRIX through the program ``Hyperbolic differential equations in Geometry and Physics'', April 4-8, 2022}

\maketitle

\tableofcontents


\section{Introduction}
\label{sec:intro}
This article is concerned with the `final state' problem for the nonlinear time-dependent Schr\"{o}dinger equation
\begin{equation}\label{eq:schrod}
Pu=(D_t + \Delta_{g(t)} + V)u=N[u]
\end{equation}
on $\RR^{n+1}$. Here we are given a family of metrics $g(t)$ on $\RR^n$ which are equal to the standard metric for $|t| \geq T$ and also equal to the standard metric for all $|z| \geq R$, that is, it is a compactly supported (in spacetime) perturbation of the flat metric on $\RR^n$. We assume that $g(t)$ is smooth in $t$ and is nontrapping for each $t$. We use the notation $D_t = -i \partial_t$,  $D_{z_j} = -i \partial_{z_j}$, etc, and always use $z = (z_1, \dots, z_n)$ for Cartesian coordinates in $\RR^n$. 
The positive Laplacian on $\RR^n$ with respect to $g(t)$ is denoted
$\Delta_{g(t)}$. The potential $V(z, t)$ is assumed to be smooth and
compactly supported in spacetime, although this condition is mostly
for convenience and could easily be weakened. 
Finally, the nonlinear term $N[u]$ is given by a
polynomial in $u$, $\ubar$ and their first-order spatial derivatives,
satisfying phase invariance:
\begin{equation}
N[e^{i\theta} u] = e^{i\theta} N[u], \quad \forall \theta \in \RR.
\label{eq:phaseinv}\end{equation}
Typical examples are 
\begin{equation}
N[u] = |u|^{p-1} u, \quad p \text{ odd}, \quad N[u] = 2 |\nabla u|^2 u \quad \text{ or } \quad N[u] = \partial_{z_i} (|u|^2 u).
\label{eq:N}\end{equation}
In every case we define $p$ to be the \emph{minimal} degree of each monomial appearing in the polynomial $N$; in view of \eqref{eq:phaseinv} it is necessarily an odd integer, indeed every monomial appearing in $N$ must have odd degree. We assume $p \geq 3$, so that $N$ is genuinely nonlinear. 

The final state problem is to show the existence and uniqueness of a solution to \eqref{eq:schrod} with the asymptotic 
\begin{equation}
u(z,t) \sim (4\pi it)^{-n/2} e^{i|z|^2/4t} f\big( \frac{z}{2t} \big), \quad t \to -\infty,
\label{eq:fminus}\end{equation}
for prescribed asymptotic data $f$. A similar asymptotic as $t \to +\infty$, 
\begin{equation}
u(z,t) \sim (4 \pi it)^{-n/2} e^{i|z|^2/4t} \tilde f\big( \frac{z}{2t} \big), \quad t \to +\infty,
\label{eq:fplus}\end{equation}
could alternatively be prescribed, but we will arbitrarily choose to prescribe asymptotic data as $t \to -\infty$. 

\subsection{Previous results from \cite{TDSL}}
This paper is a sequel to \cite{TDSL}, where the linear Schr\"odinger equation, that is, with $N \equiv 0$, was analyzed. Initally, the inhomogeneous equation, 
\begin{equation}
Pu(z,t) = F(z,t)
\label{eq:inhom}\end{equation}
was studied, where $F$ is a given function on $\RR^{n+1}$. There the authors constructed pairs of Hilbert spaces $\SX_\pm^{\bullet}(\RR^{n+1})$ and $\SY_\pm^{\bullet}(\RR^{n+1})$ (where $\bullet$ indicates a collection of exponents) such that
\begin{equation}
P : \SX_\pm^{\bullet}(\RR^{n+1}) \to \SY_\pm^{\bullet}(\RR^{n+1}) \text{ is invertible,}
\label{eq:inv}\end{equation}
that is, an isomorphism of Hilbert spaces. In fact, this was done in \emph{two} different settings: variable order (parabolic) Sobolev spaces, and fixed order Sobolev spaces with module regularity. In the present paper, we have modified the function spaces so that they have better multiplicative properties, but we begin by describing the function spaces of \cite{TDSL} as our new function spaces have some of the character of the previous spaces in \emph{both} these settings. 

All the $\SX_\pm^{\bullet}(\RR^{n+1})$ and $\SY_\pm^{\bullet}(\RR^{n+1})$ 
spaces are based on a parabolic pseudodifferential calculus $\Psip sr$ introduced in \cite{TDSL} (it is a global version of a calculus introduced by Lascar \cite{lascar}, with `scattering' type behaviour at spacetime infinity). Here $s$ is the parabolic regularity order and $r$ is the spacetime weight or spacetime decay order, so that $\Psip  sr = (1 + |z|^2 + t^2)^{r/2} \Psip s0 = \Psip s0 (1 + |z|^2 + t^2)^{r/2} $. This calculus gives rise to spaces of weighted (parabolic) Sobolev spaces, $\Hpar sr$, which are precisely the functions that are mapped to $L^2(\RR^{n+1})$ by all elements of $\Psip sr$.  We also defined spaces where the weight $r$ varies microlocally, that is, the weight is a function on phase space (it should be a classical symbol of order zero on the compactified phase space; see Section 2). Correspondingly, we have Sobolev spaces with variable order, $\Hpar s {\mathrm{r}}$. \emph{We will always write a variable spatial order in a bold font}, such as $\mathrm{r}$. 

The first setting in which the authors obtained pairs of spaces satisfying \eqref{eq:inv} is where 
\begin{equation} \SY_\pm^{\bullet}(\RR^{n+1}) = \Hpar {s-1}{\mathrm{r}_\pm+1} 
\label{eq:var-weight}\end{equation}
and $\SX_\pm^{\bullet}(\RR^{n+1})$ is a domain space 
\begin{equation}
\SX^{s, \mathrm{r}_\pm} = \big\{ u \in \Hpar s{\mathrm{r}_\pm} \mid Pu \in \Hpar {s-1}{\mathrm{r}_\pm+1} \big\}.
\label{eq:var-weight-invertibility}\end{equation}
Here the variable order $\mathrm{r}_\pm$ must have good properties with respect to the bicharacteristic flow of $P$ over spacetime infinity (i.e. the Hamiltonian flow of the symbol $p$ of $P$ inside the characteristic set of $P$, $\Char(P)$). 
Recall from \cite{TDSL} (and from Section \ref{sec:tech} below in \eqref{eq:radialsetspole}) in the phase space over spacetime infinity, there are two distinguished submanifolds, that we call the incoming and outgoing radial sets $\SR_-, \SR_+$, which are a source and sink, respectively, for the bicharacteristic flow. Every bicharacteristic starts at $\SR_-$ and ends at $\SR_+$. The variable order $\mathrm{r}_+$ must have the property that it is less than the threshold value of $-1/2$ at $\SR_+$, greater than $-1/2$ at $\SR_-$ and monotone nonincreasing under the bicharacteristic flow inside $\Char(P)$. Similarly, variable order $\mathrm{r}_-$ must have the property that it is less than the threshold value of $-1/2$ at $\SR_-$, greater than $-1/2$ at $\SR_+$ and monotone nondecreasing under the bicharacteristic flow inside $\Char(P)$ (we could take $\mathrm{r}_- = -1 - \mathrm{r}_+$). The threshold value is the smallest weight $r$ such that all global solutions to $Pu= 0$ fail to lie in $(1 + |z|^2 + t^2)^{-r} L^2(\RR^{n+1})$, and is easily seen to be $-1/2$ via \eqref{eq:fminus} and \eqref{eq:fplus}, for example. These conditions on the variable order $\mathrm{r}_\pm$ are essential in order to assemble various microlocal propagation estimates to obtain a global Fredholm estimate, from which the invertibility of $P$ acting between \eqref{eq:var-weight-invertibility} and \eqref{eq:var-weight} can be obtained. 

The second setting requires the notion of module regularity. In \cite{TDSL}, a module (shorthand for a test module of pseudodifferential operators) was defined, following and slightly generalizing  \cite{HMV2004}, as a subspace of $\Psip 11$ that contains and is a module over $\Psip 00$, and is closed under commutators. There were three modules of interest: the modules $\SM_\pm$ of elements of $\Psip 11$ characteristic on $\SR_\pm$, and their intersection $\SN := \SM_+ \cap \SM_-$. Following \cite{HMV2004},  spaces of functions with a finite order of module regularity were defined: 
\begin{multline}
\Hparpm {s,r}{j, k} = \{ u \in \Hpar s r  \mid A_1 \dots A_j B_1 \dots B_k u \in \Hpar s r , \\
A_1, \dots A_j \in \SM_\pm, \ B_1, \dots, B_k \in \SN \}.
\end{multline}
%
In words, the elements of this space are those elements of $\Hpar sr$ that remain in this space after the application of $j$ elements of $\SM_\pm$ and $k$ elements of $\SN$. We define the $\SX$- and $\SY$-spaces by 
\begin{align}
\SY_\pm^{s, r; j, k}(\RR^{n+1}) &:= \Hparpm {s}{r; j, k} , \\
\SX_\pm^{s,r; j, k}(\RR^{n+1}) &:= \{ u \in \Hparpm  {s}{r; j, k}\mid Pu  \in \Hparpm {s-1}{r+1; j, k} \}.
\end{align}

The authors then proved \eqref{eq:inv} for $\SY_\pm^{\bullet}(\RR^{n+1}) = \SY_\pm^{s-1, r+1; j, k}(\RR^{n+1})$ and $\SX_\pm^{\bullet}(\RR^{n+1}) = \SX_\pm^{s, r; j, k}(\RR^{n+1})$ when $s \in \mathbb{R}$, $r \in (-3/2, -1/2)$, $j \geq 1$ and $k \geq 0$. 

Note that in either setting, the $\SX_+$-function space is designed so that elements of this space have spacetime decay above threshold at $\SR_-$ but below threshold at $\SR_+$. This is clear in the case of the variable order spaces, while for the module regularity spaces, we note that there are elements of $\SM_+$ that are elliptic of order $(1,1)$ at $\SR_-$. Since we stipulated $j \geq 1$, we have (at least) one order of $\SM_+$-regularity and this raises the spacetime decay by $1$ at $\SR_-$. Since we assumed that $r > -3/2$, this lifts the decay to above threshold at this radial set. 
This microlocal regularity\footnote{The term `regularity' is a synonym for spacetime decay at the spacetime boundary of phase space.} at $\SR_+$ vs. $\SR_-$ means, for elements of $\SX_+$,  that nontrivial expansions such as \eqref{eq:fplus} are allowed, while \eqref{eq:fminus} is prohibited. For $\SX_-$ this is reversed: nontrivial expansions such as \eqref{eq:fplus} are prohibited, while \eqref{eq:fminus} is allowed.

The next result of \cite{TDSL} concerned global solutions of $Pu = 0$ with prescribed final state data $f$ in the sense described above. In \cite{TDSL} the authors defined `Poisson operators' $\Poim$ ($\Poip$) that map $f$ ($\tilde f$) to the solution $u$ of $Pu = 0$ satisfying \eqref{eq:fminus} (\eqref{eq:fplus}). Spaces of final state data $f$, denoted $\Hdata^k(\RR^n)$, were also defined, which are module regularity spaces (over $\RR^n$) for a certain module $\Nhat$; see Section \ref{sec:new linear
stuff}. These spaces increase in both regularity and decay as $k$ increases, and the union over $k \in \ZZ$ is the space of tempered distributions, while the intersection is the Schwartz class. The authors showed that for $k \in \NN$, the range of the Poisson operator $\Poim$ on $\Hdata^k(\RR^n)$ is precisely 
\begin{equation}
\{ u \in \mathcal{X}_+^{1/2, \sw_+; k,0}(\RR^{n+1}) + \mathcal{X}_-^{1/2, \sw_-; k,0}(\RR^{n+1}) \mid Pu = 0 \}, 
\label{eq:Poissonrange}\end{equation}
i.e. that is, those elements of $\mathcal{X}^{1/2, \sw_+} + \mathcal{X}^{1/2, \sw_-}$ in the kernel of $P$ having $\SM_\pm$-module regularity of order $k$. It was shown that for  $k \geq 2$, the global solution $u = \Poim f$ admits the asymptotic \eqref{eq:fminus} in the precise sense that 
\begin{equation}
 \lim_{t \to -\infty} (4\pi it)^{n/2} e^{-it|\zeta|^2} u(2t \zeta, t) = f(\zeta)
\end{equation} 
as a limit in the space $\ang{\zeta}^{1/2 + \epsilon}\Hdata^{k-2}(\RR^n_\zeta)$. (Notice that this limit is the same as 
$$
 \lim_{t \to +\infty, z/t \to 2\zeta} (4\pi it)^{n/2} e^{-i|z|^2/4t} u_+(z, t) . \ )
 $$
The limit as $t \to +\infty$ exists in the same sense. Precisely, the limit 
\begin{equation}
\tilde f :=  \lim_{t \to -\infty} (4\pi it)^{n/2} e^{-it|\zeta|^2} u(2t \zeta, t) 
\end{equation} 
exists in the same space $\ang{\zeta}^{1/2 + \epsilon}\Hdata^{k-2}(\RR^n_\zeta)$, and this limit $\tilde f$ is also in $\Hdata^k(\RR^n_\zeta)$.
The existence of this limit allows us to define the (linear) scattering map, $S: f \to \tilde f$. It is a linear isomorphism $S : \Hdata^k \to \Hdata^k$.

\subsection{New results} An absolutely crucial point distinguishing the results of \cite{TDSL}
from those of the present paper is that the modules $\mathcal{M}_\pm$
and $\mathcal{N}$ necessarily have \emph{pseudodifferential} (as
opposed to differential)
generators.  This is true simply because a full generating set for the
modules must have $\tau$ dependence and the operator $D_t$ is a
parabolic differential operator with parabolic regularity order \emph{two}, in
particular $D_t \in \Psi_{\mathrm{par}}^{2,0}$.

In the present paper, we find that there is a
natural module $\modu$ generated by \emph{differential} operators whose
corresponding module regularity spaces are particularly well suited to
analysis of the nonlinear equation.  This is a module with explicit
differential operator generators which arise naturally from the invariance
properties of the Schr\"odinger equation, namely the linear
derivatives, the generators of rotations, the generators of Galilean
tranformations, and the generator of the natural dilations $2tD_t + z
\cdot D_z$, see \eqref{eq:G.def} below.  In particular, these are
operators that commute with $P_0$, except for $2t D_t + z \cdot D_z$
which satisfies
$$
i[2t D_t + z \cdot D_z, P_0] = 2 P_0.
$$
The module regularity spaces corresponding to $\modu$ satisfy algebra (multiplication)
properties directly analogous to those of \cite{NEH}, meaning they
produce a sharp gain in spacetime decay of order $(n + 1)/2$, allowing for treatment of
nonlinear terms. When proving multiplication properties, having differential generators --- which means that we have the Leibniz rule for applying generators to products --- is a great advantage of working with $\modu$ as opposed to $\SM_\pm$ and $\SN$. 

Taking these algebra properties for granted for the
moment, a trade-off occurs in comparing analysis of the modules of
\cite{TDSL} with that of $\modu$, namely, the module $\modu$ fails to
have some of the basic structural features one encounters in module
regularity in other settings: some of its generators are of
parabolic regularity order $2$ and \emph{it is not
closed under commutators}.  It is however closed under taking
commutators of arbitrary elements in $\modu$ with \emph{generators} of $\modu$ (see \eqref{eq:module properties}) and this
turns out to be enough to prove the relevant propagation of module
regularity estimates in the appendix.

Notice that $\modu$-module regularity cannot distinguish between the two radial sets, since $\modu$ is characteristic at both. The condition that elements of our function space $\SX_+^\bullet$, say, are below threshold at $\SR_+$ and above threshold at $\SR_-$ will be enforced by choosing a variable order $\mathrm{r}_+$ as above. We will \emph{usually} assume in addition that $\mathrm{r}_\pm$ take values in $[-1/2 - \epsilon, -1/2 + \epsilon]$ for some small $\epsilon > 0$. Thus, with the third exponent $k$ denoting the order of $\modu$-module regularity, our function spaces will be 
\begin{align}
\SX^{s, \mathrm{r}_\pm ; k}_{\modu}(\RR^{n+1}) &:= \{ u \in \Hparmod s  {\mathrm{r}_\pm} k \mid Pu \in \Hparmod {s-1} {\mathrm{r}_\pm+1} {k} \}, \label{eq:SXnew} \\
\SY^{s-1, \mathrm{r}_\pm+1; k}_{\modu} (\RR^{n+1}) &:= \Hparmod {s-1}{\mathrm{r}_\pm+1}{k}. \label{eq:SYnew}
\end{align}

We have a corresponding linear theory for these spaces:

\begin{theorem}\label{thm:linear1} Assume that the variable order $\mathrm{r}_+$ ($\mathrm{r}_-$) has the property that it is less than the threshold value of $-1/2$ at $\SR_+$ ($\SR_-$), greater than $-1/2$ at $\SR_-$ ($\SR_+$) and monotone nonincreasing (nondecreasing) under the bicharacteristic flow inside $\Char(P)$. 
Then for any $k \in \NN$, the operator $P$ is a bounded invertible operator between \eqref{eq:SXnew} and \eqref{eq:SYnew}.  Thus we have bounded operators, which we denote $P_+^{-1}$ and $P_-^{-1}$:
\begin{equation}\begin{aligned}
P_+^{-1} : \SY^{s-1, \mathrm{r}_+ +1; k}_{\modu} (\RR^{n+1}) &\to \SX^{s, \mathrm{r}_+ ; k}_{\modu}(\RR^{n+1}), \\
P_-^{-1} : \SY^{s-1, \mathrm{r}_- +1; k}_{\modu} (\RR^{n+1}) &\to \SX^{s, \mathrm{r}_- ; k}_{\modu}(\RR^{n+1}), 
\end{aligned}\end{equation}
and call the outgoing $(+)$ and incoming $(-)$ propagators, respectively.
\end{theorem}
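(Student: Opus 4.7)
The plan is to bootstrap from the variable-order linear theory of \cite{TDSL}, which gives invertibility of $P : \SX^{s, \mathrm{r}_\pm} \to \Hpar{s-1}{\mathrm{r}_\pm+1}$, and to add $\modu$-module regularity one order at a time. Since $\SY^{s-1, \mathrm{r}_\pm+1; k}_\modu \hookrightarrow \Hpar{s-1}{\mathrm{r}_\pm+1}$, given $v$ in the former we already have a unique $u = P_\pm^{-1} v \in \SX^{s, \mathrm{r}_\pm}$; uniqueness of the candidate propagator is thus automatic, and the whole game is to show that this $u$ lies in $\Hparmod{s}{\mathrm{r}_\pm}{k}$ and that $P_\pm^{-1}$ is bounded in the stronger topology.

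I would proceed by induction on $k$, with base case $k = 0$ supplied directly by \cite{TDSL}. Assuming the statement through order $k - 1$, we have $u \in \Hparmod{s}{\mathrm{r}_\pm}{k-1}$, and must gain one more order of $\modu$-regularity. Pick any generator $A$ of $\modu$ and write
\[
P(Au) = A(Pu) + [P, A] u = Av + [P, A] u.
\]
The first term lies in $\Hparmod{s-1}{\mathrm{r}_\pm+1}{k-1}$ directly from $v \in \Hparmod{s-1}{\mathrm{r}_\pm+1}{k}$. For $[P, A] u$, the generators of $\modu$ are the infinitesimal symmetries of the flat operator $P_0$ (spatial derivatives, rotation and Galilean generators, and the dilation $2tD_t + z \cdot D_z$), so $[P_0, A]$ is either zero or reduces, as in the identity $i[2tD_t + z\cdot D_z, P_0] = 2 P_0$, to a constant multiple of $P$ itself; commutators of $A$ with the compactly supported perturbations $V$ and $g(t) - g_{\mathrm{flat}}$ are compactly supported of controlled parabolic order, and absorb harmlessly using the $k - 1$ hypothesis. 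The upshot is $P(Au) \in \Hparmod{s-1}{\mathrm{r}_\pm+1}{k-1}$, so the induction hypothesis applied to $Au$ yields $Au \in \Hparmod{s}{\mathrm{r}_\pm}{k-1}$. Since this holds for every generator $A$, we conclude $u \in \Hparmod{s}{\mathrm{r}_\pm}{k}$.

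To promote this a priori regularity statement to the quantitative invertibility claim, one needs a global Fredholm estimate of the form
\[
\|u\|_{\Hparmod{s}{\mathrm{r}_\pm}{k}} \leq C \|Pu\|_{\Hparmod{s-1}{\mathrm{r}_\pm+1}{k}} + C \|u\|_{\Hpar{-N}{-N}},
\]
obtained by assembling elliptic regularity in the parabolic calculus, real principal-type propagation along bicharacteristics, and radial point estimates at $\SR_\pm$ where the threshold hypotheses on $\mathrm{r}_\pm$ are used. These propagation-of-$\modu$-module-regularity estimates are the technical content of the appendix; combined with the injectivity already granted by the $k=0$ theory, the compact-error term drops out and $P_\pm^{-1}$ is bounded in the desired norms.

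The main obstacle is that $\modu$ is \emph{not} closed under commutators, and some of its generators (notably $2tD_t + z\cdot D_z$) have parabolic order $2$, matching $P$. One cannot therefore treat iterated commutators $[P, A_1 \cdots A_k]$ as single tractable elements, and the standard closure-under-commutators machinery of \cite{HMV2004} is unavailable. The one-generator-at-a-time induction above circumvents this, since it only uses the weaker closure property that $\modu$ is closed under commutators with its own generators, combined with the exact symmetry identities for $P_0$ and the compact support of the metric and potential perturbations — exactly the structural features of $\modu$ emphasized in the discussion preceding the theorem.
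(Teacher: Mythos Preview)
Your proposal diverges from the paper's actual argument, and the divergence matters. The paper does \emph{not} add module regularity one generator at a time via the invertibility at level $k-1$. Instead, it reproves the full suite of microlocal estimates (elliptic, real-principal-type propagation, below- and above-threshold radial point estimates) directly in the $\modu$-module regularity spaces, using positive commutator arguments with commutants of the form $A_\alpha^* A A_\alpha$ where $A_\alpha$ ranges over a generating set for $\modu^{(k)}$; the structural input is a lemma (Lemma~\ref{lem:P.crit}) controlling $\langle Z\rangle[A_j,P]$. These estimates are then assembled into the global Fredholm estimate, and invertibility follows from the $k=0$ case. Crucially, the paper's induction on $k$ proceeds in steps of \emph{two}, because $H^{s+1,r+1;k}_\modu \not\subset H^{s,r;k+1}_\modu$ when order-$2$ generators are present, while $H^{s+2,r+2;k}_\modu \subset H^{s,r;k+2}_\modu$ does hold.

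Your one-generator-at-a-time induction has a genuine gap precisely at the order-$2$ generators and the perturbation commutators. First, for $A$ of parabolic order $2$ (that is, $D_t$ or the dilation $2tD_t+z\cdot D_z$), the claim ``$Av \in H^{s-1,\mathrm{r}_\pm+1;k-1}_\modu$ directly from $v\in H^{s-1,\mathrm{r}_\pm+1;k}_\modu$'' fails: if $B\in\modu^{(k-1)}$ then $BA$ has parabolic order up to $k+1$, which is not in $\modu^{(k)}$, so $BAv$ is not controlled. Second, the perturbation commutator $[P-P_0,A]$ is a compactly supported differential operator of parabolic order $2$ (its top-order part is $\sum c_{ij}(z,t)D_{z_i}D_{z_j}$ with $c_{ij}\in C_c^\infty$); applying it to $u\in H^{s,\mathrm{r}_\pm;k-1}_\modu$ gives $H^{s-2,\infty;k-1}_\modu$, one parabolic order short of the required $H^{s-1,\mathrm{r}_\pm+1;k-1}_\modu$. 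Your phrase ``absorb harmlessly using the $k-1$ hypothesis'' does not account for this loss. These obstructions are exactly why the paper uses step-$2$ induction and treats all $A_\alpha$ simultaneously via the commutant $A_\alpha^*AA_\alpha$ rather than peeling off one $A$ at a time. Your second paragraph, invoking the appendix's propagation-of-module-regularity estimates for the quantitative Fredholm bound, is correct and is in fact the whole argument; once those estimates (together with the regularization device $S_\eta$ in their proof) are in hand, your first paragraph is both flawed and unnecessary.
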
 

\begin{theorem}\label{thm:linear2}
If $f \in \ang{\zeta}^{-\ell}\Hdata^k(\RR^n)$ with $\ell \in \NN$, then there is a unique solution $u$ to the equation $Pu = 0$ with final state $f$ as $t \to -\infty$. Moreover, $u$ lies in the space 
\begin{equation}
\Hparmod {1/2+\ell} {\mathrm{r}_-} k + \Hparmod {1/2+\ell} {\mathrm{r}_+} k.
\label{eq:u-modreg}\end{equation}
Conversely, if $Pu=0$ and $u$ lies in this sum of spaces, then the final state data both as $t \to -\infty$ and $t \to +\infty$ lies in $\ang{\zeta}^{-\ell}\Hdata^k(\RR^n)$. As a corollary, if the final state data as $t \to -\infty$ is in $\ang{\zeta}^{-\ell}\Hdata^k(\RR^n)$, then the final state data as $t \to +\infty$ is also in this space (that is, the scattering map preserves $\ang{\zeta}^{-\ell}\Hdata^k(\RR^n)$ regularity). 
\end{theorem}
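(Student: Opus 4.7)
The plan is to prove the theorem by a parametrix-plus-correction construction, using Theorem~\ref{thm:linear1} to solve away errors and the Poisson theory of \cite{TDSL} as a conceptual guide. The main task is to transfer the $\SM_\pm, \SN$ framework of \cite{TDSL} to the new differential module $\modu$, and to understand how the weight $\ang{\zeta}^{-\ell}$ on the asymptotic data corresponds to the Sobolev order $1/2 + \ell$ on the interior solution.

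\textbf{Existence.} For $f \in \ang{\zeta}^{-\ell}\Hdata^k(\RR^n)$, construct a model
$$u_{\mathrm{mod}}(z,t) = \chi(t) (4\pi it)^{-n/2} e^{i|z|^2/4t} f(z/2t),$$
with $\chi \in \CcI(\RR)$ equal to $1$ for $t \ll 0$ and vanishing for $t \geq -T$. The main technical inputs are two correspondences between operators on $f$ and operators on $u_{\mathrm{mod}}$. First, the differential generators of $\modu$ --- the spatial derivatives $D_{z_j}$, angular momenta $z_i D_{z_j} - z_j D_{z_i}$, Galilean boosts $tD_{z_j} - z_j/2$ and the dilation $2tD_t + z \cdot D_z$ --- match, up to lower-order terms in $t^{-1}$, the natural generators of the module $\Nhat$ acting on $f$ after pulling through the WKB prefactor; for example, direct computation shows that the Galilean boost applied to $u_{\mathrm{mod}}$ yields $(1/2)(4\pi it)^{-n/2} e^{i|z|^2/4t} (D_{\zeta_j} f)(z/2t)$. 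This identifies $k$ orders of $\Nhat$-module regularity of $f$ with $k$ orders of $\modu$-module regularity of $u_{\mathrm{mod}}$. Second, since spatial differentiation $D_{z_j}$ on $u_{\mathrm{mod}}$ corresponds (again up to lower order in $t^{-1}$) to multiplication by $\zeta_j$ on $f$, an $\ang{\zeta}^{\ell}$ weight on $f$ produces $\ell$ additional units of spatial Sobolev regularity on $u_{\mathrm{mod}}$, hence $\ell$ units of parabolic regularity. Combined, these place $u_{\mathrm{mod}}$ in $\Hparmod{1/2+\ell}{\mathrm{r}_-}{k}$ (it allows incoming behavior at $\SR_-$ but is above threshold at $\SR_+$, since it vanishes for $t \gg 0$). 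A stationary-phase computation then shows that $Pu_{\mathrm{mod}}$ gains an extra factor of $t^{-1}$ in the region where $\chi$ is constant (because $u_{\mathrm{mod}}$ is a WKB ansatz for the free equation), while the $\chi'$ piece and the contributions from $V$ and $g(t) - \mathrm{Id}$ are compactly supported and so lie in a much stronger space; the net effect is that $Pu_{\mathrm{mod}} \in \Hparmod{-1/2+\ell}{\mathrm{r}_++1}{k}$. Applying the outgoing propagator $P_+^{-1}$ from Theorem~\ref{thm:linear1} produces $v := P_+^{-1}(Pu_{\mathrm{mod}}) \in \Hparmod{1/2+\ell}{\mathrm{r}_+}{k}$, and $u := u_{\mathrm{mod}} - v$ solves $Pu = 0$ with the prescribed asymptotic $f$ at $t \to -\infty$, since $v$ is above threshold at $\SR_-$ and contributes no incoming asymptotic.

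\textbf{Uniqueness and converse.} Uniqueness follows by subtraction: the difference of two solutions sharing the $t \to -\infty$ asymptotic must have no incoming behavior at $\SR_-$ and so lies entirely in $\Hparmod{1/2+\ell}{\mathrm{r}_+}{k}$, where the invertibility of $P$ from Theorem~\ref{thm:linear1} forces it to vanish. For the converse direction, given $Pu = 0$ with $u = u_- + u_+$ in the sum space, only $u_-$ contributes to the $t \to -\infty$ limit (since $u_+$ is above threshold at $\SR_-$), and the same two correspondences run in reverse recover the asymptotic data $\lim_{t\to-\infty}(4\pi it)^{n/2} e^{-i|z|^2/4t} u_-(2t\zeta, t)$ as an element of $\ang{\zeta}^{-\ell}\Hdata^k$. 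The $t \to +\infty$ limit is handled symmetrically using $u_+$, and the scattering map preservation is then immediate.

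\textbf{Main obstacle.} The technical heart of the argument is rigorously establishing the second correspondence above: that $\ang{\zeta}^{-\ell}$ spatial weight on the asymptotic data is exactly matched by $\ell$ orders of parabolic Sobolev regularity on the interior solution. While the pointwise calculation is suggestive, care is needed because parabolic Sobolev spaces are defined globally via the phase-space compactification, and the correspondence must be uniform up to the radial sets $\SR_\pm$ and compatible with the variable-order framework; in particular one must check that the lower-order-in-$t^{-1}$ remainders produced by pulling generators through the WKB prefactor do not accumulate to spoil the Sobolev count. We expect the proof to factor through a dedicated lemma in Section~\ref{sec:new linear stuff} identifying the pushforward of $\ang{\zeta}^{-\ell}\Hdata^k$ under the Poisson map, possibly via a normal-form argument near the radial sets.
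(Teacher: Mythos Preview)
Your existence argument via an explicit WKB parametrix plus outgoing correction is correct in spirit and close to what is done; the paper short-circuits it by invoking the Poisson operators $\Poipm$ already built in \cite{TDSL} (Proposition~\ref{thm:Poisson mapping}, which is a minor extension of \cite[Prop.~7.5]{TDSL} to handle the extra weight $\ang{\zeta}^{-\ell}$), and then converting the resulting $\SN$-module regularity of the solution into $\modu$-module regularity via Lemma~\ref{thm:mod equiv in ker}, which says the two notions coincide on $\ker P$. Your route is more self-contained; the paper's is shorter because it recycles \cite{TDSL}. One small correction: the WKB error $P_0 u_{\mathrm{mod}}$ gains $t^{-2}$, not $t^{-1}$ (one gets $(4\pi it)^{-n/2}e^{i|z|^2/4t}(4t^2)^{-1}(\Delta_\zeta f)(z/2t)$), which is what you actually need to land above threshold at $\SR_-$ after correction.

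The converse direction, however, has a genuine gap. ``Running the correspondences in reverse'' to read off the asymptotic data from $u_\pm$ is not enough: the direct limit argument (Proposition~\ref{prop:limits}, the analogue of \cite[Prop.~7.7]{TDSL}) only places the limit in $\ang{\zeta}^{1/2+\epsilon}\Hdata^{k-2}$, with a loss of two orders of module regularity coming from the $t^{-2}(tD_z - z/2)^2$ term in the $D_t|_\zeta$ computation. To recover the data sharply in $\ang{\zeta}^{-\ell}\Hdata^k$ the paper does something different and non-obvious: it uses the identity $u = (P_+^{-1} - P_-^{-1})[P, Q_+]u$ together with $\Poim\Poim^* = i(2\pi)^{-n}(P_+^{-1} - P_-^{-1})$ to write the incoming data as $f = -i(2\pi)^n \Poim^*[P, Q_+]u$, and then applies a mapping property of the \emph{adjoint} Poisson operator $\Poim^* Q'$ (Proposition~\ref{cor:NvD}(iv)), obtained by dualizing the forward mapping for negative $k$. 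This adjoint-Poisson trick, not a direct limit, is what gives the sharp $\Hdata^k$ regularity of the data; your proposal does not contain it.
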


The proof of Theorem~\ref{thm:linear1} is a minor modification of the corresponding proof in \cite{TDSL}, where a slightly different notion of module regularity was used, and is given in Appendix~\ref{appendix}. Theorem~\ref{thm:linear2} is likewise a minor modification of the the corresponding proof in \cite{TDSL}, and is given in Section~\ref{sec:new linear stuff}. 

To develop a nonlinear theory we then have a multiplication result for module
regularity spaces defined by $\modu$.  What we see is that, after
decomposing distributions $u$ into $u_1 + u_2$, where $u_1$ is
supported in $ t > c_1$ and $u_2$ is supported in
$t < c_2$ for some $c_1, c_2 \in \mathbb{R}$, we can factor out the relevant oscillatory factor from
$u_1$ and $u_2$ and commute the \emph{differential} generators of
$\modu$ past the oscillation to obtain an alternate, simpler
characterization of $\modu$ module regularity, with respect to which
multiplication results are easily deduced. The multiplication result can be
phrased as follows, with a more general version given in Proposition
\ref{thm:s-mult}.

\begin{proposition}\label{prop:mult-intro} Let $p$ be an odd integer. 
Let $k \in \mathbb{N}$ with $k >\max(n/2+2,3) $.  Then, if $N[u] = |u|^{p-1} u$, we have 
\begin{equation}\begin{gathered}
u \in \Hparmod {1/2} {-1/2 \pm \epsilon} k \Longrightarrow N[u] \in \Hparmod {1/2} {-1/2 + (p-1)n/2 \pm p \epsilon} k, \\
\| N[u] \|_{\Hparmod {1/2} {-1/2 + (p-1)n/2 \pm p \epsilon} k} \leq C \| u \|_{\Hparmod {1/2} {-1/2 \pm \epsilon} k}^p. 
\end{gathered}\label{eq:mult0}\end{equation}

If $N$ is a polynomial in $u$, $\ubar$ and the spatial derivatives $D_{z_i} u, D_{z_i}\ubar$, with all monomials of degree at least $p$, and satisfies \eqref{eq:phaseinv}, then 
\begin{equation}
u \in \Hparmod {3/2} {-1/2 \pm \epsilon} k \Longrightarrow N[u] \in \Hparmod {1/2} {-1/2 + (p-1)n/2 \pm p \epsilon} k.
\label{eq:multdiff}\end{equation}
\end{proposition}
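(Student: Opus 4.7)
The plan is to exploit two properties of the module $\modu$: its generators are \emph{differential} operators, so the Leibniz rule applies directly to products; and these generators arise from symmetries of the free Schr\"odinger equation, so that after conjugation by the asymptotic oscillatory factor $e^{i|z|^2/4t}$ combined with the rescaling $\zeta = z/2t$, they reduce (modulo bounded $t$-dependent coefficients) to the generators of the scattering module $\Nhat$ on $\RR^n_\zeta$ that underlies the definition of $\Hdata^k$. The global strategy is to transfer the multiplication claim on $\Hparmod{s}{r}{k}(\RR^{n+1})$ to a product estimate on $\Hdata^k(\RR^n_\zeta)$, for which a standard Moser--Sobolev type algebra estimate is available.

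I would first use a smooth partition $1 = \chi_-(t) + \chi_+(t)$ with $\chi_+$ supported in $t \geq -T$ and $\chi_-$ in $t \leq T$. On the overlap $|t| \leq T$ the module regularity reduces to ordinary weighted Sobolev regularity on a slab, where the classical Sobolev algebra ($k > (n+1)/2$) suffices. For the outgoing piece $u_+ := \chi_+(t) u$ (the incoming one being symmetric), set
\begin{equation*}
v_+(\zeta, t) := (4\pi i t)^{n/2} e^{-i |z|^2/4t} u_+(z, t)\big|_{z = 2t\zeta}, \qquad t \geq T.
\end{equation*}
A direct computation shows that under this conjugation-and-rescaling, the generators of $\modu$ --- the ordinary derivatives $D_{z_j}$, the Galilean generators $z_j \pm 2tD_{z_j}$, the rotation generators, and the dilation $2tD_t + z\cdot D_z$ --- become $t$-dependent bounded linear combinations of the generators of $\Nhat$ on $\RR^n_\zeta$, together with a pure $2tD_t$ acting in $t$ alone. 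Accordingly, $u_+ \in \Hparmod{1/2}{-1/2\pm\epsilon}{k}$ is equivalent (up to the $t^{n/2}$ factor from the prefactor and the variable weight $\mathrm{r}_\pm$) to $v_+$ lying uniformly-in-$t$ in $\Hdata^k(\RR^n_\zeta)$, together with a corresponding weighted $L^2_t$ integrability statement. The $t^{n/2}$ prefactor is precisely tuned to turn the threshold decay $u \sim t^{-n/2}$ into boundedness of $v_+$.

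The multiplication claim then follows by Leibniz expansion: for any generators $L_1, \ldots, L_j$ of $\modu$ with $j \leq k$,
\begin{equation*}
L_1 \cdots L_j (N[u]) = \sum \big(L_{i_1^{(1)}} \cdots L_{i_{m_1}^{(1)}} w_1 \big) \cdots \big( L_{i_1^{(p)}} \cdots L_{i_{m_p}^{(p)}} w_p \big),
\end{equation*}
where each $w_\ell$ is one of $u, \ubar$, or (in the derivative case \eqref{eq:multdiff}) a first-order spatial derivative of such, and the total number of applied $L$-factors is $j$. Transferring this identity to the $\zeta$-side via $v_+$ and invoking the Moser-type algebra property of $\Hdata^k(\RR^n_\zeta)$ for $k > n/2 + 2$ --- the `$+2$' giving room to absorb spatial derivatives in \eqref{eq:multdiff} --- bounds the $p$-fold product in $\Hdata^k(\RR^n_\zeta)$ by the product of $\Hdata^k$ norms, uniformly in $t$. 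Undoing the conjugation on $v_+^p$ produces the factor $(4\pi i t)^{-pn/2} e^{ip|z|^2/4t}$, which by the phase invariance \eqref{eq:phaseinv} and the odd-degree property of every monomial in $N$ reorganises as $(4\pi i t)^{-n/2} e^{i|z|^2/4t} \cdot (\text{const}) \cdot t^{-(p-1)n/2}$. The extra factor $t^{-(p-1)n/2}$ is precisely the claimed gain of $(p-1)n/2$ in the spacetime weight, and the factor $p\epsilon$ arises because the variable part of $\mathrm{r}_\pm$, after transfer to the $\zeta$-side, becomes a bounded function that is simply raised to the $p$-th power under multiplication.

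The main obstacle is the careful bookkeeping in the equivalence between $\Hparmod{\cdot}{\cdot}{k}$-regularity of $u_+$ and the sought regularity of $v_+$: the $t$-dependent coefficients introduced by conjugation interact nontrivially with products of generators. The key fact that makes this work is that, although $\modu$ is not closed under commutators, it \emph{is} closed under commutators with its generators (cf.\ \eqref{eq:module properties}); this is exactly what is needed to reorder and regroup generators modulo terms of lower order during the translation. For the derivative nonlinearity case \eqref{eq:multdiff}, one observes that $D_{z_j} \in \modu$, so that $u \in \Hparmod{3/2}{-1/2\pm\epsilon}{k}$ implies $D_{z_j} u \in \Hparmod{1/2}{-1/2\pm\epsilon}{k}$ with control on $k$-th order $\modu$-regularity, and the same Leibniz-and-algebra argument applied monomial by monomial yields the stated bound.
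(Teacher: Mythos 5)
Your overall strategy --- decompose in time, remove the oscillation $e^{i|z|^2/4t}$, pass to rescaled coordinates, and invoke a Sobolev algebra --- is exactly the paper's (Sections 4.1--4.2), and your observations about the differential nature of the generators and the role of phase invariance in reassembling the module $\modu_{c_1+c_2}=\modu_1$ are correct. However, there are two concrete gaps.

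First, the reduction to ``$v_+$ lying uniformly-in-$t$ in $\Hdata^k(\RR^n_\zeta)$, together with a corresponding weighted $L^2_t$ integrability statement'' does not capture $\modu$-module regularity and cannot by itself deliver the multiplicative gain. After conjugation and rescaling, the generator $2tD_t + z\cdot D_z$ becomes (essentially) $\partial_\sigma$ with $\sigma=\log\langle t\rangle$, and membership in $\Hparmod 0 r k$ is a \emph{joint} $L^2(d\sigma\,d\zeta)$ Sobolev statement in $(\sigma,\zeta)$, not a decoupled $L^\infty_\sigma\Hdata^k$ condition plus a separate $L^2_\sigma$ condition. More to the point, knowing $\int\langle t\rangle^{-1-2\epsilon}\|v(\cdot,t)\|^2_{\Hdata^k}\,dt<\infty$ does not give $\int\langle t\rangle^{-1-2p\epsilon}\|v(\cdot,t)\|^{2p}_{\Hdata^k}\,dt<\infty$ unless one has an a priori $L^\infty_\sigma$ control, which in turn requires using the $\sigma$-regularity inside a genuinely $(n+1)$-dimensional Sobolev embedding. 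This is precisely why the paper replaces the fiber-wise viewpoint by the ``flat'' module $\SF$ on the blown-up spacetime $X$ and then applies the parabolic Sobolev algebra Lemma~\ref{lem:basic.sob.alg} on $\RR_\sigma\times\RR^n_y$. That single lemma supplies both the product bound and the gain of $(n+1)/2$ per multiplication; your prefactor bookkeeping $t^{-(p-1)n/2}$ only accounts for the spatial factor $dz=\langle t\rangle^n dy$ of the measure change and silently drops the extra $1$ coming from $dt=\langle t\rangle\,d\sigma$, which is absorbed when you pass to $d\sigma$ and then use the joint algebra.

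Second, the coordinate $\zeta=z/2t$ is only valid where $|z|\lesssim|t|$; near the equator $E=\{|z|\gtrsim|t|\}$ of the compactified spacetime, this breaks down, and the paper needs the blowup $X=[\overline{\RR^{n+1}};E]$ and a separate case analysis (the ``near $\tilde E$'' case in the proof of Proposition~\ref{thm:flat mult}). This is where the stated threshold $k>\max(n/2+2,3)$ actually originates: the $+1$ relative to the naive parabolic algebra threshold $\max((n+2)/2,2)$ buys an extra factor of $S^{-1}$ near $\tilde E$. So your explanation that the ``$+2$'' is to ``absorb spatial derivatives in \eqref{eq:multdiff}'' is incorrect --- the threshold is the same in both \eqref{eq:mult0} and \eqref{eq:multdiff}, and the room for a first-order derivative in \eqref{eq:multdiff} comes instead from the stronger parabolic regularity index $3/2$ there. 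Finally, the statement includes general $s=1/2$ and $s=3/2$; the paper handles $s>0$ via a complex-interpolation argument (Proposition~\ref{thm:s-mult}, the three lines lemma with $E_z$), which your proposal does not address.
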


Now we can combine Proposition~\ref{prop:mult-intro} and the invertibility of $P$ between \eqref{eq:SXnew} and \eqref{eq:SYnew} for suitable $\mathrm{r}_\pm$ and $k$, to solve the small data final state problem for NLS, following \cite{NEH} and \cite{TDSL}. Here we give a brief sketch of the method; the full proof is in Section~\ref{sec:results}. For simplicity we only consider the case $N[u] = \pm |u|^{p-1} u$ in this introduction. 

Given final state data $f$, we first apply Theorem~\ref{thm:linear2} to solve the linear equation $Pu_0=0$ with this final state. We break up $u_0 = u_+ + u_-$ in such a way that $u_\pm \in \SX^{1/2, \mathrm{r}_\pm; k}_{\modu}$, which is possible according to Theorem~\ref{thm:linear2}. We then seek a solution to the nonlinear equation in the form $u = u_- + w$, where $w \in \SX^{1/2, \mathrm{r}_+; k}_{\modu}$. By construction, $u$ has final state $f$. It remains to solve the equation
$$
P (u_- + w) = N[u_- + w] .
$$
We want to write this as an equation in the function space $\SX^{1/2, \mathrm{r}_+; k}_{\modu}$. To do this, we first replace $Pu_-$ by $- Pu_+$. Second, we use trivial embeddings 
\begin{equation}
\Hparmod {1/2} {-1/2 + \epsilon} k \subset \Hparmod {1/2} {\mathrm{r}_-} k \subset \Hparmod {1/2} {-1/2 - \epsilon} k
\label{eq:embeddings}\end{equation}
to approximate our variable order space by a constant order space, and apply Proposition~\ref{prop:mult-intro}. This proposition shows that the RHS is in $\Hparmod {-1/2} {\mathrm{r}_+ +1} k = \SY^{1/2, \mathrm{r}_+ +1; k}_{\modu}$ provided that 
$$
(p-1) \frac{n}{2} - p \epsilon \geq 1.
$$
Since $\epsilon$ can be taken arbitrarily small, this condition can be achieved provided that 
\begin{equation}
(p-1) \frac{n}{2} > 1,
\label{eq:pn-cond}\end{equation}
which, for $n, p \in \mathbb{N}$ and $p$ odd excludes only the case
$(n,p) = (1,3)$, i.e.\ cubic NLS in one spatial dimension.
Choose $\epsilon$ so that $0 < \epsilon < (p+1)^{-1}$, and choose $\mathrm{r}_\pm$ satisfying the conditions above, with range contained in $[-1/2 - \epsilon, -1/2 + \epsilon]$. \        
We can then apply the outgoing propagator
\begin{equation}
P_+^{-1} : \SY_{\modu}^{1/2, \mathrm{r}_+ + 1; k} \to \SX_{\modu}^{3/2,
  \mathrm{r}_+; k} \subset \SX_{\modu}^{1/2, \mathrm{r}_+;
  k}. \label{eq:intro propagator bound}
\end{equation}
We can now find the solution as the fixed point of a contraction map $\Phi$, defined on a small ball around the origin in $\SX_{\modu}^{1/2, \mathrm{r}_+; k}$, where
\begin{equation}
\Phi(w) = u_+ + P_+^{-1} \big( N[u_- + w] \big).
\label{eq:Phi}\end{equation}
A difference estimate, which is essentially a simple elaboration of
Proposition \ref{prop:mult-intro}, together with the boundedness of $P_+^{-1}$ imply that $\Phi$ is a contraction on a suitably small ball around the origin in $\SX_{\modu}^{1/2, \mathrm{r}_+; k}$. The first main theorem, proved in Section~\ref{sec:results}, is as follows.

\begin{theorem}\label{thm:main}
Assume that the nonlinear term $N$ is given by $c |u|^{p-1} u$ with
$p$ an odd integer and $(n,p) \neq (1,3)$. Choose $\epsilon$ so that $0 < \epsilon < (p+1)^{-1}$, and choose
$\mathrm{r}_\pm$ satisfying the conditions above, with range contained
in $[-1/2 - \epsilon, -1/2 + \epsilon]$. Let $k > \max(n/2+2,3)$,
and prescribe final data $f \in \SW^k(\RR^n)$ with sufficiently small
norm. Then there exists a unique
$u$ with small norm in the sum of module regularity spaces
\begin{equation}
\SX^{1/2, \mathrm{r}_-;  k}_{\modu} + \SX^{1/2, \mathrm{r}_+;  k}_{\modu}
\label{eq:u-modreg-1}\end{equation}
such that
$$
Pu = N[u], 
$$
and such that $u$ has an expansion \eqref{eq:fminus} as $t \to -\infty$, with the prescribed data $f$. Moreover, $u$ has a similar expansion \eqref{eq:fplus} as $t \to +\infty$, with asymptotic data $\tilde f$ also in the space $\SW^k$. Thus the scattering map $f \mapsto \tilde f$ preserves regularity as measured by the $\Hdata^k$-scale of spaces. 
\end{theorem}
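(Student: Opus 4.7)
The plan is to implement the contraction argument sketched in the text. First, I would set $u_0 := \Poim f$ using Theorem \ref{thm:linear2} and decompose $u_0 = u_+ + u_-$ with $u_\pm \in \SX^{1/2, \mathrm{r}_\pm; k}_\modu$ of norms $\lesssim \|f\|_{\SW^k}$. With the ansatz $u = u_- + w$, where $w \in \SX^{1/2, \mathrm{r}_+; k}_\modu$, the equation $Pu = N[u]$ becomes $P(w - u_+) = N[u_- + w]$ (using $Pu_- = -Pu_+$), which, after applying $P_+^{-1}$, is the fixed-point problem $w = \Phi(w)$ with $\Phi$ given by \eqref{eq:Phi}.

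Next, I would verify that $\Phi$ is a self-map and contraction on a small closed ball $B_\delta \subset \SX^{1/2, \mathrm{r}_+; k}_\modu$. Using the inclusion $\Hparmod{1/2}{\mathrm{r}_\pm}{k} \subset \Hparmod{1/2}{-1/2 - \epsilon}{k}$ from \eqref{eq:embeddings}, the argument $u_- + w$ has norm $\lesssim \delta + \|f\|_{\SW^k}$ in the latter. Proposition \ref{prop:mult-intro} places $N[u_- + w]$ in $\Hparmod{1/2}{-1/2 + (p-1)n/2 - p\epsilon}{k}$, and the inequality $-1/2 + (p-1)n/2 - p\epsilon \geq \mathrm{r}_+ + 1$, which follows from $(p-1)n/2 \geq 2$ (implied by $(n,p) \neq (1,3)$ among odd $p \geq 3$), $\epsilon < (p+1)^{-1}$, and $\mathrm{r}_+ \leq -1/2 + \epsilon$, yields the inclusion into $\SY^{1/2, \mathrm{r}_+ + 1; k}_\modu$ with norm bound $(\delta + \|f\|_{\SW^k})^p$. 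The propagator estimate \eqref{eq:intro propagator bound} then gives $\Phi : B_\delta \to B_\delta$ for $\delta$ and $\|f\|_{\SW^k}$ sufficiently small. For the contraction bound, I would use a multilinear extension of Proposition \ref{prop:mult-intro}: writing schematically $N[a] - N[b] = (a-b) \int_0^1 \nabla N[sa + (1-s)b]\,ds$ and applying the same multiplication bound factor by factor gives $\|\Phi(w_1) - \Phi(w_2)\| \lesssim (\delta + \|f\|_{\SW^k})^{p-1} \|w_1 - w_2\|$, producing a unique fixed point $w$ in $B_\delta$.

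Finally, for the asymptotic behavior, I would rewrite $u = \Poim f + v$ with $v = w - u_+ \in \SX^{1/2, \mathrm{r}_+; k}_\modu$. The expansion \eqref{eq:fminus} with data $f$ follows from Theorem \ref{thm:linear2} applied to $\Poim f$, together with the fact that $v$ is above threshold at $\SR_-$ and so contributes nothing to the leading asymptotic there. For the scattering expansion \eqref{eq:fplus}, Theorem \ref{thm:linear2} supplies the linear data $Sf \in \SW^k$ of $\Poim f$, and it remains to extract a final state $\tilde v \in \SW^k$ at $t \to +\infty$ from $v$, which solves the \emph{inhomogeneous} linear equation $Pv = N[u] \in \SY^{1/2, \mathrm{r}_+ + 1; k}_\modu$. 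Uniqueness in the small ball of the sum space follows because any such solution with data $f$ at $-\infty$ admits, via the linear decomposition, a representation $u = u_- + w$ with $w \in \SX^{1/2, \mathrm{r}_+; k}_\modu$ satisfying the same $\Phi$-equation.

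The main obstacle I anticipate is the last extraction step: Theorem \ref{thm:linear2} is stated for homogeneous solutions, so some argument is needed to extend the existence of a $\SW^k$ final state at $+\infty$ to inhomogeneous solutions $v$ with right-hand side in $\SY^{1/2, \mathrm{r}_+ + 1; k}_\modu$. A natural route is to apply the linear theorem directly to the full nonlinear solution $u$, noting that the spacetime-decay gain furnished by Proposition \ref{prop:mult-intro} places $u$ in a sum of module-regularity spaces of exactly the form \eqref{eq:u-modreg}, so the characterization of final data there gives $\tilde f \in \SW^k$; alternatively, one may rerun the linear Poisson-operator construction of \cite{TDSL} with a source term and identify $\tilde v$ as a limit of the form appearing in \eqref{eq:fplus}. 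The remaining multiplication and contraction estimates are routine once Proposition \ref{prop:mult-intro} and its multilinear variant are in place.
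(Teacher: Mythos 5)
Your set-up of the contraction map $\Phi$, the decomposition $u_0 = u_+ + u_-$, the embedding $\Hparmod{1/2}{\mathrm{r}_\pm}{k} \subset \Hparmod{1/2}{-1/2 - \epsilon}{k}$, the verification that $(p-1)n/2 \geq 2$ forces the spatial exponent above $1/2 + \epsilon$ for $\epsilon < (p+1)^{-1}$, and the propagator bound all match the paper's argument (Proposition~\ref{thm:contract power}). Your difference estimate sketch, while phrased as a real integral of $\nabla N$, is essentially the paper's algebraic factoring $N[u_-+w] - N[u_-+\tilde w] = (w-\tilde w)p_1 + (\overline w - \overline{\tilde w})p_2$ into a difference and a $(p-1)$-linear polynomial; the algebraic version is slightly more careful about the $\bar u$-dependence of $|u|^{p-1}u$, but the end result is the same.

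The genuine gap is the one you flagged: extracting $\tilde f \in \SW^k$ as $t\to+\infty$. Neither of your two suggested routes quite closes it. Applying Theorem~\ref{thm:linear2} to $u$ directly fails because $Pu = N[u] \neq 0$, and ``rerunning the Poisson construction with a source'' is the right instinct but you have not identified the specific device. The paper's fix is to subtract the purely incoming correction $\tilde w := -P_-^{-1}\big(N[u]\big) = -P_-^{-1}(Pu)$, which lies in $\SX^{1/2,\mathrm{r}_-;k}_\modu$ and therefore has vanishing outgoing data by Proposition~\ref{prop:limits}. Setting $v := u + \tilde w$ gives $Pv = Pu - P P_-^{-1}Pu = 0$ exactly, and $\SL_+ v = \SL_+ u$. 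One then extracts $\SL_+ v$ by the linear machinery: the identities $v = (P_+^{-1}-P_-^{-1})[P,Q_+]v$ and $\Poip\Poip^* = i(2\pi)^{-n}(P_+^{-1}-P_-^{-1})$ (equations \eqref{eq:Qu}, \eqref{eq:PP*}) write the outgoing data as $\Poip^* Q'[P,Q_+]v$ up to a Schwartz error, and part (iv) of Proposition~\ref{cor:NvD} shows this lands in $\SW^k$. This reduction from the nonlinear solution to a bona fide linear solution by subtracting $P_-^{-1}(Pu)$ is the missing idea; once you have it, the rest of the argument is exactly the linear characterization you already know from Theorem~\ref{thm:linear2}. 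A secondary omission: you should also verify $\SL_- u = f$, which the paper does by noting $\SL_- u_+ = 0 = \SL_- w$ (both are $P_+^{-1}$ applied to admissible right-hand sides, hence above threshold at $\SR_-$, so Proposition~\ref{prop:limits} gives vanishing incoming data) and $\SL_- u_- = \SL_- u_0 = f$.
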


Directly from the definition of the $\SX^{\bullet}_{\modu}$ spaces we
deduce the following:
\begin{cor}
  Assumptions as in Theorem \ref{thm:main}, with incoming data $f \in
  \SW^k(\RR^n)$ sufficiently small, the unique (sufficiently small) solution to $Pu
  = N[u]$ with incoming data $f$ satisfies $u \in H^{1/2, -1/2 -
    \epsilon ; k}_{\modu}$, or, concretely,
  $$
D_{z}^\alpha \Rot^\beta (tD_{z_j}-cz_j/2)^j D_t^p(2tD_t + z \cdot D_z)^qu \in
\langle z,t \rangle^{1/2 + \epsilon} H_{\mathrm{par}}^{1/2, 0}(\mathbb{R}^{n + 1}),
$$
for $|\alpha| + |\beta| + j + 2p + 2q \le k$ where $\Rot^\beta$ is a
a product of generators of rotations of order $|\beta|$.
\end{cor}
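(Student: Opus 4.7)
The corollary should be a near-mechanical consequence of Theorem~\ref{thm:main}, reflecting the paper's remark that it follows ``directly from the definition'' of the $\SX^{\bullet}_{\modu}$ spaces. My plan is as follows.

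I would first invoke Theorem~\ref{thm:main} to place $u$ in the sum $\SX^{1/2, \mathrm{r}_-; k}_{\modu} + \SX^{1/2, \mathrm{r}_+; k}_{\modu}$, and in particular in $\Hparmod{1/2}{\mathrm{r}_-}{k} + \Hparmod{1/2}{\mathrm{r}_+}{k}$. Because the variable orders $\mathrm{r}_\pm$ are constrained to take values in $[-1/2-\epsilon, -1/2+\epsilon]$, they are pointwise bounded below by the constant $-1/2-\epsilon$. By monotonicity of module-regularity Sobolev spaces in the weight (a smaller weight gives a larger space), each summand embeds continuously in $\Hparmod{1/2}{-1/2-\epsilon}{k}$, and hence so does their sum. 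This yields the abstract assertion $u \in H^{1/2, -1/2-\epsilon;k}_{\modu}$.

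Next I would unpack the module-regularity condition. By definition, membership in $\Hparmod{1/2}{-1/2-\epsilon}{k}$ means that $A_1 \cdots A_m u \in \Hpar{1/2}{-1/2-\epsilon}$ for every product of generators $A_i \in \modu$ whose total parabolic regularity order is at most $k$. The differential generators of $\modu$ are, as described in the introduction, the spatial derivatives $D_{z_j}$, the rotation generators, the Galilean boosts $tD_{z_j} - cz_j/2$, the time derivative $D_t$, and the dilation $2tD_t + z \cdot D_z$. The first three families each have parabolic order $1$, whereas $D_t$ and the dilation have parabolic order $2$, since time variables carry double parabolic weight in the Schr\"odinger calculus. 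This is exactly what produces the bookkeeping $|\alpha|+|\beta|+j+2p+2q \le k$ in the statement. Converting to the explicit weighted space via $\Hpar{1/2}{-1/2-\epsilon} = \langle z,t\rangle^{1/2+\epsilon} H_{\mathrm{par}}^{1/2,0}(\RR^{n+1})$ (a direct consequence of the convention $\Psip{s}{r} = (1+|z|^2+t^2)^{r/2}\Psip{s}{0}$) then yields the inclusion displayed in the corollary.

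I do not expect any serious technical obstacle here: the argument reduces, modulo Theorem~\ref{thm:main}, to an embedding of variable-order spaces into a single constant-order space and a definition-chasing exercise. The two points requiring a little care are the parabolic-order accounting --- making sure $D_t$ and $2tD_t + z\cdot D_z$ really enter the constraint with weight $2$ rather than $1$, which is the origin of the asymmetry in the exponent bound --- and the requirement that the range of $\mathrm{r}_\pm$ lie in the narrow band $[-1/2-\epsilon, -1/2+\epsilon]$, which is what permits the embedding step in the first place.
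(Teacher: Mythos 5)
Your proof is correct and follows exactly the intended route: the paper itself says the corollary is ``directly from the definition,'' and your chain of steps (Theorem~\ref{thm:main} places $u$ in the sum of $\SX^{1/2,\mathrm{r}_\pm;k}_{\modu}$ spaces; the pointwise bound $\mathrm{r}_\pm \geq -1/2-\epsilon$ gives the embedding into $\Hparmod{1/2}{-1/2-\epsilon}{k}$; unpacking $\modu^{(k)}$ with the parabolic order bookkeeping $|\alpha|+|\beta|+j+2p+2q\le k$; and identifying $\Hpar{1/2}{-1/2-\epsilon}=\ang{z,t}^{1/2+\epsilon}\Hpar{1/2}{0}$) fills that in correctly.
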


\begin{remark} We note that the sole excluded case being $(n, p) =
  (1,3)$ allows us to treat cubic NLS for $n \geq 2$, and quintic NLS
  for all $n \in \mathbb{N}$.  The case $(n, p) = (1,3)$ is treated by
  Lindblad and Soffer \cite{MR2199392}. It cannot be expected that our method works for $(n, p) = (1,3)$, since then the asymptotics \eqref{eq:fminus}, \eqref{eq:fplus} do not hold: there is a logarithmic divergence in these asymptotic expansions, in effect due to a resonance between the nonlinearity and the mapping properties of the propagators $P_\pm^{-1}$. We leave it as an interesting open question whether modification of our methodology here could be used to treat that case.
\end{remark}

We notice that in the propagator mapping property \eqref{eq:intro propagator bound}, we simply
`wasted' one order of parabolic regularity, replacing $3/2$ by $1/2$
after application of the propagator $P^{-1}_+$.  This gain of one
order is needed (i.e.\ not wasted) in the treatment of derivative NLS, where $N[u]$ has parabolic regularity order one lower than that of $u$; in this case, the propagator $P^{-1}_+$ acts to restore the parabolic regularity order.
The following theorem follows from Proposition \ref{thm:contract
  deriv} below.
  
\begin{theorem}\label{thm:main2}
Assume that the nonlinear term $N$ is given by a sum of monomials in $u$, $\ubar$ and spatial derivatives of order $1$ applied to $u$ and $\ubar$, with each monomial of degree at least $p$. Choose $\epsilon$ and $\mathrm{r}_\pm$ as in Theorem~\ref{thm:main}. Let $k > \max(n/2+2,3)$, and prescribe final data $f \in \ang{\zeta}^{-1} \SW^k(\RR^n_\zeta)$ with sufficiently small norm. Then there exists a unique $u$ with small norm in the sum of module regularity spaces 
\begin{equation}
\SX^{3/2, \mathrm{r}_-;  k}_{\modu} + \SX^{3/2, \mathrm{r}_+;  k}_{\modu}
\label{eq:u-modreg-2}\end{equation}
 such that $Pu = N[u]$, 
and such that $u$ has an expansion \eqref{eq:fminus} as $t \to -\infty$, with the prescribed data $f$. Moreover, $u$ has a similar expansion \eqref{eq:fplus} as $t \to +\infty$, with asymptotic data $\tilde f$ also in the space $\ang{\zeta}^{-1}  \SW^k$. 
\end{theorem}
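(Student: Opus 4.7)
The scheme mirrors the proof sketch of Theorem~\ref{thm:main}, but with every parabolic regularity order raised by one so that the single order of parabolic regularity gained by the propagator $P_+^{-1}$ is spent absorbing the derivative in $N$ rather than being wasted. First I would apply Theorem~\ref{thm:linear2} with $\ell = 1$ to $f \in \ang{\zeta}^{-1}\SW^k(\RR^n)$, producing a global linear solution $u_0 = u_+ + u_-$ with $u_\pm \in \Hparmod{3/2}{\mathrm{r}_\pm}{k}$ and norms controlled by $\|f\|_{\ang{\zeta}^{-1}\SW^k}$. Then I would seek $u$ in the form $u = u_- + w$ with $w \in \SX^{3/2,\mathrm{r}_+;k}_\modu$; the $t \to -\infty$ asymptotic is automatic since $u_-$ supplies it and $w$ carries none (being above threshold at $\SR_-$). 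Using $Pu_- = -Pu_+$, the equation $Pu = N[u]$ is equivalent to the fixed point problem
\begin{equation*}
w = \Phi(w) := u_+ + P_+^{-1}\bigl(N[u_- + w]\bigr)
\end{equation*}
on a small ball around the origin in $\SX^{3/2,\mathrm{r}_+;k}_\modu$.

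For self-mapping I invoke the derivative estimate \eqref{eq:multdiff} of Proposition~\ref{prop:mult-intro}. The trivial embedding $\Hparmod{3/2}{\mathrm{r}_\pm}{k} \subset \Hparmod{3/2}{-1/2-\epsilon}{k}$ places $u_- + w$ in the hypothesis of \eqref{eq:multdiff}, yielding $N[u_- + w] \in \Hparmod{1/2}{-1/2 + (p-1)n/2 - p\epsilon}{k}$. The condition $(p-1)n/2 - p\epsilon \geq 1 + \epsilon$, which holds whenever $(n,p) \neq (1,3)$ and $\epsilon < (p+1)^{-1}$, embeds this into $\Hparmod{1/2}{\mathrm{r}_+ + 1}{k} = \SY^{1/2,\mathrm{r}_+ + 1;k}_\modu$, after which Theorem~\ref{thm:linear1} with $s = 3/2$ returns $P_+^{-1}(N[u_- + w])$ to $\SX^{3/2,\mathrm{r}_+;k}_\modu$. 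This is precisely the step where the parabolic regularity gain in $P_+^{-1}$ compensates for the derivative cost in $N$, rather than being discarded as in Theorem~\ref{thm:main}. The contraction estimate is the multilinear analogue: polarizing $N[u_1] - N[u_2]$ as a sum of degree-$p$ terms containing exactly one factor of $u_1 - u_2$ and applying the same embeddings and multiplication bound gives $\|\Phi(w_1) - \Phi(w_2)\| \lesssim R^{p-1}\|w_1 - w_2\|$ on the ball of radius $R$, so that $\Phi$ contracts on a sufficiently small ball for small $f$. Banach's fixed point theorem then produces the unique small solution $u = u_- + w$ in \eqref{eq:u-modreg-2}.

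For the outgoing asymptotic I would set $v := u - P_+^{-1}(N[u])$. Then $Pv = 0$ and $v$ remains in \eqref{eq:u-modreg-2}, so the converse direction of Theorem~\ref{thm:linear2} yields both asymptotics of $v$ in $\ang{\zeta}^{-1}\SW^k$. The correction $P_+^{-1}(N[u]) \in \SX^{3/2,\mathrm{r}_+;k}_\modu$ has no incoming asymptotic but contributes to the $t \to +\infty$ one; its outgoing data is extracted from the $\SR_+$ end of its module regularity via the Poisson-map machinery of \cite{TDSL} and also lies in $\ang{\zeta}^{-1}\SW^k$, so the sum $\tilde f$ lies there as well.

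The main obstacle is the bookkeeping in the multilinear difference estimate when $N$ contains spatial derivatives. Different monomials can distribute the derivative onto different factors, so the Leibniz rule for $\modu$-generators must be applied asymmetrically across the polarization, and one must check that no $\modu$-generator lands on a factor with insufficient regularity. Because $\modu$ is generated by \emph{differential} operators obeying the Leibniz rule and every monomial in $N$ carries at most one spatial derivative, this reduces to a finite combinatorial enumeration parallel to Proposition~\ref{thm:s-mult}, adding no new analytic ingredient beyond those already developed for \eqref{eq:multdiff}.
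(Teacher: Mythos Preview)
Your contraction mapping argument is essentially identical to the paper's (Proposition~\ref{thm:contract deriv}): you correctly use Theorem~\ref{thm:linear2} with $\ell=1$ to place $u_\pm$ in $H_{\modu}^{3/2,\mathrm{r}_\pm;k}$, apply the derivative multiplication estimate \eqref{eq:multdiff} so that $N[u_-+w]$ lands in $H_{\modu}^{1/2,\mathrm{r}_+ +1;k}$, and let $P_+^{-1}$ restore the parabolic order to $3/2$. The incoming-data verification is also fine.

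The gap is in your treatment of the outgoing data. You set $v = u - P_+^{-1}(N[u])$, observe $Pv=0$, and apply Theorem~\ref{thm:linear2} to $v$; but then you still owe the statement that the correction $P_+^{-1}(N[u])$ has outgoing data in $\ang{\zeta}^{-1}\SW^k$. Nothing in the paper gives this: Proposition~\ref{prop:limits} only places $\SL_+ P_+^{-1}(N[u])$ in the much weaker space $\ang{\zeta}^{1/2+\epsilon}\SW^{k-2}$, and Theorem~\ref{thm:linear2} applies only to elements of $\ker P$, which $P_+^{-1}(N[u])$ is not. Your appeal to ``the Poisson-map machinery of \cite{TDSL}'' does not fill this.

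The paper's fix is simple and worth internalizing: subtract the \emph{incoming} propagator instead. Set $\tilde w := -P_-^{-1}(N[u])$ and $v' := u + \tilde w$. Then $Pv'=0$, and $v'=u_- + \tilde w + w$ lies in the sum space \eqref{eq:u-modreg-2} (since $u_-+\tilde w \in H_{\modu}^{3/2,\mathrm{r}_-;k}$ and $w\in H_{\modu}^{3/2,\mathrm{r}_+;k}$). Crucially, $\SL_+\tilde w=0$ by Proposition~\ref{prop:limits}, so $\SL_+ u = \SL_+ v'$. Now Theorem~\ref{thm:linear2} applied to $v'$ (with $\ell=1$, via part~(iv) of Proposition~\ref{cor:NvD}) gives $\tilde f = \SL_+ v' \in \ang{\zeta}^{-1}\SW^k$ directly, with no residual term to analyze.
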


\begin{remark} In our current setup, the spaces $\ang{\zeta}^{-1} \SW^k(\RR^n_\zeta)$ form a more natural scale of spaces than the $ \SW^k(\RR^n)$ scale to analyze the derivative NLS problem. The extra order of decay of $f$ at infinity due to the $\ang{\zeta}^{-1}$  factor is responsible for the extra order of parabolic regularity in \eqref{eq:u-modreg-2} as compared to \eqref{eq:u-modreg-1}.
%
\end{remark}

\subsection{Relation to existing literature}  

Work on scattering for NLS goes back at least to Lin and Strauss in \cite{MR515228} on cubic NLS in $\mathbb{R}^{3 + 1}$, in which the authors establish the existence of a Banach space $\mathcal{F}$ of solutions to the linear equation for which, given a solution $u$ to NLS with appropriate initial data, there are $u_\pm \in \mathcal{F}$ such that
$$
\| u(t) - u_\pm(t) \|_{L^2} \to 0, \mbox{ as } t \to \pm \infty.
$$
The literature on global well-posedness and scattering for NLS is vast; we make no attempt to survey it,  but mention a few seminal results \cite{GV85, Bourgain98, CKSTT04, MR2415387, MR2800718, MR3090782}. The approach taken here is competely novel. We mention three main differences between the present approach and previous literature:
\begin{itemize}
\item Almost all previous literature solves the initial value problem for NLS, with data prescribed at time $t=0$. We consider the final state problem, with data $f$ prescribed at $t=-\infty$, and find the corresponding global solution $u$ to $Pu = 0$ \emph{directly}; the initial value problem is never considered.
\item The notion of scaling and critical scaling regularity plays no \emph{explicit} role in our analysis (although we work in a relatively high regularity setting, which corresponds to the subcritical regime in previous literature).
\item We work with Hilbert spaces throughout, and make no use of Strichartz estimates which are fundamental to almost all previous works.
\end{itemize}

%

In our work, the consideration of time-dependent potentials arises as a necessity to avoid the complications in the microlocal analysis which arise from potentials which are not compactly supported in time.  Nevertheless, time-dependent potentials are of independent interest, and arise in the work of Rodnianski and Schlag \cite{MR2038194} for rough potentials and Soffer and Wu \cite{SofferWu}, who prove a local decay for NLS with time-dependent potentials.


The methodology used to obtain the results in this paper is part of a
growing body of work in geometric microlocal analysis in which one
constructs Fredholm mappings for nonelliptic operators, and then
leverages this Fredholm theory to solve nonlinear equations.  The
linear aspects of this framework go back to Faure and Sj\"ostrand's work on Anosov flows \cite{FS2011} and, especially,  Vasy's work on
asymptotically hyperbolic and Kerr-de Sitter spaces \cite{VD2013}, which itself
draws from Melrose's work on scattering theory \cite{RBMSpec} --- see for example Vasy's lecture notes \cite{grenoble}. 
This nonelliptic Fredholm theory has been generalized and applied
widely; in particular, in \cite{HVsemi}, \cite{BVW2015} and \cite{GHV2016} it was combined with the
module regularity developed in \cite{HMV2004} to solve
nonlinear wave equations.  The first and third authors, in
collaboration with Shapiro and Zhang, applied this perspective to the
nonlinear Helmholtz equation to study the nonlinear scattering matrix
of Helmholtz operators with decaying potentials \cite{NEH,NLSM}, which
work indicated the possibility of defining and studying nonlinear
scattering for evolution equations in a similar fashion.  In this
direction, the authors developed the (nonelliptic) Fredholm theory of
the linear Schr\"odinger operator in \cite{TDSL}, setting the stage for the
nonlinear analysis of the current paper. The results of \cite{TDSL} form the foundation for the present article. 
Very recently, Sussman \cite{Sussman} has completed an analysis of the Klein-Gordon equation using very similar methods; in particular he also uses an approach based on a modification of the scattering calculus, microlocal propagation estimates within such a calculus, and module regularity. 

Regularity with respect to a test module of operators
was formally introduced in \cite{HMV2004} although the concept arose
earlier. It is very closely related to Klainerman's vector field
method \cite{klainerman1985}. It is noted in volume 4 of H\"ormander's
treatise \cite{Ho4} (and attributed to Melrose) that Lagrangian
regularity is equivalent to iterative regularity with respect to the
Lie algebra of first order pseudodifferential operators characteristic
to the Lagrangian. This Lie algebra is a module over the zeroth order
operators. In our setting, we similarly look at a module $\modu$ over
$\Psip 00$ of operators characteristic at the radial set. As explained
in \cite{TDSL}, the radial set $\SR$ can be viewed as the boundary, at
spacetime infinity, of a conic (with respect to spacetime dilations)
Lagrangian submanifold, and the corresponding module regularity spaces
are analogues of functions with (a finite amount of) Lagrangian
regularity. In fact, in this sense, the `classical' Lagrangian
expansions with respect to $\SR$ are precisely expansions of the form
\eqref{eq:fminus}, \eqref{eq:fplus}.  Infinite module regularity with
respect to $\modu$ is equivalent to infinite module regularity with
respect to the module $\SN$ from \cite{TDSL} defined by the characteristic
condition, so in that sense Lagrangian functions associated to $\SR$ do not distinguish
between the two types of modules.  

Our work is focused on understanding regularity properties of the scattering map, more precisely, showing that the outgoing data $\tilde f$ has the same regularity as the incoming data $f$  in the $\Hdata^k$-scale of spaces. Previously, it has been shown that, for example for the initial value problem for cubic NLS in 3 spatial dimensions, there is scattering for initial data $u_0$ in $H^s$ for certain ranges of $s$, and the corresponding free data $\phi_\pm$ is also in $H^s$ \cite{GV85, Bourgain98, CKSTT04}. Since the incoming and outgoing data are the Fourier transforms of the $\phi_\pm$, this would imply that if $f$ is in a weighted $L^2$ space $\ang{\zeta}^{-s} L^2(\RR^n_\zeta)$, for $s \geq s_0$, then $\tilde f$ is also in this space. More recently, Guo, Huang and Song \cite{GuoHuaSon} have shown that if $u_0$ is in $L^1 \cap H^3$, then $\phi_\pm$ are also in this space. This implies some regularity for both $f, \tilde f$. Beyond these results, to the authors' knowledge this question has not been much studied in the literature.

As mentioned above, for nonlinearities including terms $|u|^{p - 1}
u$, our results do not cover the case $n = 1, p = 3$.  That case is treated in
the paper of Lindblad and Soffer \cite{MR2199392}, and the form of the
solutions they obtain can be seen to incur a log loss under
application of the module derivatives of this paper.  We have not addressed the question of
whether our methods could be suitably
modified to treat that case.


\subsection{Structure of this article}
 	
We begin, in Section \ref{sec:tech}, by recalling the essential features of the parabolic pseudodifferential calculus, developed in \cite{TDSL}, which will be relevant in our study of NLS.  This includes especially analysis of $P$ as a parabolic differential operator.  We explain the features of the parabolic principal symbol of $P$, specifically how its characteristic set extends to the boundary of an appropriately compactified parablic cotangent bundle $\SymbSpa$, and recall that its Hamilton vector field vanishes on a set $\mathcal{R}$ consisting of two connected components which intersect the corner of $\SymbSpa$ transversally.  These components $\mathcal{R}_\pm$ are (by definition) radial points of $P$, and form families of sources ($-$) and sinks ($+$) for the Hamilton flow.  The radial points estimates and Fredholm theory are then discussed, as well as multiplication results for the basic Sobolev spaces induced by the parabolic PsiDOs.

In Section \ref{sec:mod reg} we introduce the notion of module regularity. We discuss general features of module regularity spaces, and in particular consider modules generated by operators of orders potentially greater than $1$.  This is an important feature of our analysis, namely that we choose a module which is naturally related to the operator, but for which the order of one of the generators is $(2,1)$ (i.e.\ parabolic regularity order $2$, spacetime order $1$) in the parabolic sense; thus module regularity analysis from previous work does not apply directly and must be developed here.  This is discussed in Section  \ref{sec:mod reg} and the appendix.

In Section \ref{sec:mult}, we then prove the algebra property of Proposition \ref{prop:mult-intro} for module regularity spaces, with a gain of $(n+1)/2$ (easily seen to be the best possible gain) in spacetime decay order.
This is the main analytic tool used to accomodate the nonlinear term in the equation.

In Section \ref{sec:results}, we turn to proving our main theorems.  These are, roughly speaking,
theorems which produce global spacetime solutions to NLS with various
nonlinearities and prescribed asymptotic data.  This is accomplished
using a contraction mapping argument that relies on the linear
theory we develop for $P$, i.e. the mapping properties of $P_\pm^{-1}$
on module regularity spaces from Theorem~\ref{thm:linear1}, and the
algebra properties of module regularity spaces in
Sections~\ref{sec:first big algebra} and \ref{sec:interpolation}.

In the appendix we prove the module regularity estimates.  The theorems and proofs are similar to those in \cite{TDSL} (which in turn are closely related to those in \cite{HMV2004}) but it is necessary to revisit them since all earlier results are for modules generated by operators of order not greater than $1$.  Our modules in particular are not closed under commutators, and we must make do with the weaker conditions \eqref{eq:module properties}.  The modifications required to the proofs in \cite{TDSL} are nevertheless straightforward. 


\section{Parabolic pseudos and Fredholm theory
  for the Schr\"odinger operator}\label{sec:tech}

\subsection{Microlocal analysis} Our approach to NLS is essentially microlocal. We work on the cotangent bundle $T^* \RR^{n+1}$, with spacetime coordinates $(z,t) \in \RR^n \times \RR$ and dual momentum coordinates $(\zeta, \tau)$. We work particularly on a compactification of $T^* \RR^{n+1}$, which is the Cartesian product of the usual radial compactification of $\RR^{n+1}_{z, t}$ and the \emph{parabolic} compactification of the dual space $\RR^{n+1}_{\zeta, \tau}$. For the spacetime factor, this means that a boundary defining function $\rhobase$ for the boundary of the compactification of $\RR^{n+1}_{z, t}$ (often termed spacetime infinity) is $\rhobase := \ang{Z}^{-1}$ where $Z = (z, t)$, and hence $\ang{Z} := (1 + |z|^2 + t^2)^{1/2}$. Angular coordinates near the boundary can be taken to be $z_j/t$ in the region $\{ |z| \leq 2 |t| \}$, or $\hat z := z/|z|$ and $t/|z|$ in the region $\{ |z| \geq  |t|/2 \}$. For the momentum factor, on the other hand, a boundary defining function $\rhofib$ for the boundary of its compactification (often termed fibre infinity) is $\rhofib := R^{-1}$ where $R := ( |\zeta|^4 + \tau^2)^{1/4}$ is a parabolic `radial function' in the fibre, and angular variables near fibre infinity can be taken to be either $\zeta_j/\sqrt{|\tau|}$ in the region $\{ |\zeta| \leq 2 \sqrt{|\tau|} \}$, or $\hat \zeta$ and $\tau/|\zeta|^2$ in the region $\{ |\zeta| \geq  \sqrt{|\tau|}/2 \}$. We note that $\ang{Z}$ is asymptotically homogeneous of degree one, and the spacetime angular variables are homogeneous of degree zero, under scaling in $Z = (z,t)$. Similarly, 
 $R$ is homogeneous of degree one, and the fibre
 angular variables homogeneous of degree zero, under the parabolic
 scaling $(\zeta, \tau) \mapsto (a\zeta, a^2 \tau)$ for $a > 0$ in the
 fibre directions. We denote the compactified cotangent bundle by
 $\SymbSpa$. It is a manifold with corners of
 codimension two.

We use the algebra of \emph{parabolic scattering pseudodifferential operators} introduced in \cite{TDSL}. This is 
easiest to describe in the case of classical symbols. By definition, a classical parabolic symbol of order $(s, r)$ on $\SymbSpa$ is a function in the space $R^{s} \ang{Z}^r C^\infty(\SymbSpa)$. More generally, parabolic symbols (not necessarily classical) are smooth functions $a$ on the cotangent bundle $T^* \RR^{n+1}$ obeying the estimates 
\begin{equation}
\Big| D_z^\alpha D_t^l D_\zeta^\beta D_\tau^m a(z, t, \zeta, \tau) \Big| \leq C_{\alpha, \beta, l, m} \ang{R}^{s - |\alpha| - 2m}\ang{Z}^{r - |\alpha| - l}  .
\end{equation}
We denote the class of such symbols by $S^{s,r}_{\mathrm{par}}(\RR^{n+1})$. Observe that the symbols $\zeta_i$ are order $(1,0)$, while $\tau$ is order $(2,0)$ in this symbol class. Also, notice that each $\tau$-derivative applied to $a$ reduces the parabolic regularity order $s$ by
$2$ due to the parabolic scaling. Notice also that derivatives with
respect to the spacetime variables reduce the order of growth at
spacetime infinity, just as derivatives in the fibre variables reduce
the order of growth at fibre infinity. This is the defining feature of
the \emph{scattering calculus} as introduced by H\"ormander \cite{hormander1979weyl}, Parenti \cite{Parenti} and Melrose \cite{RBMSpec}. As emphasized by Melrose, it means that microlocal analysis takes place at \emph{both} boundary hypersurfaces of $\SymbSpa$ --- spacetime infinity as well as fibre infinity --- as will be explained more below. 

The class of such parabolic symbols, resp. classical parabolic symbols, of order $(s, r)$ is denoted $\Spar sr$, resp. $\Spcl sr$. From here on we will usually omit the qualifier `parabolic'. The corresponding class of operators is denoted $\Psip sr$, resp. $\Psipcl sr$. We refer to $s$ as the \emph{parabolic regularity order} (which we prefer to `differential order' since $D_t$ is order $1$ as a differential operator but has parabolic regularity order $2$) and to $r$ as the \emph{spacetime decay order} or \emph{spacetime weight}. 

One key advantage of working in a parabolic calculus is that the
operator $P$ is an operator of real principal type in this calculus
away from the radial sets, so standard microlocal propagation
estimates will apply.  (These are essentially a combination of the
inhomogeneous propagation theory in \cite{lascar} and the scattering
calculus propagation in \cite{RBMSpec} and is proven rigorously in our
linear paper \cite{TDSL}.)  

The usual elements of pseudodifferential calculus apply: 
\begin{itemize}
\item The parabolic scattering pseudodifferential operators form a graded algebra, i.e.\ 
\begin{equation}\label{eq:comp}
\Psip sr \circ \Psip {s'}{r'} \subset \Psip {s+s'}{r+r'},
\end{equation}
\item 
there is a principal symbol map 
$$\sigma^{s,r} : \Psip sr \to S^{s,r}(\SymbSpa) / S^{s-1,r-1}(\SymbSpa),
$$
 which is multiplicative under composition, 
\item Composition is commutative to leading order,  that is, commutators satisfy
 \begin{equation}\label{eq:comm}
\Big[ \Psip sr ,  \Psip {s'}{r'} \Big] \subset \Psip {s+s'-1}{r+r'-1}, 
\end{equation}
and the principal symbol of the commutator is the Poisson bracket of the principal symbols, or equivalently the Hamilton vector field of one symbol applied to the other: 
\begin{multline}\label{eq:Poissonbracket}
\sigma^{s+s'-1, r+r'-1}(i[A, B]) = \{ a,  b\} = H_{a} b, \quad 
a = \sigma^{s,r}(A), \quad b = \sigma^{s',r'}(B). 
\end{multline}
\end{itemize}

In the remainder of this section, we assume for ease of exposition that all our operators are classical. For a classical operator $A$ of order $(s,r)$, let $a$ be the left-reduced symbol of $A$. (The choice of quantization will be irrelevant in our discussion below.) Let $\tilde a = \ang{R}^{-s} \ang{Z}^{-r}  a$, which is a smooth function on $\SymbSpa$. The \emph{elliptic set} of $A$, denoted $\Ell(A)$, is the open subset of $\partial \SymbSpa$ (the union of spacetime infinity and fibre infinity) where $\tilde a \neq 0$. The \emph{characteristic variety} of $A$, denoted $\Char(A)$, is its complement: the zero set of $\tilde a$ restricted to  $\partial \SymbSpa$. 

For operators such as $P$, with a nonempty characteristic variety, the
microlocal estimates we need are usually easily obtained on the
elliptic set --- they follow directly from microlocal elliptic
regularity. We therefore focus on the characteristic variety. Here, as
is well known since H\"ormander's work in the 1970s \cite{Hormander:Existence}, regularity (and singularities) of solutions to $Pu = 0$ propagate along \emph{null bicharacteristics}, which by definition are the flow lines of the Hamilton vector field within $\Char(A)$. (Since the Hamiltonian is constant along the flow, any bicharacteristic that meets $\Char(A)$ is contained within $\Char(A)$.) In the present context, if a (classical) symbol has order $(1,1)$, then it is not hard to check that the Hamilton vector field is a b-vector field on $\SymbSpa$, that is, it is smooth and tangent to the boundary. More generally, for a symbol $a$ of order $(s,r)$, then $\ang{R}^{-s+1} \ang{Z}^{-r+1} H_a$ is smooth and tangent to the boundary; we denote this rescaled vector field $H^{s,r}_a$. This rescaling only has the effect of reparametrizing the flow on $\Char(A)$, since $H_{\rho a} = \rho H_a + a H_{\rho} = \rho H_a$ on $\Char(A)$, for any function $\rho$. 

Where $H^{s,r}_a$ is nonvanishing as a smooth vector field on the characteristic variety (either at fibre infinity or spacetime infinity), we have `propagation of regularity' as usual in the study of operators of real principal type. Thus, the complement, where $H^{s,r}_a$ vanishes as a smooth vector field, is of special importance. This is known as the \emph{radial set}, so-called because at such a point at fibre infinity, the Hamilton vector field must be a multiple of the `radial' vector field (in the sense of the generator of parabolic dilations, $\zeta \cdot \partial_\zeta + 2 \tau \partial_\tau$) in the fibre, while at spacetime infinity, the Hamilton vector field must be a multiple of the radial vector field in spacetime, $z \cdot \partial_z + t \partial_t$.    In the case of our operator $P$, the radial set $\SR$ is contained in spacetime infinity, although it meets the corner. It has two components, $\SR_-$ and $\SR_+$. Since the symbol of $P$ in a neighbourhood of spacetime infinity is $\tau + |\zeta|^2$, the Hamilton vector field is $\partial_t + 2 \zeta \cdot \partial_z$. It is clear that this is a multiple of $z \cdot \partial_z + t \partial_t$ exactly when $\zeta = z/2t$ for $|z|/|t| \leq C$. When $|z|/|t|$ is large, we rescale the Hamilton vector field by multiplying by $|z|/|\zeta|$, obtaining 
$$
\frac{|z|}{|\zeta|} \frac{\partial}{\partial t} + 2 \hat \zeta |z| \cdot \frac{\partial}{\partial z}.
$$
For this to be a multiple of $z \cdot \partial_z + t \partial_t$ we require that $|z|/|\zeta| = |t|$ and either $t \leq 0$ together with $\hat \zeta = - \hat z$, or $t \geq 0$ and $\hat \zeta = +\hat z$. In addition, we have $\tau = -|\zeta|^2$ to be on the characteristic variety. We see from this that the radial set $\SR$ has two components $\SR_-$ and $\SR_+$, which each diffeomorphic to a radially compactified $\RR^n$ with coordinate $\zeta$. They are given in the region $|z|/|t| \leq 2C$ by 
\begin{equation}\label{eq:radialsetspole}
\SR_\pm = \{ \frac{z}{t} = 2\zeta, \tau = - |\zeta|^2, \pm t > 0 \}
\end{equation}
and in the region $|z|/|t| \geq C$ in the following way. We use coordinates $s = t/|z|$, $\hat z$, $\rho_f = |\zeta|^{-1}$, $\hat \zeta$ and $\tau/|\zeta|^2$, and in these coordinates we have 
\begin{equation}\label{eq:radialsetseq}
\SR_\pm = \{s = \pm \frac{\rho_f}{2}, \hat z = \pm \hat \zeta, \frac{\tau}{|\zeta|^2} = -1 \}.
\end{equation}
\begin{figure}
	\centering
	\begin{tikzpicture}[scale=0.8]
		\draw[thick] (5,0) arc (0:360:5cm);
		\draw[dashed] (0,0) ellipse (5cm and 0.9cm);
		\draw[thick] (-5,0) arc (180:360:5cm and 0.9cm);
		\draw[->] (-3.536,3.536)--(-3.536*0.8,3.536*0.8) node[anchor=west]  {$\rho_{\mathsf{base}}$};
		
		\draw[->] (6-6,-1.5+1.5)--(6-6,0.2+1.5) node[anchor=west] {$t$};
		\draw[->] (6-6,-1.5+1.5)--(5.1-6,-2.2+1.5) node[anchor=west] {$z_1$};
		\draw[->] (6-6,-1.5+1.5)--(7.7-6,-1.5+1.5) node[anchor=south] {$z_2$};
	\end{tikzpicture}
	\caption{The radially compactified spacetime.  The function
		$\rhobase = (1 + t^2 + |z|^2)^{-1/2}$ defines (i.e.\ vanishes at) spacetime
		infinity.  The radial set $\mathcal{R}_+$ lies over the northern
		hemisphere, while the radial set $\mathcal{R}_-$ lies over the southern
		hemisphere. The equator $E$ is the illustrated great circle.}
	\label{fig1}
\end{figure}
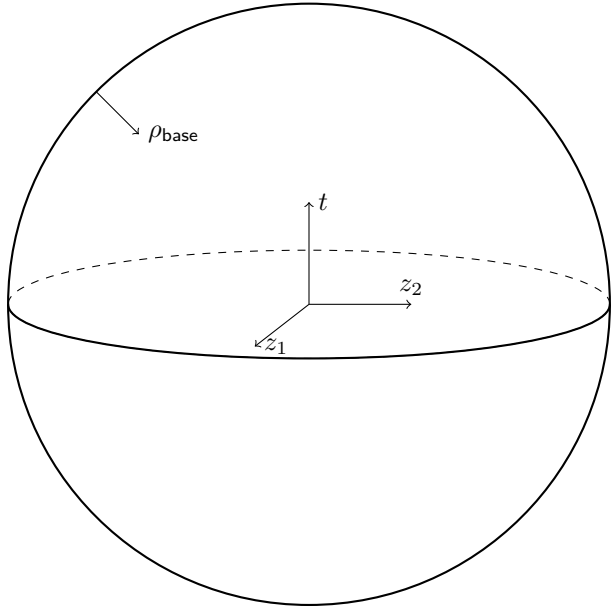

In a picture where compactified spacetime is represented as a solid ball with the time axis vertical (see Figure \ref{fig1}), and so spacetime infinity is the surface of the ball, the radial set $\SR_-$, which we call the incoming radial set, lies over the southern hemisphere $\{ s \leq 0 \}$, while $\SR_+$, which we call the outgoing radial set, lies over the northern hemisphere $\{ s \geq 0 \}$. They both have boundary over the equator $E$, given by 
\begin{equation}
E = \{ \ang{Z}^{-1} = 0, s = 0 \},
\label{eq:equator} \end{equation}
where they reach fibre infinity. The coordinate $\hat z = \pm \hat \zeta$ distinguishes the two components $\SR_\pm$ of the radial set. 

We also make some comments about the rescaled Hamilton vector field at fibre infinity. Here, the rescaling involves multiplication by $\ang{R}^{-1}$. This renormalizes the time component $\partial_t$  to zero, and we obtain, on each time slice, geodesic flow with respect to the metric $g_t$. This reflects ``infinite propagation speed'' for the Schr\"odinger equation. The flow here is clearly non-radial, as there is a nonzero spatial component. Thus there are no radial points at fibre infinity (except at the corner, at the boundary of $\SR_\pm$). 

In summary, we can divide the boundary of compactified phase space, into three disjoint sets: the elliptic region, the characteristic variety where the Hamilton vector field is non-radial, and the radial set. Each requires a different type of microlocal estimate to assemble the overall Fredholm estimate. First, though, we need to discuss modules and module regularity spaces.


\subsection{Parabolic Sobolev spaces}
For real parameters $s, r$, we define parabolic Sobolev spaces $\Hpar sr$  such that 
\begin{equation}
u \in \Hpar sr \Leftrightarrow Au \in L^2(\RR^{n+1}) \text{ for all } A \in \Psip sr. 
\end{equation}
This condition is equivalent to $Eu \in L^2(\RR^{n+1})$ for a single globally elliptic $E = E_{s,r}\in \Psip sr$. It is convenient to choose $E_{s,r}$ to be invertible, for example, $E_{s,r} = \Op_L( \ang{Z}^r \ang{R}^s ) = \ang{Z}^r \Op(\ang{R}^s)$. We define an inner product on this space by 
\begin{equation}
\ang{u, v} = \ang{E_{s,r}u, E_{s,r} v }_{L^2},
\end{equation}
where the inner product on the RHS is the standard inner product on $L^2(\RR^{n+1})$. This gives $\Hpar sr$ a Hilbert space structure. 

The usual H\"ormander ``square root'' trick, and the pseudodifferential calculus, shows that operators of order $(0,0)$ are bounded on $\Hpar sr$. Moreover, elliptic regularity works in the usual way. Namely, if $A \in \Psip sr$ is elliptic on a set $U \subset \partial T^* \RR^{n+1}$, then there exists a microlocal inverse, that is, a $B \in \Psip {-s}{-r}$ so that $AB$ is equal to the identity microlocally on $U$, in the sense that $Q (AB - \id)$ is an operator of order $(-\infty, -\infty)$ for any $Q$ with $\WF'(Q) \subset U$. 

We also note that operators $A \in \Psip sr$ for $s, r < 0$ are compact on $L^2(\RR^{n+1})$, and consequently, the space $\Hpar sr$ is compactly embedded in $\Hpar {s'}{r'}$ provided that $s > s'$ and $r > r'$.

\subsection{Microlocal propagation estimates}
Our global estimates, discussed in the following subsection, arise from combining microlocal propagation estimates that apply in different regions of the characteristic variety, with elliptic estimates that apply in the elliptic region. In these estimates, we denote by $Q, Q'$, etc, operators in $\Psipcl 00$ with microlocal support in a sufficiently small neighbourhood of a point or region in phase space, that will be applied to $u$. We will use $G$ for a similar microlocalizing operator that will be applied to $Pu$. 

We first review elliptic estimates. These take the following form. 

\begin{prop}\label{prop:elliptic.estimate}
Assume that $Q \in \Psipcl 00$ is such that $\WF'(Q)$ is contained in $\Ell(P)$, and suppose that $G \in \Psipcl 0 0$ and $P$ are both elliptic on $\WF'(Q)$. Then for arbitrary $s$, $r$, $M$ and $N$ we have 
Then if $GPu\in \Hpar{s-2}{r}$, we have $Qu\in \Hpar sr$ with the estimate
\begin{equation}\label{eq:elliptic.estimate}
\|Qu\|_{\Hpar sr}\leq C_{M,N} (\|GPu\|_{\Hpar{s-2}{r}}+\|u\|_{\Hpar MN}).
\end{equation}
\end{prop}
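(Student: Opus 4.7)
The plan is to construct a microlocal parametrix for $GP$ on $\WF'(Q)$ and use it to reduce the bound to standard boundedness of parabolic scattering operators on parabolic Sobolev spaces, with the remainder absorbed by the residual norm $\|u\|_{\Hpar{M}{N}}$.

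First, since $G$ and $P$ are both elliptic on $\WF'(Q)$, multiplicativity of the principal symbol implies that $GP \in \Psip{2}{0}$ is elliptic on $\WF'(Q)$. By the standard parametrix construction in the parabolic scattering calculus---principal symbol inversion followed by asymptotic summation of the Neumann-type expansion---I obtain $B \in \Psip{-2}{0}$ and an auxiliary $\tilde Q \in \Psipcl{0}{0}$, with $\tilde Q$ elliptic on $\WF'(Q)$, such that
$$
BGP = \tilde Q + R_1, \qquad R_1 \in \Psip{-\infty}{-\infty}.
$$
Applying the same construction to $\tilde Q$ yields $E \in \Psipcl{0}{0}$ with $E\tilde Q = Q + R_2$ and $R_2 \in \Psip{-\infty}{-\infty}$. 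Combining,
$$
Qu = EBGPu - ER_1 u - R_2 u.
$$

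Second, I estimate each term in $\Hpar{s}{r}$. The composition $EB \in \Psip{-2}{0}$ is bounded $\Hpar{s-2}{r} \to \Hpar{s}{r}$, so
$$
\|EBGPu\|_{\Hpar{s}{r}} \leq C \|GPu\|_{\Hpar{s-2}{r}}.
$$
Since $ER_1, R_2 \in \Psip{-\infty}{-\infty}$, they map $\Hpar{M}{N} \to \Hpar{s}{r}$ boundedly for every $M, N \in \RR$, giving
$$
\|ER_1 u + R_2 u\|_{\Hpar{s}{r}} \leq C_{M,N} \|u\|_{\Hpar{M}{N}}.
$$
Summing the two bounds produces \eqref{eq:elliptic.estimate}.

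There is no substantive obstacle: this is the standard microlocal elliptic estimate, and the only nontrivial input is that the parabolic scattering calculus of \cite{TDSL} admits asymptotic summation to residual order $(-\infty, -\infty)$, i.e.\ simultaneously negative infinite order in parabolic regularity and spacetime decay, so that the error terms can indeed be absorbed into $\|u\|_{\Hpar{M}{N}}$ for arbitrary $M, N$. This joint asymptotic summation, together with the $L^2$-boundedness of operators in $\Psip{0}{0}$, was already set up in \cite{TDSL} and is exactly what makes the standard elliptic parametrix argument go through unchanged in this setting.
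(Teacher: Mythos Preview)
Your proof is correct and is the standard microlocal elliptic parametrix argument; the paper states Proposition~\ref{prop:elliptic.estimate} without proof, and the module-regularity version in the appendix (Proposition~\ref{prop:module.elliptic.estimate.old}) is proved by the same mechanism, constructing a microlocal parametrix for $P$ on $\WF'(Q)$ and absorbing the remainder into the residual norm. The only cosmetic difference is that the appendix inverts $P$ first and then observes that $Q'P$ (your $GP$) is equally elliptic on $\WF'(Q)$, whereas you invert $GP$ directly; both routes are equivalent.
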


Here we think of $M$ and $N$ as being very negative, so that the norm on $u$ is much weaker than the norm on $Qu$. 
Notice that the order of the norm on $GPu$ is the order $(s,r)$ of the norm on $Qu$ minus $(2, 0)$, the order of $P$.

Next, we consider $u$ in a microlocal neighbourhood of a point $q$ in $\Char(P)$. The most straightforward case is when $q$ is not a radial point. That means, by definition, that the rescaled Hamilton vector field $H_p^{2,0}$ is nonvanishing at $q$; in particular, since it is a tangent vector field, it has a nonvanishing tangential component. Then the microlocal propagation estimate works as follows: let $\gamma$ be the bicharacteristic through $q$, and suppose that we know that $u$ is  in $\Hpar sr$ microlocally near another point $q'$ of the bicharacteristic $\gamma$. Then, if $Pu$ is sufficiently regular in a neighbourhood of the bicharacteristic segment from $q'$ to $q$, and $u$ is known to have some weak background regularity of order $(M,N)$,  then $u$ is  in $\Hpar sr$ microlocally near $q$. More precisely, we have 

\begin{prop}[Propagation of regularity]\label{prop:sing.P}
	Let $Q,Q',G\in\Psipcl{0}{0}$ be operators of order $(0,0)$ with $G$ elliptic on $\WF'(Q)$. Let $s, r, M, N$ be arbitrary real numbers. Suppose that for every $\alpha\in\WF'(Q)\cap\Sigma(P)  $ there exists $\alpha'$ such that $Q'$ is elliptic at $\alpha'$ and there is a bicharacteristic $\gamma$ of $P$ from $\alpha'$ to $\alpha$ such that $G$ is elliptic on the bicharacteristic segment along $\gamma$ from $\alpha$ to $\alpha'$. 
	
Then if $GP u\in \Hpar{s-1}{r+1}$ and $Q'u\in \Hpar sr$, we have $Q u\in \Hpar sr$ with the estimate
\begin{equation}\label{eq:sing.P}	
	\|Qu\|_{\Hpar sr}\leq C \Big(\|Q' u \|_{\Hpar sr}+\|GP u\|_{\Hpar{s-1}{r+1}}+\|u\|_{\Hpar MN} \Big).
	\end{equation}
\end{prop}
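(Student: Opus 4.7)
The strategy is the classical positive commutator argument of H\"ormander, adapted to the parabolic scattering calculus. This proposition is essentially the inhomogeneous-scattering propagation estimate established in the predecessor paper \cite{TDSL}, drawing on Lascar \cite{lascar} and Melrose \cite{RBMSpec}, so I sketch the structural outline rather than belabor order bookkeeping.

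First, one performs a standard regularization by inserting a family $\Lambda_\epsilon \in \Psip{-\infty}{0}$ uniformly bounded in $\Psip{0}{0}$ with $\Lambda_\epsilon \to \mathrm{Id}$, so that the main computation is valid for a priori Schwartz $u$; uniform-in-$\epsilon$ estimates let one pass to the limit. The heart of the argument is the construction of a commutant $A \in \Psipcl{s-1/2}{r+1/2}$. Since $\alpha$ is not a radial point, the rescaled Hamilton field $H_p^{2,0}$ is a nonvanishing smooth b-vector field on $\SymbSpa$ near the given bicharacteristic $\gamma$, and hence admits flow-box coordinates there. Writing the principal symbol as $a = \chi(\varphi)\psi$, with $\varphi$ a flow parameter vanishing at a transverse hypersurface just past $\alpha'$ and growing along $\gamma$ through $\alpha$, $\chi$ a cutoff equal to $1$ near the target end and decreasing to $0$ before $\alpha'$, and $\psi$ a transverse cutoff supported in $\WF'(G)$, one arranges $-\chi\chi' = b_0^2 - e_0^2$ with $b_0$ elliptic at $\alpha$ (hence on $\WF'(Q)$ after shrinking supports) and $e_0$ essentially supported in $\WF'(Q')$.

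The symbolic calculus then gives
\begin{equation*}
i[P, A^*A] \;=\; -B^*B + E^*E + R_P + R,
\end{equation*}
with $B, E \in \Psipcl{s}{r}$, $B$ elliptic on $\WF'(Q)$, $E$ microlocally supported in $\WF'(Q')$, $R_P$ factoring through $P$ with wavefront set in $\WF'(G)$, and $R \in \Psip{2s-1}{2r+1}$ of lower order. Using the (formal) self-adjointness of $P$,
\begin{equation*}
\langle i[P, A^*A]\, u, u\rangle \;=\; -2\,\Im\,\langle A^*A\, Pu,\, u\rangle,
\end{equation*}
and Cauchy--Schwarz together with the boundedness $A^*A : \Hpar{s-1}{r+1} \to \Hpar{-s}{-r}$ controls the right-hand side by $C\,\|GPu\|_{\Hpar{s-1}{r+1}}\,\|u\|_{\Hpar{s}{r}}$ plus terms absorbable into $\|u\|_{\Hpar{M}{N}}$. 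Applying sharp G{\aa}rding to handle the remainder $R$, absorbing a small multiple of $\|u\|_{\Hpar{s}{r}}$ into the positive $\|Bu\|_{L^2}^2$ term, and finally invoking microlocal elliptic regularity (to convert $\|Bu\|_{L^2}$ into $\|Qu\|_{\Hpar{s}{r}}$ and $\|Eu\|_{L^2}$ into $\|Q'u\|_{\Hpar{s}{r}}$), one obtains \eqref{eq:sing.P}.

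The main technical point is the bookkeeping of parabolic weights --- derivatives in $\tau$ cost two parabolic orders --- and the verification that the flow-box construction extends smoothly across whichever boundary faces of $\SymbSpa$ the bicharacteristic $\gamma$ visits, including the corner where fibre infinity and spacetime infinity meet. Because $\gamma$ is disjoint from the radial sets $\SR_\pm$, the vector field $H_p^{2,0}$ does not degenerate along $\gamma$ and no threshold-regularity hypothesis enters; this is precisely what distinguishes the present estimate from the radial-point estimates needed at $\SR_\pm$.
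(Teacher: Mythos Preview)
The paper does not supply its own proof of this proposition: it is stated in Section~2.3 as background from \cite{TDSL}, with the remark that the parabolic propagation estimates ``are essentially a combination of the inhomogeneous propagation theory in \cite{lascar} and the scattering calculus propagation in \cite{RBMSpec} and is proven rigorously in our linear paper \cite{TDSL}.'' Your sketch---a positive commutator argument with a flow-box commutant $A\in\Psipcl{s-1/2}{r+1/2}$, the $B^*B-E^*E$ decomposition, and regularization---is exactly the standard method underlying that cited result, so your approach is consistent with what the paper invokes.
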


There are two key differences compared to the elliptic estimate: first, we need an a priori assumption of regularity of $u$ elsewhere along the bicharacteristic, and second, the assumed regularity of the $GPu$ term is increased from $(s-2, r)$ in the elliptic case to $(s-1, r+1)$ here. That is, we need one order of additional regularity for the $GPu$ term in both the spacetime and fibre senses. This is often referred to as a `loss of regularity' of order one compared to the elliptic case. 

The remaining case is a microlocal estimate near a point which is on one of the radial sets $\SR_\pm$. In this case there is an additional subtlety of a threshold value $r = -1/2$ for the spacetime order. This can be understood as follows: either from mass conservation (the spatial $L^2$ mass of any solution of $Pu = 0$ is constant for large $|t|$) or from the asymptotic expansion \eqref{eq:fminus}, \eqref{eq:fplus}, we see that there are no nontrivial solutions of $Pu = 0$ in the space $\Hpar sr$ for $r \geq -1/2$, but there are an infinite-dimensional family of solutions for $r < -1/2$, parametrized by $f \in \mathcal{S}(\mathbb{R}^n)$ in \eqref{eq:fminus} for example. It is not surprising, therefore, that the microlocal estimates have to take into account whether $r$ is below or above threshold. It turns out that the threshold only matters at the radial sets.

When $r < -1/2$ is below threshold, we have a result very similar to that of Proposition~\ref{prop:sing.P}. 

\begin{prop}[Below threshold radial point estimate]
	\label{prop:belowschrod}
	Suppose $r<-\frac{1}{2}$, and let $s, M, N \in \RR$. Assume there exists a neighbourhood $U$ of $\mathcal{R}_\pm$ and $Q', G\in\Psipcl 00 $ such that for every $\alpha\in \Char(P) \cap U \setminus \mathcal{R}_\pm$ the bicharacteristic $\gamma$ through $\alpha$ enters $\Ell(Q')$ whilst remaining in $\Ell(G)$. Then there exists $Q\in\Psipcl 00$ elliptic on $\mathcal{R}_\pm$ such that if $u \in \Hpar MN$,  $Q'u\in \Hpar sr$ and $GP u\in \Hpar{s-1}{r+1}$, then $Qu\in \Hpar sr$ and  there exists $C>0$ such that
\begin{equation}\label{eq:radial.schrod.below}
\|Qu\|_{\Hpar sr}\leq C \Big(\|Q'u\|_{\Hpar sr} +\|GP u\|_{\Hpar{s-1}{r+1}}+\|u\|_{\Hpar MN} \Big).
\end{equation}
\end{prop}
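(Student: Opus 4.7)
The plan is a positive commutator argument of Melrose–Vasy type, microlocalized at the radial set, adapted to the parabolic scattering calculus from \cite{TDSL}. Below threshold ($r < -1/2$), the contribution from differentiating the weight $\rho_{\mathrm{base}}^{-2r-1}$ along the bicharacteristic flow has the same sign as the contribution from the cutoff defining the commutant, so the argument closes without any a priori control at $\mathcal{R}_\pm$ itself.

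First I would choose a symmetric commutant $A = A^* \in \Psipcl{s-1/2}{r+1/2}$ with principal symbol of the form
$$a = \phi(R)\,\chi(\rho_{\mathrm{base}})\,\psi(d)^{1/2}\,\rho_{\mathrm{base}}^{-r-1/2}\,R^{s-1/2},$$
where $d \geq 0$ is a defining function for $\mathcal{R}_\pm$ inside $\Char(P) \cap \partial\SymbSpa$, $\psi$ is a bump at $0$ with $\psi' \leq 0$ and microsupport in the given neighbourhood $U$, $\chi$ localizes to spacetime infinity with $\chi' \geq 0$, and $\phi$ cuts off fibre infinity (so the corner $\mathcal{R}_\pm \cap E$ is excluded for the moment). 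By \eqref{eq:comp} and \eqref{eq:Poissonbracket},
$$\sigma^{2s-1,2r+1}\bigl(i[P,A^*A]\bigr) = H_p(a^2).$$

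Next I would compute $H_p(a^2)$ using the explicit form of the radial set from \eqref{eq:radialsetspole}–\eqref{eq:radialsetseq}. The rescaled Hamilton field $H_p^{2,0}$ is tangent to $\partial\SymbSpa$ and acts on $\rho_{\mathrm{base}}$ as $H_p^{2,0}\rho_{\mathrm{base}} = c_\pm\,\rho_{\mathrm{base}}$ along $\mathcal{R}_\pm$, with $c_\pm$ of a definite sign fixed by the source/sink structure. Hence the weight contribution equals $(-2r-1)\,c_\pm\,a^2$, which for $r<-1/2$ is of fixed nonzero sign at $\mathcal{R}_\pm$. The choice $\psi' \leq 0$ is made so that the propagation term $H_p(\psi(d))\cdot(\cdots)$ carries the same sign on the support of $\psi'$, which by assumption lies in bicharacteristic reach of $\Ell(Q')$. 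Writing the result as a sum of squares plus cutoff remainders, one gets the microlocal identity
$$H_p(a^2) = b_0^2 + b_1^2 + b_2^2 + \text{(lower order)},$$
with $b_0$ elliptic at $\mathcal{R}_\pm$, $b_1$ microsupported in a set swept into $\Ell(Q')$ by the flow, and $b_2$ arising from $\chi'$ and thus microsupported in $\Ell(G)$.

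Finally, pairing the identity $2\Im\langle A^*Au,Pu\rangle = \langle i[P,A^*A]u,u\rangle$, applying sharp G{\aa}rding to each $b_j^2$ piece, and controlling the right-hand pairing by
$$|\langle A^*Au,Pu\rangle| \leq \varepsilon\,\|Au\|_{L^2}^2 + C_\varepsilon\,\|GPu\|_{\Hpar{s-1}{r+1}}^2,$$
with the $\varepsilon$-term absorbed into the LHS, yields the desired bound once $Qu$ is extracted via ellipticity of $b_0$ at $\mathcal{R}_\pm$. The main technical obstacle is that the order of $A$ exceeds the a priori regularity $\Hpar MN$ of $u$, so the pairing is not literally defined; this is handled by the standard regularization $A_\varepsilon := A\Lambda_\varepsilon$ with $\Lambda_\varepsilon \in \Psipcl00$ a uniformly bounded family converging to the identity in $\Psipcl\delta\delta$ for any $\delta>0$, carrying the inequality uniformly in $\varepsilon$ and passing to the limit. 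The corner $\mathcal{R}_\pm \cap E$ is handled separately by noting that removing the cutoff $\phi$ introduces only an error microsupported at fibre infinity and off $\mathcal{R}_\pm$, where Proposition \ref{prop:sing.P} (nonradial propagation) already controls $u$.
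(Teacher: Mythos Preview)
The paper does not prove this proposition here; it is one of the basic propagation estimates recalled from \cite{TDSL}, and the appendix (see the proof of Propositions~\ref{prop:mod.belowschrod}--\ref{prop:mod.aboveschrod}) confirms that the underlying method is exactly the positive-commutator scheme you outline: a commutant of order $(s-1/2,r+1/2)$ localized to $\mathcal{R}_\pm$, the threshold sign coming from $-2r-1>0$ when the rescaled flow hits the weight, and a regularization-plus-weak-limit passage to justify the pairing. So your overall approach matches the one taken in \cite{TDSL}.

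One genuine issue is your handling of the corner. You insert a fibre cutoff $\phi(R)$ to exclude the corner and then assert that removing $\phi$ produces an error ``microsupported at fibre infinity and off $\mathcal{R}_\pm$'', to be absorbed by Proposition~\ref{prop:sing.P}. But $\mathcal{R}_\pm$ \emph{meets} fibre infinity: its boundary lies at the corner of $\SymbSpa$, over the equator $E$ (see \eqref{eq:radialsetseq} and the surrounding discussion). So the $\phi'$ contribution is not disjoint from the radial set and cannot be disposed of by nonradial propagation alone. In \cite{TDSL} the commutant is built to work uniformly up to the corner, with the source/sink computation and the threshold sign carried out there as well; you should drop the fibre cutoff and verify the symbol calculation at the corner directly rather than deferring it.

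A smaller point: in your symbolic decomposition you write $H_p(a^2) = b_0^2 + b_1^2 + b_2^2$ with all the same sign, but the $\psi'$ term must carry the \emph{opposite} sign to the main term --- that is exactly what forces it onto the right-hand side of \eqref{eq:radial.schrod.below}, where it is controlled (via Proposition~\ref{prop:sing.P}) by $\|Q'u\|_{\Hpar sr}$. With all three pieces of the same sign the estimate would close with no $Q'$ input at all, which is false.
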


The above threshold works a little differently; it is stronger in one respect, and weaker in another. It is stronger in the sense that we no longer need to assume a priori regularity of $u$ in $\Hpar sr$ in some other region in order to obtain $\Hpar sr$ regularity near the radial set. On the other hand, we do need to assume a priori that $u$ is above threshold regularity near the radial set. This explains the presence of the $Gu$ term below. 

\begin{prop}[Above threshold radial point estimate]
	\label{prop:aboveschrod}
	Suppose $r>r'>-\frac{1}{2}$ and $s>s'$, and let $M,N \in \RR$. Assume that $G\in\Psip{0}{0}$ is elliptic at $\SR_\pm$. Then there exists $Q\in\Psipcl 00$ elliptic at $\SR_\pm$ such that, if $u \in \Hpar MN$, $Gu\in \Hpar{s'}{r'}$ and $GP u\in \Hpar{s-1}{r+1}$, then $Qu\in \Hpar sr$ and there exists $C>0$ such that
	\begin{equation}\label{eq:radial.schrod.above}
		\|Qu\|_{\Hpar sr}\leq C \Big(\|GP u\|_{\Hpar{s-1}{r+1}}+\|Gu\|_{\Hpar{s'}{r'}}+\|u\|_{\Hpar MN} \Big).
	\end{equation}
\end{prop}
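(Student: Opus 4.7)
The proof will follow the standard positive commutator scheme for radial point estimates, originating with Melrose for the scattering calculus and extended to non-self-adjoint/radial settings by Vasy \cite{VD2013}, adapted to the parabolic calculus as in \cite{TDSL}. The plan is to construct a symmetric operator $A = A^* \in \Psipcl{2s-1}{2r+1}$ microsupported in a small neighborhood of $\SR_\pm$, and to exploit the pairing identity
\[
\langle i[A, P]u, u\rangle = 2\operatorname{Im}\langle APu, u\rangle - \langle i(P - P^*)Au, u\rangle,
\]
where $P - P^* \in \Psipcl{1}{-1}$ is of strictly lower order than $P$. The left-hand side is to supply a positive quantity controlling $\|Qu\|_{\Hpar{s}{r}}^2$ for some $Q \in \Psipcl{0}{0}$ elliptic at $\SR_\pm$, while the right-hand side is estimated against $\|GPu\|_{\Hpar{s-1}{r+1}}$, $\|Gu\|_{\Hpar{s'}{r'}}$ and $\|u\|_{\Hpar{M}{N}}$.

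The principal symbol of $A$ I would take to be
\[
a = \chi(\rhobase)^2 \psi(\omega)^2 \ang{R}^{2s-1}\ang{Z}^{2r+1},
\]
where $\chi$ localizes to a small neighborhood of spacetime infinity and $\psi$ is a cutoff in the angular cross-sectional variables about $\SR_\pm$, chosen monotone along the rescaled bicharacteristic flow (possible because $\SR_+$ is a sink and $\SR_-$ a source at spacetime infinity, cf.\ \eqref{eq:radialsetspole}--\eqref{eq:radialsetseq}). By \eqref{eq:Poissonbracket}, the principal symbol of $i[A, P]$ equals $H_a p = -H_p a$, and at $\SR_\pm$ it splits into three contributions: the weight derivative $H_p\ang{Z}^{2r+1}$ produces a term proportional to $(2r+1)$ with a definite sign determined by whether $t > 0$ or $t < 0$, the hypothesis $r > -1/2$ ensuring $2r+1 > 0$; the angular term $H_p\psi^2 = 2\psi H_p\psi$ has a definite sign by the monotone construction, reinforcing the weight contribution; and the spacetime cutoff term $H_p\chi^2$ produces an error supported where $\chi' \ne 0$, which by shrinking $\chi$ can be arranged to lie in $\Ell(G)$. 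One thus obtains a symbolic decomposition
\[
\mp\, \sigma(i[A, P]) = b^2 + e_G + e_{\mathrm{low}},
\]
with $b$ elliptic at $\SR_\pm$ of order $(s, r)$, $e_G$ microsupported in $\Ell(G)$, and $e_{\mathrm{low}}$ strictly subprincipal. Applying the pairing identity together with Cauchy--Schwarz and a Young's-inequality splitting absorbs an $\epsilon$-fraction of $\|Qu\|_{\Hpar{s}{r}}^2$ into the leading commutator term and produces $C_\epsilon\|GPu\|_{\Hpar{s-1}{r+1}}^2$; the contributions of $e_G$, $e_{\mathrm{low}}$ and the $(P - P^*)$-term are then controlled by $\|Gu\|_{\Hpar{s'}{r'}}^2$ and $\|u\|_{\Hpar{M}{N}}^2$, using the elliptic parametrix for $G$ and compact embedding. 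A single application of the commutator gains only a bounded amount of regularity (roughly $1/2$ order in each of $s$ and $r$) over the a priori bound, so iterating the argument a finite number of times closes the gap up to the full $(s, r)$.

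The principal obstacle will be rigorously justifying the pairing $\langle i[A, P]u, u\rangle$, since $A$ has high order whereas $u$ is only assumed a priori in the weak space $\Hpar{M}{N}$. This is handled by a standard regularization: replace $A$ by $A_\delta$ with symbol $a\phi_\delta$, where $\phi_\delta \in S^{-\infty,-\infty}_{\mathrm{par}}$ for each $\delta > 0$ but is uniformly bounded in $S^{0,0}_{\mathrm{par}}$ and converges pointwise to $1$ as $\delta \to 0$. The estimate is carried out for $A_\delta$ with $\delta$-uniform constants --- the crucial point being that the above-threshold sign at $\SR_\pm$ is preserved under regularization --- and the limit $\delta \to 0$ is then taken. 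The argument parallels the below-threshold Proposition~\ref{prop:belowschrod} and the radial point estimates of \cite{VD2013, TDSL}, and introduces no genuinely new difficulty in the parabolic setting beyond what is already furnished by the commutator formula \eqref{eq:Poissonbracket} and the explicit radial analysis at $\SR_\pm$ given in \eqref{eq:radialsetspole}--\eqref{eq:radialsetseq}.
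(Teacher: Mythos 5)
Your proposal follows the standard positive commutator scheme for above-threshold radial point estimates, which is indeed what is used here: this paper does not itself prove Proposition~\ref{prop:aboveschrod}, but cites the corresponding result from~\cite{TDSL} (the appendix of the present paper only redoes the module-regularity variant, Proposition~\ref{prop:mod.aboveschrod}, by minor modification of the argument in~\cite{TDSL}). The overall architecture you describe --- symmetric commutant $A \in \Psipcl{2s-1}{2r+1}$ built from angular, spacetime, and weight cutoffs, the pairing identity with $P - P^*$ of lower order, the $(2r+1)$-weighted term with definite sign at $\SR_\pm$, control of the error terms by $\|GPu\|$, $\|Gu\|_{s',r'}$ and the weak background norm, and regularization followed by iteration to close the gap --- is the right approach and matches~\cite{TDSL}.

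One point is stated slightly backwards and is worth getting right, since it is exactly what distinguishes the above-threshold from the below-threshold case. You write that ``the crucial point [is] that the above-threshold sign at $\SR_\pm$ is preserved under regularization.'' In fact, when the regularizer $\phi_\delta$ is inserted, the term $a\, H_p(\phi_\delta)$ generically carries the \emph{opposite} sign to the main term $(2r+1)$-term: the regularizer is designed to lower the weight at infinity, so its Hamilton derivative at a source/sink opposes the above-threshold commutator sign (this is the same phenomenon that makes the below-threshold case ``free'' --- there the regularizer term has the helpful sign). The true role of the hypothesis $Gu \in \Hpar{s'}{r'}$ with $r'>-\tfrac12$, $s' < s$ is precisely to absorb this wrong-signed regularizer contribution uniformly in $\delta$, after which one may pass to the limit; it is not merely there to make the pairing formally well defined, and mere membership of $u$ in $\Hpar{M}{N}$ is insufficient. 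Your sketch would be strengthened by making this explicit, since otherwise the reader cannot see why above-threshold estimates require a priori regularity at the radial set while below-threshold ones do not.
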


\subsection{Fredholm estimates} 
The microlocal propagation estimates can be combined to prove a global ``Fredholm estimate'' (actually a pair of dual estimates) that has, as a consequence, that $P$ is a Fredholm operator between suitable Hilbert spaces. In \cite{TDSL} the authors showed that $P$ is actually an invertible map between the same spaces. However, there is a catch: to make the Fredholm estimate work, we need to have the spacetime  order $r$ \emph{above} a threshold value $-1/2$ at one of the radial sets, and \emph{below} the threshold at the other radial set. Roughly speaking the need to be above threshold at one radial set is so we can exploit Proposition~\ref{prop:aboveschrod} in which regularity ``appears'' at the radial set without having to ``flow in'' from elsewhere (as shown by the absence of a $\| Q' u\|_{\Hpar sr}$ term on the RHS of this estimate, in contrast with Propositions~\ref{prop:sing.P} and \ref{prop:belowschrod}). The need to have one radial set below threshold is because a below threshold radial set becomes above threshold for the dual estimate. 

The threshold value of $-1/2$ can be understood by returning to the asymptotic expansions \eqref{eq:fminus} and \eqref{eq:fplus}.  For $f \in \mathcal{S}(\mathbb{R}^n)$ and any $s \in \mathbb{R}$, these expressions are in $H^{s, r}_{\mathrm{par}}(\RR^{n+1})$ for every $r < -1/2$ but not for $r = -1/2$, so this index is the threshold value below which global solutions can exist. From a technical point of view $r = -1/2$ is the threshold for which the commutant in the required positive commutator estimate changes from positive (spacetime) order to negative, which changes the positivity properties of various terms in the estimate. 

We thus define a \emph{variable} order $\sw_+$, which is a classical symbol of order zero on phase space, that is above $-1/2$ at $\SR_+$ and below threshold at $\SR_-$, and (to allow propagation of regularity in between) is monotone with respect to bicharacteristic flow (i.e. the Hamiltonian flow of $p$). If we define $\sw_- := -1 - \sw_+$, then this has the opposite property: $\sw_-$ is above $-1/2$ at $\SR_-$ and below threshold at $\SR_+$, and monotone (in the opposite direction) with respect to bicharacteristic flow. We then obtain the Fredholm estimates\footnote{We need to work here in a slightly larger calculus of operators with variable orders. These operators are of type $(1, \delta)$ rather than $(1, 0)$ for any $\delta > 0$, thus for example the commutator of two operators as in \eqref{eq:comm} will only lie in $\Psip {s+s'-1+\delta}{r+r'-1+\delta}$. However the differences are inessential and we do not belabour the point here.}
\begin{align}\label{eq:Fredholm}
\| u \|_{\Hpar s {\sw_+}} &\leq C \Big( \| Pu \|_{\Hpar {s-1}{\sw_+ +1}} + \| u \|_{\Hpar MN} \Big), \\
\| u \|_{\Hpar {1-s} {\sw_-}} &\leq C \Big( \| P^*u \|_{\Hpar {-s}{\sw_- +1}} + \| u \|_{\Hpar MN} \Big)
\end{align}
for any $s, M, N \in \RR$. 
These two estimates are dual because the negative of $\sw_+ + 1$ is $\sw_-$, and similarly the negative of $\sw_+$ is $\sw_- + 1$.

\subsection{Preliminary multiplication results}
The main new aspect of this paper is a multiplication result for module regularity spaces. The module regularity result is based on Sobolev algebra properties: functions with $H^s$ regularity on a Riemannian manifold of dimension $N$, with Ricci curvature bounded below and positive injectivity radius, form an algebra under pointwise multiplication provided that $s > N/2$ \cite{MR1828225}. A simple adaptation of the proof for Euclidean space shows that the \emph{parabolic} Sobolev spaces, $\Hpar s0$ on $\RR^{n+1}$ form an algebra for $s > (n+2)/2$ (with the increase of the threshold by $1/2$ due to parabolicity). It is immediate that if $r_1, r_2$ are constant spacetime weights, then we have a bounded bilinear map
$$
\Hpar s{r_1} \cdot \Hpar s{r_2} \subset \Hpar s{r_1 + r_2}, \quad s > \frac{n+2}{2}.
$$

This result, however, is not enough for applications to NLS. The reason is, as we have seen, in the product $N[u]$, the factors are all below threshold, that is, their spacetime weight must be below $-1/2$. Thus applying this product rule will produce a more and more negative spacetime weight. However, to apply the inverse $P_\pm^{-1}$, we need to land in a space with spacetime weight \emph{larger} than $r$, namely $r+1$, in order to ``close the loop'' and be in a position to apply the Contraction Mapping Theorem. 

To overcome this difficulty, we consider for a moment vector fields with linear or constant coefficients. It will be convenient to temporarily discuss this just on a spatial slice, $\RR^n$, to avoid difficulties with the parabolic calculus. These are vector fields of bounded length with respect to a conformal metric $g_b = \ang{z}^2 dz^2$. We therefore have a Sobolev algebra property for this metric. The measure $dg_b$ is $\ang{z}^n$ times the Euclidean measure; this means that the weighted $L^2$ spaces relative to the b-measure are the same as weighted $L^2$ spaces relative to the Euclidean measure, but with a discrepancy of $n/2$ in the weight. If we write the spaces \emph{relative to the Euclidean measure}, therefore, we obtain a mutliplication result 
\begin{equation}\label{eq:b-mult}
H_b^{s, r_1}(\RR^n, dz) \cdot H_b^{s, r_2}(\RR^n, dz) \subset H_b^{s, r_1 + r_2 + n/2}(\RR^n, dz). 
\end{equation}

Away from the corner of the radial sets, in sets of the form
$|1/t | < C, |z/t| < C$, it is to be expected by comparison with
similar situations, in particular from \cite{NLSM}, that distributions
in the module regularity space should be conjugate to distributions in
$H_b$ spaces.  Indeed, removing the oscillation by multiplying by
$e^{-i |z|^2/(4t)}$ reduces the module to one consisting of
  $b$-derivatives.  Outside such regions, we must avoid the
  singularity at $t = 0$, and this is accomplished in Section
  \ref{sec:mod reg} below by our time cutoff decomposition.

\section{Module regularity}\label{sec:mod reg}

\subsection{Module regularity spaces}

We will define module regularity spaces $H_{\modu_c}^{s,r;k}$ within which membership
of $u$ can be characterized by application of a specific, natural
family differential (as opposed to arbitrary pseudodifferential)
operators characteristic to $\SR_c$.  Below we set
$$
\Psi_{\mathrm{par, cl}}^{\mathbb{N}, \mathbb{N}} = \bigcup_{m,l \in
  \mathbb{N} \times \mathbb{N}} \Psi_{\mathrm{par, cl}}^{m,l}.
$$

As foreshadowed in the Introduction, we will adjust the definition of module regularity space used in \cite{} to allow for generators that have parabolic regularity order greater than $1$. This is to allow operators such as $D_t$, which have order $2$ in the parabolic calculus, to be generators. 
We begin with some abstract definitions for general modules and module
regularity spaces, starting with classes of generators. 
\begin{defn}
	\label{def:testmod}
A finite set $\mathbf{A} = \{ A_0 = \Id, A_1, \dots, A_N \} \subset \Psi_\mathrm{par, cl}^{\mathbb{N}, \mathbb{N}}$ of operators of integer order is called an \emph{admissible set of generators} if it contains the identity, and satisfies the following conditions:
\begin{itemize}
\item $\mathbf{A}$ is closed under commutators in the sense that, for all $A_i, A_j$ in $\mathbf{A}$,  there exist operators $C_{ijk} \in \Psip 00$ such that 
\begin{equation}
[A_i, A_j] = \sum_k C_{ijk} A_k ,
\label{eq:commgen}\end{equation}
\item  For all  $A \in \mathbf{A}$  and all $m,l \in \mathbb{R}$, we have 
\begin{equation}
[A, \Psip ml] \subset
    \Psip ml .
\label{eq:module properties}
\end{equation}
\end{itemize}
\end{defn}

Given an admissible set of generators, we generate a sequence of modules over the algebra $\Psip 00$. 

\begin{defn}
	\label{def:module.power}
Let $\mathbf{A} = \{ A_0 = \Id, A_1, \dots, A_N \}$ be an admissible set of generators, and let $\SM$ denote the module over $\Psip 00$ generated by $\mathbf{A}$. We define the \emph{reduced powers} $\SM^{(k)}$, $k = 1, 2, \dots$ to 
	to be the $\Psip{0}{0}$-modules generated by the sets
	\begin{equation}
		\{A_\alpha=\mathbf{A}^\alpha:= A_1^{\alpha_1} \cdots
                A_N^{\alpha_N}  \colon|\alpha|\leq k \textrm{ and }A_\alpha\in\Psip{k}{k}\}.
\label{eq:Mkgen}	\end{equation}
It is straightforward to check that this definition is independent of the ordering of the $A_i$, due to properties \eqref{eq:commgen} and \eqref{eq:module properties}. 
\end{defn}

\begin{remark} In previous works, the generators were required to be in $\Psipcl 11$. In that case, we have $\SM^{(1)} = \SM$ and $\SM^{(k)} = \SM^k$, that is, the module generated by $k$-fold products of elements of $\SM$. Clearly this no longer holds when we have generators with parabolic regularity order $2$. However, the properties \eqref{eq:commgen} and \eqref{eq:module properties} turn out to be sufficient to employ these new module sequences $\SM^{(k)}$ in essentially the same way the module powers $\SM^k$ were employed in previous works. 
\end{remark}

We define module regularity spaces with respect to Sobolev spaces as follows.
\begin{defn}
	\label{def:module.reg}
Let $\SM$ be a module generated by an admissible set of generators $\mathbf{A}$, and let $s, l \in \RR$. The module regularity space of order $k$ over the parabolic Sobolev space $\Hpar sl$ is defined by 
	\begin{equation}\label{eq:1module}
		H_\SM^{s,l;k}:=\{u\in H^{s,l}_{\mathrm{par}}(\RR^{n+1}):A u \in H^{s,l}_{\mathrm{par}}\textrm{ for all } A\in\SM^{(k)}\}.
	\end{equation}
\end{defn}

We equip the space $H_\SM^{s,l;\kappa}$ with a Hilbert space structure by fixing a choice of generators $\mathbf{A}$ and taking
\begin{equation}
\label{eq:mod.reg.sob.norm}
	\|u\|_{H_\SM^{s,l;k}}^2:= \sum_{\mathbf{A}^\alpha \in \SM^{(k)}} \|\mathbf{A}^\alpha  u\|_{H_\mathrm{par}^{s,l}}^2
\end{equation}
Definition \ref{def:module.reg} generalises in the natural way to the case of variable spatial weight $\mathsf{I}\in\Psip{0}{0}$.
\begin{rem}
  Given a generating set $\mathbf{A}$, by definition it is the reduced powers $\mathcal{M}^{(k)}$ which determine module
  regularity.  The module $\SM$ is essentially a convenient piece of
  notation, referring to the notion of module regularity.
\end{rem}

At this point, it is natural to ask whether the module regularity spaces depend only on the module $\SM$, or also on the choice of generators. The answer is subtle. We show in the following lemma that the module regularity spaces only depend on the module $\SM$. \emph{However}, property \eqref{eq:module properties} is not invariant under a change of generators. This property will be crucial to prove the positive commutator estimates on which this paper relies. So, it is crucial that the module $\SM$ possesses an admissible set of generators satisfying \eqref{eq:commgen} and \eqref{eq:module properties}. 

\begin{lemma}\label{thm:order doesnt matter}
  Assume that a generating set $\mathbf{A}$ with module $\mathcal{M}$
  satisfies \eqref{eq:module properties}.  Then the module regularity spaces $H_\SM^{s,l;k}$
  only depend on $\SM$, and not on the choice of generators $\mathcal{A}$. Indeed, we have 
  \begin{equation}
\mathcal{M}^{(k)} = \mathcal{M}^{k} \cap
\Psi_{\mathrm{par}}^{k,k},\label{eq:characterization}
\end{equation}
where $\mathcal{M}^{k}$ is the $k$-th power of the module
(the linear span of compositions of up to $k$ elements from $\mathcal{M}$.)
\end{lemma}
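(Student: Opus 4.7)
The plan is to establish the characterization \eqref{eq:characterization}, since from it the generator-independence of $H_{\mathcal{M}}^{s,l;k}$ follows at once: the right-hand side $\mathcal{M}^k \cap \Psi^{k,k}_{\mathrm{par}}$ is defined intrinsically, in terms of the module and the ambient calculus, with no reference to a particular generating set.

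The inclusion $\mathcal{M}^{(k)} \subset \mathcal{M}^k \cap \Psi^{k,k}_{\mathrm{par}}$ is immediate from the definitions, since each generator $A_\alpha$ of $\mathcal{M}^{(k)}$ is a $\leq k$-fold composition of elements of $\mathcal{M}$ lying in $\Psi^{k,k}_{\mathrm{par}}$, and multiplication by $\Psi^{0,0}_{\mathrm{par}}$-coefficients preserves both memberships via the composition rule \eqref{eq:comp}. For the reverse inclusion I would argue by induction on $k$, the base case $k=0$ being $\mathcal{M}^{(0)} = \mathcal{M}^0 = \Psi^{0,0}_{\mathrm{par}}$. Given $T \in \mathcal{M}^k \cap \Psi^{k,k}_{\mathrm{par}}$, the first step is to rewrite $T$ in canonical form $T = \sum_{|\alpha| \leq k} D_\alpha A_\alpha$ with $D_\alpha \in \Psi^{0,0}_{\mathrm{par}}$ and the $A_\alpha$ ordered in a fixed way. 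This reduction is achieved by expanding $T$ as a sum of products $M_1 \cdots M_j$ with $j \leq k$ and $M_i \in \mathcal{M}$, expanding each factor in the generators, using \eqref{eq:module properties} to commute $\Psi^{0,0}_{\mathrm{par}}$-coefficients past generators (with commutator corrections still in $\Psi^{0,0}_{\mathrm{par}}$), and using \eqref{eq:commgen} to sort the generators into the chosen canonical order (with corrections that are products of strictly fewer generators, handled by a nested induction on product length).

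It remains to show that such a canonical-form expression lying in $\Psi^{k,k}_{\mathrm{par}}$ is in $\mathcal{M}^{(k)}$. Terms with $A_\alpha \in \Psi^{k,k}_{\mathrm{par}}$ lie in $\mathcal{M}^{(k)}$ by definition; the main obstacle is the ``bad'' terms, where $A_\alpha$ has parabolic regularity order exceeding $k$, which can occur when a generator has parabolic order $2$ (as with $2tD_t + z\cdot D_z$ in $\modu$). Since $T \in \Psi^{k,k}_{\mathrm{par}}$, the principal symbol of the sum of these bad terms at the top parabolic order must vanish, so the bad sum actually lies in a strictly lower parabolic regularity class. Putting this residual back into canonical form and iterating the argument on the maximal parabolic order of the appearing monomials, all contributions are eventually brought into $\Psi^{k,k}_{\mathrm{par}}$ term-by-term; any accumulated remainders of product length $<k$ are absorbed by the outer induction on $k$ applied to $\mathcal{M}^{k-1} \cap \Psi^{k-1,k-1}_{\mathrm{par}} \subset \mathcal{M}^{(k-1)} \subset \mathcal{M}^{(k)}$. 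The principal-symbol cancellation at this final step is where the admissibility condition \eqref{eq:module properties} is essential, since it guarantees that commuting generators past $\Psi^{0,0}_{\mathrm{par}}$-coefficients does not raise parabolic orders and hence that the canonical-form reduction interacts cleanly with the order filtration.
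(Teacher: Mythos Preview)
Your approach diverges from the paper's. The paper does not induct on $k$, nor does it invoke any principal-symbol cancellation. It instead reduces the reverse inclusion to showing that any single word $L = Q_1 A_{i_1} \cdots Q_j A_{i_j}$ with $j \le k$, $Q_i \in \Psi^{0,0}_{\mathrm{par}}$, and $L \in \Psi^{k,k}_{\mathrm{par}}$ already lies in $\mathcal{M}^{(k)}$, and proves this by a double induction on the product length $j$ and the number $l$ of coefficients $Q_i$ not equal to $\Id$. Using \eqref{eq:module properties}, one commutes the $Q_i$ to the far left (each $[A,Q]$ stays in $\Psi^{0,0}_{\mathrm{par}}$, so one lowers $l$ at the cost of a correction term with smaller $j$); once all $Q$'s are on the left, one uses \eqref{eq:commgen} to sort the bare generators into the canonical order \eqref{eq:Mkgen}, again producing only smaller-$j$ corrections.

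There is a genuine gap in your treatment of the ``bad'' terms. Knowing that the bad sum $S = \sum_{\text{bad }\alpha} D_\alpha A_\alpha$ lies in a lower parabolic regularity class than its individual summands does not, by itself, allow you to rewrite $S$ using monomials $A_\alpha$ of lower parabolic order: the sum is \emph{already} in canonical form, so ``putting the residual back into canonical form'' changes nothing, and the maximal parabolic order of the appearing monomials $A_\alpha$ is unaffected by the observation that their weighted sum has dropped in order. Your proposed iteration therefore makes no progress. What would actually be needed is a mechanism for trading parabolic order between the coefficient $D_\alpha$ (which, being merely in $\Psi^{0,0}_{\mathrm{par}}$, may have strictly negative parabolic order) and the monomial $A_\alpha$---for instance an algebraic relation among the generators at the symbol level, or a factorization of $D_\alpha$ through an order-lowering element of the module---and you have not supplied one. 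The paper's $(j,l)$ induction sidesteps this by working with individual words assumed to lie in $\Psi^{k,k}_{\mathrm{par}}$ rather than separating off a ``bad sum'' whose summands do not.
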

\begin{proof}


  Since the generators are themselves elements in the module, the containment $\mathcal{M}^{(k)} \subset \mathcal{M}^{k} \cap
  \Psi_{\mathrm{par}}^{k,k}$ is obvious from the definition.

  In the other direction, it suffices to show that if one has $j \le
  k$ and $Q_1
  \dots Q_j \in \Psi_{\mathrm{par}}^{0,0}$, then for $A_{i_1}, \dots,
  A_{i_j} \in \mathbf{A}$ with
  $$
L = Q_1 A_{i_1} \cdots Q_j A_{i_j} \in \Psi_{\mathrm{par}}^{k,k},
$$
then $L \in \mathcal{M}^{(k)}$.  If $j = 1$ this follows by
definition. For $j \geq 2$, and for $l$ of the $Q_i$ not equal to the identity,  we argue by induction: assume the result is true for all smaller values of $j$ or for the same value of $j$ and smaller values of $l$. When $l=0$ or $1$ the result is easy: either the products are already in the order \eqref{eq:Mkgen} in which case the result is immediate, or we reorder using \eqref{eq:commgen} which reduces to the case of $j' < j$. For larger values of $l$, we shift factors of $Q_i$ to the left, using the identity
\begin{equation*}
  \begin{gathered}
    Q_1 A_{i_1} \cdots Q_{l-2} A_{i_{l-2}} Q_{l-1} A_{i_{l-1}} A_{i_l} \cdots
      A_{i_j} \\
      \qquad = Q_1 A_{i_1} \cdots Q_{l-2} Q_{l-1} A_{i_{l-2}} A_{i_{l-1}}
         \cdots  A_{i_j} + Q_1 [A_{i_1}, Q_{l-2}] A_{i_{l-2}} Q_{l-1} A_{i_{l-1}}  \cdots  A_{i_j}.
      \end{gathered}
\end{equation*}
By induction on $l$ the first term on the right lies in
$\mathcal{M}^{(k)}$, while, by the first property in \eqref{eq:module
  properties} $ [A_{i_1}, Q_{l-2}] \in \Psi_{\mathrm{par}}^{0,0}$, so
the second term on the right hand side lies in $\mathcal{M}^{(k)}$ by
induction on $j$. We can then reorder the generators $A_i$ as before. 
  \end{proof}
  
  \begin{remark} Lemma~\ref{thm:order doesnt matter} shows that the finiteness of the norm \eqref{eq:mod.reg.sob.norm} implies that $Au \in H^{s,l}_{\mathrm{par}}$ for all $A\in\SM^{(k)}$, so this is a suitable norm for the space \eqref{eq:1module}.
  \end{remark}
  
We next prove two straightforward results about boundedness of operators on module regularity spaces.

\begin{lemma}\label{thm:order really doesnt matter}
  Assume that $\mathbf{A}$ is an admissible set of generators of the module $\mathcal{M}$.  Let $u \in \mathcal{S}'$
  and let $L \in \mathcal{M}^{(k)}$.  Then
    $$
\| L u \|_{H^{s, \mathrm{r}}} \lesssim \| u \|_{H^{s, \mathrm{r};k}_{\SM}}.
  $$
\end{lemma}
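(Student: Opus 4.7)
The strategy is essentially bookkeeping: the module regularity norm $\|u\|_{H^{s,\mathrm{r};k}_{\SM}}$ is defined as a sum over a fixed finite set of generating products $\mathbf{A}^\alpha \in \mathcal{M}^{(k)}$, and by definition of $\mathcal{M}^{(k)}$ as a $\Psi^{0,0}_{\mathrm{par}}$-module, every $L \in \mathcal{M}^{(k)}$ can be written as a finite sum $L = \sum_\alpha Q_\alpha \mathbf{A}^\alpha$ with $Q_\alpha \in \Psi^{0,0}_{\mathrm{par}}$. The boundedness estimate will then follow from the triangle inequality together with the fact that zeroth-order operators are bounded on parabolic (variable-order) Sobolev spaces.

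Concretely, the plan is as follows. First, I would appeal to Definition \ref{def:module.power} to write
$$
L = \sum_{\alpha \in F} Q_\alpha \mathbf{A}^\alpha
$$
for some finite index set $F$ of multi-indices satisfying $|\alpha| \le k$ and $\mathbf{A}^\alpha \in \Psi^{k,k}_{\mathrm{par}}$, and some $Q_\alpha \in \Psi^{0,0}_{\mathrm{par}}$; this is the defining property of the module generated by the specified set. Then, applying the triangle inequality in $H^{s,\mathrm{r}}_{\mathrm{par}}$ and using the $L^2$-boundedness of operators of order $(0,0)$ on parabolic Sobolev spaces (which extends to the variable-order case inside the enlarged calculus of type $(1,\delta)$), I obtain
$$
\|L u\|_{H^{s,\mathrm{r}}_{\mathrm{par}}} \le \sum_{\alpha \in F} \|Q_\alpha \mathbf{A}^\alpha u\|_{H^{s,\mathrm{r}}_{\mathrm{par}}} \le \sum_{\alpha \in F} C_\alpha \|\mathbf{A}^\alpha u\|_{H^{s,\mathrm{r}}_{\mathrm{par}}}.
$$
Finally, by Cauchy--Schwarz (or simply by the definition \eqref{eq:mod.reg.sob.norm} of the module regularity norm), the right-hand side is controlled by $C \|u\|_{H^{s,\mathrm{r};k}_{\SM}}$, with a constant depending on $L$ through the bounds on the $Q_\alpha$ and on $|F|$.

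There is essentially no obstacle, since both ingredients are already in place: the generating representation $L = \sum_\alpha Q_\alpha \mathbf{A}^\alpha$ is immediate from Definition \ref{def:module.power}, and boundedness of zeroth-order parabolic scattering operators is standard. The only mild technical point is to note that the $Q_\alpha$ need not be unique, but any such decomposition yields the bound; and that in the variable-order setting the boundedness of $\Psi^{0,0}_{\mathrm{par}}$ on $H^{s,\mathrm{r}}_{\mathrm{par}}$ follows from the same H\"ormander square-root argument used for the constant-order case. It is worth emphasizing that Lemma \ref{thm:order doesnt matter} is what makes this clean: since $\mathcal{M}^{(k)} = \mathcal{M}^k \cap \Psi^{k,k}_{\mathrm{par}}$ is intrinsic, the bound on $Lu$ genuinely depends only on the module and not on our particular choice of generators, though of course the implicit constant in the estimate does depend on the chosen generating set through the norm on the right.
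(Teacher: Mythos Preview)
Your proof is correct and in fact more direct than the paper's. You appeal to Definition~\ref{def:module.power} itself: since $\mathcal{M}^{(k)}$ is \emph{defined} as the $\Psi^{0,0}_{\mathrm{par}}$-module generated by the ordered products $\mathbf{A}^\alpha$, any $L \in \mathcal{M}^{(k)}$ is immediately a finite sum $\sum Q_\alpha \mathbf{A}^\alpha$, and boundedness of $Q_\alpha$ on $H^{s,\mathrm{r}}_{\mathrm{par}}$ finishes the argument. The paper instead works through the characterization $\mathcal{M}^{(k)} = \mathcal{M}^k \cap \Psi^{k,k}_{\mathrm{par}}$ from Lemma~\ref{thm:order doesnt matter}, writes a general element as $Q_1 A_{i_1} \cdots Q_j A_{i_j}$, and then runs an induction on $j$ and on the number of non-identity coefficients $Q_l$, commuting the $Q$'s to the left --- essentially replaying part of the proof of Lemma~\ref{thm:order doesnt matter}. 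Your route bypasses that commutation step entirely; the paper's route has the minor advantage of making the estimate manifest for elements presented in the unordered form, but since that reduction has already been carried out in the preceding lemma, your shortcut loses nothing.
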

\begin{proof}
  It suffices to show that if
  $\mathbf{B}^\alpha \in \mathcal{M}^{|\alpha|}$ with
  $B^\alpha \in \Psi^{k,k}_{\mathrm{par}}$.  Then
  $$
\| B^\alpha u \|_{H^{s, \mathrm{r}}} \lesssim \| u \|_{H^{s, \mathrm{r};k}_{\SM}}.
$$

For $|\alpha| = 1$, such $B^{\alpha}$ are finite sums of $Q A$ where
$A \in \mathbb{A} \cap \Psi^{1,1}_{\mathrm{par}}$ and
$Q \in \Psi^{0,0}_{\mathrm{par}}$.  Since $Q$ is bounded on
$H^{s,\mathrm{r}}_{\mathrm{par}}$ (see \cite[Sect.\ 2.4]{TDSL}) we have
  $$
\| QA u \|_{H^{s, \mathrm{r}}} \lesssim  \| A u \|_{H^{s, \mathrm{r}}}
\le \| u \|_{H^{s, \mathrm{r};1}_{\SM}},
$$
where the second inequality holds by definition.

For arbitrary $k$, writing $B^\alpha = Q_1 A_{i_1} \dots Q_j A_{i_j}$
for $j \le |\alpha|$, as in the previous lemma we can argue by
induction on $j$ and the number $l\le j$ such that $Q_l = \dots = Q_j
= \Id.$ \end{proof}

\begin{lemma}\label{thm:bounded on mod reg spaces}
  Let $B \in \Psi_{\mathrm{par}}^{m,l}$ with $m,l \in \mathbb{R}$, then for $s
  \in \mathbb{R}$ and $\mathrm{r}$ a variable order, then
  \begin{equation}
B \colon H^{s, \mathrm{r};k}_{\SM} \lra H^{s - m, \mathrm{r}
  - l;k}_{\SM}\label{eq:bounded on mod reg spaces}
\end{equation}
is bounded.
\end{lemma}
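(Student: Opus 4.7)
The plan is to reduce the claim to showing that for every admissible product $\mathbf{A}^\alpha \in \mathcal{M}^{(k)}$ one has
\[
\|\mathbf{A}^\alpha B u\|_{H^{s-m,\mathrm{r}-l}_{\mathrm{par}}} \lesssim \|u\|_{H^{s,\mathrm{r};k}_{\mathcal{M}}},
\]
since the squared left-hand side summed over the finite generating family $\{\mathbf{A}^\alpha\}$ is exactly $\|Bu\|_{H^{s-m,\mathrm{r}-l;k}_{\mathcal{M}}}^2$ by \eqref{eq:mod.reg.sob.norm}. The heart of the argument is then a commutator expansion that moves $B$ to the left of the string of generators, trading generator factors for pseudodifferential operators of the same order $(m,l)$ by means of property \eqref{eq:module properties}.

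More precisely, I will prove by induction on the number $j$ of generator factors in $\mathbf{A}^\alpha = A_{i_1}\cdots A_{i_j}$ that there is a finite decomposition
\[
\mathbf{A}^\alpha B = \sum_\beta \widetilde B_\beta \, \mathbf{A}^{\beta},
\]
where each $\widetilde B_\beta \in \Psi^{m,l}_{\mathrm{par}}$ and each $\mathbf{A}^{\beta}$ is a product of at most $j$ generators from $\mathbf{A}$. The base case $j=0$ is tautological. For the inductive step, write $\mathbf{A}^\alpha = A_{i_1} \mathbf{A}^{\alpha'}$, apply the hypothesis to $\mathbf{A}^{\alpha'} B = \sum_{\beta'} \widetilde B_{\beta'} \mathbf{A}^{\beta'}$, and for each summand compute
\[
A_{i_1} \widetilde B_{\beta'} \mathbf{A}^{\beta'} = \widetilde B_{\beta'}\bigl(A_{i_1}\mathbf{A}^{\beta'}\bigr) + [A_{i_1},\widetilde B_{\beta'}]\mathbf{A}^{\beta'}.
\]
By \eqref{eq:module properties} the commutator $[A_{i_1},\widetilde B_{\beta'}]$ lies again in $\Psi^{m,l}_{\mathrm{par}}$, and the second term therefore has the required form with one fewer generator factor, while the first is already of the desired shape. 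Iterating closes the induction.

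With the decomposition in hand, each product $\mathbf{A}^{\beta}$ arising on the right has at most $k$ generator factors and, by composition of orders, $\widetilde B_\beta \mathbf{A}^{\beta} \in \Psi^{k+m,k+l}_{\mathrm{par}}$; using Lemma~\ref{thm:order doesnt matter} we may arrange (by collecting terms of matching orders, or simply by noting that any $\mathbf{A}^{\beta}$ of length $j_\beta$ lies in $\mathcal{M}^{(k_\beta)}$ for $k_\beta$ its total parabolic order, bounded by $k$) that each $\mathbf{A}^{\beta}$ belongs to some $\mathcal{M}^{(k_\beta)}$ with $k_\beta \le k$. Lemma~\ref{thm:order really doesnt matter} then gives $\|\mathbf{A}^{\beta}u\|_{H^{s,\mathrm{r}}_{\mathrm{par}}} \lesssim \|u\|_{H^{s,\mathrm{r};k}_{\mathcal{M}}}$, and standard parabolic pseudodifferential mapping on variable-order Sobolev spaces gives $\widetilde B_\beta \colon H^{s,\mathrm{r}}_{\mathrm{par}} \to H^{s-m,\mathrm{r}-l}_{\mathrm{par}}$ boundedly. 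Summing the finitely many terms yields \eqref{eq:bounded on mod reg spaces}.

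The main obstacle is purely organizational: one must verify that the commutator expansion stays within the \emph{reduced} powers $\mathcal{M}^{(k_\beta)}$ rather than the ordinary powers $\mathcal{M}^{k_\beta}$ --- that is, that the parabolic order of each residual generator string is under control. This is precisely the content supplied by Lemma~\ref{thm:order doesnt matter} and by the fact that \eqref{eq:module properties} preserves, rather than lowers, the order $(m,l)$ of the ``$B$-side'' factor at each step of the expansion.
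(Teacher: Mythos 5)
Your proof is correct and rests on the same core mechanism as the paper's: commuting $B$ past the generators and using \eqref{eq:module properties} to keep each commutator $[A_i, B]$ in $\Psi^{m,l}_{\mathrm{par}}$. The difference is purely organizational. The paper inducts directly on the module regularity order $k$: after the one-pass Leibniz expansion $A_1\cdots A_m B = B A_1\cdots A_m + \sum_j A_1 \cdots A_{j-1}[B,A_j]A_{j+1}\cdots A_m$, each residual term has the fresh $\Psi^{m,l}_{\mathrm{par}}$ operator $[B,A_j]$ sandwiched between two generator strings of strictly smaller total order, and the inductive hypothesis (the Lemma itself at lower $k$, applied to $[B,A_j]$) together with Lemma~\ref{thm:order really doesnt matter} finishes. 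You instead isolate an explicit algebraic identity $\mathbf{A}^\alpha B = \sum_\beta \widetilde B_\beta \mathbf{A}^\beta$, proved by induction on the generator count, and only then take norms. Both routes work; yours has the modest advantage of making visible that every residual string $\mathbf{A}^\beta$ is a subproduct of $\mathbf{A}^\alpha$ in its original order, hence automatically lies in $\Psi^{k_\beta,k_\beta}_{\mathrm{par}}$ with $k_\beta \le k$ and thus in $\mathcal{M}^{(k_\beta)}$, so that the reduced-power bookkeeping is separated cleanly from the functional-analytic estimate.
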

\begin{proof}
  By induction on $k$, the base case being the case $k = 0$ which is
  true by \cite[Sect.\ 2.4]{TDSL}.  Assuming it is true for $k$, let
    $A_i \in \mathbf{G}_c, i = 1, \dots, m$ have
    $A_1 \dots A_m \in \Psi^{k + 1,k + 1}_{\mathrm{par}}$.  Then
    \begin{equation*}
      \begin{gathered}
\| A_1 \dots A_m B u \|_{s - m, \mathrm{r} - l} = \| B A_1 \dots A_m u
\|_{s - m, \mathrm{r} - l} \\
\qquad \qquad \qquad + \sum_{j = 1}^m \| A_1 \dots A_{j - 1} [B,
A_j] A_{j + 1} \dots A_n u \|_{s - m, \mathrm{r} - l}
      \end{gathered}
    \end{equation*}
    The first term on the right is bounded $ \| A_1 \dots A_m u
\|_{s , \mathrm{r} } \le \| u \|_{H^{s,
    \mathrm{r};k}_{\SM}}$ by definition.  The terms in the sum are of the
form $\| A_1 \dots A_{j - 1} \tilde B A_{j + 1} \dots A_n u \|_{s - m,
  \mathrm{r} - l}$ with $\tilde B \in \Psi^{m,l}_{\mathrm{par}}$ but $A_1 \dots
A_{j - 1} \in \Psi^{k_1, k_1}_{\mathrm{par}}$ and $A_{j + 1} \dots A_{m} \in
\Psi^{k_2, k_2}_{\mathrm{par}}$ where $k_1 + k_2 \le k$, and thus by definition
and then induction
$$
\| A_1 \dots A_{j - 1} \tilde B A_{j + 1} \dots A_n u \|_{s - m,
  \mathrm{r} - l} \lesssim \| \tilde B A_{j + 1} \dots A_n u \|_{s - m,
  \mathrm{r} - l; k_1} \lesssim \| \tilde B u \|_{s - m,
  \mathrm{r} - l; k},
$$
which is what we wanted.
\end{proof}

The following corollary is then just the global elliptic parametrix
construction (discussed in \cite{TDSL}) and the boundedness given by
Lemma \ref{thm:bounded on mod reg spaces}
\begin{cor}\label{cor:Bmodinv}
Suppose $B \in \Psip m l$ is elliptic, meaning $\sigma_{m,l}(B)$ is
nonvanishing on $\partial ( \SymbSpa)$. Then the mapping
in \eqref{eq:bounded on mod reg spaces} is Fredholm, and its kernel
and cokernel lie in $\mathcal{S}(\mathbb{R}^{n + 1})$ and are
therefore independent of all regularity parameters.  In particular if
$\ker A = \{ 0 \} = \coker A$ then \eqref{eq:bounded on mod reg
  spaces} is a continuous bijection.
\end{cor}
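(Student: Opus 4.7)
The plan is to invoke the global elliptic parametrix for $B$ in the parabolic calculus (recalled in \cite{TDSL}): since $\sigma_{m,l}(B)$ is nowhere vanishing on $\partial(\SymbSpa)$, there is $B' \in \Psip{-m}{-l}$ with $BB' = I + R_1$ and $B'B = I + R_2$ for smoothing remainders $R_1, R_2 \in \Psip{-\infty}{-\infty}$. By Lemma \ref{thm:bounded on mod reg spaces}, $B'$ defines a bounded map $H^{s-m, \mathrm{r}-l;k}_{\SM} \to H^{s, \mathrm{r};k}_{\SM}$, and each $R_i$ defines a bounded map between any two module regularity spaces; in particular $R_i : H^{s, \mathrm{r};k}_{\SM} \to H^{s+1, \mathrm{r}+1;k}_{\SM}$ is bounded, and similarly on the target side.

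The main analytic step I need is the compact embedding
\begin{equation*}
H^{s+1, \mathrm{r}+1;k}_{\SM} \hookrightarrow H^{s, \mathrm{r};k}_{\SM}.
\end{equation*}
Since the norm \eqref{eq:mod.reg.sob.norm} is a \emph{finite} sum over the generator products $\mathbf{A}^\alpha$ with $|\alpha| \le k$ and $\mathbf{A}^\alpha \in \Psip{k}{k}$, this reduces by a diagonal extraction argument to the compact embedding $\Hpar{s+1}{\mathrm{r}+1} \hookrightarrow \Hpar{s}{\mathrm{r}}$ of basic parabolic Sobolev spaces recalled in Section \ref{sec:tech}. Composing with the bounded map $R_i : H^{s, \mathrm{r};k}_{\SM} \to H^{s+1, \mathrm{r}+1;k}_{\SM}$ yields compactness of $R_i$ on $H^{s, \mathrm{r};k}_{\SM}$, and analogously on $H^{s-m, \mathrm{r}-l;k}_{\SM}$. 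The abstract parametrix criterion (two-sided inverse modulo compacts) then gives Fredholmness of $B$.

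For the kernel, if $Bu = 0$ with $u \in H^{s, \mathrm{r};k}_{\SM}$, applying $B'$ gives $u = -R_2 u$; since $R_2 \in \Psip{-\infty}{-\infty}$ has Schwartz kernel and hence maps $\mathcal{S}'(\RR^{n+1}) \to \mathcal{S}(\RR^{n+1})$, one concludes $u \in \mathcal{S}(\RR^{n+1})$, so $\ker B$ coincides with the space of Schwartz solutions of $Bu = 0$, independent of $s, \mathrm{r}, k$. For the cokernel, any $v \in H^{s-m, \mathrm{r}-l;k}_{\SM}$ satisfies $v = B(B'v) - R_1 v$ with $R_1 v \in \mathcal{S}$, so the natural map $\mathcal{S} \to H^{s-m, \mathrm{r}-l;k}_{\SM} / B\, H^{s, \mathrm{r};k}_{\SM}$ is surjective. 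To see that its kernel is exactly $B\mathcal{S}$, note that if $B\psi \in \mathcal{S}$ with $\psi \in H^{s, \mathrm{r};k}_{\SM}$, then $\psi = B'(B\psi) - R_2\psi$ lies in $\mathcal{S}$. Hence $\coker B \cong \mathcal{S}/B\mathcal{S}$ is also independent of $s, \mathrm{r}, k$.

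Finally, under the hypothesis $\ker B = \{0\} = \coker B$, the Fredholm map $B$ is a bounded bijection between Banach spaces, so the open mapping theorem provides continuity of $B^{-1}$. The only mildly delicate point will be the compact embedding of module regularity spaces, but this is essentially a finite combinatorial reduction to the known compactness of parabolic Sobolev embeddings; beyond that, everything is a direct parametrix chase.
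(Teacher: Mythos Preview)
Your proof is correct and follows exactly the approach the paper indicates: the global elliptic parametrix construction combined with the boundedness on module regularity spaces from Lemma~\ref{thm:bounded on mod reg spaces}. You have simply filled in the standard details (compactness of the embedding via the finite generator sum, and the identification of kernel and cokernel with Schwartz objects via the smoothing remainders) that the paper's one-line proof leaves implicit.
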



\subsection{The modules $\modu_c$}
We now introduce the modules of chief interest in this article. These are indexed by a real number $c$. 
Define 
\begin{equation}\label{eq:G.def}
	\mathbf{G}_{c}:=\{D_{z_j},\ D_t,\ \mathrm{Rot_{ij}},\
        tD_{z_j}-cz_j/2,\ 2tD_t+z\cdot
        D_z,\ Id\}
      \end{equation}
      where $\mathrm{Rot_{ij}}=z_iD_{z_j}-z_jD_{z_i}$. \emph{We claim that $\mathbf{G}_c$ is an admissible set of generators in the sense of Definition~\ref{def:testmod}.} The first condition, \eqref{eq:commgen}, is obvious as the elements of $\mathbf{G}_c$ have coefficients that are either linear or constant, and $\mathbf{G}_c$ contains all the constant coefficient vector fields. As for the second condition, \eqref{eq:module properties}, this condition is automatically satisfied for all generators of order $(1,1)$, so we only need to check this for $D_t$ and for $2t D_t +  z \cdot D_z$. For any $B \in \Psip ml$, we have 
$$
\sigma(i[D_t, B]) = \partial_t \sigma(B), 
$$
and 
$$
\sigma(i[2tD_t + z \cdot D_t, B]) = H_{2t\tau + z \cdot \zeta} \sigma(B) \text{ modulo } S_{\mathrm{par}}^{m,l}. 
$$
So we need to check that the Hamilton vector fields of $D_t$ and for $2t D_t +  z \cdot D_z$ map $S_{\mathrm{par}}^{m,l}$ to $S_{\mathrm{par}}^{m,l}$. This is obvious for $D_t$, while the Hamilton vector field of $2t D_t +  z \cdot D_z$ is
$$
2t \partial_t + z \cdot \partial_z - 2\tau \partial_\tau - \zeta \cdot \partial_\zeta,
$$
and it is easily checked that this is a b-vector field on the compactified cotangent space, hence maps $S_{\mathrm{par}}^{m,l}$ to $S_{\mathrm{par}}^{m,l}$. This verifies \eqref{eq:module properties}. We can now define our modules $\modu_c$. 

\begin{defn}
The modules $\modu_c$ are those modules generated by the generators $\mathbf{G}_c$ in \eqref{eq:G.def}. We will usually write $\modu$ instead of $\modu_1$. 
\end{defn}

The form of expansions \eqref{eq:fminus} and \eqref{eq:fplus} explains why we need to consider a one-parameter family of modules $\modu_c$. We are interested in multiplicative properties of these module regularity spaces. Clearly, if we multiply two functions of the form $e^{i|z|^2/4t} \tilde u$, where $\tilde u$ has some conormal regularity, then the product will have the form $e^{i|z|^2/2t} w$ with 
$w$ similarly conormal. More generally, we can multiply functions with oscillatory prefactors $e^{ic_1|z|^2/4t}$ and $e^{ic_2|z|^2/4t}$, and the result will have oscillatory prefactor $e^{i(c_1 + c_2)|z|^2/4t}$. The modules associated with such prefactors are precisely the modules $\modu_c$, and for $c \neq 0$ they are characteristic at a scaled radial set which is the radial set for a modified Schr\"odinger operator $P_c$ where the operator $D_t$ is replaced by $c D_t$. Thus, our multiplicative result 
will show that products of functions with $\modu_{c_1}$-regularity and $\modu_{c_2}$-regularity have $\modu_{c_1+c_2}$-regularity; see Propositions~\ref{thm:final mult} and \ref{thm:s-mult}. 

We also remark that the significance of condition \eqref{eq:phaseinv} is that if $u$ has $\modu_1$-regularity, then $N[u]$ also has $\modu_1$-regularity. This is crucial for the purposes of defining a contraction map $\Phi$ in Section~\ref{sec:results}.

We note that, for $c \neq 0$, there is an alternative generating set 
      
%
%
\begin{equation}
  \label{eq:other gen set}
	\tilde{\mathbf{G}}_{c}:=\{D_{z_j},\ D_t,\ \mathrm{Rot_{ij}},\
        tD_{z_j}-cz_j/2,\ t(c D_t+ D_z \cdot D_z),\ \Id\}.
\end{equation}
In fact, the identity 
$$
t(c D_t + D_z \cdot D_z) - D_z \cdot (t D_z - cz/2)
= (c/2)(2 t D_t - z \cdot D_z)
$$
shows that we can switch between these two that, if $\tilde \modu_c$ denotes the
module for  $\tilde{\mathbf{G}}_c$, then
$$
\tilde \modu_c^{(k)} = \modu_c^{(k)}.
$$
This holds despite the fact that $\modu_c \neq \tilde \modu_c$, and despite the fact that $\tilde{\mathbf{G}}_{c}$ is \emph{not} an admissible set of generators.  Thus
the modules generated by $\tilde{\mathbf{G}}_c$ and $\mathbf{G}_c$
are equivalent from the standpoint of module regularity spaces.



\vskip 8pt 

In \cite{TDSL}, we defined the module $\mathcal{N}$ as the module of all pseudodifferential operators of order $(1,1)$ characteristic on both radial sets $\SR_\pm$. This module is generated by the operators 
\begin{equation}
\Id, \quad \Rot_{ij}, \quad t D_{z_i} - z_i/2, \quad D_{z_j}, \quad E_{-1} D_t, \quad \ang{Z} E_{-1} P, 
\label{eq:Ngen}\end{equation}
where $\ang{Z} = (1 + |z|^2 + t^2)^{-1/2}$, and $E_{-1} = \Op((1 + |\zeta|^2 + \tau^2)^{-1/4})$ is an elliptic element of order $(-1, 0)$. Our present module $\modu$ is a replacement for $\mathcal{N}$: it is also generated by operators characteristic to $\SR_\pm$ (viewed as operators of order $(*, 1)$; notice that $D_z$ and $D_t$ have spacetime order $0$, therefore their symbol at $\SR$ trivially vanishes when viewed as a symbol of order $(*,1)$). 
However, for $\modu$, we have chosen generators that are all differential operators of order $1$, so that a Leibniz rule is satisfied when the generators are applied to products; this is extremely convenient for deducing algebra properties as in the following section. 

Although the concepts of module regularity with respect to the module $\modu$ is distinct from that of module regularity with respect to $\SN$, for solutions $u$ of $Pu = 0$ there is in fact no difference:

\begin{lemma}\label{thm:mod equiv in ker}
For any $s, \mathrm{r}$, $H_\mathcal{N}^{s, \mathrm{r};k} \cap \ker P = 
H_\SD^{s, \mathrm{r};k} \cap \ker P$. 
\end{lemma}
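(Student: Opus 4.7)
Both admissible generating sets share the common generators $\mathbf{C} := \{\Id, \Rot_{ij}, tD_{z_j} - z_j/2, D_{z_j}\}$, and differ only in how they encode $D_t$: the module $\mathcal{N}$ uses the pseudodifferential generators $E_{-1} D_t$ and $\ang{Z} E_{-1} P$ (both in $\Psip 1 1$), while $\SD$ uses the differential generators $D_t \in \Psip 2 0$ and $2tD_t + z \cdot D_z \in \Psip 2 1$. The plan is to prove both inclusions simultaneously by induction on $k$, exploiting the relation $D_t u = -(\Delta_{g(t)} + V)u$ valid on $\ker P$ to trade the differing generators across the two modules.

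For the forward inclusion $H^{s,\mathrm{r};k}_\SD \cap \ker P \subset H^{s,\mathrm{r};k}_\mathcal{N} \cap \ker P$, consider $A \in \mathcal{N}^{(k)}$ written as a composition of $\mathcal{N}$-generators; using \eqref{eq:commgen} and \eqref{eq:module properties}, any factor of $\ang{Z} E_{-1} P$ or $E_{-1} D_t$ is commuted to the rightmost position, modulo commutators involving strictly fewer non-common generators (handled by the inductive hypothesis). At the rightmost slot $\ang{Z} E_{-1} P u = 0$, while
\begin{equation*}
E_{-1} D_t u = -E_{-1}(\Delta_{g(t)} + V) u = -\sum_{ij} (g^{ij} E_{-1} D_{z_i}) D_{z_j} u + R u ,
\end{equation*}
where $g^{ij} E_{-1} D_{z_i} \in \Psip 0 0$ acts boundedly by Lemma \ref{thm:bounded on mod reg spaces}, $D_{z_j} \in \mathbf{C}$ is common to both modules, and $R$ is a strictly lower-order remainder from commutators and the smoothing contribution of the compactly supported $V$.

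For the reverse inclusion, the non-common $\SD$-generators are rewritten on $\ker P$ as
\begin{align*}
D_t u &= -\textstyle\sum_{ij} g^{ij}(t) D_{z_i} D_{z_j} u + (\text{lower order}), \\
(2tD_t + z \cdot D_z) u &= -2\textstyle\sum_j (tD_{z_j} - z_j/2) D_{z_j} u + (\text{lower order}),
\end{align*}
the second identity following from $-2t\Delta + z \cdot D_z = -2\sum_j (tD_{z_j} - z_j/2) D_{z_j}$ for the flat Laplacian, with corrections from $g^{ij}(t) - \delta^{ij}$ and first-order metric terms being compactly supported and absorbed into the remainder. Each right-hand side is a $\Psip 0 0$-combination of products of two $\mathcal{N}$-generators, with parabolic orders $(2,0)$ and $(2,1)$ matching those of the left-hand sides. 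Compositions are handled inductively in the same rightmost-substitution fashion.

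The principal obstacle will be combinatorial bookkeeping: after substitution and commutator rearrangement the resulting products must land in the correct reduced power $\mathcal{M}^{(k)} = \mathcal{M}^k \cap \Psip k k$ (via the characterization of Lemma \ref{thm:order doesnt matter}). Since each substitution above preserves parabolic orders exactly, and commutators strictly reduce order by the admissibility properties \eqref{eq:commgen} and \eqref{eq:module properties}, the induction on $k$ closes and the two module regularity spaces coincide on $\ker P$.
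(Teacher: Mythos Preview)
Your approach is correct and takes a genuinely different route from the paper's own proof. The paper first passes to the alternative generating set $\tilde{\mathbf{G}}_1$ for $\modu$ (replacing $2tD_t + z\cdot D_z$ by $tP$), arranges the $P$-containing generator on the right so that it annihilates $u$, and thereby reduces the comparison to products $R^j(tD_z - z/2)^\beta D_z^\alpha(E_{-1}D_t)^m u$ versus $R^j(tD_z - z/2)^\beta D_z^\alpha D_t^{m'} u$. It then argues \emph{microlocally}: decomposing $u = u_1 + u_2$ with $u_2$ microsupported away from $\Char(P)$ (hence Schwartz), and for $u_1$ exploiting that on $\Char(P)$ one of $\Id$ or $D_{z_i}$ is elliptic of order $(1,0)$, so that for suitable $|\alpha'| = m'$ the operator $D_z^{\alpha'}E_{-1}^{m'}$ is microlocally elliptic of order $(0,0)$ and can be stripped off. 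Your argument instead substitutes $D_t u = -(\Delta_{g(t)} + V)u$ directly at the operator level, converting each non-common generator into a $\Psip 0 0$-combination of products of common generators while preserving parabolic orders, and closes by induction on the number of non-common factors. Your route is more elementary in that it avoids any microlocal decomposition or parametrix construction; the paper's route is shorter on bookkeeping because it handles all powers of $E_{-1}D_t$ (resp.\ $D_t$) at once rather than peeling them off one at a time. One point you should make explicit: after a substitution such as $D_t \to g^{ij}D_{z_i}D_{z_j}$ the \emph{number} of generating factors increases by one, and you need the parabolic-order budget $2a + 2b + c \le k$ (with $a,b$ counting the order-two generators) to guarantee the resulting product still lies in $\mathcal{M}^{(k)}$ via Lemma~\ref{thm:order doesnt matter}; this is implicit in your order-preservation remark but deserves a sentence.
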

\begin{proof}
  Assume that $Pu = 0$. Membership $u \in H_\mathcal{N}^{s, \mathrm{r};k}$ is
  equivalent to membership in $H_{par}^{s, \mathrm{r}}$ after
  the application of $k$ or fewer generators of $\SN$, which are listed in \eqref{eq:Ngen}. 
  We can order them so that any powers of the generator $\ang{Z} E_{-1} P$, if present, occur on the right. Of course, this generator annihilates $u$, so for $u \in \ker P$, we only need consider products of generators of the form 
 \begin{equation}
 R^j (tD_z - z/2)^\beta D_z^\alpha (E_{-1} D_t)^m u, \quad |\alpha| + |\beta| + j +m \le k,
 \label{eq:Nfactors}\end{equation}
 where $R^j$ is a product of $j$ rotations.
 
 On the other hand, membership of $u \in H_{\modu}^{s, \mathrm{r};k}$ is
  equivalent to membership in $H_{par}^{s, \mathrm{r}}$ after the application of products of generators of $\modu$, with total order at most $(k,k)$. Following the discussion in the previous subsection, we can take the following generators of $\modu$:
  \begin{equation}
\Id, \quad  \Rot_{ij}, \quad t D_{z_i} - z_i/2, \quad D_{z_j}, \quad D_t, \quad  tP, 
\end{equation}
(we may replace $D_t + D_z \cdot D_z$ with $P$ as these are equal in a neighbourhood of spacetime infinity by assumption). So the difference is just in the final two generators. We may arrange these generators in any order, so we put the factors of $tP$, if present, on the right, and then they annihilate $u$. So we only need to consider products of operators of the form 
 \begin{equation}
 R^j (tD_z - z/2)^\beta D_z^\alpha D_t^{m'} u, \quad |\alpha| + |\beta| + j + 2m' \le k. 
 \label{eq:Dfactors}\end{equation}
 The only difference between \eqref{eq:Nfactors} and \eqref{eq:Dfactors} is in the final factor: we have $m$ powers of $(E_{-1} D_t)$ in the former case, each of order $(1, 0)$ and $m'$ powers of $D_t$ in the latter case, each of order $(2, 0)$. 
 
 First assume that $u \in H_\mathcal{N}^{s, \mathrm{r};k}$. We microlocally decompose $u = u_1 + u_2$, where $u_1$ is microsupported near $\Char(P)$ and $u_2$ is microsupported away from $\Char(P)$. Then from $Pu = 0$ we see that $u_2$ is Schwartz, and therefore is contained in both module regularity spaces. Therefore it remains to consider $u_1$. We will exploit the fact that, on the microsupport of $u_1$, on any sufficiently small microlocal neighbourhood, either $\Id$ or one of the $D_{z_i}$ are elliptic (as an operator of order $(1, 0)$). 
 
 To show that $u_1 \in H_{\modu}^{s, \mathrm{r};k}$ we need to \eqref{eq:Dfactors} is in $H_{par}^{s, \mathrm{r}}$. Fix $\alpha$, $\beta$, $j$ and $m'$ as in \eqref{eq:Dfactors}. Choose any multi-index $\alpha'$ with $|\alpha'| = m'$. Then, according to \eqref{eq:Nfactors}, we have 
$$
R^j (tD_z - z/2)^\beta D_z^{\alpha + \alpha'} (E_{-1} D_t)^{m'} u \in H_{par}^{s, \mathrm{r}}.
$$
In any microlocal neighbourhood, we can choose $\alpha'$ so that $D_z^{\alpha'} E_{-1}^{m'}$ is elliptic of order $(0,0)$, and therefore  microlocally invertible. We can therefore remove this factor and deduce \eqref{eq:Dfactors} is in $H_{par}^{s, \mathrm{r}}$, as required. 

The argument can be reversed to show the reverse inclusion. 
 
\end{proof}

\section{Algebra properties}
\label{sec:mult}

\subsection{Time cut-offs and reduction to the ``flat''
  module} \label{sec:flat module}

When proving algebra properties for the module regularity spaces
$H^{s,r;k}_{\mathrm{par}, \modu_c}$, we will conjugate the generators
of $\modu_c$ by the oscillatory factor $e^{ic|z|^2/4t}$ and then
appeal to standard Sobolev algebra properties. The singularity of the
oscillatory factor at $t=0$ motivates the introduction of subspaces of
the module regularity spaces supported on either side of a spatial
slice $\{t=t^*\}$.

We fix a time $t_0$ and let
\begin{equation}
H_{\modu_c}^{0, l; k}(t \geq t_0) = \left\{ u \in
  H_{\modu_c}^{0, l; k}:  \supp (u)
  \subset \{ t \geq t_0 \} \right\}.
\label{eq:D-support}\end{equation}
Define $H_{\modu_c}^{0, l; k}(t \leq t_0)$ analogously. 

We can cut off functions in $H_{\modu_c}^{0,l;k}$ to the half-space $\{t \geq t_0\}$ by multiplying by a smooth cutoff function $\chi_{ \geq t_0}$ with the following properties.
\begin{equation}
  \label{eq:chi time}
  \chi_{\geq t_0} \in C^\infty(\mathbb{R}), \quad \chi(t) = 0 \mbox{ for } t \leq
  t_0 , \ \chi(t) = 1 \mbox{ for } t \ge
  t_0 + 1.
\end{equation}
We similarly can cut off to $\{t \leq t_0\}$ with $\chi_{\leq t_0}$ supported in $t \le t_0$ and
identically $1$ for $t \le t_0 - 1$.  Below we will typically use
$\chi_{\leq t_0} = 1 - \chi_{\geq t_0 -1}$.
\begin{lemma}
Denoting multiplication operators by $m_f(u) =f u$ for
  functions $f$, 
  \begin{equation}
    \label{eq:cont of cutoffs}
    m_{\chi_{\geq t_0}} \colon H_{\modu_c}^{0
      , l; k} \lra
    H_{\modu_c}^{0, l; k}(t \geq t_0), \quad 
    m_{\chi_{\leq t_0}} \colon H_{\modu_c}^{0, l; k} \lra
    H_{\modu_c}^{0, l; k}(t \leq t_0)
  \end{equation}
  are continuous.
\end{lemma}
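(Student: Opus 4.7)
The two statements are symmetric, so I argue the first; the argument for $\chi_{\leq t_0}$ is verbatim identical. The support claim is immediate from $\supp \chi_{\geq t_0} \subseteq \{t \geq t_0\}$, so the content of the lemma is the continuity of the multiplication operator $m_\chi$, $\chi := \chi_{\geq t_0}$, on $H_{\modu_c}^{0, l; k}$. A first observation is that while $\chi \in L^\infty(\RR^{n+1})$, the function $\chi$ is \emph{not} a symbol in $S^{0,0}_{\mathrm{par}}(\RR^{n+1})$: its derivative $D_t\chi = -i\chi'(t)$ is nonzero but independent of $z$, so it fails the symbolic bound $|D_t\chi| \leq C\ang{Z}^{-1}$. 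Consequently Lemma~\ref{thm:bounded on mod reg spaces} cannot be applied directly and a hands-on argument is required; the main obstacle is to bypass this failure by exploiting the rigid form of the generators in $\mathbf{G}_c$.

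The crucial preliminary step is the commutator computation. For $\chi = \chi(t)$ and each generator $A \in \mathbf{G}_c$, a direct calculation gives
\[
[D_{z_j}, m_\chi] = [\Rot_{ij}, m_\chi] = [tD_{z_j} - cz_j/2, m_\chi] = 0, \qquad [D_t, m_\chi] = m_{-i\chi'(t)}, \qquad [2tD_t + z\cdot D_z, m_\chi] = m_{-2it\chi'(t)}.
\]
Let $\mathcal{T}$ denote the class of smooth functions $\phi(t)$ on $\RR$ with $\phi \in L^\infty$ and each derivative $\phi^{(j)}$, $j \geq 1$, of compact support in $t$. Then $\chi \in \mathcal{T}$, and the computation above shows that $\mathcal{T}$ is closed under the map $\phi \mapsto [A, m_\phi]$ for every $A \in \mathbf{G}_c$; the only nontrivial check is that $t\phi'(t) \in \mathcal{T}$ whenever $\phi \in \mathcal{T}$, which holds because $\phi'$ is compactly supported. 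Moreover, multiplication by any $\phi \in \mathcal{T}$ is bounded on $H^{0,l}_{\mathrm{par}} = \ang{Z}^{-l}L^2(\RR^{n+1})$ by the trivial pointwise estimate $\|\phi v\|_{H^{0,l}_{\mathrm{par}}} \leq \|\phi\|_{L^\infty}\|v\|_{H^{0,l}_{\mathrm{par}}}$.

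Given these two ingredients, the continuity of $m_\chi$ follows directly, without induction. Fix $u \in H_{\modu_c}^{0,l;k}$ and $L = A_{i_1}\cdots A_{i_m} \in \modu_c^{(k)}$ (we may assume all $A_{i_j}$ are non-identity, as identity factors commute with $m_\chi$ and drop out harmlessly; this forces $m \leq k$ since every non-identity generator has parabolic order at least $1$). The Leibniz identity $[BC, D] = B[C,D] + [B,D]C$, applied iteratively, together with repeated commuting of the resulting multiplication operators leftward using the closure of $\mathcal{T}$ under $\phi \mapsto [A, m_\phi]$, yields
\[
L(\chi u) \;=\; \chi \, Lu \;+\; \sum_\alpha m_{\psi_\alpha}\, B_\alpha u,
\]
where each $\psi_\alpha \in \mathcal{T}$ and each $B_\alpha$ is a product of at most $m - 1$ generators from $\mathbf{G}_c$. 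Each such $B_\alpha$ has parabolic order at most $k - 1$ and spacetime order at most $k$, so $B_\alpha \in \Psi_{\mathrm{par}}^{k,k}$, and since it has at most $k - 1 \leq k$ generator factors, $B_\alpha \in \modu_c^k \cap \Psi_{\mathrm{par}}^{k,k} = \modu_c^{(k)}$. By Lemma~\ref{thm:order really doesnt matter}, $\|B_\alpha u\|_{H^{0,l}_{\mathrm{par}}} \lesssim \|u\|_{H_{\modu_c}^{0,l;k}}$; combining with the multiplicative bound on $m_{\psi_\alpha}$ and the trivial bound on the main term $\chi Lu$ gives $\|L(\chi u)\|_{H^{0,l}_{\mathrm{par}}} \lesssim \|u\|_{H_{\modu_c}^{0,l;k}}$. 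Summing over a defining generating set of $\modu_c^{(k)}$ yields the continuity.
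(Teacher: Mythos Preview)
Your proof is correct and follows essentially the same approach as the paper's: compute commutators of the generators with $m_\chi$, observe they are multiplication by bounded functions of $t$ (with compactly supported derivative), and iterate. The paper's own proof treats only a single generator explicitly and then asserts ``arguing identically'' for the rest, leaving the passage to higher $k$ implicit; your introduction of the class $\mathcal{T}$ and the explicit Leibniz-type expansion make this iteration precise, and your use of the characterization $\modu_c^{(k)} = \modu_c^k \cap \Psi_{\mathrm{par}}^{k,k}$ (Lemma~\ref{thm:order doesnt matter}) together with Lemma~\ref{thm:order really doesnt matter} to control the $B_\alpha$ terms is the right way to close the argument rigorously.
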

\begin{proof}
Since they are bounded functions, multiplication by either
$\chi$ or $1 - \chi$ preserves $L^2(dtdz)$.  The spacetime weight
$\langle z, t \rangle$ commutes
with $m_\chi$ and $m_{1 - \chi}$ so the weighted $L^2$ spaces are also
preserved.  


For the module regularity, the generator
  $2 tD_t + z\cdot D_z$ for the module satisfies
$$
(2 tD_t + rD_r) \chi(t) u = - 2 i t \chi'(t) u + \chi(t) (2 tD_t +
z\cdot D_z) u
$$
and the right hand side is in $L^2$ because $t \chi'(t)$ is bounded
($\chi'$ is supported in the bounded interval $[t_0 - 1, t_0+1]$.)  Arguing identically for
the other (differential) generators of the module, we see that module
regularity of $u$ implies module regularity of $\chi(t) u$.  
\end{proof}

We now ``remove the oscillation'' $e^{i|z|^2/4t}$ appearing in \eqref{eq:fminus} and \eqref{eq:fplus} from functions in the module regularity spaces. However, we wish to avoid the singularity at $t=0$ of this oscillation. To do this, we can take advantage of the invariance of our class of Hamiltonians under time translation, to see that one could replace the factor $e^{i|z|^2/4t}$ by any time translation $e^{i|z|^2/4(t-t_0)}$ of it. Indeed, this would just have the effect of replacing the incoming and outgoing data, $f(\zeta)$ and $\tilde f(\zeta)$, in these expressions\footnote{Multiplication by the factor $e^{it_0 |\zeta|^2}$ is an isomorphism of the spaces $\Hdata^k$, so time translations just gives rise to isomorphisms of the spaces of incoming and outgoing data. We remark in passing that this property is not enjoyed by standard Sobolev spaces, so this is one good reason for using the $\Hdata^k$ scale of spaces for incoming and outgoing data.} by $f(\zeta) e^{it_0 |\zeta|^2}$ and $\tilde f(\zeta) e^{it_0 |\zeta|^2}$.

Thus, after picking a time $t_0$, we will decompose $u \in H^{s,r;k}_{\mathrm{par}, \modu_c}$ using $u = u_> + u_< := \chi_{\geq t_0} u + \chi_{\leq t_0+1} u$ and then divide by the factor $e^{i|z|^2/4(t+1-t_0)}$ for the first term, and $e^{i|z|^2/4(t-2-t_0)}$ for the second term. This avoids the singularity in both cases. 

We aim to show that the resulting functions $w_> := e^{-i|z|^2/4(t+1-t_0)} u_>$ and $w_< := e^{-i|z|^2/4(t-2-t_0)} u_<$ belong to some conjugated ``module regularity space''. One slight technical issue, however, is that the $\chi$ cutoff functions are not smooth on the compactified spacetime $\overline{\RR^{n+1}}$; they have a conic singularity at the equator $E$. To remedy this, we consider the blowup 
\begin{equation}
X := [\overline{\RR^{n+1}}; E]
\label{eq:Xdefn}\end{equation}

\begin{center}
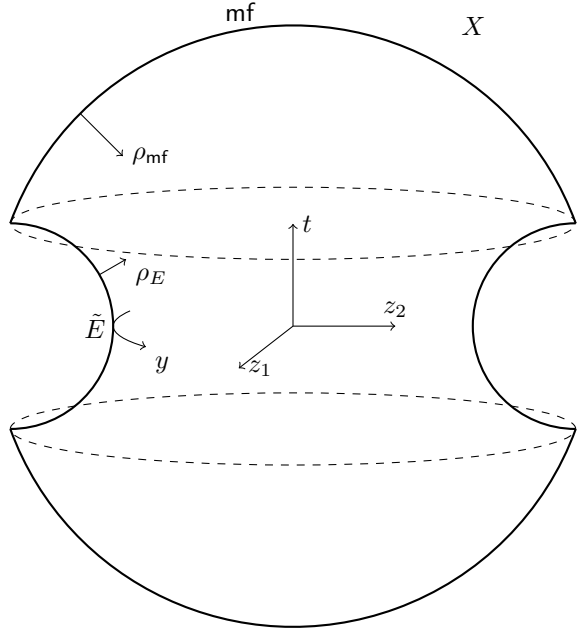
\begin{figure}
	\begin{tikzpicture}[scale=0.8]
		\draw[thick] (4.698,1.71) arc (20:160:5cm);
		\draw[thick] (-4.698,-1.71) arc (200:340:5cm);
		\draw[dashed] (0,1.71) ellipse (4.698cm and 0.6cm);
		\draw[dashed] (0,-1.71) ellipse (4.698cm and 0.6cm);
		\draw[thick] (4.698,1.71) arc (90:270:1.71cm);
		\draw[thick] (-4.698,-1.71) arc (-90:90:1.71cm);
		\draw[->] (-3.536,3.536)--(-3.536*0.8,3.536*0.8) node[anchor=west]  {$\rho_{\mathsf{mf}}$};
		\draw (-0.868,4.924) node[anchor=south] {$\mathsf{mf}$};
		\draw (-3.29,0) node {$\tilde{E}$};
		\draw[->] (-4.698+1.48,0.855)--(-4.698+1.48*1.3,0.855*1.3) node[anchor=north west] {$\rho_{E}$};
		\draw (-2.988,0) arc (180:155:3cm and 0.6cm);
		\draw[->] (-2.988,0) arc (180:215:3cm and 0.6cm) node[anchor=north west] {$y$};
		\draw (3,5) node {$X$};
		\draw[->] (6-6,-1.5+1.5)--(6-6,0.2+1.5) node[anchor=west] {$t$};
		\draw[->] (6-6,-1.5+1.5)--(5.1-6,-2.2+1.5) node[anchor=west] {$z_1$};
		\draw[->] (6-6,-1.5+1.5)--(7.7-6,-1.5+1.5) node[anchor=south] {$z_2$};
	\end{tikzpicture}
\caption{The blowup space $X$. It has two boundary hypersurfaces $\tilde{E}$ and $\mathsf{mf}$ (which itself has two components). A boundary defining function for $\tilde{E}$ is $\rho_E=\ang{t}/\ang{z}$, while a boundary defining function for $\mathsf{mf}$ is $\ang{t}^{-1}$.}
\label{fig:blowup}
\end{figure}
\end{center}
using Melrose's notation for the blowup of a manifold with corners at a $p$-submanifold. This blowup is illustrated in Figure \ref{fig:blowup}. Indeed, the submanifold $E$ of $\overline{\RR^{n+1}}$ is given in local coordinates $(s, \rhobase, y)$, where $s = t/\ang{Z}$ and $\rhobase = \ang{Z}^{-1}$, while $y = (y_1, \dots, y_{n-1})$ are local coordinates on the sphere $S^{n-1}_{\hat z}$, by 
$$
E = \{ s = 0, \rhobase = 0 \}.
$$

In the blowup space $X$, each point of $E$ has, by definition, been replaced by its inward-pointing spherical normal bundle, which in this case is a line segment. The quotient $t = s/\rhobase$ is a coordinate on the interior of each line segment. That means that we can regard each preimage of a point in $E$ as a copy of the (compactified) time axis. It follows that functions such as $\chi_{\geq t_0}$ and $\chi_{\leq t_0}$, which are smooth functions of $t$, and constant for $t$ sufficiently large or sufficiently small, lift to smooth functions on $X$. 

We will denote the preimage of $E$ in $X$ by $\tilde E$; as just mentioned, $\tilde E$ has the structure of a product, $\tilde E \equiv E \times \overline{\RR}$ where the factor $\overline{\RR}$ may be viewed as the compactified time axis. 
The rest of spacetime infinity lifts to a second boundary hypersurface that we shall denote $\mf$, standing for `main face'. 
We will denote a generic boundary defining function for $\tilde E$ by $\rho_E$ (even though $\rho_{\tilde E}$ might be more accurate) and a generic boundary defining function for $\mf$ by $\rho_{\mf}$. 

Now, returning to $w_>$ and $w_<$, we will show that these lie in some conjugated ``module regularity space''; however, this term will have a slightly different meaning in this section compared to the previous section, which is the reason for using quotes. In this section, the term `test module' will mean a module of \emph{differential} operators over $C^\infty(X)$, where $X$ is as in \eqref{eq:Xdefn}, which is closed under commutators, and generated over $C^\infty(X)$  by a finite number of first order differential operators including the identity operator.\footnote{We avoid using pseudodifferential operators here because there are additional technicalities with defining them on manifold with corners such as $X$. The fact that our test modules are modules over $C^\infty(X)$, as opposed to an algebra of zeroth order pseudodifferential operators, means that our ability to localize is reduced; however, in this section localization using smooth functions on $X$ is sufficient for our purposes.} In particular, we define the specific ``flat'' test module $\SF$ by listing generators: 

\begin{equation}
  \label{eq:7}
\mathcal{F}  =  C^\infty(X)\text{-span of } \left\{\ang{t} D_{z_j},  \quad \frac{\ang{z}}{ \ang{t}} , \quad \ang{t} D_t, \quad \Rot_{jk} \right\}.
\end{equation}
This generating set $\mathbf{A} = A_1, \dots, A_N$ satisfies an admissibility condition analogous to Definition~\ref{def:testmod}. Namely, we have
\begin{itemize}
\item $\mathbf{A}$ is closed under commutators in the sense that, for all $A_i, A_j$ in $\mathbf{A}$,  there exist operators $C_{ijk} \in C^\infty(X)$ such that 
\begin{equation}
[A_i, A_j] = \sum_k C_{ijk} A_k ,
\label{eq:diffcommgen}\end{equation}
\item  For all  $A \in \mathbf{A}$  we have 
\begin{equation}
[A, C^\infty(X)] \subset C^\infty(X).
\label{eq:diff module properties}
\end{equation}
\end{itemize}

We also define an order function on our module. To respect the parabolic scaling, we define the order of $\ang{t} D_t$ to be $2$, and the order of the remaining generators to be $1$. Of course, elements of $C^\infty(X)$ have order zero. 

We can equivalently write generators in terms of the boundary defining functions of $X$, which shows how $\SF$ relates to the geometry of $X$. For this we choose the specific boundary defining function\footnote{The structure below is not invariant under all diffeomorphisms of $X$; thus, we cannot take an arbitrary boundary defining function here. The question of the precise invariance properties of this module is not relevant to the present article and will considered elsewhere.} $\rho_E = \ang{t} \ang{z}^{-1}$, and we take local coordinates $y = (y_1, \dots, y_{n-1})$ on a coordinate patch of the sphere $S^{n-1}_{\hat z}$. Then, 
\begin{equation}
  \label{eq:77}
\mathcal{F}  =  C^\infty(X)\text{-span of } \left\{\rho_E^2 D_{\rho_E},  \quad \frac{1}{\rho_E} , \quad \rho_{\mf} D_{\rho_{\mf}}, \quad D_{y_j} \right\}
\end{equation}
where all these generators have order $1$ except for $\rho_{\mf} D_{\rho_{\mf}}$ which has order $2$. 

We extend the order function to powers of the module, $\SF^k$, in the natural way. That is, if $A \in \SF^k$ then the order of $A$, $\ord(A)$,  is 
the minimum $j$ such that $A$ can be written as a finite sum of products of module elements, with $C^\infty(X)$ coefficients, such that in each term, the sum of the orders of the factors is at most $j$. 

As in the previous section, the module regularity spaces are defined in terms of \emph{reduced} powers. Thus, we define the $k$th reduced power $\SF^{(k)}$ by 
\begin{equation}
\SF^{(k)} = \{ A \in \SF^k \mid \ord(A) \leq k \}
\label{eq:F-red}\end{equation}


The module regularity spaces corresponding to the differential module $\SF$ are, for $s=0$ and $k \in \NN$, 
\begin{equation}
  \begin{gathered}\label{eq:flat mod reg space}
    H^{0, r; k}_{\SF} = \left\{ u \in
      \langle z, t \rangle^{-r} L^2(\mathbb{R}^{n + 1}) \mid A u \in \langle z, t \rangle^{-r} L^2(\mathbb{R}^{n + 1})
      \ \forall A \in \SF^{(k)}  \right\}
  \end{gathered}
\end{equation}
with norm 
\begin{equation}
\label{eq:diff.mod.reg.sob.norm}
	\|u\|_{H_\SF^{0,r;k}}^2:= \sum_{\mathbf{A}^\alpha \in \SF^{(k)}} \|\mathbf{A}^\alpha  u\|_{H_\mathrm{par}^{0,r}}^2.
\end{equation}
That this is a suitable norm follows from the admissibility conditions in a similar way to the previous section.

Similarly to \eqref{eq:D-support} we also define spaces with support on one side of a time slice:
\begin{equation}
H^{0, r; k}_{\SF}(t \geq t_0) = \left\{ u \in
H^{0, r; k}_{\SF}  \mid  \supp (u)
  \subset \{ t \geq t_0 \} \right\}.
\label{eq:D-supportF}\end{equation}



\begin{proposition}\label{thm:flat back and forth}
  Let $c \in \mathbb{R} \setminus 0$, $r \in \mathbb{R}$ and $k \in
  \mathbb{N}_0$. Let $t_ 0 \in \RR$, $t_* < t_0 < t^*$.  In the notation introduced above, the multiplication operators
  \begin{equation}
    \label{eq:flat map up}
e^{- i c|z|^2/4(t - t_*)}  \colon H_{\modu_c}^{0, r; k}(t \geq t_0)
\lra H_{\mathcal{F}}^{0,r;k}(t \geq t_0)
\end{equation}
and
  \begin{equation}
    \label{eq:flat map down}
e^{- i c|z|^2/4(t - t^*)}  \colon H_{\modu_c}^{0, r; k}(t \leq t_0)
\lra H_{\mathcal{F}}^{0,r;k}(t \leq t_0)
\end{equation}
are continuous and invertible.  
\end{proposition}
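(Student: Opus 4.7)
The plan is as follows. Multiplication by $e^{\pm i\phi}$, with $\phi = c|z|^2/4(t-t_*)$ (respectively $\phi = c|z|^2/4(t-t^*)$), is an $L^2$-isometry that commutes with multiplication by $\ang{Z}^r$ and preserves the time-support condition, so the entire content of the proposition is the equivalence, after conjugation, of $\modu_c$-module regularity on $\{t\ge t_0\}$ with $\mathcal{F}$-module regularity on the same set. I would establish this by computing explicitly how $A \mapsto e^{\mp i\phi}A e^{\pm i\phi}$ acts on the generators $\mathbf{G}_c$ and on those of $\mathcal{F}$, and then extending to reduced powers by induction on $k$.

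For the first step, applying the identity $e^{-i\phi}(-i\partial_k)e^{i\phi} = -i\partial_k + \partial_k\phi$ to each element of $\mathbf{G}_c$ yields
\begin{align*}
e^{-i\phi}D_{z_j}e^{i\phi} &= D_{z_j} + \tfrac{cz_j}{2(t-t_*)}, &
e^{-i\phi}D_te^{i\phi} &= D_t - \tfrac{c|z|^2}{4(t-t_*)^2}, \\
e^{-i\phi}(tD_{z_j}-\tfrac{c}{2}z_j)e^{i\phi} &= tD_{z_j} + \tfrac{ct_*z_j}{2(t-t_*)}, &
e^{-i\phi}(2tD_t+z\cdot D_z)e^{i\phi} &= 2tD_t + z\cdot D_z - \tfrac{ct_*|z|^2}{2(t-t_*)^2},
\end{align*}
with $\Rot_{ij}$ conjugation-invariant. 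On $\{t\ge t_0>t_*\}$ the ratios $\ang{t}/(t-t_*)$, $t/\ang{t}$, $z_j/\ang{z}$ and $|z|/\ang{z}$ extend to bounded smooth functions on the blowup $X$, and each conjugated $\mathbf{G}_c$-generator of order $\nu$ is thus a $C^\infty(X)$-linear combination of elements of $\mathcal{F}^{(\nu)}$: for example $cz_j/(2(t-t_*)) = \bigl[c\ang{t}/(2(t-t_*))\bigr](z_j/\ang{z})(\ang{z}/\ang{t})$ and $c|z|^2/(4(t-t_*)^2) = \bigl[c(|z|/\ang{z})^2(\ang{t}/(t-t_*))^2/4\bigr](\ang{z}/\ang{t})^2$. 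This handles the direction $w\in H^{0,r;k}_{\mathcal{F}}\Rightarrow e^{i\phi}w\in H^{0,r;k}_{\modu_c}$.

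For the other direction, one computes $e^{i\phi}Be^{-i\phi}$ for $B\in\mathcal{F}$ and expresses it as a $C^\infty(X)$-combination of $\mathbf{G}_c$-generators. The key algebraic cancellation is
\[
e^{i\phi}(\ang{t}D_{z_j})e^{-i\phi} = \tfrac{\ang{t}}{t-t_*}\bigl(tD_{z_j}-\tfrac{c}{2}z_j\bigr) - \tfrac{t_*\ang{t}}{t-t_*}D_{z_j},
\]
in which the unbounded multiplication by $z_j$ produced by the phase shift exactly cancels the $cz_j/2$ appearing in $tD_{z_j}-cz_j/2$, leaving only bounded $C^\infty(X)$ coefficients. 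The analogous expression for $\ang{t}D_t$ uses the generator $2tD_t + z\cdot D_z$ together with combinations of $\ang{t}D_{z_j}$'s (via $z\cdot D_z = \sum_j(z_j/\ang{t})\ang{t}D_{z_j}$) to cancel the unwanted $z\cdot D_z$ piece. The conjugation-invariant operator $\ang{z}/\ang{t}$ is controlled indirectly: setting $w = e^{-i\phi}u$, the computation above gives $(t-t_*)D_{z_j}w = e^{-i\phi}(tD_{z_j}-\tfrac{c}{2}z_j)u - t_* e^{-i\phi}D_{z_j}u \in \Hpar{0}{r}$, hence $D_{z_j}w\in\Hpar{0}{r}$, and then $cz_j w/(2(t-t_*)) = e^{-i\phi}D_{z_j}u - D_{z_j}w \in\Hpar{0}{r}$. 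Summing against the bounded smooth multipliers $z_j/\ang{z}$ produces $|z|^2 w/((t-t_*)\ang{z})\in\Hpar{0}{r}$, which on $\{|z|\ge 1\}$ differs from $(\ang{z}/\ang{t})w$ by a bounded $C^\infty(X)$ factor, while on $\{|z|\le 1\}$ the ratio $\ang{z}/\ang{t}$ is itself bounded.

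Finally, the generator-level equivalences are extended to reduced powers $\modu_c^{(k)}$ and $\mathcal{F}^{(k)}$ by induction on $k$, following the same pattern as the proof of Lemma~\ref{thm:order doesnt matter}. The admissibility conditions \eqref{eq:commgen}--\eqref{eq:module properties} for $\mathbf{G}_c$ and their analogues \eqref{eq:diffcommgen}--\eqref{eq:diff module properties} for $\mathcal{F}$ guarantee that commutators arising when $C^\infty(X)$ coefficients are pushed past generators lie in strictly lower reduced powers, and are therefore absorbed by the inductive hypothesis. The main technical obstacle is verifying, at each inductive step, that the rational composite functions in $\ang{t}$, $\ang{z}$, $z_j$ and $(t-t_*)$ that appear extend to smooth bounded functions on $X$, in particular at the corner $\tilde E\cap\mf$; given $t_0>t_*$, this is straightforward in coordinates using the boundary defining functions $\rho_E = \ang{t}/\ang{z}$ and $\rho_{\mf} = 1/\ang{t}$. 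The case $\{t\le t_0\}$ with $\phi = c|z|^2/4(t-t^*)$, $t^*>t_0$, is completely symmetric.
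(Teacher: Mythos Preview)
Your overall strategy matches the paper's: conjugation by the oscillatory factor turns $\modu_c$-regularity on $\{t\ge t_0\}$ into $\mathcal{F}$-regularity, and you correctly identify the relevant commutator computations. The direction $w\in H^{0,r;k}_{\mathcal F}\Rightarrow e^{i\phi}w\in H^{0,r;k}_{\modu_c}$ is handled as in the paper.

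There is, however, a genuine gap in your treatment of the reverse direction for the generator $\ang{t}D_t$. You propose to write $z\cdot D_z=\sum_j(z_j/\ang{t})\ang{t}D_{z_j}$, but $z_j/\ang{t}$ is \emph{not} bounded on $X$: it blows up at the blown-up equator $\tilde E$ (where $|z|\to\infty$ with $t$ fixed), so it is not a $C^\infty(X)$ coefficient and multiplication by it does not preserve $\Hpar{0}{r}$. This breaks the claimed cancellation, and the induction on $k$ inherits the problem.

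The paper circumvents this difficulty by a single clean move you did not make: it first replaces $\mathbf G_c$ by the $\RR$-linearly equivalent generating set with $t$ shifted to $t-t_*$, namely $(t-t_*)D_{z_j}-cz_j/2$ and $2(t-t_*)D_t+z\cdot D_z$. These two generators commute \emph{exactly} with $\Phi=e^{ic|z|^2/4(t-t_*)}$, so after conjugation one gets an intermediate differential module $\mathcal F'$ (over $C^\infty(X)$) whose generators include the multiplication operator $cz_j/(2(t-t_*))$ as an explicit order-$1$ element. Then $z\cdot D_z$ is recovered as a \emph{product} in $\mathcal F'^{(2)}$ of this multiplication operator with $(t-t_*)D_{z_j}$, which legitimately eliminates $z\cdot D_z$ from $2(t-t_*)D_t+z\cdot D_z$ and yields $(t-t_*)D_t$ without ever invoking an unbounded coefficient. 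Because all these reductions are invertible over $C^\infty(X)$, the equivalence $\mathcal F'=\mathcal F$ holds at the level of generating sets, and no separate induction on $k$ is needed. Your argument can be repaired along exactly these lines; the missing ingredient is recognising that the oscillation-conjugated $D_{z_j}$ supplies the multiplication operator you need as a bona fide module generator, rather than something to be controlled only ``indirectly'' at the end.
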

\begin{proof}
We only prove \eqref{eq:flat map up} as proving the other statement is exactly analogous. 

First, we note that the module $\modu_c$ has an
  equivalent set of generators
  $$
D_{z_j},  \quad D_t,  \quad (t - t_*)D_{z_j}-cz_j/2,  \quad 2(t - t_*)D_t+ z \cdot D_z, \quad \Rot_{jk} .
  $$
Indeed, this set is given by a $\RR$-linear change of
basis with respect to $\mathbf{G}_c$.  Then we compute, with
$\Phi := \Phi_{c, t_*}(z,
t) = e^{ic|z|^2/4(t - t_*)}$
\begin{equation}
  \begin{gathered}
\ [ D_{z_j}, \Phi ] = \frac{cz_j}{2(t - t_*)}, \quad
\ [ D_t,  \Phi ] = -\frac{c|z|^2}{4(t - t_*)^2} \\ 
\ [ (t - t_*)D_{z_j}-cz_j/2, \Phi ] = [ 2(t - t_*)D_t+ z
\cdot D_z , \Phi ] = [\Rot_{jk}, \Phi] = 0
\label{eq:commutators}
\end{gathered}
\end{equation}

This proves directly that $u \in H_{\modu_c}^{0, r; k}$
if and only if $w := e^{- i c |z|^2/(t - t_*)} u$ lies in a module
regularity space with differential module
\begin{multline*}
    \mathcal{F}' = C^\infty(X)\text{-span of }
    \Big\{ D_{z_j} + \frac{cz_j}{2(t - t_*)} , \quad  (t -
      t_*)D_{z_j}, \quad  D_t - \frac{c|z|^2}{4(t -t_*)^2} , \\ 
        2(t - t_*)D_t+ z \cdot D_z, \quad \Rot_{jk}  \Big\}
\end{multline*}
where the orders of generators are $2$ when a time derivative $D_t$ is present and $1$ otherwise. 

Now, acting on functions $v$ supported where $t \geq t_0$, we can change to a simpler basis of generators, as follows. First, $t-t_*$ is bounded below on the support of $v$, so we can express $D_z$ as a $C^\infty(X)$-multiple of $(t - t_*) D_z$ on the support of $v$, allowing us to remove the $D_z$ term in the first generator. This reduces to generators
$$
 \Big\{ \frac{cz_j}{2(t - t_*)} , \quad  (t -
      t_*)D_{z_j}, \quad  D_t - \frac{c|z|^2}{4(t -t_*)^2} , \quad  
        2(t - t_*)D_t+ z \cdot D_z, \quad \Rot_{jk}  \Big\}.
$$ 
Next, a combination of the first two generators can eliminate the $z \cdot D_z$ in the fourth generator. Having done this, we can use the $(t - t_*) D_t$ generator, together with the square of the first generator,  to eliminate the third generator altogether. This reduces the set of generators to 
$$
 \Big\{ \frac{cz_j}{2(t - t_*)} , \quad  (t -t_*)D_{z_j},  \quad  (t - t_*)D_t, \quad \Rot_{jk}  \Big\}.
$$ 
We finally note that $z_j$ is a $C^\infty(X)$-multiple of $\ang{z}$, and $t - t_*$ is a $C^\infty(X)$-multiple of $\ang{t}$, and we arrive at the generators for $\SF$ as listed in \eqref{eq:7}. 

We have thus shown that if $v \in H_{\modu_c}^{0, r; k}(t \geq t_0)$, then $e^{- i c|z|^2/4(t - t_*)} v$ is in $H_{\mathcal{F}}^{0,r;k}(t \geq t_0)$. The steps can be reversed, showing that if $w \in H_{\mathcal{F}}^{0,r;k}(t \geq t_0)$, then $e^{i c|z|^2/4(t - t_*)} w$ is in $H_{\modu_c}^{0, r; k}(t \geq t_0)$. 
%
%
%
\end{proof}


\begin{remark}
The modules $\SN$ and $\modu$ are intimately connected with the existence of asymptotic expansions \eqref{eq:fminus}, \eqref{eq:fplus}. As mentioned in the introduction, the functions with infinite module regularity with respect to $\SN$, or equivalently, infinite module regularity with respect to $\modu$, are the `Lagrangian distributions' (termed Legendre distributions in \cite{MZ96}) associated to the Lagrangian submanifold that is the conic extension (in the sense of spacetime dilations) of the radial set $\SR$. To illustrate the point, we mention the following result:
\end{remark}

\begin{proposition}
Assume that $Pu = 0$, and that $u \in H_{\SN}^{1/2, \rmin; k}$ for all $k$, or, equivalently in view of Lemma~\ref{thm:mod equiv in ker}, that 
$u \in H_{\modu}^{1/2, \rmin; k}$ for all $k$. Then with $\chi_{\geq t_0}$ and $\chi_{\leq t_0 + 1}$ as above, and choosing $t_* < t_0  < t_0 + 1 < t^*$, we can write 
$$
u \chi_{\geq t_0} =(t-t_*)^{-n/2} e^{i|z|^2/4(t-t_*)} \tilde u,
$$
where $\tilde u$ is a $C^\infty$ function on the blowup space $X$, vanishing to infinite order at $\tilde E$; consequently, $\tilde u$
is actually $C^\infty$ on $\overline{\RR^{n+1}}$, vanishing to infinite order on the `lower half' of the boundary. A similar 
statement can be made for $u \chi_{\leq t_0 + 1} = u - u \chi_{\geq t_0}$, with $t^*$ replacing $t_*$.  In particular, the incoming and outgoing data for such $u$ with infinite module regularity are both Schwartz. 
\end{proposition}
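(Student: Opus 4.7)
The strategy is to combine Proposition \ref{thm:flat back and forth} with a Sobolev-type embedding on the blowup space $X$, then use the equation $Pu=0$ to bootstrap from b-conormality to full $C^\infty$ smoothness at $\mf$.

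First, set $v := e^{-i|z|^2/4(t-t_*)}\chi_{\geq t_0} u$ and $\tilde u := (t-t_*)^{n/2} v$, so that $u\chi_{\geq t_0} = (t-t_*)^{-n/2} e^{i|z|^2/4(t-t_*)} \tilde u$. Multiplication by $\chi_{\geq t_0}$ preserves $\modu$-module regularity (Lemma preceding Proposition \ref{thm:flat back and forth}), and the inclusion $H^{1/2,\rmin;k}_{\modu} \subset H^{0,\rmin;k}_{\modu}$ reduces parabolic order. Proposition \ref{thm:flat back and forth} then yields $v \in H^{0,\rmin;k}_{\SF}(t \geq t_0)$ for every $k$. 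The factor $(t-t_*)^{n/2}$ is a smooth multiplier on $X$ (contributing only a fixed shift of the weight at $\mf$), so $\tilde u$ has infinite $\SF$-module regularity in an appropriately weighted space.

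Next, I would interpret infinite $\SF$-regularity geometrically. The generator $\ang{z}/\ang{t}$ coincides with $1/\rho_E$ near $\tilde E$, so iterated application gives $\rho_E^N \tilde u$ in weighted $L^2$ for all $N$, i.e., infinite-order vanishing of $\tilde u$ at $\tilde E$ in the $L^2$ sense. The remaining generators $\{\ang{t}D_{z_j}, \ang{t}D_t, \Rot_{jk}\}$ are smooth b-vector fields on $X$, and together with the multiplier $1/\rho_E$ they span the b-tangent space of $X$ modulo $C^\infty(X)$. A standard Sobolev embedding argument on $X$ (paralleling \cite{HMV2004}) then promotes $L^2$-based module regularity to pointwise $C^\infty$ smoothness of $\tilde u$ in the interior and up to $\tilde E$ with infinite-order vanishing there, and to b-conormality at $\mf$.

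To upgrade b-conormality at $\mf$ to full $C^\infty$ smoothness, I would use the equation $Pu=0$. Since $\Phi := (t-t_*)^{-n/2}e^{i|z|^2/4(t-t_*)}$ is a free Schr\"odinger kernel ($P\Phi = 0$ away from $\supp V$), a direct computation in coordinates $(\rho_{\mf},\zeta)$ with $\zeta := z/2(t-t_*)$ and $\rho_{\mf} := 1/(t-t_*)$ converts $Pu=0$ on $\supp\chi_{\geq t_0}$ into
\begin{equation*}
\partial_{\rho_{\mf}}\tilde u = \tfrac{i}{4}\Delta_\zeta \tilde u + g,
\end{equation*}
where $g$ is smooth on $X$ with infinite vanishing at $\tilde E$ (arising from the errors $[P,\chi_{\geq t_0}]u$ and $Vu$, both compactly supported in $t$). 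Since $\partial_{\rho_{\mf}}$ is transverse to $\mf$, iterating the equation expresses every $\partial_{\rho_{\mf}}^N\tilde u$ in terms of $\zeta$-derivatives of $\tilde u$ (already controlled by module regularity) together with lower-order data, thereby promoting b-conormality to full $C^\infty$ smoothness up to $\mf$.

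The main obstacle is Step 3: rigorously verifying the equation above (carefully tracking all commutator and conjugation errors) and then carrying out the transverse smoothing iteration. Once $\tilde u \in C^\infty(X)$ with infinite vanishing at $\tilde E$, descent to $\overline{\RR^{n+1}}$ is immediate, and smoothness combined with $\tilde u \equiv 0$ on $\{t \leq t_0\}$ forces infinite-order vanishing at the equator. The outgoing data $\tilde f$ is (up to a constant) the restriction of $\tilde u$ to $\mf$, and its infinite vanishing at $\tilde E \cap \mf = E$ gives $\tilde f \in \Schw(\RR^n)$. The identical argument applied to $u\chi_{\leq t_0+1}$ with $t^*$ in place of $t_*$ yields the corresponding Schwartz statement for the incoming data $f$.
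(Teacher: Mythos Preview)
Your approach is correct and takes a genuinely different route from the paper's. Both proofs begin identically, using Proposition~\ref{thm:flat back and forth} to pass to the flat module $\SF$ and obtain b-conormality of $\tilde u$ at $\mf$ together with infinite-order vanishing at $\tilde E$. The divergence is in Step~3, upgrading b-conormality at $\mf$ to full smoothness. You derive the conjugated equation $\partial_{\rho_{\mf}}\tilde u = \tfrac{i}{4}\Delta_\zeta \tilde u + g$ (which is correct: the conjugation of $P_0$ by $(t-t_*)^{-n/2}e^{i|z|^2/4(t-t_*)}$ and the change of variables $(z,t)\mapsto(\zeta,\rho_{\mf})$ yield exactly this, with $g$ compactly supported in $t$) and iterate it, expressing $\partial_{\rho_{\mf}}^N\tilde u$ in terms of $\Delta_\zeta^N\tilde u$ plus lower-order data, all controlled by $\SF$-regularity since $D_\zeta$ is a module generator near $\mf$. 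The paper instead first invokes Proposition~\ref{prop:limits} to show the boundary trace $f$ is Schwartz, then builds a formal series approximation $v'$ with $Pv' = O((t-t_*)^{-n/2-N-1})$, writes $u = v' - P_-^{-1}(Pv')$, and uses the mapping properties of $P_-^{-1}$ on spaces with high spacetime weight (Theorem~\ref{thm:linear1}) to show the correction vanishes to order $N+O(1)$ at the northern hemisphere.

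Your argument is more elementary and self-contained: it needs only the local conjugated equation and a direct bootstrap, whereas the paper's argument leverages the global propagator machinery already developed. The paper's method, on the other hand, generalizes more readily to settings where an explicit conjugation is unavailable but propagator bounds are known.
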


We only provide brief remarks on the proof since we do not need this result in the sequel. Passing to the flat module $\SF$, we see that $\tilde u$ has fixed regularity under repeated applications of the vector fields $D_y, \rho_{\mf} D_{\rho_{\mf}}$ and $D_{\rho_E}$. This is slightly weaker than claimed as we need regularity under repeated application of $D_{\rho_{\mf}}$, not merely $\rho_{\mf} D_{\rho_{\mf}}$. To obtain this improvement we need to exploit the PDE $Pu = 0$. Using Prop. 7.7 of \cite{TDSL}, or Prop.~\ref{prop:limits} of the present paper, we find that $\tilde u$ has a limit $f$ at the boundary, which is in $\Hdata^k$ for all $k$ and therefore Schwartz. Then, given such data, and any large integer $N$, we can form a formal solution as a finite power series in $(t-t_*)$ with $N+1$ terms:
$$
v' = (t-t_*)^{-n/2} e^{i|z|^2/4(t-t_*)} \tilde v
$$
where $\tilde v$ is a $C^\infty$ function with boundary value $f$, and $Pv = O((t-t_*)^{-n/2-N-1})$. We let $v = v' - P_-^{-1}(Pv)$; this is a solution to $Pv = 0$ and it has the same outgoing data as $u$, thus $v=u$. However, using the mapping properties of $P_-^{-1}$ from Theorem~\ref{thm:linear1}, using a variable weight $\mathrm{r}_-$ equal to $N + O(1)$ outside a small neighbourhood of $\SR_-$, we can see that the correction term $P_-^{-1}(Pv)$ vanishes to order $ N + O(1)$ at the northern hemisphere (in reference to Figure~\ref{fig1}), and thus $\tilde u$ itself is $C^{N + O(1)}$ at the northern hemisphere. Since $N$ is arbitrary, $\tilde u$ is smooth at the northern hemisphere, and vanishes to infinite order at the equator.  Of course, this means that it  vanishes to infinite order at the entire southern hemisphere due to the cutoff $\chi_{\geq t_0}$.

\begin{remark}\label{rem:inv} We can observe that the spaces $H_{\mathcal{F}}^{0,r;k}$ and $H_{\mathcal{F}}^{0,r;k}(t \geq t_0)$ are invariant under multiplication by functions of the form
\begin{equation}
e^{i f(t) |z|^2/\ang{t}^2}, \quad f \in S^0_{\mathrm{cl}}(\RR; \RR),
\label{eq:mult-inv}\end{equation}
where $f(t)$ is a real-valued classical symbol of order zero in $t$. To see this, let $u \in H_{\mathcal{F}}^{0,r;k}$ for $k \geq 1$; we apply the generators of $\SF$ in the form \eqref{eq:7} to 
$$
e^{i f(t) |z|^2/\ang{t}^2} u.
$$
For example, applying $\ang{t} D_{z_j}$, we obtain 
$$
e^{i f(t) |z|^2/\ang{t}^2} \Big( \ang{t} D_{z_j} u + \frac{2 f(t) z_j}{\ang{t}} u \Big).
$$
The first term is in $\ang{z,t}^{-r} L^2$ by assumption, and the second term is also since $\ang{z}\ang{t}^{-1}$ is a generator of $\SF$. The other generators, and repeated applications of generators, are analyzed in the same way. Moveover, if $u$ has support in $\{ t \geq t_0 \}$, then we only need $f$ to be a classical symbol of order zero for $t \geq t_0$. 

This observation clarifies why the value of $t_*$ is not important when we multiply by $e^{-i c |z|^2/4(t-t_*)}$ in Proposition~\ref{thm:flat back and forth}. If we take two different values $t_*, t_*'$ of $t$, then we can easily check that 
$$
e^{-i c |z|^2/4(t-t_*)} e^{i c |z|^2/4(t-t_*')}
$$
is of the form \eqref{eq:mult-inv}, at least for $t > \max(t_*, t_*')$. 
\end{remark}

\subsection{Algebra properties of module regularity
  spaces}\label{sec:first big algebra}
In this section we prove the main multiplication properties we will
use to bound the nonlinear terms.  These results are phrased in terms
of Banach space containments and should be interpreted as follows,
given $X, X'$ Banach spaces which are subspaces of
$\mathcal{S}'(\mathbb{R}^{n + 1})$, $X \cdot X \subset X'$ means that
for all $u, v \in X$, $uv$ is a tempered distribution which in fact
lies in $X'$, and the mapping $(u, v) \mapsto uv$ is a continuous
bilinear map $X \times X \lra X'$.

We begin with the basic algebra property of parabolic Sobolev spaces, which is in the same spirit as the results in \cite[Section~4B]{HVsemi}, and indeed is proved by the same Fourier methods.

Writing $x\in \RR^{n+1}$ as $(x',x'')\in \RR\times \RR^{n}$,  we define
\begin{equation}
H_\SQ^{k}(\RR\times\RR^{n})=\{u\in L^2(\RR^{n+1}):D_{x'}^\alpha D_{x''}^\beta u\in L^2(\RR^{n+1})\textrm{ for }2\alpha+|\beta| \leq k  \}. 
\label{eq:HQ-def}\end{equation}
Thus $x'$ is the parabolic direction in this setting. 

\begin{lemma}
\label{lem:basic.sob.alg}
For  $k_1,k_2\in \NN$ such that $k_1+k_2 \geq k> \max(\frac{n+2}{2},2)$, we have
\begin{equation} H_\SQ^{k_1}(\RR\times\RR^{n})\cdot H_\SQ^{k_2}(\RR\times\RR^{n})\subset L^2(\RR^{n+1}) .
\label{eq:Sob-prod}\end{equation}
\end{lemma}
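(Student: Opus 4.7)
The plan is to translate the product estimate into a convolution estimate via Plancherel and then exploit the parabolic frequency scaling. Setting the parabolic weight $\Lambda(\xi', \xi'') := (1 + |\xi'| + |\xi''|^2)^{1/2}$, membership in $H^{k}_{\SQ}$ is equivalent to $\Lambda^k \hat u \in L^2$, since the condition $2\alpha + |\beta| \leq k$ exactly reflects that $|\xi'| \lesssim \Lambda^2$ and $|\xi''| \lesssim \Lambda$. Hence by Plancherel the desired inequality is equivalent to
\begin{equation*}
\|\hat u * \hat v\|_{L^2(\RR^{n+1})} \lesssim \|\Lambda^{k_1} \hat u\|_{L^2} \, \|\Lambda^{k_2} \hat v\|_{L^2}.
\end{equation*}
The key analytic input is the integrability $\|\Lambda^{-k}\|_{L^2(\RR^{n+1})} < \infty$ for $k > (n+2)/2$, which I would verify by substituting $\xi' = (1+|\xi''|^2) s$ in the $\xi'$-integral to obtain a product of the form $\int (1+|s|)^{-k}\,ds \cdot \int (1+|\xi''|^2)^{1-k}\, d\xi''$, finite precisely under the stated hypothesis on $k$.

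When $k_2 \geq k > (n+2)/2$ (or symmetrically for $k_1$), a one-line argument suffices: Cauchy--Schwarz gives $\|\hat v\|_{L^1} \leq \|\Lambda^{-k_2}\|_{L^2}\,\|\Lambda^{k_2} \hat v\|_{L^2}$, so $v \in L^\infty$ with the required bound, and then $\|uv\|_{L^2} \leq \|v\|_{L^\infty} \|u\|_{L^2}$. The non-trivial case is when $k_1, k_2 < k$ but $k_1 + k_2 \geq k$. Here I would carry out a Bony-type paraproduct split of the convolution as $\hat u * \hat v = I_1 + I_2$, where $I_1$ is the restriction of the integral to the region $\{\Lambda(\xi - \eta) \geq \Lambda(\eta)\}$ and $I_2$ to the complement. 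The parabolic Peetre inequality $\Lambda(\xi) \leq C(\Lambda(\xi - \eta) + \Lambda(\eta))$, which follows from $|\xi'| \leq |\xi' - \eta'| + |\eta'|$ combined with $|\xi''|^2 \leq 2(|\xi'' - \eta''|^2 + |\eta''|^2)$, then gives $\Lambda(\xi) \lesssim \Lambda(\xi - \eta)$ on the support of $I_1$. Writing $\hat u = (\Lambda^{k_1}\hat u)/\Lambda^{k_1}$ and similarly for $\hat v$, Cauchy--Schwarz in $\eta$ followed by Fubini reduces the $L^2$ estimate for $I_1$ to the volume bound $\int_{\Lambda(\zeta) \geq \Lambda(\eta)} \Lambda(\zeta)^{-2k_1}\, d\zeta \lesssim \Lambda(\eta)^{n+2 - 2k_1}$ (a consequence of $|\{\Lambda \leq R\}| \sim R^{n+2}$), after which the remaining $\eta$-integral is finite precisely because $2(k_1 + k_2) \geq 2k > n+2$. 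The region $I_2$ is handled by swapping the roles of $\xi - \eta$ and $\eta$.

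The main obstacle is not conceptual but rather the careful bookkeeping required to keep the non-isotropic parabolic scaling consistent throughout. In particular, the Peetre-type inequality, the integrability $\|\Lambda^{-k}\|_{L^2} < \infty$, and the parabolic volume estimate $|\{\Lambda \leq R\}| \sim R^{n+2}$ all need to be verified by explicit substitutions reflecting the weighting $|\xi'| \sim |\xi''|^2$, rather than quoted from the standard isotropic Sobolev theory. The threshold $(n+2)/2$ comes out of these substitutions as the natural ``parabolic half-dimension,'' consistent with $H^k_\SQ \hookrightarrow L^\infty$ for $k > (n+2)/2$.
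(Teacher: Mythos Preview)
Your approach has two genuine gaps.

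First, the equivalence $u \in H_\SQ^k \iff \Lambda^k \hat u \in L^2$ fails for odd $k$. Since $D_{x'}$ carries parabolic weight $2$, the condition $2\alpha + |\beta| \leq k$ with $\alpha \in \NN_0$ only yields control of $|\xi'|^{(k-1)/2}$ when $k$ is odd, not $|\xi'|^{k/2}$. Concretely, for $k=1$ one has $H_\SQ^1 = \{u : u, D_{x''_j}u \in L^2\}$, which is $\langle \xi''\rangle \hat u \in L^2$ and does \emph{not} imply $(1+|\xi'|)^{1/2}\hat u \in L^2$. The paper accounts for this by using the weaker but valid implication $u \in H_\SQ^k \Rightarrow \langle \xi''\rangle w^{k-1}\hat u \in L^2$ (with $w \sim \Lambda$), and all subsequent integrability estimates are carried out with this asymmetric weight; this is in fact where the extra constraint $k > 2$ (beyond $k > (n+2)/2$) enters.

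Second, your volume bound $\int_{\Lambda(\zeta) \geq \Lambda(\eta)} \Lambda(\zeta)^{-2k_1}\, d\zeta \lesssim \Lambda(\eta)^{n+2-2k_1}$ is only valid when $2k_1 > n+2$, since otherwise the tail integral diverges. In the regime you are treating ($k_1, k_2 < k$, both small), $k_1 \leq (n+2)/2$ is entirely possible (e.g.\ $n=3$, $k=3$, $k_1=1$, $k_2=2$), so the bound as stated is simply false. The paper sidesteps this by bounding $\sup_{\xi} \int \langle \xi''-\eta''\rangle^{-2}\langle\eta''\rangle^{-2} w(\xi-\eta)^{-2(k_1-1)} w(\eta)^{-2(k_2-1)} d\eta$ directly via H\"older in the $\eta'$ and $\eta''$ directions separately, choosing an interpolation parameter $r$ so that both one-dimensional integrals are uniformly finite; this uses only the summed condition $k_1 + k_2 > (n+2)/2$ together with $k_1, k_2 \geq 1$ (the case $k_2 = 0$ being handled by the $L^\infty$ embedding).

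Your paraproduct/Peetre framework is a reasonable alternative in spirit, but both of these issues would have to be repaired before it constitutes a proof.
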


\begin{proof}
Let $u_j\in H_\SQ^{k_j}(\RR\times\RR^{n})$. We introduce the notation
$$\xi=(\xi',\xi'')\in \RR \times \RR^{n}$$
for the dual variables to $x=(x',x'')$, and the weight function $$ w(\xi)=(1+|\xi'|^2+|\xi''|^4)^{1/4}.$$
Then for even $k$, we have
$$u\in H_\SQ^{k}(\RR\times\RR^n) \Leftrightarrow w(\xi)^k\hat{u}\in L^2,$$
and for odd $k$, we have
$$u \in H_\SQ^{k}(\RR \times\RR^n) \Leftrightarrow \ang{\xi''}w(\xi)^{k-1}\hat{u} \in L^2. $$
In particular, we have 
\begin{equation}
\label{eq:fourier.weight}
u\in H_\SQ^k(\RR\times \RR^{n}) \Rightarrow \ang{\xi''}w(\xi)^{k-1}\hat{u} \in L^2
\end{equation}
regardless of parity of $k$. 

Without loss of generality we assume $k_1\geq k_2$.

If $k_2=0$, then $k_1\geq k$ and the inclusion follows from the computation  
\begin{equation}
	\|u_1\|_{L^{\infty}}\leq \|\hat{u}_1\|_{L^1}\leq  \|\ang{\xi''}w(\xi)^{k-1}\hat{u}_1\|_{L_2}\|\ang{\xi''}^{-1}w(\xi)^{1-k}\|_{L^2} 
\end{equation}
The first factor is finite by \eqref{eq:fourier.weight} and the second can be bounded using
$$\int \ang{\xi''}^{-2}w(\xi)^{2(1-k)} \, d\xi \lesssim \int \ang{\xi''}^{2(1-r)(1-k)-2}\ang{\xi'}^{r(1-k)}  \, d\xi $$
for any $r\in[0,1]$. Fixing $r=\frac{1+\ep}{k-1}$ for $\ep>0$ small, the $\xi'$-integral is finite. The $\xi''$-integral is
$$\int \ang{\xi''}^{-2k+2(1+\ep)}\, d\xi''$$
which is finite provided that $ -2k +2(1+\ep) < -n$, and so $k > \frac{n+2}{2}+\ep$. Hence by taking $\ep$ sufficiently small we obtain $u_1\in L^\infty$ and so $u_1u_2 \in L^2$ provided that $k>\max(\frac{n+2}{2},2)$ and $k_2=0$.

If $k_1 \geq k_2\geq 1$ we compute by Cauchy--Schwarz
\begin{equation}\begin{gathered}
	\|\hat{u}_1\ast \hat{u}_2\|_{L^2}^2 \leq\int \left(\int |\hat{u}_1(\xi_1-\xi_2)\hat{u}_2(\xi_2)| \, d\xi_2\right)^2 \, d\xi_1\\
	\leq \int M(\xi_1)\int \ang{\xi_1''-\xi_2''}^2w(\xi_1-\xi_2)^{2k_1-2}|\hat{u}_1(\xi_1-\xi_2)|^2\ang{\xi_2''}^2w(\xi_2)^{2k_2-2}|\hat{u}_2(\xi_2)|^2 \, d\xi_2 \, d\xi_1\\
	\lesssim \sup_{\xi_1} M(\xi_1) \label{eq:sup.bound}
\end{gathered}\end{equation}
where
\begin{multline}
	M(\xi_1)= \int \frac{ d\xi_2}{\ang{\xi_1''-\xi_2''}^2\ang{\xi_2''}^2 w(\xi_1-\xi_2)^{2k_1-2}w(\xi_2)^{2k_2-2}}\\
	\lesssim \int \int \frac{d\xi_2'\, d\xi_2''}{\ang{\xi_1''-\xi_2''}^{2k_1-2r(k_1-1)}\ang{\xi_2''}^{2k_2-2r(k_2-1)}\ang{\xi_1'-\xi_2'}^{r(k_1-1)}\ang{\xi_2'}^{r(k_2-1)}}
\end{multline}
for any $r\in[0,1]$. 

By H\"{o}lder's inequality, both the $\xi_2'$ and $\xi_2''$ integrals are finite and uniformly bounded in $\xi_1$ provided that
\begin{equation}
\label{eq:sobalg.cond.1}
r(k_1+k_2-2)>1
\end{equation}
and
\begin{equation}
\label{eq:sobalg.cond.2}
2(k_1+k_2)-2r(k_1+k_2-2) >n .
\end{equation}
For $k> 3$ and small $\ep>0$, we can thus choose $r\in (0,1)$ so that $r(k_1+k_2-2)=1+\ep$, and so \eqref{eq:sobalg.cond.1} is satisfied. Moreover if $k>\frac{n+2}{2}$ we then have
$$2(k_1+k_2)-2r(k_1+k_2-2)=2(k_1+k_2)-2(1+\ep) > n $$
for sufficiently small $\ep$ and so \eqref{eq:sobalg.cond.2} is also satisfied. Hence $\hat{u}_1\ast \hat{u}_2\in L^2$ from \eqref{eq:sup.bound} and Plancherel implies $u_1u_2\in L^2$ provided that $k>\max(\frac{n+2}{2},3)$ and $k_2\geq 1$.

Finally, to relax the assumption $k>\max(\frac{n+2}{2},3)$ to $k>\max(\frac{n+2}{2},2)$, it suffices to show $u_1u_2\in L^2$ if $k=3>\frac{n+2}{2}$ and $k_2\geq 1$ which we shall now assume.

As $k_1+k_2\geq k$, we either have $k_1+k_2 \geq k+1$ or $k_1+k_2 =k=3$. In the first case the improvement by $1$ to $k>\max(\frac{n+2}{2},2)$ is immediate from the preceding argument. In the latter case, we must in fact have $k_1=2,k_2=1$. We can then argue as before but using the stronger weight condition $w(\xi)^{2}\hat{u}_1\in L^2$.

The integrand in $M(\xi_1)$ is replaced by
\[\frac{1}{\ang{\xi_2''}^2 w(\xi_1-\xi_2)^{2k_1}w(\xi_2)^{2k_2-2}}\]
and consequently the factor $k_1+k_2-2$ in the integrability conditions \eqref{eq:sobalg.cond.1}, \eqref{eq:sobalg.cond.2} can be replaced by $k_1+k_2-1$. We then choose $r\in (0,1)$ such that $r(k_1+k_2-1)=1+\ep$ and conclude as before provided that $k>\max(\frac{n+2}{2},2)$.
\end{proof}

\begin{cor}\label{cor:Sob-alg}
For $k > \max((n+2)/2,2)$, the space $H_\SQ^{k}(\RR^\times\RR^{n})$ is an algebra. 
\end{cor}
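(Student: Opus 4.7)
The plan is to deduce the algebra property directly from Lemma~\ref{lem:basic.sob.alg} via the Leibniz rule and a careful bookkeeping of parabolic orders. Given $u, v \in H_\SQ^k(\RR \times \RR^n)$, I must show that $D_{x'}^\alpha D_{x''}^\beta(uv) \in L^2(\RR^{n+1})$ for every pair $(\alpha, \beta) \in \NN \times \NN^n$ with $2\alpha + |\beta| \leq k$, as this is the condition characterizing membership in $H_\SQ^k$, per \eqref{eq:HQ-def}.

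First, I would apply the Leibniz rule to write
\begin{equation*}
D_{x'}^\alpha D_{x''}^\beta(uv) = \sum_{\alpha_1 + \alpha_2 = \alpha,\ \beta_1 + \beta_2 = \beta} \binom{\alpha}{\alpha_1} \binom{\beta}{\beta_1} \bigl(D_{x'}^{\alpha_1} D_{x''}^{\beta_1} u\bigr) \bigl(D_{x'}^{\alpha_2} D_{x''}^{\beta_2} v\bigr).
\end{equation*}
For each term, set $k_i := 2\alpha_i + |\beta_i|$ for $i = 1, 2$. Because $u \in H_\SQ^k$, the factor $D_{x'}^{\alpha_1} D_{x''}^{\beta_1} u$ lies in $H_\SQ^{k - k_1}$ (differentiation reduces the parabolic order by $k_1$), and similarly $D_{x'}^{\alpha_2} D_{x''}^{\beta_2} v \in H_\SQ^{k - k_2}$.

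Now observe the crucial arithmetic: $(k - k_1) + (k - k_2) = 2k - (k_1 + k_2) = 2k - (2\alpha + |\beta|) \geq 2k - k = k$. Moreover $k - k_1, k - k_2 \geq 0$. Hence the two factors lie in $H_\SQ^{k_1'} \cdot H_\SQ^{k_2'}$ with $k_1' + k_2' \geq k$ and $k > \max(\tfrac{n+2}{2}, 2)$, exactly the hypothesis of Lemma~\ref{lem:basic.sob.alg}, which yields that each summand lies in $L^2(\RR^{n+1})$. Summing finitely many such terms gives $D_{x'}^\alpha D_{x''}^\beta(uv) \in L^2$. Combined with the implicit continuity of the bilinear map in Lemma~\ref{lem:basic.sob.alg}, this also yields boundedness of multiplication $H_\SQ^k \times H_\SQ^k \to H_\SQ^k$, so that $H_\SQ^k$ is a Banach algebra. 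There is no real obstacle here: all the analytic work is in the preceding lemma, and the only thing to check is the parabolic-order counting $k_1' + k_2' \geq k$, which is automatic from $2\alpha + |\beta| \leq k$.
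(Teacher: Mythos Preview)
Your proof is correct and follows essentially the same approach as the paper: apply the Leibniz rule to $D_{x'}^\alpha D_{x''}^\beta(uv)$ and invoke Lemma~\ref{lem:basic.sob.alg} on each resulting term. You have simply made explicit the parabolic-order bookkeeping $(k-k_1)+(k-k_2)\geq k$ that the paper leaves implicit.
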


\begin{proof} Let $v_1, v_2 \in H_\SQ^{k}(\RR\times\RR^{n})$. We need to show that if we apply an derivative $D_{x'}^\alpha D_{x''}^\beta u$ to the product $v_1v_2$, satisfying $2\alpha+|\beta| \leq k$, then we land in $L^2$. We apply the generalized Leibniz rule to obtain a sum of terms, each of which has the form of the LHS of \eqref{eq:Sob-prod}. Lemma~\ref{lem:basic.sob.alg} then completes the proof. 
\end{proof}

We are now ready to prove our main algebra result.

\begin{proposition}\label{thm:flat mult}
Let $t_0 , t_0' \in \mathbb{R}$, $k \in \mathbb{N}$ and $r_1, r_2 \in \mathbb{R}$.  Provided $k>\max(n/2+2,3)$, 
  \begin{equation}
H_{\mathcal{F}}^{0,r_1;k}(t \geq t_0) \cdot H_{\mathcal{F}}^{0,r_2;k}(t \geq 
t_0) \subset H_{\mathcal{F}}^{0,r_1 + r_2 + (n+1)/2;k}(t \geq t_0),\label{eq:basic mult}
      \end{equation}
and the same holds
with $t \leq t_0$ replacing $t \geq t_0$ everywhere.
Also, for $k$ satisfying the same condition,
\begin{equation}
H_{\mathcal{F}}^{0,r_1;k}(t \geq t_0) \cdot H_{\mathcal{F}}^{0,r_2;k}(t \leq t_0') \subset H_{\mathcal{F}}^{0 ,r_1 + r_2  + (n + 1)/2
  ;k}(t \geq t_0)\label{eq:mult not same}
\end{equation}
\end{proposition}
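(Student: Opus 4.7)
The plan is to reduce all three containments to the standard parabolic Sobolev algebra property (Corollary~\ref{cor:Sob-alg}) by changing variables in local charts on $X$ so that the generators of $\mathcal{F}$ become constant-coefficient parabolic differential operators on a model space. The crucial gain of $(n+1)/2$ in the spacetime weight will emerge as a Jacobian discrepancy between Lebesgue measure $dz\,dt$ and the natural b/scattering measure at $\partial X$.

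First I would handle the mixed statement \eqref{eq:mult not same}: for $u_1 \in H_\mathcal{F}^{0,r_1;k}(t\geq t_0)$ and $u_2 \in H_\mathcal{F}^{0,r_2;k}(t\leq t_0')$, the product $u_1 u_2$ is supported in the intersection $[t_0, t_0']$, which is either empty (in which case the statement is trivial) or compact in time. In the compact case all weights $\langle z,t\rangle^{-r}$ are bounded, and the generators of $\mathcal{F}$ reduce (modulo bounded multiplications) to the constant-coefficient parabolic derivatives $D_t$ (order $2$) and $D_z, \mathrm{Rot}$ (order $1$). Corollary~\ref{cor:Sob-alg} applies directly to give the result, with any constant absorbed into the weight.

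For the main containment \eqref{eq:basic mult}, I would use a partition of unity on $X$ to split each $u_i$ into (a) a piece supported in a compact subset of $\RR^{n+1}_{z,t}$, (b) a piece supported near $\mf \setminus \tilde E$ (where $|z|/t$ is bounded), and (c) a piece supported near $\tilde E$ (where $t/|z|$ is bounded). Since the cutoffs are smooth on $X$, they commute with the generators of $\mathcal{F}$ modulo $C^\infty(X)$-valued multiplications, so each piece lies in the appropriate module regularity space. Case (a) is handled as above. For case (b), introduce coordinates $T = \log t$, $Y = z/t$, so $\langle z,t\rangle \sim e^T$, $dz\,dt = e^{(n+1)T}dY\,dT$, and
\[
\langle t\rangle D_{z_j} \sim D_{Y_j}, \quad \langle z\rangle/\langle t\rangle \sim |Y|, \quad \langle t\rangle D_t \sim D_T - Y\cdot D_Y, \quad \mathrm{Rot}_{jk} = Y_j D_{Y_k} - Y_k D_{Y_j},
\]
all on the bounded region $|Y|\leq C$. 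Setting $v_i = e^{(r_i + (n+1)/2)T} u_i$, the conjugation of generators by $e^{sT}$ only adds constants (since $e^{sT}(D_T-Y\cdot D_Y)e^{-sT} = D_T - Y\cdot D_Y - is$), and
\[
\|u_i\|_{\langle z,t\rangle^{-r_i}L^2(dz\,dt)}^2 \;\sim\; \|v_i\|_{L^2(dY\,dT)}^2.
\]
Thus $v_i \in H_\mathcal{Q}^{k}(\RR_T \times \RR_Y^n)$ (where $T$ plays the role of the parabolic variable $x'$), and Corollary~\ref{cor:Sob-alg} gives $v_1 v_2 \in H_\mathcal{Q}^k$. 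Translating back, $u_1 u_2 \in \langle z,t\rangle^{-r_1-r_2-(n+1)/2}L^2$ with module regularity $k$, as required.

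For case (c) near $\tilde E$, use coordinates $R = \log|z|$, $\hat z \in S^{n-1}$, $s = t/|z|$, where $\langle z,t\rangle \sim e^R$ and $dz\,dt = e^{(n+1)R}dR\,d\hat z\,ds$, again producing the Jacobian factor $e^{(n+1)R}$ that gives the $(n+1)/2$ gain. The generators in \eqref{eq:77} become $s^2 D_s$ (order $1$), $1/s$, $D_R$ (order $2$, parabolic direction), and $D_{\hat z}$. Away from the corner ($s$ bounded below) these reduce to a flat parabolic calculus in $(R,s,\hat z)$, and the same conjugation-plus-Sobolev-algebra argument applies. Near the corner ($s\to 0$), one needs the scattering generators $s^2 D_s, 1/s$: but since $1/s$ appears as a generator, any element of the $k$-th reduced module power applied to $u$ controls $s^{-k}u$, and after conjugating by $e^{(r+(n+1)/2)R}$ we can again reduce to the parabolic Sobolev algebra on a bounded model after an inverse-scattering substitution $\sigma = 1/s$, which turns $s^2 D_s$ into $-D_\sigma$ and $1/s$ into $\sigma$. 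The variant with $t\leq t_0$ is identical after $t \mapsto -t$. The main obstacle is precisely this corner analysis: tracking the Leibniz-rule expansion for products of reduced powers of the module across a chart where two boundary hypersurfaces meet, and verifying that the constant $(n+1)/2$ produced by the Jacobian is the same one in both coordinate systems and hence in the corner chart.
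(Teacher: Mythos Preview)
Your overall architecture --- localize on $X$, change variables so that the $\mathcal{F}$-generators become a flat parabolic system, and read off the weight gain from the Jacobian --- is exactly the paper's strategy, and your treatment of the region near $\mf\setminus\tilde E$ is essentially the paper's argument verbatim.

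There is, however, a genuine error in your handling of the mixed case \eqref{eq:mult not same}. You claim that on the time slab $[t_0,t_0']$ ``all weights $\langle z,t\rangle^{-r}$ are bounded''. This is false: $z$ is unconstrained, and the slab meets the face $\tilde E$ (where $|z|\to\infty$ with $t$ bounded). So you cannot simply drop the weights and invoke Corollary~\ref{cor:Sob-alg}. The paper instead reduces \eqref{eq:mult not same} to \eqref{eq:basic mult} in one line: since $v_1=\chi_{>t_0-1}v_1$, one has $v_1v_2=v_1\cdot(\chi_{>t_0-1}v_2)$, and $\chi_{>t_0-1}v_2\in H^{0,r_2;k}_{\mathcal F}(t\ge t_0-1)$, so \eqref{eq:basic mult} (with $t_0$ shifted) applies directly.

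Near $\tilde E$ your sketch is pointed in the right direction but misses the mechanism that forces the stronger threshold $k>\max(n/2+2,3)$ rather than $k>\max(n/2+1,2)$. The paper does \emph{not} get the full $(n+1)/2$ gain from a single Jacobian in this chart. Using $T=t-t_*$, $S=|z|/T$, $\sigma=\log T$ and angular variables $y$ on $S^{n-1}$, the measure identity $dz\,dt=r^{n-1}\,dr\,d\hat z\,dt$ together with the parabolic algebra (Lemma~\ref{lem:basic.sob.alg}) yields only
\[
v_1v_2\in r^{-(n-1)/2}\,T^{-1}\,H^{0,0;k}_{\mathcal F}(t\ge t_0),
\]
which near $\tilde E$ is short of $\langle z,t\rangle^{-(n+1)/2}$ by exactly one power of $S^{-1}=T/r$. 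That missing factor is then recovered from the \emph{multiplication} generator $1/\rho_E\sim S$ of $\mathcal F$: since $S\cdot v_i\in H^{0,0;k-1}_{\mathcal F}$, one can trade one order of module regularity for one power of $S^{-1}$ on the product. This is precisely why the hypothesis is $k>\max(n/2+1,2)+1$. Your proposed substitution $\sigma=1/s$ does not by itself produce this extra decay, and your write-up does not account for where the ``$+1$'' in the $k$-threshold comes from.
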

\begin{proof}
  We begin by showing that \eqref{eq:mult not same} follows from
  \eqref{eq:basic mult}.  Indeed, for $v_1 \in H_{\mathcal{F}}^{0,r_1;k}(t \geq t_0)$ and $v_2 \in H_{\mathcal{F}}^{0,r_2;k}(t \leq t_0')$, using the cutoff functions from \eqref{eq:chi time}, we have $v_1 = v_1 \chi_{> t_0 - 1}$, so $v_1 v_2 = v_1 (\chi_{> t_0
  - 1} v_2)$, but it is easy to check that $\chi_{> t_0
  - 1} v_2 \in H_{\mathcal{F}}^{0,r_1;k}(t \geq t_0-1)$, so
\eqref{eq:basic mult} (with $t_0$ replaced by $t_0 - 1$) applies to $v_1 v_2$. 

It remains to prove \eqref{eq:basic mult}.  We assume $r_1 = r_2 = 0$, as the general case follows trivially by
  factoring out the weight functions.  Given $v_1, v_2 \in
 H_{\mathcal{F}}^{0,0;k}(t \geq t_0)$, since we can localize these functions in $H_{\mathcal{F}}^{0,0;k}(t \geq t_0)$ using elements of $C^\infty(X)$,  we can assume either that the support of $v_1$ lies outside some neighbourhood of $\tilde E$, the blowup of the equator, or it lies in some small neighbourhood of $\tilde E$. Then we can localize $v_2$ similarly, so that it is not changed on the support of $v_1$. 
 
 We first consider the case where $v_1$ has support disjoint from $\tilde E$. Then the blowup is irrelevant and we may use $\rho = \rho_{\mf} = \ang{t}^{-1}$ as a boundary defining function for $X$ and $y_j = z_j/\ang{t}$, $j = 1 \dots n$, as angular coordinates. In this region, 
 containment in $ H_{\mathcal{F}}^{0,0;k}(t >
t_0)$ is equivalent to
$$
(\ang{t} D_z) ^\alpha \left( \ang{t} D_t \right)^j \Rot^l
v_1 \in  H_{\mathcal{F}}^{0, 0;0} = L^2(dt\, dz), |\alpha| + 2j + l
\le k
$$
where $\Rot^l$ is any product of $l$ rotations. Noting that rotations $\Rot_{jk} = z_j D_{z_k} - z_k D_{z_j}$ in $z$ transform into rotations $y_j D_{y_k} - y_k D_{y_j}$ in $y$, we  see that this is b-regularity, i.e. holds if and only if
\begin{equation}
D_y^\alpha (\rho D_\rho)^j v_1 \in L^2(dt\, dz) = \rho^{(n + 1)/2} L^2\left(
  \frac{d\rho\, dy}{\rho}\right).\label{eq:reduced to b}
  \end{equation}
  Note that $\rho \sim \langle z, t \rangle^{-1}$ in this region. 
  Setting $\log \rho = \sigma$, we see that \eqref{eq:reduced to b} is equivalent to 
  $$
D_y^\alpha D_\sigma^j v_1 \in  \ang{z,t}^{(n+1)/2}L^2(d\sigma\,  dy)
  $$
and thus $v_1 \in  H_{\mathcal{F}}^{0,0;k}(t \geq t_0)$ if and only if  $v_1(z(\sigma, y),t(\sigma, y)) \in
  \ang{z,t}^{(n+1)/2}H^{k}_{\SQ}(\RR_\sigma\times  \RR^{n}_y)$ as defined in \eqref{eq:HQ-def}. As $H^{k}_{\SQ}(\RR_\sigma\times  \RR^{n}_y)$ is an algebra provided $k > \max((n+2)/2,2)$, by Corollary~\ref{cor:Sob-alg}, equation \eqref{eq:reduced to b} implies that $v_1, v_2$ (supported away from $\tilde E$) satisfy
  $$
v_1 v_2 \in  H_{\mathcal{F}}^{0,(n+1)/2;k}(t \geq t_0).
$$

It remains to treat the case that $v_1$ is supported near $\tilde E$. We may further localize in the $\hat z$ variable to a coordinate patch in $S^{n-1}$ with local coordinates $y = (y_1, \dots, y_{n-1})$.  In this case, we choose any $t_* < t_0$ and take  $\rho_{\mf} = (t-t_*)^{-1}$, $\rho_E= (t-t_*)/|z|$, which are valid coordinates in a neighbourhood of the support of $v_1$ in $X$, together with $y$, for a local coordinate system. To simplify notation, define $T = t-t_*$ and $S = \rho_E^{-1} = |z|/T$. Notice that $S$ is bounded below on the support of $v_1$. 
Then containment of $v_1$ in $H_\SF^{0,0;k}(t>t_0)$ is equivalent to 
$$(T D_T)^i S^{j} D_S^l \Rot^m v_1 \in L^2(dt\,dz)\textrm{ for } 2i+j+l+m \leq k .$$
For such $v_1$ we certainly have 
$$(T D_T)^i D_S^l  D_y^\alpha v_1 \in L^2 (dt \, dz) \textrm{ for } 2i+l + |\alpha| \leq k$$
and so setting $\log T=\sigma$ and writing the measure in our new coordinates, we have
$$D_\sigma^i D_S^l  D_y^\alpha v_1 \in L^2 (dt \, dz) = r^{(n-1)/2} (t-t_*) L^2(d\sigma dS dy)\textrm{ for } 2i+| + |\alpha| \leq k.$$

it follows that $H_\SF^{0,0;k}(t \geq t_0)$ is contained in a weighted parabolic Sobolev space with coordinates $(\sigma, S, y)$ and parabolic direction $\sigma$: 
$$H_\SF^{0,0;k}(t \geq t_0) \subset r^{-(n-1)/2}(t-t_*)^{-1}H_\SQ(\RR_\sigma \times (\RR_S\times \RR_y^{n-1})) .$$
Now take $v_1, v_2 \in H_\SF^{0,0;k}(t \geq t_0)$. 
If we apply a product of generators of $\SF$ of the form $(TD_T)^i S^{j} D_S^l D_y^\alpha$ to the product $v_1v_2$, by the Leibniz rule we obtain a sum of terms of the form $\tilde v_1 \tilde v_2$ where $\tilde v_1 \in H_\SF^{0,0;k_1}(t \geq t_0) $ and $\tilde v_2 \in H_\SF^{0,0;k_2}(t \geq t_0)$ with $k_1+k_2 \geq k$. Then for $k>\max((n+2)/2 ,2)$, it follows from the Leibniz rule and Lemma \ref{lem:basic.sob.alg} that for each such term we have
$$
\tilde v_1 \tilde v_2 \in r^{-(n-1)} (t-t_*)^{-2}L^2(d\sigma \, d\rho\, dS_y^{n-1} )  =r^{-(n-1)/2}(t-t_*)^{-1}L^2(dz\, dt).
$$
Hence $v_1 v_2 \in r^{-(n-1)/2}t^{-1}H_{\SF}^{0,0;k}$. Furthermore, if $k > \max(n/2+1,2)+1=\max(n/2+2,3)$ then from the second generator in \eqref{eq:77},  we gain an additional factor of
$S^{-1}$, so
$$
v_1 v_2 \in S^{-1} r^{-(n-1)/2} (t-t_*)^{-1}  H_{\mathcal{F}}^{0,0;k}(t
 \geq t_0) = r^{-(n + 1)/2}   H_{\mathcal{F}}^{0,0;k}(t
 \geq t_0) .
$$
This concludes the proof of \eqref{eq:basic mult}.\end{proof}

We can now prove
\begin{proposition}\label{thm:final mult}
Let $r, r_1, r_2 \in \mathbb{R}$, $c, c_1, c_2 \in \mathbb{R} \setminus
\{ 0 \}$, $k \in \mathbb{N}$ with $k >\max(n/2+2,3) $.  Then
  \begin{itemize}
  \item $H^{0, r_1; k}_{\mathrm{par},\modu_{c_1}} \cdot H^{0, r_2;
      k}_{\mathrm{par},\modu_{c_2}} \subset H^{0, r_1 + r_2 + (n + 1)/2; k}_{\mathrm{par},\modu_{c_1 +
        c_2}}$
    \item $u \in H^{0, r; k}_{\mathrm{par},\modu_{c}} \iff \overline u \in
      H^{0, r; k}_{\mathrm{par},\modu_{-c}}$ where $\overline{u}$ is the
      complex conjugate.
  \end{itemize}
\end{proposition}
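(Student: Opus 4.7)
The plan is to reduce the multiplication statement to the algebra property for the flat module $\mathcal{F}$ (Proposition \ref{thm:flat mult}) via the conjugation isomorphism of Proposition \ref{thm:flat back and forth}, after splitting in time to avoid the singularity of the conjugating oscillatory factor at $t = t_*$. Fix a reference time $t_0$ and decompose each $u_i \in H^{0, r_i; k}_{\mathrm{par}, \modu_{c_i}}$ using the partition of unity from \eqref{eq:chi time}, writing $u_i = u_i^> + u_i^<$ with $u_i^>$ supported in $t \geq t_0$ and $u_i^<$ supported in $t \leq t_0 + 1$; by \eqref{eq:cont of cutoffs} these pieces lie in $H^{0, r_i; k}_{\modu_{c_i}}(t \geq t_0)$ and $H^{0, r_i; k}_{\modu_{c_i}}(t \leq t_0 + 1)$ respectively. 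Then $u_1 u_2$ expands into four terms, and by symmetry it suffices to treat the ``matched'' case $u_1^> u_2^>$ and the ``cross'' case $u_1^> u_2^<$.

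For the matched case, pick $t_* < t_0$ and apply the conjugation \eqref{eq:flat map up} with $c = c_i$ to obtain $w_i := e^{-ic_i |z|^2/4(t - t_*)} u_i^> \in H^{0, r_i; k}_{\mathcal{F}}(t \geq t_0)$. Proposition \ref{thm:flat mult} then yields $w_1 w_2 \in H^{0, r_1 + r_2 + (n+1)/2; k}_{\mathcal{F}}(t \geq t_0)$, and since $u_1^> u_2^> = e^{i(c_1 + c_2)|z|^2/4(t - t_*)} w_1 w_2$, when $c_1 + c_2 \neq 0$ the inverse of \eqref{eq:flat map up} places the product in $H^{0, r_1 + r_2 + (n+1)/2; k}_{\modu_{c_1 + c_2}}(t \geq t_0)$. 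The cross term $u_1^> u_2^<$ equals $u_1^> (\chi_{\geq t_0 - 1} u_2^<)$, where $\chi_{\geq t_0 - 1} u_2^<$ is supported in the bounded interval $[t_0 - 1, t_0 + 1]$ and lies in $H^{0, r_2; k}_{\modu_{c_2}}(t \geq t_0 - 1)$, so the matched argument applies with $t_0 - 1$ in place of $t_0$. Remark \ref{rem:inv} guarantees that the choice of $t_*$ is immaterial.

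The second bullet is a direct computation on generators. Each element of $\mathbf{G}_c$ is a first-order real-coefficient differential operator, and one verifies that $L \bar u = -\overline{\tilde L u}$ for each non-identity generator $L$, where $\tilde L \in \mathbf{G}_{-c}$: the $c$-dependent case reads $(tD_{z_j} - cz_j/2) \bar u = -\overline{(tD_{z_j} + cz_j/2) u}$ with $tD_{z_j} + cz_j/2 \in \mathbf{G}_{-c}$, while the remaining generators $D_{z_j}, D_t, \Rot_{ij}, 2tD_t + z \cdot D_z$ are $c$-independent. This induces a bijection $A \mapsto \bar A$ between $\modu_c^{(k)}$ and $\modu_{-c}^{(k)}$ satisfying $\|A \bar u\|_{\Hpar 0 r} = \|\bar A u\|_{\Hpar 0 r}$, from which Lemma \ref{thm:order doesnt matter} delivers the equivalence of module regularity norms.

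The main obstacle is the case $c_1 + c_2 = 0$, since Proposition \ref{thm:flat back and forth} is stated only for $c \neq 0$. Here the reintroduced oscillation $e^{i(c_1 + c_2)|z|^2/4(t - t_*)}$ is trivial, so it suffices to show the embedding $H^{0, r; k}_{\mathcal{F}}(t \geq t_0) \subset H^{0, r; k}_{\modu_0}(t \geq t_0)$. This holds because on the half-space $t \geq t_0 > 0$ every generator of $\mathbf{G}_0$ is a $C^\infty(X)$-combination of products of $\mathcal{F}$-generators of matching parabolic order; for instance $tD_{z_j} = (t/\langle t \rangle)(\langle t \rangle D_{z_j})$, $D_t = \langle t \rangle^{-1}(\langle t \rangle D_t)$, and $z \cdot D_z = \sum_j (z_j/\langle z \rangle)(\langle z \rangle / \langle t \rangle)(\langle t \rangle D_{z_j})$. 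Expanding each element of $\modu_0^{(k)}$ in this way and using that $\Psi^{0,0}$-coefficients act boundedly, we conclude that $\modu_0^{(k)}$ maps $H^{0, r; k}_{\mathcal{F}}(t \geq t_0)$ continuously into $\langle z, t \rangle^{-r} L^2$, completing the argument.
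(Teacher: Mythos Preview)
Your proof is correct and follows essentially the same route as the paper: split in time, conjugate to the flat module $\mathcal{F}$ via Proposition~\ref{thm:flat back and forth}, multiply there using Proposition~\ref{thm:flat mult}, and conjugate back. Your handling of the cross term $u_1^> u_2^<$ differs slightly: you insert an extra cutoff $\chi_{\geq t_0-1}$ to reduce to the matched case, whereas the paper keeps all four terms together and observes that on the compact time-slab $[t_0,t_0+1]$ the residual phase $e^{ic_1|z|^2/4(t-t_*)+ic_2|z|^2/4(t-t^*)-i(c_1+c_2)|z|^2/4(t-t_*)}$ is $C^\infty(X)$. Both arguments are valid; yours is the same trick the paper uses inside Proposition~\ref{thm:flat mult} to deduce \eqref{eq:mult not same} from \eqref{eq:basic mult}, just applied one level earlier.

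You go beyond the paper in treating the case $c_1+c_2=0$ explicitly in your last paragraph. The paper's proof, as written, invokes Proposition~\ref{thm:flat back and forth} with $c=c_1+c_2$ in the final step, so it tacitly requires $c_1+c_2\neq 0$; this is harmless for the applications (phase invariance forces $c_1+c_2=1$), but strictly speaking the stated hypotheses allow $c_1+c_2=0$. Your embedding $H^{0,r;k}_{\mathcal F}(t\geq t_0)\subset H^{0,r;k}_{\modu_0}(t\geq t_0)$ via expressing each $\mathbf{G}_0$-generator as a $C^\infty(X)$-combination of $\mathcal F$-products of matching order is a clean way to close that gap.
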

\begin{proof}
The second item is immediate from taking the conjugate of the generator $tD_{z_j}-cz_j/2$.

To prove the first item, let $v_1 \in H^{0, 0; k}_{\mathrm{par},\modu_{c_1}}, v_2 \in H^{0, 0;
    k}_{\mathrm{par},\modu_{c_2}}$.  Then, fixing $t_0$ arbitrarily, and choosing $t_*, t^*$ such that $t_* < t_0$, $t^* > t_0 + 1$, 
    we can write 
  \begin{equation}
    \label{eq:finally mult}
    \begin{split}
      v_1 v_2 &= (\chi_{> t_0} v_1 + \chi_{< t_0 + 1} v_1) (\chi_{> t_0} v_2 +
      \chi_{< t_0 + 1} v_2) \\
      &= \Big(e^{i c_1 |z|^2/4(t - t_*)} w_{1, >} + e^{i c_1|z|^2/4(t - t^*)}
      w_{1, <} \Big)  \\
      &\quad  \times \Big(e^{i c_2 |z|^2/4(t - t_*)}  w_{2, >} + e^{i c_2|z|^2/4(t -
        t^*)} w_{2, <} \Big) \\
      &= e^{i (c_1 + c_2) |z|^2/4(t - t_*)} w_{1, >} w_{2,>} + e^{i
        (c_1 + c_2) |z|^2/4(t - t^*)} w_{1, <} w_{2, <}  \\
      &\quad  + e^{i c_1 |z|^2/4(t - t_*) + i c_2 |z|^2/4(t - t^*)} w_{1, >} w_{2, <} \\
      & \quad + e^{i c_1 |z|^2/4(t - t^*) + i c_2 |z|^2/4(t - t_*)}  w_{1, <} w_{2, >}
    \end{split}
  \end{equation}
  where we have 
  \begin{equation*}
    \begin{split}
      w_{1, >} &= e^{-i c_1 |z|^2/4(t - t_*)}\chi_{> t_0} v_1 \in
      H^{0,0;k}_{\mathcal{F}}(t \geq t_0) \\
      w_{1, <} &= e^{- i c_1|z|^2/4(t - t^*)}\chi_{< t_0 + 1} v_1 \in
      H^{0,0;k}_{\mathcal{F}}(t \leq t_0 + 1) \\
      \tilde w_{2, >} &= e^{- i c_2 |z|^2/4(t - t_*)}\chi_{> t_0} 
      v_2 \in H^{0,0;k}_{\mathcal{F}}(t \geq t_0) \\
      \tilde w_{2, <} &= e^{- i c_2 |z|^2/4(t - t^*)} \chi_{< t_0 + 1}
      v_2 \in H^{0,0;k}_{\mathcal{F}}(t \leq t_0 + 1).
    \end{split}
  \end{equation*}
  By Proposition \ref{thm:flat mult}, the products $w_{1, >} w_{2, >}$ lies in $H^{0, (n + 1)/2 ; k}_{\mathcal{F}}(t \geq t_0)$, while 
  $w_{1, <} w_{2, <}$ lies in $H^{0, (n + 1)/2 ; k}_{\mathcal{F}}(t \leq t_0)$. 
%
    Then from Proposition \ref{thm:flat back and forth}, the terms
    $$
    e^{i (c_1 + c_2) |z|^2/4(t - t_*)} w_{1,>}  w_{2, >} \mbox{ and }
    e^{i
        (c_1 + c_2) |z|^2/4(t - t^*)} w_{1, <} w_{2, <} \mbox{ lie in } H^{0, (n
        + 1)/2; k}_{\mathrm{par},\modu_{c_1 + c_2}}.
      $$
      
To see that the last two terms in \eqref{eq:finally mult} also lie in $H^{0, (n
  + 1)/2; k}_{\mathrm{par},\modu_{c_1 + c_2}}(t \geq t_0)$ we notice that these terms have the form 
 $$
 e^{i (c_1 + c_2) |z|^2/4(t - t_*)} \Big( g_1 \, w_{1, >} w_{2, <} + g_2  \,  w_{1, <} w_{2, >} \Big)
$$
where $g_1$ and $g_2$ are in $C^\infty(X)$ on the support of the products $w_{1, >} w_{2, <}$ and $w_{1, <} w_{2, >}$, which is contained in the time slice $\{ t_0 \leq t \leq t_0 + 1 \}$. This is because $z_j/(t-t_*)$ and $z_j/(t-t^*)$ are smooth bounded functions on $X$ on this support. 
It follows that $g_1 w_{1, >} w_{2, <} + g_2   w_{1, <} w_{2, >}$ lies in $H^{0, (n + 1)/2 ; k}_{\mathcal{F}}(t \geq t_0)$, and again we appeal to Proposition \ref{thm:flat back and forth} to complete the proof. (We could also appeal to Remark~\ref{rem:inv} in the last step.) 
      \end{proof}

\subsection{Further multiplication results} \label{sec:interpolation}

In this section, we extend the multiplication result of Proposition~\ref{thm:final
  mult} to arbitrary nonegative parabolic Sobolev regularity $s$ using a complex interpolation argument. Note that Proposition~\ref{thm:final mult} is the case $s=0$ of the next proposition. 




\begin{proposition}\label{thm:s-mult}
Let $r_1, r_2 \in \mathbb{R}$, $c_1, c_2 \in \mathbb{R} \setminus
\{ 0 \}$, $k \in \mathbb{N}$ with $k >\max(n/2+2,3) $. Let $s \geq 0$. Then
\begin{equation}
H^{s, r_1; k}_{\mathrm{par},\modu_{c_1}} \cdot H^{s, r_2;
      k}_{\mathrm{par},\modu_{c_2}} \subset H^{s, r_1 + r_2 + (n + 1)/2; k}_{\mathrm{par},\modu_{c_1 +
        c_2}}, 
\end{equation}        
 with a norm estimate 
\begin{equation}
\| u_1 u_2 \|_{ H^{s, r_1 + r_2 + (n + 1)/2; k}_{\mathrm{par},\modu_{c_1 + c_2}}} \leq C \| u_1 \|_{H^{s, r_1; k}_{\mathrm{par},\modu_{c_1}}} \cdot \| u_2 \|_{H^{s, r_2; k}_{\mathrm{par},\modu_{c_2}}}. 
\label{eq:prod-norm-est}\end{equation}
\end{proposition}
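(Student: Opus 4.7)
The approach is complex interpolation. The case $s = 0$ is exactly Proposition \ref{thm:final mult}. To extend to all $s \geq 0$, I would interpolate between this endpoint and the corresponding statement at some integer $s = N$, using bilinear Calder\'on--Lions complex interpolation; since $N$ is arbitrary, this gives the result for all $s \in [0, \infty)$.

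For the integer endpoint, the key observation is that the generators of $\modu_{c_1+c_2}$ are all first-order differential operators that split under the Leibniz rule into generators of $\modu_{c_1}$ acting on the first factor plus generators of $\modu_{c_2}$ acting on the second. This is immediate for the derivations $D_{z_j}$, $D_t$, $\Rot_{ij}$ and $2tD_t + z \cdot D_z$, which are independent of $c$; for the $c$-dependent generator one verifies directly
\begin{equation*}
\bigl(tD_{z_j}-(c_1+c_2)z_j/2\bigr)(u_1 u_2) = \bigl(tD_{z_j}-c_1 z_j/2\bigr)u_1 \cdot u_2 + u_1 \cdot \bigl(tD_{z_j}-c_2 z_j/2\bigr)u_2.
\end{equation*}
For integer $N$, membership $u_1 u_2 \in H^{N, r_1+r_2+(n+1)/2; k}_{\mathrm{par}, \modu_{c_1+c_2}}$ is equivalent to $D_z^\alpha D_t^j A(u_1 u_2)$ lying in $\langle z,t\rangle^{-r_1-r_2-(n+1)/2} L^2$ for all $A \in \modu_{c_1+c_2}^{(k)}$ and all $\alpha, j$ with $|\alpha|+2j \leq N$. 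Distributing via Leibniz yields a finite sum of products $(D_z^{\alpha_1} D_t^{j_1} B_1 u_1)(D_z^{\alpha_2} D_t^{j_2} B_2 u_2)$ with $|\alpha_1|+|\alpha_2|\le|\alpha|$, $j_1+j_2\le j$, and $B_i$ products of $\modu_{c_i}$ generators satisfying $\operatorname{ord}(B_1)+\operatorname{ord}(B_2)\le k$, to each of which the $s=0$ case of the proposition applies.

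For the interpolation step, the family $\{H^{s, r; k}_{\mathrm{par}, \modu_c}\}_{s \in \mathbb{R}}$ is a complex interpolation scale in $s$, since $H^{s, r; k}_{\mathrm{par}, \modu_c}$ is a Hilbert space that embeds isometrically as a closed (hence complemented) subspace of a finite direct sum of copies of $H^{s, r}_{\mathrm{par}}$ via $u \mapsto (\mathbf{A}^\alpha u)_\alpha$, and parabolic Sobolev spaces interpolate correctly. Hence $[H^{0, r; k}_{\mathrm{par}, \modu_c}, H^{N, r; k}_{\mathrm{par}, \modu_c}]_\theta = H^{\theta N, r; k}_{\mathrm{par}, \modu_c}$, and bilinear Calder\'on--Lions interpolation between $s=0$ and $s=N$ yields \eqref{eq:prod-norm-est} for every $s \in [0, N]$. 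The main technical obstacle is the bookkeeping of orders in the integer endpoint: $D_t$ has parabolic regularity order $2$ while the other generators have order $1$, and module regularity of a factor $B_1 u_1$ interacts nontrivially with parabolic regularity, so one must carefully verify that each distributed summand has enough $\modu_{c_i}$-module regularity remaining for the $s=0$ case to apply with the correct $k$.
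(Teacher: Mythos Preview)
Your overall strategy---establish the result at $s=0$ and at a large integer, then interpolate---matches the paper's, but both the integer endpoint and the interpolation step, as you have written them, contain gaps that the paper handles differently.

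At the integer endpoint, your full Leibniz distribution produces factors $D_z^{\alpha_i}D_t^{j_i}B_i u_i$ whose residual module regularity is only $k-\operatorname{ord}(B_i)$, not $k$: one has $B_i u_i \in H^{N,r_i;\,k-\operatorname{ord}(B_i)}_{\modu_{c_i}}$, and the parabolic derivatives then drop $s$ without restoring module order. Proposition~\ref{thm:final mult} requires both factors to carry the \emph{full} $k$, so it does not apply to these distributed terms; you would need an asymmetric statement $H^{0,r_1;k_1}_{\modu_{c_1}}\cdot H^{0,r_2;k_2}_{\modu_{c_2}}\subset H^{0,r}_{\mathrm{par}}$ for $k_1+k_2\ge k$, which is plausible (via Lemma~\ref{lem:basic.sob.alg}) but not what is stated. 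This is not merely bookkeeping. The paper avoids the issue by not distributing the module generators at all: it commutes $D_t$ (respectively $D_{z_i}D_{z_j}$) to the right past the fixed composition $A_1\cdots A_j$, applies Leibniz only to $D_t(u_1u_2)$, and observes that $D_tu_i\in H^{0,r_i;k}_{\modu_{c_i}}$ retains the full $k$ by Lemma~\ref{thm:bounded on mod reg spaces}. Then Proposition~\ref{thm:final mult} gives $(D_tu_1)u_2\in H^{0,r;k}_{\modu_{c_1+c_2}}$ directly, and applying $A_1\cdots A_j\in\modu_{c_1+c_2}^{(k)}$ lands in $H^{0,r}_{\mathrm{par}}$ by definition. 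The commutator terms are handled by iterating, since multi-commutators of $D_t$ with the linear-coefficient generators are again constant-coefficient operators of parabolic order at most two.

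Your interpolation justification is also incomplete: that $u\mapsto(\mathbf{A}^\alpha u)_\alpha$ embeds $H^{s,r;k}_{\modu_c}$ isometrically as a closed subspace of $\bigoplus H^{s,r}_{\mathrm{par}}$ does \emph{not} by itself yield $[H^{0,r;k}_{\modu_c},H^{N,r;k}_{\modu_c}]_\theta=H^{\theta N,r;k}_{\modu_c}$. Complex interpolation does not in general commute with passage to closed subspaces; one needs a common bounded retraction onto the image, and here the orthogonal projection depends on the $H^{s,r}_{\mathrm{par}}$ inner product and hence on $s$. The paper instead runs the interpolation by hand via the three-lines lemma, using the explicit analytic family $E_z=\Op\big((1+|\zeta|^4+\tau^2)^z\big)$ and pairing $E_z\big(A_1\cdots A_j(E_{-z}u_1)(E_{-z}u_2)\big)$ against a fixed test function. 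The essential input is Lemma~\ref{lem:strip}: $E_{\pm z}$ and its iterated commutators with module generators are bounded on the module regularity spaces with operator norms growing only polynomially along vertical lines, which comes from Calder\'on--Vaillancourt and Lemma~\ref{thm:bounded on mod reg spaces}.
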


\begin{proof}
We first note that the result for $s=0$ is precisely Proposition~\ref{thm:final mult}. For $s=2$, we note that the statement that $u \in \Hpar 2 r$ is equivalent to the statement that $D_t u \in \Hpar 0 r$ and $D_{z_i} D_{z_j} u \in \Hpar 0 r$ for all pairs $(i, j)$. In fact, at any point of $T^* \RR^{n+1})$ at fibre-infinity, at least one of $D_t$ or $D_{z_i} D_{z_j}$ is elliptic as an operator of order $(2,0)$. We can therefore form an elliptic operator $B \in \Psip 2 0$ as a sum $Q D_t + \sum Q_{ij} D_{z_i} D_{z_j} + Q_0$ with $Q, Q_{ij}, Q_0 \in \Psip 0 0 $. It follows that $Bu \in \Hpar 0 r$, so by elliptic regularity, we obtain $u \in \Hpar 2 r$ as claimed.  

Now assuming that $u_i$ is in $H_{\modu_{c_i}}^{2, r_i; k}$, $i = 1, 2$,  we consider an expression $A_1 \dots A_j (u_1u_2)$, where the $A_i$ are generators of the module $\modu_{c_1 + c_2}$ and the order is at most $k$. To prove the proposition, we need to show that this is in $\Hpar 2 r$. We test this by applying $D_t$ and $D_{z_i} D_{z_j}$ to this expression in turn, as discussed in the previous paragraph. We only discuss the case of $D_t$ but the argument is exactly the same for $D_{z_i} D_{z_j}$.

We commute $D_t$ to the right of the composition $A_1 \dots A_j$, obtaining commutator terms:
\begin{equation}
D_t A_1 \dots A_j (u_1 u_2) = A_1 \dots A_j D_t (u_1 u_2) + \sum_{i=1}^j A_1 \dots A_{i-1} [D_t, A_i] A_{i+1} \dots A_j (u_1u_2). 
\label{eq:1comm}\end{equation}
We notice that $D_t u_i$ is in $H_{\modu_{c_i}}^{0, r_i; k}$, $i = 1, 2$ using Lemma~\ref{thm:bounded on mod reg spaces}. So writing $D_t(u_1u_2) = (D_t u_1) u_2 + u_1 (D_t u_2)$, and applying Proposition~\ref{thm:final mult}, we see that the first term on the RHS is in $\Hpar 0 {r_1 + r_2 + (n+1)/2}$, as desired. To treat the second term on the RHS we commute once again obtaining 
\begin{multline}
\sum_{i=1}^j A_1 \dots A_{i-1} [D_t, A_i] A_{i+1} \dots A_j (u_1u_2) = \sum_{i=1}^j A_1 \dots A_{i-1} A_{i+1} \dots A_j [D_t, A_i]  (u_1u_2) \\ + 
\sum_{i=1}^j \sum_{l \neq i} A_1 \dots A_{i-1} A_{i+1} \dots A_{l-1} [[D_t, A_i], A_l] A_{l+1} \dots A_j  (u_1u_2).
\label{eq:2comm}\end{multline}
We now note that, since the generators $A_i$ are all first order differential operator with either constant or linear coefficients, the commutator of $A_i$ with $D_t$ is a constant coefficient differential operator, and its parabolic order is at most $2$. It follows that this commutator is a multiple of either $D_t$ or $D_{z_m} D_{z_q}$ or $D_{z_m}$ or $1$. Thus, the first term on the RHS of \eqref{eq:2comm} is treated the same as the first term of \eqref{eq:1comm}. As for the second term, the double commutator is again a multiple of either $D_t$ or $D_{z_m} D_{z_q}$ or $D_{z_m}$ or $1$, and is therefore a product of one or two generators, of order two, in the module. Thus, the second term on the RHS of \eqref{eq:2comm} is a composition of generators, of order at most $k$, applied to $u_1u_2$. We have shown that all the terms are in $\Hpar 0 {r_1 + r_2 + (n+1)/2}$, as desired. This completes the proof for the case $s=2$. It is clear that this method extends to any positive even integer $s$. 


To prove for other values of $s$, we use complex interpolation
\cite{MR82586} on $L^2(\RR^{n+1})$. We use the complex analytic family of operators 
$$
E_z = \Op((1 + |\zeta|^4 + \tau^2)^z) = \Op((1 + |\zeta|^4 + \tau^2)^a) \circ  \Op((1 + |\zeta|^4 + \tau^2)^{ib}) , \quad z = a + ib, 
$$
where $a , b \in \RR$, which is an operator in $\Psip {4a} 0$. We consider the families of operators $E_z$ and $E_{-z}$ as $z$ ranges over a strip $S := \{ z \mid \Re z \in [0, q] \}$ for some positive integer $q$. To apply the method, we need to have uniform estimates for these operators as $z$ ranges over $S$. 

\begin{lemma}\label{lem:strip} Let $z = a+ib$ range over the strip $S$. 

(i) The operators $E_z$, as well as multiple commutators
$$
[ A_1, [A_2, \dots [A_j, E_z] \dots ]]
$$
are bounded as maps from $\Hpar {4a} 0$ to $\Hpar 0 0 = L^2$, with operator norm uniform for $a \in [0, q]$ and growing at most polynomially as $|b| \to \infty$. Similarly, the operators $E_{-z}$, as well as multi-commutators with generators of $\modu_{c_1 + c_2}$, 
are bounded as maps from $\Hpar {0} 0 = L^2$ to $\Hpar {4a} 0$, with operator norm uniform for $a \in [0, q]$ and growing at most polynomially as $|b| \to \infty$. 

(ii) The operators $E_{-z}$, $z = a+ib$, are bounded from $H_{\modu_c}^{0, r; k}$ to $H_{\modu_c}^{4a, r; k}$, and the operators $E_z$ are bounded from $H_{\modu_c}^{4a, r; k}$ to $H_{\modu_c}^{0, r; k}$, with norms uniformly bounded for $a \in [0, q]$ and growing at most polynomially as $|b| \to \infty$. 
\end{lemma}

\begin{proof} 

(i) To begin we need to fix a particular norm on the spaces $\Hpar {4a} 0$. The easiest way to do this is to \emph{define} the norm of $f \in \Hpar {4a} 0$ to be the norm of $E_{a} f$ in $L^2$. Then, for $z=a+ib$, the operator norm of $E_z : \Hpar {4a} 0 \to L^2$ is equal to the operator norm of $E_{ib}$ acting on $L^2$. This can be estimated using the Calder\'on-Vaillancourt theorem, which bounds the operator norm of a pseudodifferential operator in terms of the sup norm of a finite number of derivatives of  the symbol. It is easy to see that derivatives of the symbol of $E_{ib}$ are bounded polynomially in $b$ as $|b| \to \infty$. 

Now, to treat multi-commutators, we use again the fact that the $A_i$ are first order differential operators with coefficients that are either constant or linear functions. It follows that the \emph{full} symbol of a commutator $[A_i, E_z]$ is given by a constant-coefficient vector field in $\zeta, \tau$ applied to the symbol of $E_z$, and inductively, the full symbol of a multi-commutator is given by a constant coefficient differential operator applied to the symbol of $E_z$. We again apply the Calder\'on-Vaillancourt theorem to see that the operator norm of such a multi-commutator, from $\Hpar {4a} 0 \to L^2$ is bounded polynomially in $b$ as $|b| \to \infty$. 

The proof for $E_{-z}$ is similar, mutatis mutandis. 

(ii) This follows from part (i) and the proof of Lemma~\ref{thm:bounded on mod reg spaces}; we omit the details. 
\end{proof}

Returning to the proof of the Proposition, we choose elements $u_i \in H_{\modu_{c_i}}^{0, r_i; k}$ and choose an arbitrary $f \in \Hpar 0  {-(r_1 + r_2)/2}$. We fix a composition $A_1 \dots A_j$ of generators of $\modu_{c_1 + c_2}$ with total order at most $k$, and consider the analytic function 
\begin{equation}
z \mapsto \Big\langle E_z \Big( A_1 \dots A_j (E_{-z} u_1)(E_{-z} u_2) \Big), f \Big\rangle.
\label{eq:z}\end{equation}
Using Corollary~\ref{cor:Bmodinv}, for real $z = a$, $E_{-a} u_i$ ranges over all elements of $H_{\modu_{c_i}}^{4a, r_i; k}$. Moreover, by Corollary~\ref{cor:Bmodinv}, $E_a$ is a linear isomorphism between $H_{\modu_{c_1+c_2}}^{4a, r_1 + r_2 + (n+1)/2; k}$ and $H_{\modu_{c_1+c_2}}^{0, r_1 + r_2 + (n+1)/2; k}$. 
So it is enough to prove that \eqref{eq:z} evaluated at $z=s/4$ is bounded by 
\begin{equation}
C \| u_1 \|_{H^{0, r_1; k}_{\mathrm{par},\modu_{c_1}}} \cdot \| u_2 \|_{H^{0, r_2; k}_{\mathrm{par},\modu_{c_2}}} \| f \|_{\Hpar 0  {-(r_1 + r_2)/2}}.
\label{eq:s-bound}\end{equation}
We use the three lines lemma. By Lemma~\ref{lem:strip} and Proposition~\ref{thm:final mult}, when $\Re z = 0$, the norm of the product $(E_{-z} u_1)(E_{-z} u_2)$ is bounded in $H_{\modu_{c_1 + c_2}}^{0, r_1 + r_2 + (n+1)/2; k}$ by 
\begin{equation}
C \ang{\Im z}^N \| u_1 \|_{H^{0, r_1; k}_{\mathrm{par},\modu_{c_1}}} \cdot \| u_2 \|_{H^{0, r_2; k}_{\mathrm{par},\modu_{c_2}}}
\label{eq:stripedge}\end{equation}
 for some $N$. It follows that the left-hand side of the inner product is bounded in $\Hpar 0 {r_1 + r_2 + (n+1)/2}$ by  
 $$
 C \ang{\Im z}^N \| u_1 \|_{H^{0, r_1; k}_{\mathrm{par},\modu_{c_1}}} \cdot \| u_2 \|_{H^{0, r_2; k}_{\mathrm{par},\modu_{c_2}}}
 $$ 
 for some (new) $N$, and so, on the edge $\Re z = 0$, the analytic function \eqref{eq:z} is bounded by 
 \begin{equation}
 C \ang{\Im z}^N \| u_1 \|_{H^{0, r_1; k}_{\mathrm{par},\modu_{c_1}}} \cdot \| u_2 \|_{H^{0, r_2; k}_{\mathrm{par},\modu_{c_2}}}  \| f \|_{\Hpar 0  {-(r_1 + r_2)/2}}
\label{eq:stripbound}\end{equation}
 for some $N$. On the other edge of the strip $S$, for $a = \Re z = q$, we find using part (ii) of Lemma~\ref{lem:strip} that $E_{-z} u_i$ is in $H_{\modu_{c_i}}^{4q, r_i; k}$, with norm growing at most polynomially in $\Im z$, and therefore, since $4q$ is an even integer, that the product $(E_{-z} u_1)(E_{-z} u_2)$ is in $H_{\modu_{c_1 + c_2}}^{4q, r_1 + r_2 + (n+1)/2; k}$ with norm in this space bounded by \eqref{eq:stripedge}. As before, this means that the left hand side of the inner product in \eqref{eq:z} is in $\Hpar 0 {r_1 + r_2 + (n+1)/2}$, and therefore, for $\Re z = q$, the norm of this analytic function is again bounded by \eqref{eq:stripbound}. Applying the three lines lemma we deduce that, for $0 \leq s \leq 4q$, the analytic function \eqref{eq:z} at $z = s/4$ is bounded by \eqref{eq:s-bound}, as required. 
%
\end{proof}


\section{Existence and asymptotics of solutions}\label{sec:results}

\subsection{Linear solutions and asymptotic data}\label{sec:new linear
stuff}
 We recall that the Poisson operators $\Poipm$ were defined in section~\ref{sec:intro}, above \eqref{eq:Poissonrange}, and that for the free operator $P_0 = D_t + \Delta_0$, we have $\Poim = \Poip$; these Poisson operators in the free case are denoted $\Poi_0$. 
As in \cite{TDSL}, we consider asymptotic data $f$ (see
\eqref{eq:fminus}--~\eqref{eq:fplus}) in a space
$\mathcal{W}^k(\mathbb{R}^n)$, which is the $k$th order module regularity space with respect to a module $\Nhat$. This module is such that the free Poisson operator intertwines the generators of $\Nhat$, which are 
\begin{equation}
  \label{eq:Wk generators}
\hat{\mathcal{N}}_{gen} :=  \left\{  \Id, \quad \zeta_j D_{\zeta_l} -
  \zeta_l D_{\zeta_j}, \quad D_{\zeta_j}, \quad \zeta_j  \right\}
\end{equation} 
with the generators of the module $\modu$. 
Then $\mathcal{W}^k(\mathbb{R}^n)$ consists exactly of those
distributions $f \in \mathcal{S}'(\mathbb{R}^n)$ such that
$$
\forall A_1, \dots, A_k\in \hat{\mathcal{N}}_{gen}, \qquad  A_1 \cdots A_k f \in L^2(\mathbb{R}^n_{\zeta}, d\zeta)
$$
See \cite[Sect.\ 7.1]{TDSL} for further background on the
$\mathcal{W}^k$ spaces.
Let $f \in \mathcal{W}^k$ with $k \geq 0$.  In \cite[Section 7]{TDSL}, we established asome precise mapping
properties for the Poisson operators $\Poipm$; here we slightly generalize these results. 

\begin{proposition}\label{thm:Poisson mapping}[c.f. Prop. 7.5 \cite{TDSL}]
Let $\epsilon > 0$ and let $\mathrm{r}_{\min} \in C^\infty(\SymbSpa)$ satisfy
$\mathrm{r}_{\min} \leq -1/2$, $\mathrm{r}_{\min} = -1/2$ outside small neighbourhoods of the radial set $\SR$, 
 and $\mathrm{r}_{\min} = -1/2 - \epsilon$ in a smaller neighborhood of
$\mathcal{R}$.  For $\ell, k \in
\mathbb{N}_0$, the Poisson operators extend from $\mathcal{S}$ to 
continuous mappings 
  $$
\mathcal{P}_\pm \colon \langle \zeta \rangle^{-\ell} \mathcal{W}^k
\longrightarrow H_{\SN}^{1/2 + \ell,
  \mathrm{r}_{\min};k} \cap \ker P.
  $$ 
\end{proposition}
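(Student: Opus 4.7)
The plan is to proceed in three steps: (i) reduce the statement to the free Poisson operator $\mathcal{P}_0$ via a perturbation argument, (ii) use an exact intertwining identity to reduce the general $\ell$ to the case $\ell = 0$, and (iii) appeal to the base case, which is essentially \cite[Prop.~7.5]{TDSL} together with microlocal propagation to accommodate the variable weight.

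First I would reduce to the free case by writing $P = P_0 + W$ with $W = \Delta_{g(t)} - \Delta_0 + V$ compactly supported in spacetime. Then $\mathcal{P}_\pm f - \mathcal{P}_0 f$ satisfies $P(\mathcal{P}_\pm f - \mathcal{P}_0 f) = -W\mathcal{P}_0 f$, and since $W\mathcal{P}_0 f$ is compactly supported with the same module regularity as $\mathcal{P}_0 f$, Theorem~\ref{thm:linear1} (combined with Lemma~\ref{thm:mod equiv in ker} to pass between $\modu$- and $\SN$-module regularity on $\ker P$) implies the correction lies in the target space. This reduces the problem to $\mathcal{P}_0$.

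For $\mathcal{P}_0$, the key is the exact intertwining identity
\begin{equation}
\mathcal{P}_0 \circ \zeta_j = D_{z_j} \circ \mathcal{P}_0,
\label{eq:intertwine plan}
\end{equation}
which follows from the representation $\mathcal{P}_0 f = e^{it\Delta_0}(\check f)$ (inverse Fourier transform of $f$ followed by free propagation) together with the Fourier duality $\widehat{D_{z_j}\phi} = \zeta_j \hat\phi$ and the fact that $D_{z_j}$ commutes with $\Delta_0$. Iterating gives $\mathcal{P}_0 \circ p(\zeta) = p(D_z) \circ \mathcal{P}_0$ for polynomials $p$, and by functional calculus $\mathcal{P}_0 \circ \langle \zeta \rangle^\ell = \langle D_z \rangle^\ell \circ \mathcal{P}_0$. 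For $f \in \langle\zeta\rangle^{-\ell}\mathcal{W}^k$, I set $g := \langle\zeta\rangle^\ell f \in \mathcal{W}^k$, so that $\mathcal{P}_0 f = \langle D_z\rangle^{-\ell}\mathcal{P}_0 g$. The $\ell = 0$ case gives $\mathcal{P}_0 g \in H^{1/2, \mathrm{r}_{\min}; k}_\SN$, and since $\langle D_z\rangle^{-\ell} \in \Psi^{-\ell, 0}_{\mathrm{par}}$ has commutators with the $\SN$ generators that are of strictly lower order, one obtains $\mathcal{P}_0 f \in H^{1/2+\ell, \mathrm{r}_{\min}; k}_\SN$ by an argument analogous to Lemma~\ref{thm:bounded on mod reg spaces}.

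The base case $\ell = 0$ is \cite[Prop.~7.5]{TDSL} with constant weight $-1/2 - \epsilon$; to upgrade to the variable weight $\mathrm{r}_{\min}$, which equals $-1/2$ outside a neighbourhood of $\SR$, I would use Proposition~\ref{prop:sing.P} to propagate the better weight along bicharacteristics in $\Char(P) \setminus \SR$, and Proposition~\ref{prop:elliptic.estimate} to obtain Schwartz decay in the elliptic region. The main obstacle I expect is the perturbation step (i): the cleanest verification requires both Sobolev and module-regularity bookkeeping to confirm that $P_\pm^{-1}(W \mathcal{P}_0 f)$ attains the full parabolic regularity $1/2 + \ell$ without spurious loss, but this ought to follow from the compact support of $W$ and the mapping properties of $P_\pm^{-1}$ in Theorem~\ref{thm:linear1} at the expense of only routine commutator computations.
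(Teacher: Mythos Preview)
Your intertwining identity $\mathcal{P}_0 \circ \zeta_j = D_{z_j} \circ \mathcal{P}_0$ and the reduction to the case $\ell = 0$ via \cite[Prop.~7.5]{TDSL} are precisely the ingredients the paper uses. However, your implementation has two genuine gaps.

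First, in step (ii), the claim $\langle D_z\rangle^{-\ell} \in \Psi^{-\ell,0}_{\mathrm{par}}$ is false. The symbol $\langle\zeta\rangle^{-\ell}$ fails the parabolic symbol estimates near the pole $\{\zeta = 0, \, |\tau| \to \infty\}$ of fibre infinity: its $\zeta$-derivatives are bounded but do not decay like $\langle R\rangle^{-\ell - |\beta|}$ there, since $R \sim |\tau|^{1/2}$ while $\langle\zeta\rangle \sim 1$. The paper's argument is instead microlocal: on $\Char(P) = \{\tau + |\zeta|^2 = 0\}$ at fibre infinity one always has $|\zeta| \sim R$, so at least one $D_{z_i}$ is elliptic there as an operator of parabolic order $1$. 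The intertwining $D_{z_i}^\ell \mathcal{P}_0 f = \mathcal{P}_0(\zeta_i^\ell f)$ combined with microlocal elliptic inversion then yields the extra $\ell$ orders of parabolic regularity on $\Char(P)$, while off $\Char(P)$ one simply uses the ellipticity of $P$ itself.

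Second, your reduction step (i) differs from the paper's and is not as routine as you suggest. The paper does not apply a propagator to the compactly supported forcing $W\mathcal{P}_0 f$; rather it invokes the fact (from \cite{TDSL}) that $\mathcal{P}_\pm f$ equals $\mathcal{P}_0 f$ \emph{microlocally near} $\mathcal{R}_\pm$, and then transports the regularity along the rest of $\Char(P)$ by propagation (via identity \eqref{eq:Qu}). Your perturbative route loses one order of parabolic regularity: $W$ drops two orders and $P_\pm^{-1}$ only restores one, landing the correction in $H^{-1/2+\ell,\cdot}$ rather than $H^{1/2+\ell,\cdot}$. Moreover, your appeal to Lemma~\ref{thm:mod equiv in ker} is misplaced, since the correction $\mathcal{P}_\pm f - \mathcal{P}_0 f$ is not in $\ker P$ (nor is $\mathcal{P}_0 f$). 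Both issues can be repaired by a further microlocal step, again exploiting the ellipticity of some $D_{z_i}$ on $\Char(P)$ away from $\mathcal{R}$, but once you do that you have essentially reproduced the paper's approach.
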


\begin{proof} (sketch) The proof is an easy modification of Prop. 7.5 of \cite{TDSL}, which is the case $l=0$. There it is shown that $\Poip f$ is equal, microlocally near $\SR_+$, to $\Poi_0 f$. Since we have propagation of regularity of solutions to $Pu = 0$, which follows from \cite[Eq. (7.2)]{TDSL} and the mapping properties of the propagators for example, it is enough to prove the regularity for $\Poi_0 f$. This follows straightforwardly from the fact that $\Poi_0$ intertwines $\zeta_i$ and $D_{z_i}$, and the fact that, on the characteristic variety at fibre infinity and away from the radial set, at least one the operators $D_{z_i}$ is elliptic.
\end{proof} 

\begin{prop}\label{cor:NvD} 

(i) In the same setting as Prop.~\ref{thm:Poisson mapping}, for $k \geq 0$ and $l \in \ZZ$, the Poisson operators extend from $\mathcal{S}$ to 
continuous mappings 
  $$
\mathcal{P}_\pm \colon \langle \zeta \rangle^{-\ell} \mathcal{W}^k
\longrightarrow H_{\modu}^{1/2 + \ell,
  \mathrm{r}_{\min};k} \cap \ker P.
  $$

(ii) For $k < 0$ and $l \in \ZZ$, the Poisson operators 
extend from $\mathcal{S}$ to 
continuous mappings 
  $$
\mathcal{P}_\pm \colon \langle \zeta \rangle^{-\ell} \mathcal{W}^k
\longrightarrow \Hpar{1/2 + k + \ell}{-1/2 + k} \cap \ker P.
  $$ 
  
  (iii) For $k, l \in \ZZ$ and $Q \in \Psip 0 0$, microsupported in the region $\{\mathrm{r}_{\min} = -1/2 \}$  the operator $Q \Poipm $ extends to a continuous mapping
$$
Q \Poipm  : \langle \zeta \rangle^{-\ell} \mathcal{W}^k
\longrightarrow \Hpar{1/2 + k + \ell}{-1/2 + k}.
  $$ 
  
  (iv)
  For $k, l \in \ZZ$ and $Q \in \Psip 0 0$, the operator $\Poipm^* Q $ extends to a continuous mapping
$$
 \Poipm^* Q  : \Hpar{-1/2 + k + \ell}{1/2 + k} 
\longrightarrow \langle \zeta \rangle^{-\ell} \mathcal{W}^k.
  $$

\end{prop}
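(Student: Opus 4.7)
The four parts are closely interrelated, and I would prove them in the order (i) for $\ell \ge 0$, then (iii), then (iv), then (i) for $\ell < 0$, and finally (ii). The underlying strategy is to combine the existing linear theory in Prop.~\ref{thm:Poisson mapping} with microlocal refinements near vs.\ away from the radial set $\SR$ and with $L^2$ duality.

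For part (i) with $\ell \ge 0$, the statement is immediate from Prop.~\ref{thm:Poisson mapping} combined with Lemma~\ref{thm:mod equiv in ker}: the image of $\Poipm$ lies in $\ker P$, on which the $\SN$- and $\modu$-module regularity spaces coincide.

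For part (iii), the microsupport of $Q$ avoids a neighbourhood of $\SR$, so $P$ satisfies elliptic and real-principal-type estimates there with no threshold restriction. I would exploit the intertwining properties of the free Poisson operator $\Poi_0$ between the generators of $\Nhat$ and those of $\modu$: the action of $D_{\zeta_j}$ on $f$ corresponds (up to a constant) to the action of the generator $tD_{z_j}-z_j/2$ of $\modu$ on $\Poi_0 f$, while multiplication by $\zeta_j$ corresponds to multiplication by $-z_j/(2t)$, which is a smooth bounded multiplier on the microsupport of $Q$ since this is bounded away from the corner at $t=0$ on fibre infinity. Commuting all the $\Nhat$-generators across $\Poi_0$ in this way converts the $\langle\zeta\rangle^{-\ell}\mathcal{W}^k$-regularity of $f$ into the target parabolic Sobolev regularity of $Q\Poi_0 f$. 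The passage from $\Poi_0$ to $\Poipm$ is handled by propagation of regularity, using that $\Poipm-\Poi_0$ is microlocally smoothing away from $\SR$ because $P-P_0$ has compact support.

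Part (iv) is obtained by dualizing (iii) in the $L^2$ pairing: one has $(\Hpar{s}{r})^* \cong \Hpar{-s}{-r}$ and $(\langle\zeta\rangle^{-\ell}\mathcal{W}^k)^* \cong \langle\zeta\rangle^{\ell}\mathcal{W}^{-k}$ (taking this as the definition of $\mathcal{W}^{-k}$ for $k>0$), and relabeling $(k,\ell)\to(-k,-\ell)$ in (iii) yields (iv) for any $Q^*$ microsupported in $\{\mathrm{r}_{\min}=-1/2\}$. A partition of unity $\Id = Q_1+Q_2$, together with the $L^2$ dual of (i) applied to the piece $Q_1$ supported near $\SR$, removes the microsupport restriction. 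Finally, parts (i) for $\ell<0$ and (ii) are produced by density: $\Poipm$ is extended from $\mathcal{S}$ using the dual bound, writing $\|\Poipm f\|_Y = \sup_{g\in Y^*,\|g\|\le 1}|\langle f,\Poipm^* g\rangle|$ and controlling the right side by (iv). The main obstacle will be accommodating the module regularity target of part (i): bounding $\|A^\alpha \Poipm f\|_{\Hpar{1/2+\ell}{\mathrm{r}_{\min}}}$ for $A^\alpha \in \modu^{(k)}$ forces $g$ to range over $(A^\alpha)^*$-images in a dual Sobolev ball, so one must use the commutator structure \eqref{eq:module properties} to match the order of $A^\alpha$ against the $\Nhat$-module regularity on the data side of~(iv).
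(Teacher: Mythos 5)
Your ordering reverses the paper's: the authors prove (i) and (ii) directly, then obtain (iii) as an immediate combination of (i) and (ii) (using that module regularity upgrades to parabolic Sobolev regularity off the radial set), and finally get (iv) by dualizing (iii). You instead prove (iii) directly and then try to recover (ii) (and (i) for $\ell < 0$) by dualizing (iv). That reversal is legitimate in principle, but as presented it has two genuine problems.

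First, the intertwining relation you state for $\zeta_j$ is wrong. The free Poisson operator is $\Poi_0 f(z,t) = c_n\int e^{iz\cdot\xi - it|\xi|^2}f(\xi)\,d\xi$, and integration by parts gives $\Poi_0\circ D_{\zeta_j} = 2(tD_{z_j}-z_j/2)\circ\Poi_0$ (which you have right) but also $\Poi_0\circ \zeta_j = D_{z_j}\circ\Poi_0$, \emph{not} multiplication by $-z_j/(2t)$. This is not a cosmetic point: the correct relation $\zeta_j\leftrightarrow D_{z_j}$ is the mechanism by which $\ell$ orders of $\langle\zeta\rangle^{-\ell}$ decay on the data become $\ell$ extra orders of parabolic Sobolev regularity $\Hpar{1/2+k+\ell}{-1/2+k}$ in (iii). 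A ``bounded multiplier'' yields no regularity gain at all. Your sketch of (iii) therefore does not produce the stated Sobolev index, and the boundedness claim about $z_j/(2t)$ on $\WF'(Q)$ is also questionable since $\{\mathrm r_{\min}=-1/2\}$ is simply the complement of small neighbourhoods of $\SR_\pm$ and is not confined to $|z|\lesssim|t|$.

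Second, the chain (iii) $\Rightarrow$ (iv) $\Rightarrow$ ((ii) and (i) for $\ell<0$) as you set it up is circular or at least under-determined. Dualizing (iii) only yields (iv) for $Q^*$ microsupported in $\{\mathrm r_{\min}=-1/2\}$; to extend to arbitrary $Q$ via a partition of unity you invoke ``the $L^2$ dual of (i),'' but at that stage you only have (i) for $k,\ell\ge 0$, whose dual gives (iv)-type statements only with the \emph{opposite} signs $k'\le 0, \ell'\le 0$. Meanwhile, to prove (ii) (which has $k<0$) by duality you need (iv) at $k' = -k > 0$, precisely the range you have not yet supplied for the radial-set piece. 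You flag the module-regularity target of (i) as an obstacle in your last paragraph, but the proposed fix (``match the order of $A^\alpha$ against the $\Nhat$-module regularity'') is left as a gesture. By contrast the paper sidesteps all of this: (ii) is proved directly by propagating regularity into $\SR_\pm$ from below threshold via Proposition~\ref{prop:mod.belowschrod} (no variable order needed), after which (iii) falls out, and dualizing (iii) is all that is needed for (iv) in the microsupport regime in which it is actually used in Theorems~\ref{thm:linear2}, \ref{thm:main}, \ref{thm:main2}. If you wish to pursue your route, prove (iii) with the correct intertwining and then prove (ii) directly as the paper does, rather than trying to extract it from (iv).
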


\begin{proof} (i) This is an immediate consequence of Prop.~\ref{thm:Poisson mapping} and Lemma~\ref{thm:mod equiv in ker}. 

(ii) The proof is very similar to the proof of Prop.~\ref{thm:Poisson mapping}; we use propagation of regularity to observe that we only need to obtain the result in a deleted neighbourhood of $\SR_\pm$. Notice that as $k < 0$ we are below threshold here in the spatial decay order, so we propagate regularity into the radial set directly (Proposition~\ref{prop:mod.belowschrod}); no variable order is required in this case. Then in a small deleted neighbourhood of $\SR_\pm$, we only need to prove the result for $\Poi_0$ since $\Poi_\pm f$ is equal, microlocally near $\SR_\pm$, to $\Poi_0 f$. This is straightforward. 

(iii) This is an immediate consequence of (i) and (ii). 

(iv) This is obtained by dualizing (iii). Notice that to obtain (iv) for $ k > 0$, as in our application in the proofs of Theorems~\ref{thm:linear2}, \ref{thm:main} and \ref{thm:main2}, we need (iii) for $k < 0$; this is the reason for including the case $k< 0$ in the  Proposition.

\end{proof}

%
%
%

We have the following statement regarding the asymptotics of $u_0$,
which will apply also to solutions to the nonlinear equations.
We have the analogue of Proposition 7.7 from
\cite{TDSL} (which is in turn the analogue of Proposition 3.4 of
\cite{NLSM},
adapted to our differential  module $\modu$.
Let $\sw_{\max}$ be a
variable weight with $\sw_{\max} = -1/2 + \epsilon$  near the radial
sets, $-1/2$ off  neighborhoods of the radials sets, and monotone
along the flow.  
\begin{proposition}\label{prop:limits}
  Let $v \in H^{-1,\sw_{\max} + 1; k}_{ \modu}(\RR^{n+1})$ with
  $k \geq 2$ and $\epsilon > 0$. Then $\upl$ is such that
  the limits
\begin{equation}\label{eq:tlimit}
\mathcal{L}_+ \upl(\zeta) := \lim_{t \to +\infty} (4\pi it)^{n/2} e^{-it|\zeta|^2} \upl(2t \zeta, t) 
\end{equation} 
and
\begin{equation}\label{eq:zerolimit}
\mathcal{L}_- \upl(\zeta) := \lim_{t \to -\infty} (4\pi it)^{n/2} e^{-it|\zeta|^2} \upl(2t \zeta, t) 
\end{equation}
exist in $\ang{\zeta}^{1/2 + \epsilon}\mathcal{W}^{k-2}(\RR^n_\zeta)$, with the limit \eqref{eq:zerolimit} identically zero. 

Moreover, we have estimates 
\begin{equation}\begin{gathered}
\| \mathcal{L}_+ \upl \|_{\ang{\cdot}^{1/2 + \epsilon}\mathcal{W}^{k-2}} \leq C \| v \|_{H^{-1, \sw_{\max} + 1; k}_{\modu}},  \\
\| t^{n/2} e^{-it|\zeta|^2/4} \upl(t \zeta, t) - \mathcal{L}_+ \upl \|_{\ang{\cdot}^{1/2 + \epsilon}\mathcal{W}^{k-2}} = O(t^{-\epsilon'}), \quad t \to \infty
\end{gathered}\end{equation}
for $\epsilon'$ sufficiently small. A similar statement is true for $u_- := \Rin v$, with a zero limit $\mathcal{L}_+ u_-$ as $t \to +\infty$ and a (potentially) nonzero limit $\mathcal{L}_- u_-$ as $t \to -\infty$. 
\end{proposition}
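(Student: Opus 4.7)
I follow the strategy of Proposition 7.7 of \cite{TDSL}, modifying the argument to accommodate the differential module $\modu$ in place of the pseudodifferential module $\SN$. The plan is to decompose $\upl$ by time cutoffs, remove the oscillatory phase $e^{i|z|^2/4(t-t_*)}$ via Proposition~\ref{thm:flat back and forth} to land in the flat module regularity space $H^{0, \cdot; k}_{\SF}$ on the compactified spacetime $X$ of Figure~\ref{fig:blowup}, and then extract the limits $\mathcal{L}_\pm \upl$ as traces on the main face $\mf$. The analogous statement for $u_- = \Rin v$ follows at once by time reversal, which interchanges the roles of $\SR_\pm$.

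\textbf{Microlocal refinement and vanishing of $\mathcal{L}_-\upl$.} By Theorem~\ref{thm:linear1}, $\upl \in H^{0, \mathrm{r}_+; k}_{\modu}$, which is above threshold at $\SR_-$ and below at $\SR_+$. Since $v \in H^{-1, \sw_{\max}+1; k}_{\modu}$ has spacetime order strictly above threshold everywhere, the above-threshold radial point estimate (in its $\modu$-module form from the Appendix, based on Proposition~\ref{prop:aboveschrod}) applied at $\SR_-$ upgrades $\upl$ to $H^{0, \sw_{\max}; k}_{\modu}$ microlocally near $\SR_-$, and propagation of regularity (Proposition~\ref{prop:sing.P}) extends this microlocally to everywhere except near $\SR_+$. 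Now fix $t_0$ and $t^*>t_0+1$ and set $w_< := e^{-i|z|^2/4(t-t^*)}\chi_{\leq t_0+1}\upl$. By Proposition~\ref{thm:flat back and forth}, $w_< \in H^{0,\sw_{\max};k}_{\SF}(t\leq t_0+1)$. In southern-hemisphere coordinates $(y = z/2t,\ \rho_{\mf} = 1/|t|)$ on $X$ away from the blown-up equator $\tilde E$, the generators of $\SF$ reduce to $D_y$, $\rho_{\mf}D_{\rho_{\mf}}$ and $\ang{z}/\ang{t}$, and the strictly above-threshold weight forces $w_<(y,\rho_{\mf})\to 0$ in the appropriate trace Sobolev space as $\rho_{\mf}\to 0$, by a one-variable Hardy/Sobolev argument combined with the $n/2$ discrepancy between Euclidean and b-measures; the neighborhood of $\tilde E$ is handled analogously using coordinates adapted to the blowup. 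Unwinding the conjugation yields $\mathcal{L}_-\upl = 0$.

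\textbf{Existence and identification of $\mathcal{L}_+\upl$.} Let $t_*<t_0$ and set $w_> := e^{-i|z|^2/4(t-t_*)}\chi_{\geq t_0}\upl$, so that $w_>\in H^{0,\mathrm{r}_+;k}_{\SF}(t\geq t_0)$ by Proposition~\ref{thm:flat back and forth}. In northern-hemisphere coordinates $(y = z/2t,\ \rho_{\mf}=1/t)$ on $X$, flat-module regularity amounts to $k$ tangential derivatives in $(D_y,\ \rho_{\mf}D_{\rho_{\mf}})$ lying in a weighted $L^2$ space. Since $\rho_{\mf}D_{\rho_{\mf}}$ has parabolic order two, one application of this generator is needed to take the trace at $\rho_{\mf}=0$; this absorbs two orders of module regularity, leaving $k-2$ tangential $D_y$ derivatives on the trace. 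The below-threshold value $\mathrm{r}_+<-1/2$ at $\SR_+$ translates under $y=\zeta$ into the prefactor $\ang{\zeta}^{1/2+\epsilon}$. A direct computation using the change of variables $\zeta = z/2t$ together with the conjugation by $e^{-i|z|^2/4(t-t_*)}$ shows that the generators in \eqref{eq:Wk generators} of $\hat{\mathcal{N}}_{gen}$ are intertwined on the boundary $\rho_{\mf}=0$ with certain tangential generators of $\SF$ applied to $w_>$, identifying the trace with a constant multiple of $\mathcal{L}_+\upl \in \ang{\zeta}^{1/2+\epsilon}\mathcal{W}^{k-2}(\RR^n_\zeta)$. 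The continuity estimate is immediate from continuity at each step, and the quantitative rate $O(t^{-\epsilon'})$ is a direct application of Hardy's inequality to $\rho_{\mf}D_{\rho_{\mf}}w_>$ with a small weight shift, as in the proof in \cite{TDSL}.

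\textbf{Main obstacle.} The principal technical point is the identification of the trace of $w_>$ at $\rho_{\mf}=0$ with an element of $\mathcal{W}^{k-2}$: one must verify that the generators $\zeta_jD_{\zeta_l}-\zeta_lD_{\zeta_j}$, $D_{\zeta_j}$ and $\zeta_j$ of $\hat{\mathcal{N}}_{gen}$, applied to the limit, are intertwined under the boundary identification $y=\zeta$ with specific tangential generators of $\SF$ applied to $w_>$. This is essentially the same bookkeeping as the intertwining of $\mathcal{P}_0$ with $\hat{\mathcal{N}}$ in \cite{TDSL}, but redone with the differential generators of $\modu$ in place of those of $\SN$; it is straightforward but requires some care to track lower-order terms generated by the conjugation.
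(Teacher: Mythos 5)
Your approach is genuinely different from the paper's. The paper computes
$D_t\rvert_\zeta \tilde u(\zeta,t)$ where $\tilde u(\zeta,t) = (4\pi it)^{n/2}e^{-it|\zeta|^2}\upl(2t\zeta,t)$, obtaining \eqref{Dtzeta} where the time derivative equals a constant multiple of $v(2t\zeta,t)$ plus a term $t^{-2}(tD_z - z/2)\cdot(tD_z - z/2)\upl$ controlled by two orders of module regularity. Using the above-threshold decay of $v$ (decomposed microlocally near and away from $\SR$), the paper shows $D_t\rvert_\zeta\tilde u \in t^{-1/2-\epsilon}\ang{\zeta}^{1/2+\epsilon}L^2$, hence in $t^{-\epsilon'}L^1([T,\infty);\ang{\zeta}^{1/2+\epsilon}L^2)$, so the limit exists by integrating out to infinity. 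You instead pass to the flat module on the blowup $X$ and try to read off $\mathcal{L}_+\upl$ as a boundary trace of $w_>$ at $\rho_{\mf}=0$.

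There is a genuine gap in the $\mathcal{L}_+$ step. You claim that $w_> \in H^{0,\mathrm{r}_+;k}_{\SF}(t\geq t_0)$ alone yields a trace at $\rho_{\mf}=0$ by absorbing two orders of module regularity into one application of $\rho_{\mf}D_{\rho_{\mf}}$. But near $\SR_+$ the weight $\mathrm{r}_+$ is \emph{below} threshold ($\mathrm{r}_+ < -1/2$), so the corresponding b-weighted $L^2$ space allows the renormalized function $t^{n/2}w_>(2t\zeta,t)$ to grow slowly (e.g.\ like $t^{\delta}$ for small $\delta > 0$), and $\rho_{\mf}D_{\rho_{\mf}}$ regularity with the same weight does not rule this out; no trace at $\rho_{\mf}=0$ follows from that information. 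What makes the trace exist in the paper's proof is precisely the extra ingredient you never invoke in this step: $P\upl = v$, which via \eqref{Dtzeta} shows that the $t$-derivative of the renormalized function gains a full extra factor of $t^{-1}$ (from the $t^{-2}$ coefficient on the module derivative squared, and from the decay $\sw_{\max}+1$ of $v$). Your use of $v$'s decay appears only in the $\SR_-$ analysis; in the $\SR_+$ analysis it is essential and missing. Relatedly, the factor $k-2$ in $\mathcal{W}^{k-2}$ is not a generic trace cost: it arises specifically because the equation produces the second-order operator $(tD_z - z/2)\cdot(tD_z - z/2)$, so two orders of module regularity are spent controlling that term. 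Your bookkeeping ("$\rho_{\mf}D_{\rho_{\mf}}$ has parabolic order two") arrives at the same exponent by coincidence but does not explain it, and the corresponding claim does not hold without the PDE.
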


\begin{proof}
  The proof follows that in \cite[Prop.\ 7.7]{TDSL} closely.  The
  modules involved are nominally different, but operators used in the
  proof in the cited paper lie in both modules.  For the
  convenience of the reader we recall the argument here.

It begins by defining
$$
\tilde u(\zeta, t) = (4\pi i t)^{n/2} e^{-it|\zeta|^2} \upl(2t \zeta, t),
$$
and computing the partial derivative of $\tilde u(\zeta, t)$; as in
the cited work we denote this derivative by $D_t \rvert_{\zeta}$ to
distinguish it from the partial derivative with respect to $t$ with $z$ fixed.
Then,
\begin{equation}\begin{gathered}
D_t \rvert_{\zeta} \tilde u(\zeta, t) = (4\pi it)^{n/2} e^{-it|\zeta|^2}\left( v(2t\zeta,t)  - \left( t^{-2}  (t D_z -
 \frac{z}{2} ) \cdot (t D_z - \frac{z}{2}) \right) \upl(2t \zeta, t) \right)
 \end{gathered}\label{Dtzeta}\end{equation}
 We recognize the factor $t D_{z_i} - z_i/2$ as an element of the
 module $\modu$. So we are now in a similar position to the
 proof in \cite{NEH}.

 Starting with $k = 2$, by Theorem \ref{prop:big mod reg invert},
 $\upl \in H_\modu^{0, \sw_+;  2}$, which allows us to conclude that the
 second term in the parenthesis has
 $$
\left((t D_z -
 \frac{z}{2} ) \cdot (t D_z - \frac{z}{2}) \right) \upl(2t \zeta, t)  \in H_{\mathrm{par}}^{0, \sw_+}
 $$
Now we decompose $v=v_1+v_2$ into a sum of terms microlocalised near and away from $\SR$ respectively. 
Using that $\sw_{\max}=-1/2+\epsilon$ near $\SR$, and $\Psip{2}{0}\subset \modu^{(2)}$, we get $v_1\in H_{\mathrm{par}}^{1,1/2+\ep}$.
As $\modu $ is elliptic away from the radial set, we also get $v_2\in H_{\mathrm{par}}^{1,\sw_{\max}+3}$.
Hence
$$v \in \ang{(t,z)}^{-1/2 - \epsilon}
L^2(dtdz).
 $$
 Thus, on $0 < T < t$
\begin{multline}
D_t \tilde u (\zeta, t)  \rvert_{\zeta} \in   \ang{(t,z)}^{-1/2 - \epsilon}t^{n/2}
L^2(dtdz) + \ang{(t,z)}^{1/2 + \epsilon}t^{n/2} \ang{t}^{-2} L^2(dt dz) \\ \subset t^{-1/2 - \epsilon} \ang{\zeta}^{1/2 + \epsilon} L^2(dt d\zeta).
\label{eq:Dtuliesin}\end{multline}
 This is in $$t^{-\epsilon'} L^1([T, \infty)_t; \ang{\zeta}^{1/2 +
  \epsilon}L^2(\RR^n_\zeta))$$ for  $0 < \epsilon' < \epsilon$. We can
thus integrate the $t$-derivative, for fixed $\zeta$, of $\tilde u$,
viewed as a function of $t$ with values in $\ang{\zeta}^{1/2 +
  \epsilon}L^2(\RR^n_\zeta)$, out to infinity, showing that the limit
exists. Moreover, the convergence is at a rate of $O(t^{-\epsilon'})$
as we see by integrating $D_t \rvert_{\zeta} \tilde u$ back from $t = \infty$. 

The rest of the proof (i.e.\ treating $k > 2$) is identical verbatim
to the $k > 2$ part of the proof of Proposition 7.7 of \cite{TDSL}.
 \end{proof}
 
 \begin{proof}[Proof of Theorem~\ref{thm:linear2}]
The first statement in Theorem~\ref{thm:linear2} regarding uniqueness was proved in \cite{TDSL}, and the second is just part (i) of  Prop.~\ref{thm:Poisson mapping}. 

To prove the converse, assume that $u$ satisfies $Pu = 0$ and lies in the space 
$$
\Hparmod {1/2+\ell} {\mathrm{r}_-} k + \Hparmod {1/2+\ell} {\mathrm{r}_+} k = \Hparmod {1/2+\ell} {\mathrm{r}_{\min}} k .
$$
We choose a microlocal partition $\Id = Q_+ + Q_-$, such that $Q_\pm$ is equal to the identity microlocally near $\SR_\pm$, and so that $\WF'[Q_\pm P] \cap \Char(P)$ is contained in the set where $\mathrm{r}_{\min} = -1/2$. We make use of the identities \cite[Eq. (7.2)]{TDSL} 
\begin{equation}
u = (P_+^{-1} - P_-^{-1}) [P, Q_+] u
\label{eq:Qu}\end{equation}
and \cite[Prop. 7.9]{TDSL}
\begin{equation}
\Poim \Poim^* = \Poip \Poip^* = i (2\pi)^{-n} (P_+^{-1} - P_-^{-1}).
\label{eq:PP*}\end{equation}
Then, using part (iii) of Prop.~\ref{thm:Poisson mapping}, with an adjustment of orders due to the fact that 
$[P, Q_+]$ is in $\Psip 1 {-1} $, we have 
\begin{equation}
[P, Q_+] u \in \Hpar {-1/2 + \ell + k}{1/2 + k} .
\label{eq:PQp}\end{equation}
Using identity \eqref{eq:PP*} we write $u$ as 
$$
u = \Poim f, \quad f = -i (2\pi)^n \Poim^* [P, Q_+] u.
$$
We further choose a microlocal cutoff $Q' \in \Psip 0 0$ such that $Q'$ is microlocally equal to the identity on $\WF'([P, Q_+])$ and $\WF'(Q')$ is disjoint from the radial set. Then we have 
$$
u = \Poim f, \quad f = -i (2\pi)^n \Big( \Poim^* Q'[P, Q_+] u + \Poim^* (\Id - Q')[P, Q_+] u \Big) .
$$
The contribution of the second term to $f$ is Schwartz, as $(\Id - Q') [P, Q_+] \in \Psip {-\infty}{-\infty}$, according to \cite[Corollary 7.6]{TDSL}. As for the first term, we use \eqref{eq:PQp} and the mapping property of $\Poim^* Q'$ from part (iv) of Prop.~\ref{cor:NvD} to see that $f$ is in $\ang{\zeta}^{-l} \Hdata^k$. A similar argument, using $\Poip, \Poip^*$ instead of $\Poim, \Poim^*$ shows that the final state data as $t \to +\infty$ is also in this space. 
\end{proof}



\subsection{Existence of solutions via contraction mapping, and the proof of Theorem~\ref{thm:main}}
Given $f \in \mathcal{W}^k$ we want to solve the nonlinear equation
$$
Pu = N[u], 
$$
with final state data $f$ as $t \to -\infty$. We perturb off linear solutions using a contraction mapping argument, together with mapping properties or the forward and backward propagators as in Theorem~\ref{thm:linear1}. Throughout this section, we assume that 
\begin{multline}
\text{ $\mathrm{r}_\pm$ are variable orders obeying the conditions of Theorem~\ref{thm:linear1},} \\ \text{ and in addition, take values in $[-1/2 - \epsilon, -1/2 + \epsilon]$}
\label{eq:rcond}\end{multline}
for some specified small $\epsilon > 0$. 
%
%

Given $f \in \SW^k$, let $u_0 = \Poim f$ be the linear solution with final state data $f$ as $t \to -\infty$, and decompose $u_0 =  u_- + u_+$ as discussed above. We look for solutions to
the nonlinear equation $Pu = N[u]$  of the form $u = w + u_-$ with
$$
w \in \mathcal{X}^{1/2, \mathrm{r_+}; k}.
$$
Specifically, we try to solve 
\begin{equation}
w = u_+ + \Rout N[u_- + w];
\end{equation}
given this equation, then we find that 
\begin{multline}
Pu = P(u_- + w) = P \Big( u_- + u_+ + \Rout N[u_- + w] \Big) \\ = P \Rout N[u_- + w] = N[u_- + w] = N[u],
\end{multline}
as required. In the above calculation we use the fact that $P(u_- + u_+) = 0$ and that $N[u_- + w]$ has spatial decay index strictly greater than $1/2$, so we can apply $\Rout$ to it. The second point is elaborated in the proof below. 
%

Thus we define
\begin{equation}\label{eq:contration}
\Phi(w) = u_+ + \Rout(N[u_- + w]),
\end{equation}
and show that it is a contraction mapping on an appropriate set:

\begin{proposition}\label{thm:contract power}
  For $k > \max(n/2+2,3)$, $\epsilon < (p+1)^{-1}$ 
  and $f$ sufficiently small in $\mathcal{W}^k$, with
  $N[u]$ the power nonlinearity in \eqref{eq:N} (or more generally a
  finite sum of such power nonlinearities with minimum $p$ satsifying
  $(n,p) \neq (1,3)$), the mapping $\Phi$ is
  a contraction on a sufficiently small ball around the origin in
  $\SX_{\modu}^{1/2, \mathrm{r}_+; k}$.  
      \end{proposition}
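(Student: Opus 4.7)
The strategy is a standard contraction mapping argument on a small ball $B_\delta \subset \SX_{\modu}^{1/2, \mathrm{r}_+; k}$ of radius $\delta > 0$, where $\delta$ and the size of $\| f \|_{\SW^k}$ are chosen in tandem. The two things to verify are that $\Phi$ maps $B_\delta$ into itself, and that $\Phi$ is a strict contraction on $B_\delta$. We use Proposition~\ref{thm:Poisson mapping} to guarantee that $u_\pm \in \SX_{\modu}^{1/2, \mathrm{r}_\pm; k}$, and observe that the norms $\|u_\pm\|$ in these spaces are controlled by $C\|f\|_{\SW^k}$. The key embedding we will exploit is
\begin{equation*}
\SX_{\modu}^{1/2, \mathrm{r}_\pm; k} \hookrightarrow \Hparmod{1/2}{-1/2 - \epsilon}{k},
\end{equation*}
which follows from $\mathrm{r}_\pm \geq -1/2 - \epsilon$.

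For the self-map property, write $N[u_- + w]$ as a sum of monomials of degree at least $p$, each of the form $u^{a}\ubar^{b}$ (omitting for the moment the pure-power case; the general case of Theorem~\ref{thm:main} reduces to this one) with $a + b \geq p$ and $a - b = 1$ by phase invariance \eqref{eq:phaseinv}. Since $\ubar \in \Hparmod{1/2}{-1/2-\epsilon}{k}$ lies in the space for the module $\modu_{-1}$ by Proposition~\ref{thm:final mult}, iterating Proposition~\ref{thm:s-mult} $a + b - 1$ times yields each such monomial in $H^{1/2,\, r^*;\, k}_{\mathrm{par}, \modu_{a - b}} = H^{1/2,\, r^*;\, k}_{\mathrm{par}, \modu}$, where
\begin{equation*}
r^* = (a+b)(-\tfrac12 - \epsilon) + (a+b-1)\tfrac{n+1}{2} \;\geq\; -\tfrac12 + \tfrac{(p-1)n}{2} - p\epsilon,
\end{equation*}
the inequality being tightest when $a + b = p$. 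The hypothesis $(n,p) \neq (1,3)$ gives $(p-1)n/2 \geq 2$ (or $\geq 3/2$ only for $(p-1)n/2 = 1$, which is excluded), while $\epsilon < (p+1)^{-1}$ gives $1 + (p+1)\epsilon < 2$. Combined with $\mathrm{r}_+ \leq -1/2 + \epsilon$, this yields $r^* \geq \mathrm{r}_+ + 1$, so that
\begin{equation*}
N[u_- + w] \in \Hparmod{1/2}{\mathrm{r}_+ + 1}{k} = \SY^{-1/2, \mathrm{r}_+ + 1; k}_{\modu}.
\end{equation*}
Applying Theorem~\ref{thm:linear1} then lands $P_+^{-1}(N[u_- + w])$ in $\SX^{3/2, \mathrm{r}_+; k}_{\modu} \subset \SX^{1/2, \mathrm{r}_+; k}_{\modu}$, with norm bounded by a constant times $(\|u_-\| + \|w\|)^p$. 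Thus for $f$ small and $\delta$ small, we obtain $\|\Phi(w)\| \leq \|u_+\| + C(\|u_-\| + \delta)^p \leq \delta$, so $\Phi(B_\delta) \subset B_\delta$.

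For the contraction estimate, $\Phi(w_1) - \Phi(w_2) = P_+^{-1}\bigl(N[u_- + w_1] - N[u_- + w_2]\bigr)$, and each monomial difference telescopes as
\begin{equation*}
(u_- + w_1)^a (\overline{u_-} + \overline{w_1})^b - (u_- + w_2)^a (\overline{u_-} + \overline{w_2})^b = \sum_j M_j \cdot (w_1 - w_2)_{j},
\end{equation*}
where each $M_j$ is a product of $a+b-1$ factors drawn from $\{u_-, w_1, w_2, \overline{u_-}, \overline{w_1}, \overline{w_2}\}$ and $(w_1-w_2)_j$ is either $w_1 - w_2$ or its conjugate. Applying Proposition~\ref{thm:s-mult} as above (and tracking module indices via Proposition~\ref{thm:final mult}, observing the total degree still sums to $1$), the same weight calculation gives
\begin{equation*}
\|\Phi(w_1) - \Phi(w_2)\|_{\SX^{1/2,\mathrm{r}_+;k}_\modu} \leq C\bigl(\|u_-\| + \|w_1\| + \|w_2\|\bigr)^{p-1} \|w_1 - w_2\|_{\SX^{1/2,\mathrm{r}_+;k}_\modu}.
\end{equation*}
Choosing $\delta$ small so that $C(\|u_-\| + 2\delta)^{p-1} < 1$ makes $\Phi$ a contraction. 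The Banach fixed point theorem then supplies a unique fixed point in $B_\delta$.

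The main technical hurdle is the bookkeeping of module indices and the verification that $\epsilon < (p+1)^{-1}$ suffices to absorb the $p\epsilon$ loss coming from the passage between variable and constant spacetime weights; but this is an arithmetic check and both numerical conditions were assembled in the introduction. The uniqueness, existence, and $\SW^k$-regularity of the outgoing asymptotic data $\tilde f$ then follow by applying Proposition~\ref{prop:limits} to $u = u_- + w$, together with Theorem~\ref{thm:linear2}, since $u \in \SX^{1/2,\mathrm{r}_-;k}_\modu + \SX^{1/2,\mathrm{r}_+;k}_\modu$ and $Pu = N[u] \in \Hparmod{1/2}{\mathrm{r}_+ + 1}{k}$ is controlled.
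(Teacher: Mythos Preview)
Your argument is essentially the same as the paper's: embed into the constant-weight space $\Hparmod{1/2}{-1/2-\epsilon}{k}$, iterate the multiplication result (Proposition~\ref{thm:s-mult}) to place $N[u_-+w]$ in $\Hparmod{1/2}{\mathrm{r}_++1}{k}$ via the arithmetic $(p-1)n/2 - p\epsilon \geq 1 + \epsilon$, apply $P_+^{-1}$, and then telescope the difference for the contraction estimate. Two minor remarks: your identification $\Hparmod{1/2}{\mathrm{r}_+ + 1}{k} = \SY^{-1/2, \mathrm{r}_+ + 1; k}_{\modu}$ has a typo in the first superscript (it should read $\SY^{1/2,\mathrm{r}_++1;k}_{\modu}$, since by definition $\SY^{s-1,\ldots}_{\modu} = H^{s-1,\ldots}_{\modu}$), and your final paragraph on asymptotic data and $\SW^k$-regularity of $\tilde f$ belongs to the proof of Theorem~\ref{thm:main} rather than this proposition, which only asserts the contraction property.
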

\begin{proof}[Proof of Proposition \ref{thm:contract power}]
  To begin with,
  \begin{equation}
    \| |u_- + w|^{p-1} (u_- + w) \|_{H_{\modu}^{1/2, p (-1/2 - \epsilon) + (p - 1)(n +
        1)/2;k}} \lesssim \left( \| u_- \|_{H_{\modu} ^{1/2,
          \mathrm{r}_- ; k}} +  \| w \|_{H_{\modu}^{1/2, \mathrm{r}_+;
          k}}\right)^{p}.\label{eq:power estimate}
  \end{equation}
  To see this holds, note that both
  $H_{\modu}^{1/2,\mathrm{r}_-; k}$ and
  $H_{\modu} ^{1/2, \mathrm{r}_+ ; k}(\mathbb{R}^{n + 1})$
  lie in $H_{\modu}^{1/2,-1/2 - \epsilon; k}$. Hence,  we have
$$
    \| u_- + w \|_{H_{\modu}^{1/2, -1/2 - \epsilon;k}} \lesssim \| u_- \|_{H_{\modu} ^{1/2,
          \mathrm{r}_- ; k}} +  \| w \|_{H_{\modu}^{1/2, \mathrm{r}_+;
          k}}.
      $$
Then \eqref{eq:power estimate} follows from Proposition
      \ref{thm:s-mult} applied repeatedly, noting that the final module is $\modu = \modu_1$, with $c=1$, due to phase invariance \eqref{eq:phaseinv}.  Thus in particular for any $f$, $\Phi$
      maps $H_{\modu}^{1/2, \mathrm{r}; k}$ to
        $H_{\modu}^{1/2, p(-1/2 - \epsilon) + (p - 1)(n + 1)/2;k}$.
 The spatial exponent here can be written 
 $$
p(-\frac1{2} - \epsilon) + \frac{(p - 1)(n + 1)}{2} = \frac{(p-1)n}{2}  - \frac1{2} - p \epsilon.
$$
It is easy to check that if $n \geq 1$ is a positive integer and $p \geq 1$ is an odd integer, with $(n, p) \neq (1, 3)$, then 
$$
\frac{(p-1)n}{2}  - \frac1{2} \geq \frac{3}{2}.
$$
So this exponent is at least $1/2 + \epsilon$ provided that $\epsilon < (p+1)^{-1}$. Given this we have 
\begin{multline}
\| N[u_- + w] \|_{H_{\modu}^{1/2, \mathrm{r}_+ + 1;k}}  \lesssim \| N[u_- + w] \|_{H_{\modu}^{1/2, 1/2 + \epsilon;k}}  \\ \lesssim \left( \| u_- \|_{H_{\modu} ^{1/2,
          \mathrm{r}_- ; k}} +  \| w \|_{H_{\modu}^{1/2, \mathrm{r}_+; k}}\right)^{p}
\label{eq:Nbound}\end{multline}
We can then apply $\Rout$ to land back in $H_{\modu}^{1/2, \mathrm{r}_+; k}$ (in fact, we land in the more regular space $H_{\modu}^{3/2, \mathrm{r}_+; k}$, a fact that will be exploited in the next subsection) . Thus 
 $\Phi$ is a mapping on $H_{\modu}^{1/2, \mathrm{r}_+; k}$. Moreover, it is easy to check that $P \circ \Phi$ maps into $H_{\modu}^{-1/2, \mathrm{r}_+ +1; k}$, thus in fact, $\Phi$ maps into $\SX_{\modu}^{1/2, \mathrm{r}_+; k}$. 

To show that $\Phi$ is a contraction, on a sufficiently small ball around the origin in $\SX_{\modu}^{1/2, \mathrm{r}_+; k}$, we must find a bound for $\Phi(w) -
\Phi(\tilde w)$, for $w, \tilde w \in H_{\modu}^{1/2, \mathrm{r}_+; k}$.  We have
\begin{equation}
  \label{eq:difference for contract}
  \begin{gathered}
    N[u_- + w]) - N[u_- + \tilde w]) = (w - \tilde w) p_1(u_-, w,
    \tilde w) + (\overline{w} - \overline{\tilde w})p_2(u_1, w, \tilde
    w)
  \end{gathered}
\end{equation}
where $p_1(x, y, z)$ and $p_2(x, y, z)$ are polynomials satisfying
$$
p_1(x, y, z) = \sum_{q \in \mathbb{N}_0^6} c_q x^{q_1}
\overline{x}^{q_2} y^{q_3} \overline{y}^{q_4} z^{q_5} \overline{z}^{q_6}
$$
where $c_q$ is only non-zero when $q_1 + q_3 + q_5 = q_2 + q_4 + q_6 =
(p - 1)/2$, meaning the total number of conjugates is equal to the
total number of non-conjugates.  Similarly 
$$
p_2(x, y, z) = \sum_{q \in \mathbb{N}_0^6} c_q' x^{q_1}
\overline{x}^{q_2} y^{q_3} \overline{y}^{q_4} z^{q_5} \overline{z}^{q_6}
$$
where the $c_q' = 0$ unless $q_1 + q_3 + q_5 = q_2 + q_4 + q_6 + 2=
(p - 1)/2 + 1$, i.e. there are two fewer conjugates.  This is easy to
check e.g.\ by induction on the even number $p-1$.  Thus we can apply
the same rationale used for \eqref{eq:power estimate} to obtain
\begin{equation}
  \begin{gathered}
    \| N[u_- + w]) - N[u_- + \tilde w])\|_{H_{\modu}^{1/2, \mathrm{r}_+ + 1;k}} \qquad \\
    \qquad  \le  C \| w - \tilde w
    \|_{H_{\modu}^{1/2, \mathrm{r}_+; k}} \left( \| u_-
      \|_{H_{\modu} ^{1/2, \mathrm{r}_- ; k}} + \| w
      \|_{H_{\modu}^{1/2, \mathrm{r}_+; k}} + \| \tilde  w
      \|_{H_{\modu}^{1/2, \mathrm{r}_+; k}}
    \right)^{p-1}.\label{eq:product diff bound}
  \end{gathered}
      \end{equation}
    Using
    $$
 \| u_- \|_{H_{\modu} ^{1/2,\mathrm{r}_- ; k}} \lesssim \| f \|_{\mathcal{W}^k},
 $$
 and the mapping property of $\Rout$ from Theorem~\ref{thm:linear1}, we
 conclude that for $f$ sufficiently small in $\mathcal{W}^k$, $\Phi$
 is a contraction mapping on a small neighborhood of the origin in
 $\SX_{\modu}^{1/2, \mathrm{r}_+; k}$.
 \end{proof}
 
 \begin{proof}[Proof of Theorem~\ref{thm:main}]
 Given $f$ sufficiently small in $\mathcal{W}^k$, we define $w$ to be the unique fixed point of $\Phi$ as described above Proposition~\ref{thm:contract power}, and define $u = u_- + w$. Then as explained above, $Pu = N[u]$ so $u$ solves the nonlinear Schr\"odinger equation. Moreover, if $u' = u_- + w'$ is another solution, with $w'$ also small in $\SX_{\modu}^{1/2, \mathrm{r}_+; k}$, then we have 
 $$
 P(u_- + w') = N[u_- + w'] \Longrightarrow P w' = P u_+ + N[u_- + w']
 $$
 since $Pu_+ = - Pu_-$. This is an equation in the space $H_{\modu}^{1/2, \mathrm{r}_+ + 1; k} = \SY_{\modu}^{1/2, \mathrm{r}_+ + 1; k}$, so we can apply $P_+^{-1}$ and obtain 
 $$
   w' =  u_+ + P_+^{-1} N[u_- + w'] = \Phi(w').
 $$
 So $w'$ is also a fixed point of $\Phi$. Using the uniqueness part of the contraction mapping principle shows that $w' = w$, so we have shown the uniqueness of the small solution $u$ in $H_{\modu}^{3/2, \mathrm{r}_+; k} + H_{\modu}^{3/2, \mathrm{r}_-; k}$. 
 
To check that the incoming data of $u$ is equal to $f$, we note that this is true for $u_0 = \Poim f$; that is, $f = \SL_- u_0$. On the other hand, we can write $u_0 = u_- + u_+$, and $u_+ = P_+^{-1} [P, Q_+] u_0$ is the outgoing propagator applied to a function in $H_{\modu}^{1/2, 1+\mathrm{r}_{\max}; k}$ so, by Proposition~\ref{prop:limits}, we have $\SL_- u_+ = 0$. It follows that $\SL_- u_- = f$. In a similar way, $w$ is also the outgoing propagator applied to a function in $H_{\modu}^{1/2, 1+\mathrm{r}_{\max}; k}$, so we also have $\SL_- w = 0$. Then 
$$
\SL_- u = \SL_- u_- + \SL_- w = f + 0 = f,
$$
showing that $u$ solves the final state problem. 

To show that the outgoing data is also in $\Hdata^k$, we let $\tilde w = - P_-^{-1} N[u_- + w]$ and write $v = w + \tilde w$, so that $Pv = 0$. It follows that $\SL_+ w = \SL_+(v - \tilde w) = \SL_+ v$, as $\tilde w$ is the incoming propagator applied to something in $H_{\modu}^{1/2, 1+\mathrm{r}_{\max}; k}$, using \eqref{eq:Nbound}. So it suffices to show that the outgoing data of $v$ is in $\Hdata^k$. Similarly to the proof of Theorem~\ref{thm:linear2}, we use the identities \eqref{eq:Qu}
 and \eqref{eq:PP*}, and introduce the operator $Q'$ as in that proof to see that the outgoing data of $v$ is 
 $$
 \Poip^* Q' [P, Q_+] v.
 $$
 modulo a Schwartz function. The part (iv) of 
 Prop.~\ref{cor:NvD}, with $l=0$, shows that $\SL_+ v$ is in $\Hdata^k$. This completes the proof of Theorem~\ref{thm:main}.
\end{proof}

\subsection{Derivative nonlinearities, and the proof of Theorem~\ref{thm:main2}}

As foreshadowed in the Introduction, and observed below \eqref{eq:Nbound} in the proof of Theorem~\ref{thm:main}, we wasted one order of parabolic regularity: applying the outgoing propagator improves the regularity exponent $s$ to $3/2$ but we merely embed this back in the space with $s=1/2$. This immediately suggests that we will be able to treat derivative nonlinearities, and this is the case. The only slight change required in the proof is that we start with incoming data in the space $\ang{\zeta}^{-1} \Hdata^k$ instead of $\Hdata^k$; this is to avoid negative regularity exponents $s$, which is not permitted in Proposition~\ref{thm:s-mult}. 

\begin{proposition}\label{thm:contract deriv}
  For $k > \max(n/2+2,3)$ 
  and $f$ sufficiently small in $\langle \zeta \rangle^{-1} \mathcal{W}^k$, with
  $N[u]$ the derivative nonlinearity in \eqref{eq:N}, or more
  generally a finite sum of derivative and power nonlinearities with
  $(n,p) \neq (1,3)$, satisfying phase invariance \eqref{eq:phaseinv}, the mapping $\Phi$ defined by \eqref{eq:Phi} is
  a contraction on a sufficiently small ball around the origin in
  $\SX_{\modu}^{3/2, \mathrm{r}_+; k}$. Thus for $f$ sufficiently small in $\langle \zeta \rangle^{-1} \mathcal{W}^k$ there exists $w \in
\SX_{\modu}^{3/2, \mathrm{r}_+; k}$, unique in a small ball around
$0$, such that $u = u_- + w$ solves $Pu = N[u]$, with $u_- \in \SX_{\modu} ^{3/2,
          \mathrm{r}_- ; k}$ defined as before.  Moreover, $u$ solves the final state problem in the sense that $\mathcal{L}_- u = f$. 
 \end{proposition}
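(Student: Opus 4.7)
The plan is to run the exact contraction argument of Proposition~\ref{thm:contract power}, only with the parabolic regularity raised by one: we work in $\SX_{\modu}^{3/2, \mathrm{r}_+; k}$ rather than $\SX_{\modu}^{1/2, \mathrm{r}_+; k}$, thereby exploiting, rather than discarding, the half-order of parabolic regularity gained from the propagator mapping $P_+^{-1}\colon \SY_{\modu}^{1/2, \mathrm{r}_+ + 1; k} \to \SX_{\modu}^{3/2, \mathrm{r}_+; k}$ of Theorem~\ref{thm:linear1}. Given $f \in \ang{\zeta}^{-1}\SW^k$, Proposition~\ref{cor:NvD}(i) with $\ell = 1$ yields $u_0 = \Poim f \in H_{\modu}^{3/2, \mathrm{r}_{\min}; k} \cap \ker P$; this is the precise reason for the extra weight $\ang{\zeta}^{-1}$ on the data, namely that we need one extra order of spatial decay to purchase one extra order of parabolic regularity. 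Split $u_0 = u_- + u_+$ with $u_\pm \in H_{\modu}^{3/2, \mathrm{r}_\pm; k}$ and look for $u = u_- + w$ with $w$ the fixed point of $\Phi(w) = u_+ + P_+^{-1}(N[u_- + w])$ in a small ball of $\SX_{\modu}^{3/2, \mathrm{r}_+; k}$.

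The only step requiring new analysis is the multiplicative estimate on $N[u_- + w]$. Each monomial of $N$ is a product of at least $p$ factors drawn from $\{u, \overline{u}, D_{z_j} u, D_{z_j} \overline{u}\}$. For $v \in H^{3/2, -1/2 - \epsilon; k}_{\modu_c}$, the function $v$ itself lies in this space, and since $D_{z_j}$ is a generator of every $\modu_c$, Lemma~\ref{thm:bounded on mod reg spaces} gives $D_{z_j} v \in H^{1/2, -1/2 - \epsilon; k}_{\modu_c}$; conjugation swaps $\modu_c$ with $\modu_{-c}$ via Proposition~\ref{thm:final mult}. Embedding each order-$3/2$ factor into $H^{1/2}$ and iterating Proposition~\ref{thm:s-mult} at $s = 1/2$ therefore bounds every monomial in $H^{1/2, (p-1)n/2 - 1/2 - p\epsilon; k}_{\modu_c}$, where $c$ equals the number of unconjugated minus the number of conjugated factors. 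Phase invariance \eqref{eq:phaseinv} forces $c = 1$ in every monomial, so the sum lands in $\modu = \modu_1$. Under $(n,p) \neq (1,3)$ we have $(p-1)n/2 \geq 2$, hence $\epsilon < (p+1)^{-1}$ guarantees
\[
(p-1)n/2 - 1/2 - p\epsilon \geq 1/2 + \epsilon \geq \mathrm{r}_+ + 1,
\]
and the embedding chain \eqref{eq:embeddings} gives $N[u_- + w] \in \SY_{\modu}^{1/2, \mathrm{r}_+ + 1; k}$ with
\[
\|N[u_- + w]\|_{\SY_{\modu}^{1/2, \mathrm{r}_+ + 1; k}} \lesssim \bigl(\|u_-\|_{H_{\modu}^{3/2, \mathrm{r}_-; k}} + \|w\|_{\SX_{\modu}^{3/2, \mathrm{r}_+; k}}\bigr)^p.
\]
Applying $P_+^{-1}$ returns this to $\SX_{\modu}^{3/2, \mathrm{r}_+; k}$, so $\Phi$ sends a small ball into itself.

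The contraction property follows by the same bookkeeping applied to the algebraic identity
\[
N[u_- + w] - N[u_- + \tilde w] = \sum_{|\alpha| \leq 1} D^\alpha(w - \tilde w)\, q_\alpha + \sum_{|\alpha| \leq 1} D^\alpha(\overline{w - \tilde w})\, \tilde q_\alpha,
\]
where each $q_\alpha, \tilde q_\alpha$ is a polynomial of degree $p - 1$ in $u_-, w, \tilde w$, their conjugates, and first spatial derivatives thereof, with the same phase-invariance accounting as the monomials of $N$; the same multiplicative estimate then yields
\[
\|N[u_- + w] - N[u_- + \tilde w]\|_{\SY_{\modu}^{1/2, \mathrm{r}_+ + 1; k}} \lesssim \|w - \tilde w\|_{\SX_{\modu}^{3/2, \mathrm{r}_+; k}} \bigl(\|u_-\|_{H_{\modu}^{3/2, \mathrm{r}_-; k}} + \|w\|_{\SX_{\modu}^{3/2, \mathrm{r}_+; k}} + \|\tilde w\|_{\SX_{\modu}^{3/2, \mathrm{r}_+; k}}\bigr)^{p-1}.
\]
Combined with $\|u_-\|_{H_{\modu}^{3/2, \mathrm{r}_-; k}} \lesssim \|f\|_{\ang{\zeta}^{-1}\SW^k}$ this produces a contraction on a small ball for $f$ sufficiently small, yielding the unique $w$ and hence $u = u_- + w$ solving $Pu = N[u]$. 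The identity $\mathcal{L}_- u = f$ is verified as at the end of the proof of Theorem~\ref{thm:main}: $\mathcal{L}_- u_- = f$ by construction, while $u_+$ and $w$ are each $P_+^{-1}$ applied to a function in $H_{\modu}^{1/2, \mathrm{r}_+ + 1; k}$, so Proposition~\ref{prop:limits} gives $\mathcal{L}_- u_+ = \mathcal{L}_- w = 0$. The only real point of care is the phase-invariance bookkeeping with derivative factors, which pins the target module at $\modu_1$; everything else is a routine adaptation of the proof of Theorem~\ref{thm:main}.
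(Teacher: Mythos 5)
Your proof is correct and follows the same route as the paper: raise the parabolic regularity index to $3/2$, use Proposition~\ref{cor:NvD}(i) with $\ell = 1$ to accommodate the $\ang{\zeta}^{-1}$ weight on the data, apply the derivative-nonlinearity case of Proposition~\ref{thm:s-mult} at $s = 1/2$ after using $D_{z_j}$ to drop each factor down one parabolic order, and note that $P_+^{-1}$ restores the lost order. The paper's proof is only a brief sketch deferring to Proposition~\ref{thm:contract power}; you have filled in the same steps, including the phase-invariance bookkeeping that pins the image module at $\modu_1$, and your treatment of the difference identity with derivative factors is a correct elaboration of what the paper leaves implicit.
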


      \begin{proof}
        The proof is nearly identical to that of Proposition
        \ref{thm:contract power}.  For
        $f \in \langle \zeta \rangle^{-1} \mathcal{W}^k$, by Proposition
        \ref{thm:Poisson mapping} we construct
        $\Poim f = u_0 = u_- + u_+$ and writing $u = u_- + w$ with
        $w \in H_{\modu}^{3/2, \mathrm{r}; k}$.  By Proposition
        \ref{thm:s-mult}, taking into account the derivative in the
        nonlinearity, we obtain
        $N[u] \in H_\modu^{1/2,p(-1/2 -\epsilon) + (p-1))(n + 1)/2 ;
          k}$, i.e.\ the analogue of \eqref{eq:power estimate}.
        Defining $\Phi$ exactly as in \eqref{eq:Phi} we note that
        $\Rout$ restores the order of (parabolic) differential
        regularity to $3/2$, and thus arguing exactly as in the
        previous proof we obtain that for
        $f$ sufficiently small in
        $\langle \zeta \rangle^{-1} \mathcal{W}^k$, $\Phi$ is a contraction
        mapping on $H_{\modu}^{3/2, \mathrm{r}; k}$.  The rest follows
        as above.
      \end{proof}
      
\begin{proof}[Proof of Theorem~\ref{thm:main2}]       
Again writing instead $\Poip f = u_0 = u_- + u_+$ we obtain a solution $u =
w + u_+$ to the final state problem with $w \in  H_{\modu}^{3/2,
  \mathrm{r}_+; k}$ the fixed point of the map of the contraction map $\Phi$ from Proposition~\ref{thm:contract deriv}. 
The proof is essentially the same as the proof of Theorem~\ref{thm:main} apart from some changes to the exponents. We find that $u_\pm$ lie in $\Hpar {3/2}{\mathrm{r}_\pm}$ instead of $\Hpar {1/2}{\mathrm{r}_\pm}$ in the previous proof, due to the extra decay factor $\ang{\zeta}^{-1}$ assumed for the prescribed incoming data $f$. Since the nonlinearity involves first order derivatives, this reduces the parabolic decay index by $1$, leading to $N[u_- + w]$ lying in the space $H_{\modu}^{1/2, 1/2 + \epsilon; k}$. Then applying the outgoing propagator restores the parabolic decay index to $3/2$. 

Showing that the incoming data is $f$ is exactly as before. The treatment of the outgoing data is almost the same as before, with the main difference that we need to apply part (iv) of Proposition~\ref{cor:NvD} with $l=1$ rather than $l=0$. 
\end{proof}

\subsection{Inhomogeneous equations}
We can also solve inhomogeneous nonlinear equations
\begin{equation}
  \label{eq:inhomogeneous}
  Pu - N[u] = g
\end{equation}
provided $g$ is sufficiently small in an appropriate space, using the
same arguments above, with prescribed, sufficiently small incoming
data $f$.  In particular, with $f = 0$ we have $u$ in a purely
outgoing (or incoming) solutions:
\begin{proposition}
  Let $N[u]$ be a linear combination of power nonlinearities in
  \eqref{eq:N} and assume $k > \max(n/2+2,3)$. For
  $$
  g \in H^{1/2, \mathrm{r}_{\max}; k}_{\modu}
  $$
  and $f \in \mathcal{W}^k$, both sufficiently small in their respective spaces, there
  exists $w$ in $H_{\modu}^{1/2, \mathrm{r}_+; k}$, such that
  $u = u_- + w$ solves $Pu - N[u] = g$, where $u_-$ is the incoming
  part of the linear solution $u_0$ of $f$ described at the beginning
  of this section.
In particular, for $g \in H^{1/2, \mathrm{r}_{\max};
  k}_{\modu}$ sufficiently small and $f \equiv 0$, there exists $u \in H_{\modu}^{1/2,
  \mathrm{r}_+; k}$ sufficiently small such that $Pu - N[u] = g$,
i.e. $u$ is a purely outgoing solution.

When $N[u]$ is a derivative nonlinearity, the same holds with $f
\in \langle \zeta \rangle^{-1}\mathcal{W}^{k}$, $g \in H_{\modu}^{3/2, \mathrm{r}_{\max}; k}$.
\end{proposition}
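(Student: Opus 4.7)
The plan is to adapt the contraction-mapping scheme of Proposition~\ref{thm:contract power}, treating $g$ as an additional inhomogeneity added inside the outgoing propagator. First I would set $u_0 = \Poim f$ and decompose $u_0 = u_- + u_+$ with $u_\pm \in \SX^{1/2,\mathrm{r}_\pm;k}_{\modu}$, exactly as in the proof of Theorem~\ref{thm:main}. Writing $u = u_- + w$ with $w \in \SX^{1/2,\mathrm{r}_+;k}_{\modu}$, and using $Pu_- = -Pu_+$, the equation $Pu - N[u] = g$ is equivalent to $Pw = Pu_+ + N[u_- + w] + g$, and hence, after applying $\Rout = P_+^{-1}$, to the fixed-point problem $w = \Phi_g(w)$ with
\begin{equation*}
\Phi_g(w) := u_+ + \Rout\bigl(N[u_- + w] + g\bigr).
\end{equation*}

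The next step is to verify that $\Phi_g$ is a self-map and a contraction on a small ball about the origin in $\SX^{1/2,\mathrm{r}_+;k}_{\modu}$. The nonlinear term is controlled exactly as in \eqref{eq:Nbound}: Proposition~\ref{thm:s-mult} applied $p$ times, together with the embedding $\SX^{1/2,\mathrm{r}_\pm;k}_{\modu} \subset H^{1/2,-1/2-\epsilon;k}_{\modu}$, gives
\begin{equation*}
\|N[u_-+w]\|_{\SY^{1/2,\mathrm{r}_++1;k}_{\modu}} \lesssim \bigl(\|u_-\|_{\SX^{1/2,\mathrm{r}_-;k}_{\modu}} + \|w\|_{\SX^{1/2,\mathrm{r}_+;k}_{\modu}}\bigr)^p,
\end{equation*}
using $(n,p)\neq(1,3)$ to ensure the resulting weight exceeds $1/2 + \epsilon$. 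The hypothesis places $g$ in a weight space that embeds into $\SY^{1/2,\mathrm{r}_++1;k}_{\modu}$ (interpreting $\mathrm{r}_{\max}$ as dominating $\mathrm{r}_++1$ on the relevant microlocal region), so Theorem~\ref{thm:linear1} applied with $s = 3/2$ sends the sum into $\SX^{3/2,\mathrm{r}_+;k}_{\modu} \subset \SX^{1/2,\mathrm{r}_+;k}_{\modu}$. The contraction property is then immediate: since $g$ is independent of $w$, the difference $\Phi_g(w) - \Phi_g(\widetilde w)$ reduces to $\Rout\bigl(N[u_- + w] - N[u_- + \widetilde w]\bigr)$, and the estimate \eqref{eq:product diff bound} applies verbatim.

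For $\|f\|_{\mathcal{W}^k}$ and $\|g\|$ both sufficiently small, $\Phi_g$ is therefore a strict contraction on some small ball, and its unique fixed point $w$ produces the desired solution $u = u_- + w$. Setting $f \equiv 0$ forces $u_\pm = 0$, so $u = w \in \SX^{1/2,\mathrm{r}_+;k}_{\modu}$ is purely outgoing. For the derivative-nonlinearity case one repeats the argument in $\SX^{3/2,\mathrm{r}_+;k}_{\modu}$ with $f \in \langle\zeta\rangle^{-1}\mathcal{W}^k$ and $g \in H^{3/2,\mathrm{r}_{\max};k}_{\modu}$, as in Proposition~\ref{thm:contract deriv}: the one order of parabolic regularity gained by $\Rout$ compensates for the derivative in $N$. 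The only point I would check with care is the bookkeeping of weight and regularity indices, ensuring that $g$, $N[u_-+w]$ and $Pu_+$ all simultaneously lie in the domain of $\Rout$ for the prescribed target; I anticipate no conceptual obstacle beyond what was already resolved in the homogeneous case.
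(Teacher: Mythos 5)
Your proposal is correct and follows the paper's proof essentially verbatim: the paper likewise defines $\Phi(w) = u_+ + \Rout(N[u_-+w]+g)$ and observes that the argument proceeds exactly as in Propositions~\ref{thm:contract power} and \ref{thm:contract deriv}.
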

\begin{proof}
Analogously to the previous proofs, set $\Phi(w) = u_+ + \Rout(N[u_- +
w] + g)$.  The proof then proceeds exactly as those of
Propositions \ref{thm:contract power} and \ref{thm:contract deriv}.
\end{proof}


\appendix


\section{Propagation of small module regularity}
\label{appendix}

We now prove ``module regularity'' for the Schr\"odinger operator
$ P = D_t + \Delta_{g(t)} + V(z,t)$ for the module $\modu = \modu_1$, i.e.\
the $\Psi^{0,0}_{\mathrm{par}}$-module generated by the operators in
$\mathbf{G} = \mathbf{G}_1$ (see ~\eqref{eq:G.def}).  Much of what
follows is a minor modification material of Section 5 of \cite{TDSL},
the key difference between that setting and the current one being that
the modules $\modu^{(k)}$ are not closed under commutation, instead
satisfying the properties \eqref{eq:module properties}.  The
assumptions on $g(t)$ and $V(z,t)$ are the same as those from the
introduction, namely $g(t)$ is a smooth family of metrics on $\RR^n$
equal to the standard metric for $|t| \geq T$ and also equal to the
standard metric for all $|z| \leq R$ and $V(z, t)$ is assumed to be
smooth and compactly supported.

First we have the elliptic estimates, similar to \cite[Prop.~5.7]{TDSL}
\begin{prop}
	\label{prop:module.elliptic.estimate.old}
	Suppose $Q, Q' \in\Psip{0}{0}$ and $\WF'(Q) \subset \Ell_{2,0}(P)
        \cap \Ell_{0,0}(Q')$.  Then for any $u \in \mathcal{S}'$ with $Q'Pu
        \in H_{\modu}^{s-2,\sw;k}$, we have $Qu \in
        H_{\modu}^{s,\sw;k}$, and for any $M,N \in \mathbb{R}$,
        there is $C$ such that
	\begin{equation}\label{eq:mod ellip}
          \|Qu\|_{H_{\modu}^{s,\sw;k}}\leq C(\|Q'P u\|_{H_{\modu}^{s-2,\sw;k}}+\|u\|_{H_{\mathrm{par}}^{M,N}})
	\end{equation}
        for all such $u$.
      \end{prop}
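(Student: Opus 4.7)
The plan is to proceed by induction on $k$. The base case $k=0$ reduces to Proposition~\ref{prop:elliptic.estimate}, the standard microlocal elliptic estimate in the fixed-order parabolic Sobolev spaces. Assume the estimate holds for all orders strictly less than $k$. By the characterization \eqref{eq:characterization}, to control $\|Qu\|_{H^{s,r;k}_{\modu}}$ it suffices to bound $\|QAu\|_{H^{s,r}_{\mathrm{par}}}$ for an arbitrary product $A = G_{i_1} \cdots G_{i_j}$ of generators from $\mathbf{G}$ with $\ord(A) \le k$ (and $j \le k$).

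Pick $Q_0 \in \Psi^{0,0}_{\mathrm{par, cl}}$ elliptic on $\WF'(Q)$, with $\WF'(Q_0) \subset \Ell_{2,0}(P) \cap \Ell_{0,0}(Q')$. Applying Proposition~\ref{prop:elliptic.estimate} to $Au$ yields
\[
\|QAu\|_{H^{s,r}_{\mathrm{par}}} \le C\bigl(\|Q_0 P(Au)\|_{H^{s-2,r}_{\mathrm{par}}} + \|Au\|_{H^{M',N'}_{\mathrm{par}}}\bigr),
\]
and the second summand is bounded by $\|u\|_{H^{M,N}_{\mathrm{par}}}$ via Lemma~\ref{thm:bounded on mod reg spaces} by taking $M,N$ sufficiently negative. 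To handle the first summand I write $P(Au) = APu + [P,A]u$. For the contribution $Q_0 A P u$, iterated use of \eqref{eq:module properties} lets me commute $Q_0$ past $A$ with error in $\Psip{k-1}{k-1}$, so this becomes $AQ_0 P u$ plus lower-order errors. Since $\WF'(Q_0) \subset \Ell(Q')$, a microlocal parametrix construction writes $Q_0 P u = Q_0 (Q') Pu$ modulo a smoothing error, and then Lemma~\ref{thm:order really doesnt matter} gives $\|A Q_0 Q' P u\|_{H^{s-2,r}_{\mathrm{par}}} \lesssim \|Q' Pu\|_{H^{s-2,r;k}_{\modu}}$, which is exactly the hypothesis term. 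The lower-order errors reduce to the inductive hypothesis applied to either $Q'Pu$ or $u$.

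For the commutator term $Q_0[P,A]u$, I expand by Leibniz: $[P,A] = \sum_l G_{i_1}\cdots[P,G_{i_l}]\cdots G_{i_j}$. Crucially, for the flat operator $P_0 = D_t + \Delta_0$, every generator except $2tD_t + z\cdot D_z$ commutes with $P_0$, while $[P_0, 2tD_t + z\cdot D_z] = 2i P_0$. The remaining piece $P - P_0 = (\Delta_{g(t)} - \Delta_0) + V$ is compactly supported in spacetime, so its commutators with the generators are compactly supported first- or second-order operators that contribute strictly lower-order module-regularity terms (their spacetime weight is effectively $-\infty$). Thus each commutator $[P, G_{i_l}]$ is either of strictly lower module order (handled by induction) or a multiple of $P$ itself, in which case $G_{i_1}\cdots P \cdots G_{i_j}u$ is, after commuting $P$ to the left and absorbing the resulting lower-order commutator errors by induction, ultimately reduced to $Q'Pu$ acted on by a product of generators of total order $\le k$, which is controlled by $\|Q'Pu\|_{H^{s-2,r;k}_{\modu}}$.

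The main obstacle is the bookkeeping of parabolic orders in the Leibniz expansion, complicated by the fact that $D_t$ has parabolic regularity order $2$ and that $\modu$ is not closed under commutators: in contrast with \cite[Prop.~5.7]{TDSL}, one must rely on the weaker properties \eqref{eq:commgen}--\eqref{eq:module properties} throughout. However, both properties are built precisely to make the commutator $[Q_0, A]$ lie in $\Psip{k-1}{k-1}$ (rather than in some larger space), and the explicit computation $[P_0, 2tD_t + z\cdot D_z] = 2iP_0$ ensures the order-$2$ generator never worsens the parabolic order accounting beyond what is already incurred by a single application of $P$. With these ingredients, each term produced by the Leibniz expansion closes up under the induction or is absorbed into $\|Q'Pu\|_{H^{s-2,r;k}_{\modu}}$, yielding the desired estimate \eqref{eq:mod ellip}.
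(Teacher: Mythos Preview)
Your approach is genuinely different from the paper's, and considerably more laborious. The paper gives a two-line argument: construct a microlocal parametrix $G\in\Psip{-2}{0}$ with $\WF'(GP-\Id)\cap\WF'(Q)=\varnothing$, write $Qu=QGPu+Q(GP-\Id)u$, and then invoke Lemma~\ref{thm:bounded on mod reg spaces} directly. Since $QG\in\Psip{-2}{0}$, that lemma gives $\|QGPu\|_{H_{\modu}^{s,\sw;k}}\lesssim\|Pu\|_{H_{\modu}^{s-2,\sw;k}}$, while $Q(GP-\Id)\in\Psip{-\infty}{-\infty}$ handles the remainder. No induction on $k$, no commutator expansion: all the commutator bookkeeping is already absorbed in Lemma~\ref{thm:bounded on mod reg spaces}. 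Replacing $P$ by $Q'P$ (elliptic on $\WF'(Q)$) then gives the stated form.

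Your inductive route can be made to work, but there is a gap in your treatment of the $[P-P_0,G_{i_l}]$ terms. You assert these are ``strictly lower-order module-regularity terms'' because their spacetime weight is $-\infty$. That is not enough: such a commutator lies in $\Psip{2}{-\infty}$, so for an order-one generator $G_{i_l}$ the full product $G_{i_1}\cdots[P-P_0,G_{i_l}]\cdots G_{i_j}$ can have parabolic order $k+1$, not $\le k-1$, and the induction on $k$ does not close as written. What is needed is the structural fact that $[A_j,P]$ can be re-expressed in terms of the generators without increasing order---precisely Lemma~\ref{lem:P.crit}, which you do not cite. Alternatively, one could feed the excess parabolic order back through the elliptic parametrix on $\WF'(Q_0)$, but that is essentially the paper's argument. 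Either way, the step ``handled by induction'' for the compactly supported commutators requires more than the $-\infty$ spacetime weight alone.
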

\begin{proof}
	Following \cite{TDSL}, the standard elliptic parametrix
        construction works in the parabolic calculus, and thus allows
        us to construct $G \in \Psi^{-2,0}_{\mathrm{par}}$ such that
        $\WF'(GP - I) \cap \WF'(Q) = \varnothing$.  Using the
        boundedness of parabolic PsiDOs on module regularity spaces
       \ref{thm:bounded on mod reg spaces} we have the
        standard argument:
        $$
\|Qu\|_{H_{\modu}^{s,\sw;k}} \le \|Q GP  u\|_{H_{\modu}^{s,\sw;k}} + \|Q(GP - I) u\|_{H_{\modu}^{s,\sw;k}} \lesssim \|QGP
u\|_{H_{\modu}^{s - 2,\sw;k}} + \|
u\|_{_{\mathrm{par}}^{N,M}}.
        $$
Using $\|Q GP  u\|_{H_{\modu}^{s,\sw;k}} \le C \|P
u\|_{H_{\modu}^{s - 2,\sw;k}}$ (by boundedness of $QG$ on module
regularity spaces) gives the
standard elliptic estimate (i.e.\ without the $Q'$) and noting that
$Q'P$ is elliptic on $\Ell_{2,0}(P) \cap \Ell_{0,0}(Q')$ gives the estimate \eqref{eq:mod ellip}.
        \end{proof}

We now prove the module-regularity propagation of
singularities estimates away from radial points.  We argue by
reduction to the standard (non-module) parabolic propagation using the
following lemma, which states, essentially, that on the characteristic
set of $P$ away from the radial set, module regularity is ``the
same'' as standard parabolic regularity. 
\begin{lemma}\label{thm:mod is par}
  Let $q \in \SymbSpa$ have $q \in \Sigma_{P} \setminus
  \mathcal{R}$.  Then at least one of the $D_{z_i}$ or $t D_{z_i} -
  z_i/2$ is elliptic at $q$ in $\Psi^{1,1}_{par}$.

  In particular, at each such $q$, and for all $k \in \mathbb{N}_0$,
  $s \in \mathbb{R}$ and $\mathrm{r}$ a variable order, there are $Q, Q' \in \Psi^{0,0}_{par}$ with
  $q \in \Ell_{0,0}(Q) \cap \Ell_{0,0}(Q')$ such that $\WF'(Q) \subset
  \Ell_{0,0}(Q')$ and for any $M,N$,
  \begin{equation*}
    \begin{gathered}
      \| Q u \|_{H^{s,r; k}_{\modu}} \lesssim \| Q' u \|_{H^{s + k,r
          + k}} + \| u \|_{H_{\mathrm{par}}^{M,N}} \mbox{ and } \\
      \| Q u \|_{H^{s + k,r
          + k}} \lesssim \| Q' u \|_{H^{s,r; k}_{\modu}} + \| u
      \|_{H_{\mathrm{par}}^{M,N}}
    \end{gathered}
\end{equation*}
\end{lemma}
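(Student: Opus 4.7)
The plan is to prove the two assertions in turn: first, the pointwise ellipticity of at least one differential generator at each $q \in \Sigma_P \setminus \mathcal{R}$, and then deduce the two-sided norm equivalence from elliptic parametrix arguments applied to $A^k$, combined with careful commutator bookkeeping using the admissibility condition \eqref{eq:module properties}.

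For the ellipticity dichotomy, I would case-analyze on $\partial \SymbSpa$, computing $(1,1)$-principal symbols on each boundary stratum. On $\Sigma_P$ at fibre infinity away from the corner, the characteristic condition $\tau + |\zeta|^2 = 0$ forces $|\zeta| \to \infty$, so some $\hat\zeta_j \neq 0$ and the $(1,1)$-principal symbol $\zeta_j/(R\ang{Z})$ of $D_{z_j}$ does not vanish at $q$. At spacetime infinity away from the corner, the description \eqref{eq:radialsetspole} identifies $\mathcal{R}$ with $\{z/t = 2\zeta,\ \tau = -|\zeta|^2\}$, so $q \notin \mathcal{R}$ implies $\zeta_j \neq z_j/(2t)$ for some $j$, and then the $(1,1)$-principal symbol $(t\zeta_j - z_j/2)/\ang{Z}$ of $tD_{z_j} - z_j/2$ is nonzero at $q$. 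At the corner, using the description \eqref{eq:radialsetseq} of $\mathcal{R}_\pm$ in the coordinates $(s = t/|z|, \hat z, \rho_f, \hat\zeta, \tau/|\zeta|^2)$, the combination of these two families of symbols covers all off-$\mathcal{R}$ points by straightforward bookkeeping from the explicit formulas for $\mathcal{R}_\pm$.

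For the norm equivalences, fix $A \in \mathbf{G} \cap \Psi^{1,1}_{\mathrm{par}}$ elliptic at $q$ as just established, and choose $Q, Q' \in \Psi^{0,0}_{\mathrm{par}}$ microlocalized to a small neighborhood of $q$ on which $A$ is uniformly elliptic, with $\WF'(Q) \subset \Ell(Q')$. Then $A^k \in \modu^{(k)} \subset \Psi^{k,k}_{\mathrm{par}}$ is elliptic at $q$ as a $(k,k)$-order operator. The upper bound
$$
\|Qu\|_{H^{s,r;k}_{\modu}} \lesssim \|Q'u\|_{H^{s+k,r+k}_{\mathrm{par}}} + \|u\|_{H^{M,N}_{\mathrm{par}}}
$$
is immediate from the definition \eqref{eq:mod.reg.sob.norm} of the module regularity norm combined with Lemma~\ref{thm:bounded on mod reg spaces}: every generator product $L \in \modu^{(k)}$ lies in $\Psi^{k,k}_{\mathrm{par}}$, hence $\|LQu\|_{H^{s,r}} \lesssim \|Qu\|_{H^{s+k,r+k}}$, and summing over the finite generating set followed by the usual microlocal cutoff comparison between $Q$ and $Q'$ gives the bound.

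The reverse inequality uses elliptic regularity for $A^k$. The standard parametrix construction in the parabolic calculus (analogous to Proposition~\ref{prop:module.elliptic.estimate.old}, with $P$ replaced by $A^k$ and module order zero) yields $\|Qu\|_{H^{s+k,r+k}_{\mathrm{par}}} \lesssim \|Q' A^k u\|_{H^{s,r}_{\mathrm{par}}} + \|u\|_{H^{M,N}}$. Commuting $Q'$ past $A^k$ produces the main term $\|A^k Q' u\|_{H^{s,r}} \lesssim \|Q'u\|_{H^{s,r;k}_{\modu}}$ since $A^k$ is among the generators of $\modu^{(k)}$ appearing in the norm, plus a commutator term $\|[A^k,Q']u\|_{H^{s,r}}$. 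Iterating \eqref{eq:module properties} yields
$$
[A^k,Q'] = \sum_{j=0}^{k-1} A^j [A, Q'] A^{k-1-j} \in \Psi^{k-1,k-1}_{\mathrm{par}},
$$
so by induction on $k$ this term is absorbed into $\|Q'u\|_{H^{s,r;k-1}_{\modu}} + \|u\|_{H^{M,N}} \leq \|Q'u\|_{H^{s,r;k}_{\modu}} + \|u\|_{H^{M,N}}$. The main technical obstacle is precisely this commutator bookkeeping in the reverse direction: since $\mathbf{G}$ is not closed under commutators with arbitrary pseudodifferential operators, only the weaker admissibility property \eqref{eq:module properties} is available, and it is exactly this property that keeps $[A^k,Q']$ at order $(k-1,k-1)$ and allows the induction to close.
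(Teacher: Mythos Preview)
Your proof is correct and follows essentially the same strategy as the paper: verify that some order-$(1,1)$ differential generator is elliptic at $q$, then use microlocal elliptic regularity for its $k$-th power to get the harder inequality. A few comparative remarks.

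For the ellipticity dichotomy, the paper argues by contradiction rather than by a stratum-by-stratum case analysis: assuming all $D_{z_i}$ and $tD_{z_i}-z_i/2$ are characteristic, one lands inside the closure of $\{t\zeta_j - z_j/2 = 0,\ \tau + |\zeta|^2 = 0\}$ over spacetime infinity, which is exactly $\mathcal{R}$. This avoids having to treat the corner separately, where your phrase ``straightforward bookkeeping from the explicit formulas'' is doing real work. The paper also records the useful observation that when $D_{z_i}$ is the elliptic generator, $q$ necessarily lies over the \emph{interior} of spacetime (since $D_{z_i}$ has spacetime order $0$ and is therefore automatically characteristic as a $(1,1)$ operator at spacetime infinity); this makes the weight bookkeeping in that case trivial.

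For the reverse norm inequality, you are more explicit than the paper about the commutator $[A^k,Q']$ and the induction on $k$ that absorbs it. This is fine, but note that your induction implicitly requires a nested family of microlocalizers $Q \subset Q' \subset Q'' \subset \cdots$, all contained in $\Ell_{1,1}(A)$; the final $Q'$ in the statement should be taken to be the outermost one after $k$ steps. One small inaccuracy: the generator $A$ you use is one of $D_{z_i}$ or $tD_{z_i}-z_i/2$, both of which lie in $\Psi^{1,1}_{\mathrm{par}}$, so $[A,Q']\in\Psi^{0,0}_{\mathrm{par}}$ follows directly from the standard commutator rule \eqref{eq:comm}, not from the admissibility condition \eqref{eq:module properties}. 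The latter is only needed for the order-$(2,*)$ generators $D_t$ and $2tD_t + z\cdot D_z$, which play no role in this lemma.
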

Note that this is not true if one is not on the characteristic set of
$P$, for example over the interior on the set $\{ \zeta_i = \sigma_{1,0}
(D_{z_i}) = 0 : i = 1, \dots, n \}$.
\begin{proof}
  First, assume that all the $D_{z_i}$ and $t D_{z_i} - z_i/2$ are
  characteristic at $q$.  Then, as $q \in \Sigma_{P}$, if $q$ lies
  over the interior of $M$, then it is in the closure of the set
  $$
  \{ \zeta = 0, t \zeta - z_i /2 = 0, \tau = |\zeta|^2 \} = \{ \zeta
  = 0, z = 0, \tau = 0 \}.
  $$
This set does not intersect fiber infinity, such $q$ in fact lies over
the boundary.

Over the boundary, the $D_{z_i}$ are characteristic in
$\Psi^{1,1}_{\mathrm{par}}$, so $q$ lies in the closure of $\{t \zeta - z_i /2 = 0, \tau = |\zeta|^2 \},$ which is exactly the radial set.  This
  proves the first part.

On to the inequalities, the first follows directly from the definition.
  For the second, assume one of the $D_{z_i}$ is elliptic in
  $\Psi^{1,1}$ at $q$. Then $q$ lies over the interior of $M$, since
  $D_{z_i}$ is characteristic in $\Psi^{1,1}$ over the spacetime
  boundary.  But then, taking $Q \in \Psi^{0,0}_{par}$ elliptic at $q$
  with $\WF'(Q)$ in the interior of $M$ and with $\WF'(Q)
  \subset \Ell_{1,1}(D_{z_i})$, since $D_{z_i}^k u \in
  H^{s,r}$, by elliptic regularity for the parabolic calculus we have
$$
\| Q u \|_{H^{s + k,\mathrm{r} + k}} \lesssim \| Q' D_{z_i}^k u\|_{H^{s +
      k, \mathrm{r} + k}}, + \| u \|_{M,N} \lesssim \| Q' u\|_{H^{s,
      \mathrm{r}; k}_{\modu}}, + \| u \|_{M,N} 
$$
Here the spatial decay order is irrelevant since $Q$ is supported away
from spacetime infinity; on the right hand side it can be taken to be
arbitrary.  The argument is similar if $t D_{z_i} - z_i/2$ is
    elliptic at $q$. 
  \end{proof}

  Using the previous lemma we can now prove the following by arguing
  along the lines of \cite[Prop.\ 5.8]{TDSL}
\begin{prop}
	\label{prop:module.propagation}
	Let $Q,Q',G\in\Psip{0}{0}$ with $G$ elliptic on $\WF'(Q)$, and
        such that
        $\WF'(Q') \cup \WF'(G) \cap \mathcal{R} = \varnothing$.  Let
        $\sw$ be a variable spacetime order that is non-increasing in
        the direction of the bicharacteristic flow of $P$.
        Suppose also that for every
        $\alpha\in\WF'(Q)\cap\Sigma_P $ there exists $\alpha'$ such
        that $Q'$ is elliptic at $\alpha'$ and there is a forward
        bicharacteristic curve $\gamma$ of $P $ from $\alpha'$ to
        $\alpha$ such that $G$ is elliptic on $\gamma$.
	
	Then if $GP u\in H_{\modu}^{s-1,\sw+1;k}$ and $Q'u\in
        H_{\modu}^{s,\sw;k}$, we have $Q u\in
        H_{\modu}^{s,\sw;k}$ with the estimate	
\[\|Qu\|_{H_{\modu}^{s,\sw;k}}\leq C(\|Q' u \|_{H_{\modu}^{s,\sw;k}}+\|GP u\|_{H_{\modu}^{s,\sw;k}}+\|u\|_{H_{\mathrm{par}}^{M,N}})\]
	for any $M,N\in\RR$.
\end{prop}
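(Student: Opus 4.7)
The plan is to reduce this module-regularity propagation estimate to the standard parabolic propagation of singularities (Proposition~\ref{prop:sing.P}) by means of Lemma~\ref{thm:mod is par}. That lemma asserts that on $\Sigma_P \setminus \mathcal{R}$ at least one of the first-order differential generators $D_{z_i}$ or $tD_{z_i}-z_i/2$ of $\modu$ is elliptic in $\Psip{1}{1}$; the two-sided inequalities in its statement then express that, microlocally away from $\mathcal{R}$, membership in $\Hparmod{s}{\sw}{k}$ is equivalent, modulo a residual $\Hpar{M}{N}$ term, to membership in the shifted parabolic Sobolev space $\Hpar{s+k}{\sw+k}$. Since by hypothesis $\WF'(Q) \cup \WF'(Q') \cup \WF'(G)$ avoids $\mathcal{R}$, we remain in the regime where this equivalence holds.

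First I would translate the hypotheses using the backward inequality of Lemma~\ref{thm:mod is par}: $Q'u \in \Hparmod{s}{\sw}{k}$ yields $\tilde Q' u \in \Hpar{s+k}{\sw+k}$ for a suitably enlarged cutoff $\tilde Q'$, and $GPu \in \Hparmod{s-1}{\sw+1}{k}$ yields $\tilde G Pu \in \Hpar{s-1+k}{\sw+1+k}$, each at the cost of an ambient $\Hpar{M}{N}$ term. Next I would cover $\WF'(Q) \cap \Sigma_P$ and the relevant bicharacteristic segments by finitely many microlocal neighbourhoods on each of which a single generator from $\mathbf{G}_1$ is elliptic, and apply Proposition~\ref{prop:sing.P} with shifted indices $(s+k,\sw+k)$; the monotonicity of $\sw$ along the forward flow is preserved by the $k$-shift, and the bicharacteristic connectivity hypothesis transfers directly. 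Chaining these microlocal parabolic propagation estimates across the cover would produce the desired parabolic estimate on $Qu$. Finally, the forward inequality of Lemma~\ref{thm:mod is par} would translate this parabolic bound on $Qu$ at order $(s+k,\sw+k)$ back into the module regularity estimate at order $(s,\sw;k)$.

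The main obstacle will be the bookkeeping required to pass between the various local trivializations of the module/parabolic equivalence along the bicharacteristic, since the elliptic generator appearing in Lemma~\ref{thm:mod is par} may change from one microlocal neighbourhood to the next, and the residual $\Hpar{M}{N}$ errors generated at each application must be collected and absorbed without accumulation. An alternative route would be to run a positive-commutator argument directly in the module regularity setting, using a commutant of the form $B = \sum_{\mathbf{A}^\alpha \in \modu^{(k)}} (\mathbf{A}^\alpha)^* C \mathbf{A}^\alpha$ with $C$ the standard commutant from the parabolic propagation proof; the technical difficulty there, not encountered in earlier module regularity work such as \cite{HMV2004} and \cite{TDSL}, is that $\modu$ is not closed under commutators, so the error terms $[P,\mathbf{A}^\alpha]$ produced by the commutator calculation must be controlled using only the weaker admissibility condition \eqref{eq:module properties}, which asserts that each generator $A$ satisfies $[A, \Psip{m}{l}] \subset \Psip{m}{l}$.
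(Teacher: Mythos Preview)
Your proposal is correct and follows precisely the paper's approach: the paper's proof is a single sentence stating that, by Lemma~\ref{thm:mod is par}, the statement reduces to the standard (non-module) parabolic propagation of singularities from \cite{TDSL}. Your discussion of the bookkeeping (covering by microlocal neighbourhoods where a fixed generator is elliptic, chaining the shifted-order estimates, absorbing residual terms) fills in details that the paper simply suppresses, and your mention of the alternative positive-commutator route is not needed here since the reduction argument suffices.
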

\begin{proof}
  By Lemma  \ref{thm:mod is par} this follows from the propagation of
  standard (i.e.\ not module)
  parabolic regularity, which is Proposition 8.1 of \cite{TDSL}.
\end{proof}

We now have the two main module regularity estimates near
the radial set.  This requires a closure property for commutators of generators
with the operator $P$ itself. 
\begin{lemma}
\label{lem:P.crit}
For any generator $A_j\in\mathbf{G}=\{A_0,\ldots,A_N\}$, we have
$$i\ang{Z}[A_j,P]=\sum_{k=0}^N C_{jk}A_k+C_j'P $$
where $C_{jk}\in \Psip{1}{0}$ and $C_j'\in \Psip{0}{1}$ are such that
$$\sigma_{\mathrm{base},1,0}(C_{jk})|_{\SR}=0 \quad \textrm{for }j\neq k$$
and
$$\mathrm{Re}(\sigma_{\mathrm{base},1,0}(C_{jj}))|_{\SR}=0 $$
and $C_{jk}=0$ for each $k$ such that $A_k$ has higher parabolic regularity order than $A_j$.
\end{lemma}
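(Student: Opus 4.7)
The plan is to decompose $P = P_0 + W$, where $P_0 = D_t + \Delta$ is the flat free Schr\"odinger operator and $W := \Delta_{g(t)} - \Delta + V$ has coefficients compactly supported in spacetime by hypothesis. I would then compute $[A_j, P_0]$ and $[A_j, W]$ separately for each generator and assemble the desired decomposition. The commutators with the flat part all vanish except for the parabolic dilation: using $[t, D_t] = i$ and $[z_l, \Delta] = 2iD_{z_l}$ one finds $[tD_{z_l}, P_0] = iD_{z_l} = [z_l/2, P_0]$, which cancel (Galilean invariance of $P_0$), while the analogous computation gives $[2tD_t + z\cdot D_z, P_0] = 2iD_t + 2i\Delta = 2iP_0$. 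Hence
$$
i\ang{Z}[2tD_t + z\cdot D_z, P_0] = -2\ang{Z}P_0 = -2\ang{Z}P + 2\ang{Z}W,
$$
so I would set $C_j' := -2\ang{Z} \in \Psip{0}{1}$ for the dilation generator and $C_j' := 0$ for all other generators, leaving a compactly supported remainder to be absorbed into the $C_{jk}$.

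For the compactly supported part, since $W$ contains only spatial derivatives, $[A_j, W]$ is a differential operator of order at most $2$ in $z$ with compactly supported coefficients, and this is preserved by multiplication by $\ang{Z}$. Writing such an operator as $\sum_{l,m} a_{lm}(z,t) D_{z_l} D_{z_m} + \sum_l a_l(z,t) D_{z_l} + a_0(z,t)$, I would take
$$
C_{j, D_{z_m}} := \sum_l a_{lm}(z,t) D_{z_l} + a_m(z,t), \qquad C_{j, 0} := a_0(z,t),
$$
with all remaining $C_{jk}$ equal to zero. Each $C_{jk}$ so constructed is a compactly supported differential operator of order at most $1$, hence lies in $\Psip{1}{-\infty} \subset \Psip{1}{0}$. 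Since the only $A_k$ invoked are $\mathrm{Id}$ and the $D_{z_l}$, both of parabolic regularity order $\le 1$, and every $A_j$ producing a nonzero commutator has order $\geq 1$, the constraint that $C_{jk} = 0$ whenever $A_k$ has strictly higher fiber order than $A_j$ is automatic. The case $A_j = \mathrm{Id}$ is trivial since $[\mathrm{Id}, P] = 0$.

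Finally, both symbol conditions are immediate: every $C_{jk}$ constructed above has compactly supported coefficients, so its full symbol is $O(\rho_{\mathrm{base}}^\infty)$ at spacetime infinity, which forces $\sigma_{\mathrm{base},1,0}(C_{jk})$ to vanish identically there, and in particular on $\SR$; this trivially yields both $\sigma_{\mathrm{base},1,0}(C_{jk})|_{\SR}=0$ for $j \ne k$ and $\mathrm{Re}\,\sigma_{\mathrm{base},1,0}(C_{jj})|_{\SR}=0$. The main ``obstacle'' is really just the explicit commutator bookkeeping; no finer microlocal argument is required, precisely because apart from the single scaling identity $[2tD_t + z \cdot D_z, P_0] = 2iP_0$ every contribution to $i\ang{Z}[A_j,P]$ is compactly supported in spacetime.
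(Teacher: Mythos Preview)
Your proof is correct. The paper states this lemma without proof (it is placed between the elliptic/propagation estimates and the radial point estimates in the appendix as a needed closure property, with no argument given, presumably because it is a direct computation analogous to the corresponding check in \cite{TDSL}). Your decomposition $P = P_0 + W$ is exactly the natural one: the free commutators $[A_j,P_0]$ vanish for every generator except the parabolic dilation, where $[2tD_t + z\cdot D_z, P_0] = 2iP_0$ furnishes the only nontrivial $C_j'$ term, and all remaining contributions come from $W$ and are compactly supported in spacetime, so the resulting $C_{jk}$ lie in $\Psip{1}{-\infty}$ and have vanishing base principal symbol at $\SR$ automatically. The order constraint is likewise handled correctly, since your expansion uses only $\mathrm{Id}$ and the $D_{z_m}$, each of parabolic order at most $1$, while every $A_j$ producing a nonzero commutator has order at least $1$.
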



\begin{prop}
	\label{prop:mod.belowschrod}
	Assume $r \in \mathbb{R}$, $r<-1/2$. Assume that there exists a neighbourhood $U$
        of $\SR_{\pm}$ and $Q', G\in\Psip{0}{0}$ such that for every
        $\alpha\in\Sigma_P \cap U\setminus \SR_{\pm}$ the
        bicharacteristic $\gamma$ through $\alpha$ enters $\Ell_{0,0}(Q')$
        whilst remaining in $\Ell_{0,0}(G)$. Then there exists
        $Q\in\Psip{0}{0}$ elliptic on $\SR_{\pm}$ such that if
        $u \in H_\mathrm{par}^{M,N}$, $Q'u\in H_{\SD}^{s,r;k}$, and
        $GP u\in H_{\SD}^{s-1,r+1;k}$, then
        $Qu\in H_{\SD}^{s,r;k}$. Moreover, there exists $C>0$ such
        that
	\begin{equation}\label{eq:mod.radial.schrod.below}
		\|Qu\|_{H_{\SD}^{s,r;k}}\leq C(\|Q'u\|_{H_{\SD}^{s,r;k}} +\|GP u\|_{H_{\SD}^{s-1,r+1;k}}+\|u\|_{H_\mathrm{par}^{M,N}})
	\end{equation}
\end{prop}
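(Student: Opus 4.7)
I would argue by induction on the module order $k$. The base case $k=0$ is exactly the scalar below-threshold radial estimate of Proposition~\ref{prop:belowschrod}, using the identifications $H^{s,r;0}_{\modu} = H^{s,r}_{\mathrm{par}}$ and $H^{s-1,r+1;0}_{\modu} = H^{s-1,r+1}_{\mathrm{par}}$. For the inductive step, assume the estimate \eqref{eq:mod.radial.schrod.below} at level $k-1$. By definition of the module regularity norm, it suffices to fix an arbitrary $A \in \modu^{(k)}$ (a product of generators of combined order at most $(k,k)$) and bound $\|QAu\|_{H^{s,r}_{\mathrm{par}}}$ by the right-hand side of \eqref{eq:mod.radial.schrod.below}. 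The background regularity $Au \in H^{M-k,N-k}_{\mathrm{par}}$ follows from $u \in H^{M,N}_{\mathrm{par}}$ and mapping properties of the parabolic calculus, so I may apply Proposition~\ref{prop:belowschrod} to $Au$ (with microlocalizers chosen compatibly with $Q, Q', G$) and reduce matters to bounding
\begin{equation*}
\|Q'(Au)\|_{H^{s,r}_{\mathrm{par}}} \quad \text{and} \quad \|GP(Au)\|_{H^{s-1,r+1}_{\mathrm{par}}}.
\end{equation*}

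For the first quantity I write $Q'A = AQ' + [Q',A]$. The term $AQ'u$ lies in $H^{s,r}_{\mathrm{par}}$ by Lemma~\ref{thm:order really doesnt matter} applied to the hypothesis $Q'u \in H^{s,r;k}_{\modu}$. Expanding the commutator via Leibniz over the factors of $A$ and invoking the admissibility property \eqref{eq:module properties}, each single commutator $[Q',A_{i_j}]$ lies in $\Psi^{0,0}_{\mathrm{par}}$, so $[Q',A]$ factors through an element of $\modu^{(k-1)}$, and the inductive hypothesis then controls $[Q',A]u$ in $H^{s,r}_{\mathrm{par}}$ in terms of the module norms on the right of \eqref{eq:mod.radial.schrod.below}.

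For the second quantity, write $PAu = APu + [P,A]u$ and then $GA = AG + [G,A]$, so
\begin{equation*}
GPAu = AGPu + [G,A]Pu + G[P,A]u.
\end{equation*}
The term $AGPu$ lies in $H^{s-1,r+1}_{\mathrm{par}}$ by Lemma~\ref{thm:order really doesnt matter} applied to the hypothesis $GPu \in H^{s-1,r+1;k}_{\modu}$, and $[G,A]Pu$ is handled exactly as $[Q',A]u$ above. For the crucial remaining term, I expand via Leibniz $[P,A] = \sum_j A_{i_1}\cdots A_{i_{j-1}}[P,A_{i_j}]A_{i_{j+1}}\cdots A_{i_m}$ and apply Lemma~\ref{lem:P.crit} to replace each inner commutator by $\ang{Z}^{-1}\bigl(\sum_k C_{i_j k}A_k + C_{i_j}' P\bigr)$. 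The $P$-terms are moved to the right and absorbed iteratively using the same expansion together with the hypothesis on $GPu$. The off-diagonal terms ($k \neq i_j$) then lie in $\ang{Z}^{-1}\Psi^{*,0}_{\mathrm{par}} \cdot \modu^{(k-1)}$ up to lower-order pieces, and the $\ang{Z}^{-1}$ factor supplies precisely the extra spatial decay that upgrades the natural output weight $r$ to $r+1$; these are then bounded via the inductive hypothesis.

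The main obstacle is the diagonal contribution $A_{i_1}\cdots (\ang{Z}^{-1}C_{i_j i_j}) A_{i_j} \cdots A_{i_m}u$, which remains at module order $k$ and so is not reached by the inductive hypothesis. Here the rescue is the second structural clause of Lemma~\ref{lem:P.crit}, $\Re\,\sigma_{1,0}(C_{jj})\big|_{\SR}=0$: the diagonal term is purely imaginary to leading order at $\SR$, so it can be absorbed exactly as in the standard below-threshold positive commutator argument that proves Proposition~\ref{prop:belowschrod}, except now executed at module level $k$ rather than at level $0$. The strict inequality $r < -1/2$ provides the definite sign needed to absorb the remaining diagonal contribution into the left-hand side, and the remainder of the argument is routine bookkeeping of commutator terms, following \cite[Sect.~5]{TDSL} mutatis mutandis, with the reduced powers $\modu^{(k)}$ and properties \eqref{eq:commgen}--\eqref{eq:module properties} in place of the commutator-closed modules used there.
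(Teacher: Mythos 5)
There is a genuine gap, and you in fact half-notice it in your final paragraph without resolving it. Your strategy is to black-box Proposition~\ref{prop:belowschrod} applied to $Au$, reducing to bounding $\|Q'Au\|_{H^{s,r}_{\mathrm{par}}}$ and $\|GPAu\|_{H^{s-1,r+1}_{\mathrm{par}}}$, and then to control the latter using $AGPu$, $[G,A]Pu$, and $G[P,A]u$ plus Lemma~\ref{lem:P.crit}. The off-diagonal and lower-order pieces do fall to the inductive hypothesis, as you say. But the diagonal term $G\langle Z\rangle^{-1}C_{i_j i_j} A'u$ (with $A'\in\modu^{(k)}$ of the same module order as $A$) does \emph{not} fall to it: at best you obtain an inequality of the form $\|QAu\|_{H^{s,r}} \leq C(\cdots) + C'\|QA'u\|_{H^{s,r}}$ with no a priori control of $C' < 1$. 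You propose to absorb this "exactly as in the standard below-threshold positive commutator argument \dots except now executed at module level $k$." That is not something your black-box reduction gives you access to: once you have applied Proposition~\ref{prop:belowschrod} to $Au$ as a scalar estimate, the commutant and the sign structure inside its proof are no longer available to you, and the vanishing of $\Re\,\sigma(C_{jj})|_{\SR}$ cannot help you absorb anything from the outside. The only way to use that sign condition is to run the positive commutator argument from scratch with the commutant $A_\alpha^* A A_\alpha$ and treat all multi-indices $\alpha$ of module order $\leq k$ simultaneously as a coupled system (this is exactly the "matrix of operators" indexed by $S_k = \{\alpha : |\alpha|\leq k,\ A^\alpha\in\Psip{k}{k}\}$), which is what the paper does. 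So the two halves of your proposal are really two different strategies: the black-box reduction, which breaks on the diagonal term, and the coupled commutator argument, which is what actually closes the estimate but makes the black-box step superfluous.

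Two smaller points worth flagging. First, the paper's induction (used to upgrade the a priori estimate to an actual regularity statement via the regularizer $S_\eta$) proceeds in steps of size $2$, not $1$, because $H^{s+1,r+1;k}_{\modu}\not\subset H^{s,r;k+1}_{\modu}$ for this module, which has order-$(2,1)$ generators; the odd-$k$ case is handled separately via the submodule $\tilde\modu$ generated by $\tilde{\mathbf G}\cap\Psip 1 1$. Your step-size-$1$ induction would run into exactly this failure. Second, the regularization/weak-limit step is not optional: the a priori estimate presupposes $Qu\in H^{s,r;k}_{\modu}$ in order for the left-hand side to be finite, so one cannot simply "apply" it to conclude regularity, and your sketch silently skips this issue.
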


\begin{prop}
	\label{prop:mod.aboveschrod} 
	Assume $r,r', s, s' \in \mathbb{R}$, $r>r'>-\frac{1}{2}$ and $s>s'$. Assume that $G\in\Psip{0}{0}$ is elliptic at $\SR_{\pm}$. Then there exists $Q\in\Psip{0}{0}$ elliptic at $\SR_{\pm}$ such that, if $u \in H_\mathrm{par}^{M,N}$, $Gu\in H_{\SD}^{s',r';k}$ and $GP u\in H_{\modu}^{s-1,r+1;k}$, then $Qu\in H_{\SD}^{s,r;k}$ and there exists $C>0$ such that
	\begin{equation}\label{eq:mod.radial.schrod.above}
		\|Qu\|_{H_{\SD}^{s,r;k}}\leq C(\|GP u\|_{H_{\SD}^{s-1,r+1;k}}+\|Gu\|_{H_{\SD}^{s',r';k}}+\|u\|_{H_{\mathrm{par}}^{M,N}}).
	\end{equation}
\end{prop}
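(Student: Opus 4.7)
The plan is to proceed by induction on the module regularity order $k$, with base case $k = 0$ given by Proposition \ref{prop:aboveschrod}. At the inductive step, assume the estimate holds for module regularity of order $k - 1$ (in particular the hypotheses at level $k$ imply those at level $k-1$, so by induction some $Q_0 \in \Psip 0 0$ elliptic at $\SR_\pm$ satisfies $Q_0 u \in H_\SD^{s, r; k-1}$ with the corresponding bound). To upgrade to level $k$, let $B = A_{i_1} \cdots A_{i_j}$ be a representative element of $\SD^{(k)}$ as in Lemma \ref{thm:order doesnt matter}. I would apply the base case to the distribution $Bu$, obtaining some $\widetilde Q_B \in \Psip 0 0$ elliptic at $\SR_\pm$ such that
\[ \|\widetilde Q_B Bu\|_{\Hpar s r} \leq C\Big( \|GP(Bu)\|_{\Hpar{s-1}{r+1}} + \|G(Bu)\|_{\Hpar{s'}{r'}} + \|Bu\|_{\Hpar{M'}{N'}} \Big). \]
Choosing $Q \in \Psip 0 0$ elliptic at $\SR_\pm$ with $\WF'(Q)$ contained in the common elliptic set of all the $\widetilde Q_B$ as $B$ ranges over a finite generating set, the left-hand side dominates $\|BQu\|_{\Hpar s r}$ up to commutator errors $[B,Q]u$ that lie in $\SD^{(k-1)}$ acting on $u$ by iterated use of \eqref{eq:module properties}, and hence are controlled by the induction.

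The hypotheses of the base case must then be verified for $Bu$. The easier bound on $G(Bu) = BGu + [G,B]u$ follows from $Gu \in H_\SD^{s', r'; k}$ combined with Lemma \ref{thm:order really doesnt matter} for the principal term, and from reducing $[G, B]$ to an element of $\SD^{(k-1)}$ via \eqref{eq:module properties} for the commutator, plugged into the inductive hypothesis. For the harder bound on $GP(Bu) = GBPu + G[P,B]u$, the first term uses $GPu \in H_\SD^{s-1, r+1; k}$ after commuting $G$ through $B$, and the second is handled via Lemma \ref{lem:P.crit} and the Leibniz expansion
\[ [P, B] = \sum_{\ell=1}^j A_{i_1} \cdots A_{i_{\ell-1}} [P, A_{i_\ell}] A_{i_{\ell+1}} \cdots A_{i_j}, \qquad -i[P, A_{i_\ell}] = \ang{Z}^{-1}\Big(\sum_m C_{\ell m} A_m + C_\ell' P\Big). \]
Each resulting summand has either an $A_m$-generator substituted for $A_{i_\ell}$ or a $P$-factor in the middle, accompanied by a factor of $\ang{Z}^{-1}$. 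The $P$-in-middle terms are processed by commuting $P$ rightward toward $u$ (generating further commutators of the same type, with a recursion terminating at a $Pu$-factor bounded by $\|GPu\|_{H_\SD^{s-1, r+1; k-1}}$-type quantities). The remaining summands have strictly fewer module generators and a spatial decay gain of order one, which places them at module order $k-1$ with effective spatial weight $r + 1 > r' + 1 > 1/2$, still above threshold, so the inductive hypothesis applies.

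The main obstacle will be the careful bookkeeping required to track the parabolic regularity orders throughout these Leibniz expansions, since the generators $D_t$ and $2t D_t + z\cdot D_z$ have parabolic regularity order $2$ rather than $1$. This is alleviated precisely by the vanishing condition $C_{jk} = 0$ whenever $\operatorname{ord}(A_k) > \operatorname{ord}(A_j)$ in Lemma \ref{lem:P.crit}, which guarantees that each commutator produced does not inadvertently raise the parabolic order. A second subtlety is that the module $\SD$ is \emph{not} closed under commutators: $[A_i, A_j]$ need not lie in the module, only the weaker property \eqref{eq:module properties} holds. This is exactly the gap bridged by Lemma \ref{lem:P.crit}, which replaces closure under commutators with the more specific closure of $[P, A_j]$ modulo $\ang{Z}^{-1}$-gains. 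Finally, the condition $\operatorname{Re}(\sigma(C_{jj}))|_{\SR} = 0$ in Lemma \ref{lem:P.crit} is what preserves the positive-commutator sign structure underlying the base-case radial estimate in Proposition \ref{prop:aboveschrod}, so that the induction closes cleanly at every step without extraneous unsigned terms.
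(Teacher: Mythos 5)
Your proposal does not match the paper's proof, and it contains a genuine gap. The paper does \emph{not} argue by applying the scalar above-threshold estimate to $Bu$ for $B \in \SD^{(k)}$; instead it runs a direct positive-commutator argument with commutant $\sum_{\alpha\in S_k} A_\alpha^* A A_\alpha$, where $A_\alpha$ ranges over the products of generators lying in $\SD^{(k)}$ and $A$ is the scalar radial commutant. After commuting $P$ past the generators using \eqref{eq:commgen}, \eqref{eq:module properties} and Lemma~\ref{lem:P.crit}, the cross-terms assemble into a matrix indexed by $S_k$, and the vanishing conditions $\sigma(C_{jk})|_\SR = 0$ for $j\neq k$ and $\Re\sigma(C_{jj})|_\SR = 0$ are exactly what make this matrix positive near $\SR_\pm$. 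That simultaneity --- estimating all $\|A_\alpha u\|$ at once --- is the mechanism that closes the argument. It is then upgraded from an a~priori estimate to the full result by a regularizer $S_\eta\in\Psip{-2}{-2}$ with an induction of step size $2$ (forced by the order-$2$ generators), with the odd case $k=1$ handled separately via the submodule $\tilde\SD$ generated by $\tilde{\mathbf G}\cap\Psip{1}{1}$.

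The gap in your argument is in the sentence ``The remaining summands have strictly fewer module generators and a spatial decay gain of order one, which places them at module order $k-1$.'' This is false. When you substitute $[P,A_{i_\ell}] = i\ang{Z}^{-1}(\sum_m C_{\ell m}A_m + C'_\ell P)$ into the Leibniz expansion of $[P,B]$, the $A_m$-substitution term is
\[
A_{i_1}\cdots A_{i_{\ell-1}}\,\ang{Z}^{-1}C_{\ell m}\,A_m\,A_{i_{\ell+1}}\cdots A_{i_j},
\]
which still contains $j$ module generators --- $A_{i_\ell}$ has been \emph{replaced} by $A_m$, not dropped. After commuting $\ang{Z}^{-1}C_{\ell m}\in\Psip{1}{-1}$ to the outside, this term is $\Psip{1}{-1}$ applied to $B''u$ with $B''\in\SD^{(k')}$, $k'\le k$ (and $k'=k$ whenever $\ord(A_m)=\ord(A_{i_\ell})$, which is generically the case). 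To place this in $\Hpar{s-1}{r+1}$ you therefore need $B''u\in\Hpar{s}{r}$ --- which is precisely the conclusion of the proposition. Your induction does not reach this: the hypothesis at level $k-1$ is useless here, and the a~priori data $Gu\in H^{s',r';k}_\SD$ is below the target in both exponents, while $\ang{Z}^{-1}$ only buys a single order of decay, not the full gap $(s-s',r-r')$. So the verification of the hypothesis $GP(Bu)\in\Hpar{s-1}{r+1}$ is circular, and the inductive loop never closes. This is exactly the obstruction that the paper's matrix positive-commutator estimate is designed to overcome: the terms you cannot bound individually are instead absorbed into the positive part of the commutator, using the sign structure in Lemma~\ref{lem:P.crit}. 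Your proposal also does not address the step-size-$2$ induction forced by the parabolic-order-$2$ generators, nor the $k=1$ case, both of which the paper must handle separately.
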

\begin{proof}
	The proof of these estimates is almost identical to the proof of \cite[Proposition~5.9]{TDSL} although there is only a single module involved.
	
	We take a product of generators $A_\alpha=\prod_{j=0}^N \mathbf{A}_j^{\alpha_j}$ that lies in the reduced power  $\SD^{(k)}$ and $A$ as in \cite[Proposition~5.9]{TDSL} and we consider the commutator
	$$i[A_\alpha^*AA_\alpha,P].$$
	
	Using \eqref{eq:commgen}, \eqref{eq:module properties} and Lemma  \ref{lem:P.crit} we can commute $P$ past the generators in the expression $i[A_\alpha^*AA_\alpha,P]$ to obtain the same commutator identity as in \cite[(5.31)]{TDSL} with the terms $E_\alpha,E_\alpha'\in\SM^{(k')}$ for $k'< j$ and $A_\beta\in \SM^{(k)}$.
	We can then follow the positive commutator argument where the matrix of operators in \cite[(5.32)]{TDSL} is replaced with 
	$$S_k=\{\alpha\in \NN^{N+1}:|\alpha|\leq k\textrm{ and } A^\alpha\in \Psip{k}{k}\}$$
	to account for the different norm $\|\cdot\|_{H_{\SD}}$ \eqref{eq:mod.reg.sob.norm} in this paper.
	This yields the a priori estimate \eqref{eq:radial.schrod.below} as in \cite[(5.50)]{TDSL}.
	
	To complete the proof from the a priori estimate we proceed by induction in $k$ as in \cite{TDSL}, with a slight modification. It is no longer the case that $H_{\modu}^{s+1,r+1;k}\subset H_{\modu}^{s,r;k+1}$, however it is true that $H_{\modu_{c}}^{s+2,r+2;k}\subset H_{\modu}^{s,r;k+2}$,and so the induction is best done with step size $2$.
	
	 For $k=0$, the result is just the standard radial set estimate in the parabolic calculus \cite[Proposition~5.3]{TDSL}.
	
	Assuming now $k\geq 2$ and the result has been proved for module regularity of order $k-2$, we take $u$ satisfying the conditions of Proposition \ref{prop:belowschrod} and take $$u'=S_\eta u $$ where 
	$$S_\eta=q_L\left(\left(\frac{\rhobase}{\rhobase+1}\frac{\rhofib}{\rhofib+1}\right)^2\right)\in \Psip{-2}{-2}.$$
	The inductive hypothesis implies that $Qu\in H^{s,r;k-2}$ and so upon redefining $Q$ by shrinking its microsupport, we get that $Qu'\in H^{s,r;k}$ and the a priori estimate \eqref{eq:mod.radial.schrod.below} is valid for $u'$. We conclude that \eqref{eq:mod.radial.schrod.below} is valid for $u$ as well by taking $\eta\to 0$ and using a weak limit argument as in \cite{TDSL}. This completes the proof of Proposition \ref{prop:belowschrod} for even $k$. For odd $k$ the same induction argument works provided that we can establish the result for $k=1$. To do this, we consider the submodule $\tilde{\SD}$ of $\modu$ generated by $\tilde{\mathbf{G}}\cap\Psip{1}{1}$. This submodule satisfies  $\tilde{\SD}^{(k)}=\tilde{\SD}^{k}$ for all $k$ and  satisfies \eqref{eq:commgen}, \eqref{eq:module properties} and Lemma \ref{lem:P.crit} as well. 
	Hence \cite[Proposition~5.9]{TDSL} applies to the module regularity spaces $H_{\tilde{\SD}}^{s,r;k}$ directly, and the observation that $H_{\tilde{\SD}}^{s,r;1}=H_{\modu}^{s,r;1}$ completes the proof. 
	
	The proof of Proposition \ref{prop:aboveschrod} is obtained by making similar modifications to \cite[Proposition~5.10]{TDSL}.
\end{proof}

We can compile these various
estimates into a global estimate in module regularity spaces by using
an appropriate microlocal partition of unity.
\begin{proposition}\label{prop:big mod reg invert}
  Let $\mathrm{r}_+ \in C^{\infty}(\SymbSpa)$ be
  monotone nonincreasing along the
  Hamilton flow and constant in neighborhoods of $\mathcal{R}_\pm$
  with
  \begin{equation}
\mp ( \mathrm{r}_+ + 1/2) > 0 \mbox{ near } \mathcal{R}_{\pm} . \label{eq:rplus}
  \end{equation}
Let $s \in \mathbb{R}$ and $k \in  \mathbb{N}_0$.  Then there is $C >
0$ such that for
  all $u \in \mathcal{S}'$ with $Pu \in H^{s-1, \mathrm{r}_+;
    k}_{\modu}$ and $u \in H^{s,\mathrm{r}_+; k}_{\modu}$, that is, $u \in \mathcal{X}^{s, \mathrm{r}_+;k}_{\modu}$, 
  \begin{equation}
    \label{eq:mod reg global}
    		\|u\|_{H_{\SD}^{s,\mathrm{r}_+;k}}\leq C(\|P
                u\|_{H_{\SD}^{s-1,\mathrm{r}_+ + 1;k}}+\|u\|_{H_{\mathrm{par}}^{M,N}}).
  \end{equation}

Moreover, the mapping
\begin{equation}
  \label{eq:invertible mapping}
  P \colon \mathcal{X}^{s, \mathrm{r}_+;k}_{\modu} \lra H^{s-1,
    \mathrm{r}_+ + 1; k}_{\modu}
\end{equation}
is invertible.

The same holds for $\mathrm{r}_- \in  C^{\infty}(
  \SymbSpa)$ monotone nondecreasing along the
  Hamilton flow and constant in neighborhoods of $\mathcal{R}_\pm$
  with $\pm (\mathrm{r}_- + 1/2) > 0$ near $\mathcal{R}_\pm$.
\end{proposition}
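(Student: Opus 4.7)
The plan is to prove \eqref{eq:mod reg global} by combining the four microlocal module-regularity estimates established earlier in the appendix: the elliptic estimate (Proposition \ref{prop:module.elliptic.estimate.old}), the real-principal-type propagation estimate (Proposition \ref{prop:module.propagation}), and the two radial-point estimates (Propositions \ref{prop:mod.belowschrod} and \ref{prop:mod.aboveschrod}). Invertibility will then follow by combining \eqref{eq:mod reg global} with the analogous estimate for $P^*$ (with weight $\mathrm{r}_- = -1 - \mathrm{r}_+$) and the uniqueness results from \cite{TDSL}.

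First, I would fix a finite open cover of $\partial(\SymbSpa)$ subordinate to a microlocal partition of unity $I = \sum_j Q_j$ in $\Psip{0}{0}$ with the following property: each $Q_j$ is microsupported in a region where one of the four estimates above applies. Near $\mathcal{R}_-$, where $\mathrm{r}_+ > -1/2$, the above-threshold estimate (Proposition \ref{prop:mod.aboveschrod}) bounds $\|Q_-u\|_{H^{s,\mathrm{r}_+;k}_{\modu}}$ by $\|GPu\|_{H^{s-1,\mathrm{r}_++1;k}_{\modu}}$ plus an a priori low-regularity norm on $u$. This is the crucial starting point, since it produces the target regularity near $\mathcal{R}_-$ without assuming any regularity along the flow. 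Every null bicharacteristic leaves $\mathcal{R}_-$ and terminates at $\mathcal{R}_+$; using that $\mathrm{r}_+$ is nonincreasing along the flow, iterating Proposition \ref{prop:module.propagation} finitely many times (covered by a finite subcover thanks to compactness of the characteristic set modulo the radial sets) propagates $H^{s,\mathrm{r}_+;k}_{\modu}$ regularity along bicharacteristic segments up to any deleted neighborhood of $\mathcal{R}_+$. Then the below-threshold radial estimate at $\mathcal{R}_+$ (Proposition \ref{prop:mod.belowschrod}) takes the regularity the rest of the way into $\mathcal{R}_+$; and Proposition \ref{prop:module.elliptic.estimate.old} handles $\Ell(P)$. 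Summing over the partition gives \eqref{eq:mod reg global}, with error terms of the form $\|u\|_{H^{s',\mathrm{r}_+ -\delta;k}_{\modu}}$ (for some $s' < s$, $\delta > 0$) absorbed into $\|u\|_{H^{M,N}_{\mathrm{par}}}$ using the compactness of the inclusion $H^{s,\mathrm{r}_+;k}_{\modu} \hookrightarrow H^{M,N}_{\mathrm{par}}$ when $M < s$ and $N < \inf \mathrm{r}_+$.

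For \eqref{eq:invertible mapping}, I would run the same chain of microlocal estimates for $P^*$ acting on the dual spaces, with the weight $\mathrm{r}_-$, which by construction is above threshold at $\mathcal{R}_+$ and below threshold at $\mathcal{R}_-$, and which is monotone in the reverse direction under the flow. Together with \eqref{eq:mod reg global}, the standard Fredholm functional-analytic argument (as in \cite[Sect.\ 6]{TDSL}) gives that $P$ in \eqref{eq:invertible mapping} is Fredholm. For injectivity of $P$ on $\mathcal{X}^{s,\mathrm{r}_+;k}_{\modu}$: any $u \in \ker P$ lies in particular in the Sobolev-with-module spaces treated in \cite{TDSL}, whose kernel is trivial in this decay class (no nontrivial global solution is above threshold at $\mathcal{R}_-$). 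Since the corresponding dual statement for $P^*$ in the $\mathrm{r}_-$ setup also yields trivial kernel by the same reasoning, $P$ in \eqref{eq:invertible mapping} is a Fredholm isomorphism.

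The main obstacle is bookkeeping rather than novelty: one has to verify that every error term produced at each microlocal step can be absorbed into either a term controlled at a previous step of the global argument or into the weak background norm, given the new module $\modu$ whose generators have mixed parabolic orders $(1,1)$ and $(2,1)$, and which only satisfies the weaker closure condition \eqref{eq:module properties}. However, since the radial-point estimates for $\modu$ were already proved above and involve exactly the same structure as in \cite{TDSL}, the assembly of these estimates into a global Fredholm estimate proceeds essentially verbatim as in \cite[Sects.\ 5--6]{TDSL}, with the only modification being the replacement of the modules $\SN,\SM_\pm$ with $\modu$, and with reduced module powers $\modu^{(k)}$ replacing ordinary module powers.
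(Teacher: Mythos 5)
Your proposal follows the same overall architecture as the paper's proof: decompose $u$ using a microlocal partition of unity, apply the above-threshold radial-point estimate near $\mathcal{R}_-$, the propagation estimate along bicharacteristics, the below-threshold radial-point estimate at $\mathcal{R}_+$, and the elliptic estimate on $\Ell(P)$, then sum; the paper uses a slightly more concrete three-term partition $u = Q_- u + Q_1 u + Q_+ u$ but these are equivalent. The invertibility sketch via the dual estimate for $P^*$ with weight $\mathrm{r}_-$ and the Fredholm machinery from \cite{TDSL} is also in line with what the paper intends.

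There is one technical imprecision worth correcting. After assembling the microlocal estimates, the problematic error term is $\|Gu\|_{H_{\modu}^{s',r';k}}$ with $G$ microsupported near $\mathcal{R}_-$, $s'<s$, and the crucial feature that $r' > -1/2$, i.e.\ \emph{above} threshold --- it is not a weak background norm. You say it is ``absorbed using the compactness of the inclusion $H^{s,\mathrm{r}_+;k}_{\modu}\hookrightarrow H^{M,N}_{\mathrm{par}}$''. Compactness of this embedding alone does not give absorption: what one needs (and what the paper invokes) is an \emph{interpolation inequality}
\[
\|Gu\|_{H_{\modu}^{s',r';k}} \le \delta \|u\|_{H_{\modu}^{s,\mathrm{r}_+;k}} + \frac{C}{\delta}\|u\|_{H_{\mathrm{par}}^{M,N}},
\]
valid because $s'$ lies strictly between $M$ and $s$ and $r'$ lies strictly between $N$ and the value of $\mathrm{r}_+$ near $\mathcal{R}_-$. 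Substituting and taking $\delta$ small then absorbs the error into the left-hand side. Compactness of the inclusion is what one uses later for the Fredholm property (finite-dimensional kernel/cokernel), not to obtain the estimate \eqref{eq:mod reg global} itself. This is a fix in wording rather than a change of strategy, but as written the absorption step does not quite follow.
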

\begin{proof}
  This follows via the now standard argument of writing
  $u = Q_- u + Q_1 u + Q_+ u$ with
  $Q_\pm, Q_1 \in \Psi^{0,0}_{\mathrm{par}}$ and $Q_\pm$
  microsupported sufficiently close to $\mathcal{R}_\pm$ that the
  radial points estimates apply, so that one obtains directly from the
  previous propositions initially an
  estimate 
  $$
    		\|u\|_{H_{\SD}^{s,\mathrm{r}_+;k}}\leq C(\|P u\|_{H_{\SD}^{s-1,\mathrm{r}_+;k}}
		+  \|G u\|_{H_{\SD}^{s',-1/2 + \epsilon';k}}+ \|u\|_{H_{\mathrm{par}}^{M,N}}).
                $$
for some $G \in \Psi^{0,0}_{\mathrm{par}}$ supported near
$\mathcal{R}_-$ (arising from the $Gu$ term in \eqref{eq:mod.radial.schrod.above}) , $s' < s$,   and $-1/2 < -1/2 + \epsilon'  < -1/2 + \epsilon =
\mathrm{r}_+$ near $\mathcal{R}_-$.  Then, since $s'$ is between $M$ and $s$, and $-1/2 + \epsilon'$ is between $N$ and $-1/2 + \epsilon$, we can bound the $Gu$ norm by 
$$
\delta \|u\|_{H_{\SD}^{s,\mathrm{r}_+;k}} + \frac{C}{\delta} \|u\|_{H_{\mathrm{par}}^{M,N}}
$$
for small $\delta$ and suitable $M,N$, and thereby absorb the first term on the LHS, arriving at the Fredholm estimate \eqref{eq:mod reg global}. 
\end{proof}

\begin{remark} It is important to realize that to obtain the estimate \eqref{eq:mod reg global}, one must assume \emph{a priori} that $u \in H^{s,\mathrm{r}_+; k}_{\modu}$ --- that is, it does not follow from the assumption that $Pu \in H^{s-1, \mathrm{r}_+;
    k}_{\modu}$. This is in contrast to the situation for elliptic estimates, where, for example, in \eqref{eq:mod ellip}, the fact that $Q'Pu
        \in H_{\modu}^{s-2,\sw;k}$ implies that $Qu \in H_{\modu}^{s,\sw;k}$ without any such \emph{a priori} assumption. 
\end{remark}
  

\bibliographystyle{plain}
\bibliography{nls}
\end{document}